\numberwithin{equation}{section}
\DeclareFontFamily{U}{matha}{\hyphenchar\font45}
\DeclareFontShape{U}{matha}{m}{n}{
      <5> <6> <7> <8> <9> <10> gen * matha
      <10.95> matha10 <12> <14.4> <17.28> <20.74> <24.88> matha12
      }{}
\DeclareSymbolFont{matha}{U}{matha}{m}{n}
\DeclareFontFamily{U}{mathx}{\hyphenchar\font45}
\DeclareFontShape{U}{mathx}{m}{n}{
      <5> <6> <7> <8> <9> <10>
      <10.95> <12> <14.4> <17.28> <20.74> <24.88>
      mathx10
      }{}
\DeclareSymbolFont{mathx}{U}{mathx}{m}{n}
\DeclareMathSymbol{\obot}{2}{matha}{"6B}
\newcommand{\sL}{\ensuremath{\mathscr{L}}\xspace}
\newcommand{\sM}{\ensuremath{\mathscr{M}}\xspace}
\newcommand{\sP}{\ensuremath{\mathscr{P}}\xspace}
\newcommand{\rD}{\ensuremath{\mathrm{D}}\xspace}
\newcommand{\rH}{\ensuremath{\mathrm{H}}\xspace}
\newcommand{\F}{\mathbf{F}}
\newcommand{\G}{\mathbf{G}}
\newcommand{\wt}[1]{\widetilde{#1}}
\newcommand{\Q}{\mathbf{Q}}
\newcommand{\Z}{\mathbf{Z}}
\newcommand{\mf}[1]{\mathfrak{#1}}
\newcommand{\cl}{\overline}
\newcommand{\ul}[1]{\underline{#1}}
\newcommand{\ol}[1]{\overline{#1}}
\newcommand{\wh}[1]{\widehat{#1}}
\newcommand{\Cal}[1]{\mathcal{#1}}
\DeclareMathOperator{\et}{\text{\'{e}t}}
\newcommand{\mbf}[1]{\mathbf{#1}}
\newcommand{\co}{\colon}
\newcommand{\mrm}[1]{\mathrm{#1}}
\newcommand{\msf}[1]{\mathsf{#1}}
\newcommand{\bbm}[1]{\mathbbm{#1}}
\newcommand{\PP}{\mathbf{P}}
\newcommand{\DD}{\mathbf{D}}
\newcommand{\TT}{\mathbb{T}}
\newcommand{\inj}{\hookrightarrow}
\newcommand{\surj}{\twoheadrightarrow}
\newcommand{\tw}[1]{\sm{\langle #1 \rangle}}
\newcommand{\twbrace}[1]{\{#1\}}
\newcommand{\pair}[1]{\langle #1 \rangle}
\newcommand{\di}{\diamond}
\newcommand{\td}{\triangledown}
\newcommand{\tu}{\vartriangle}
\DeclareMathOperator{\Tot}{Tot}
\DeclareMathOperator{\Perf}{Perf}
\DeclareMathOperator{\GL}{GL}
\DeclareMathOperator{\Frob}{Frob}
\DeclareMathOperator{\N}{\mathbf{N}}
\DeclareMathOperator{\Tr}{Tr}
\DeclareMathOperator{\Hom}{Hom}
\newcommand{\cHom}{\Cal{H}om}
\newcommand{\cRHom}{\Cal{R}\Cal{H}om}
\DeclareMathOperator{\rank}{rank}
\DeclareMathOperator{\Spec}{Spec\,}
\DeclareMathOperator{\End}{End}
\DeclareMathOperator{\Bun}{Bun}
\DeclareMathOperator{\Ext}{Ext}
\DeclareMathOperator{\Pic}{Pic}
\DeclareMathOperator{\Id}{Id}
\DeclareMathOperator{\Gr}{Gr}
\DeclareMathOperator{\QCoh}{QCoh}
\DeclareMathOperator{\Sym}{Sym}
\DeclareMathOperator{\pt}{pt}
\DeclareMathOperator{\Fib}{Fib}
\DeclareMathOperator{\Fix}{Fix}
\DeclareMathOperator{\Sht}{Sht}
\DeclareMathOperator{\pr}{pr}
\DeclareMathOperator{\Hk}{Hk}
\DeclareMathOperator{\CH}{CH}
\DeclareMathOperator{\Map}{Map}
\DeclareMathOperator{\RHom}{RHom}
\DeclareMathOperator{\FT}{FT}
\DeclareMathOperator{\cc}{\mathfrak{c}}
\DeclareMathOperator{\dd}{\mathfrak{d}}
\DeclareMathOperator{\ee}{\mathfrak{e}}
\newcommand{\sm}[1]{{\scriptstyle  #1}}
\newcommand{\limit}{\varprojlim}
\DeclareMathOperator{\ev}{ev}
\DeclareMathOperator{\coev}{coev}
\newcommand{\arith}{\mrm{arith}}
\newcommand{\incl}{\hookrightarrow}
\newcommand{\isom}{\stackrel{\sim}{\to}}
\newcommand{\Qsh}[1]{\Q_{#1}}
\renewcommand{\j}[1]{\langle{#1}\rangle}
\newcommand{\bu}{\bullet}
\newcommand{\ov}{\overline}
\newcommand\xr{\xrightarrow}
\newcommand\ot{\otimes}
\renewcommand\c{\circ}
\renewcommand\sp{\mathfrak{sp}}
\renewcommand\a\alpha
\renewcommand\b\beta
\newcommand\g\gamma
\renewcommand\d\delta
\newcommand{\io}{\iota}
\newcommand{\ph}{\varphi}
\newcommand{\s}{\sigma}
\newcommand{\y}{\eta}
\newcommand{\z}{\zeta}
\newcommand{\om}{\omega}
\newcommand\sh{\sharp}
\newcommand\frL{\mathfrak{L}}
\newcommand\frM{\mathfrak{M}}
\newcommand\frc{\mathfrak{c}}
\newcommand\frq{\mathfrak{q}}
\newcommand\frs{\mathfrak{s}}
\newcommand\Corr{\mathrm{Corr}}
\newcommand\CoCorr{\mathrm{CoCorr}}
\def\upp{\textup{upp}}
\def\low{\textup{low}}
\def\out{\textup{out}}
\newcommand{\SH}{\mrm{SH}}
\newcommand{\DA}[1]{\mrm{DA}_{\et}({#1}; \Q)}
\newcommand{\Dmot}[1]{\cD_{\mrm{mot}}({#1};\Q)}
\newcommand{\Dmotg}[1]{\cD_{\mrm{mot},\mrm{gm}}({#1};\Q)}
\DeclareMathOperator{\DM}{DM}
\DeclareMathOperator{\LZ}{LZ}
\DeclareMathOperator{\Av}{Av}
\newcommand{\fsp}{\mathfrak{sp}}
\newcommand{\spc}{\mathrm{sp}}
\newcommand{\rFT}{\mrm{FT}^{\mrm{ren}}}
\newcommand{\aFT}{\mrm{FT}^{\mrm{arith}}}
\DeclareMathOperator{\red}{red}
\newcommand\nc{\newcommand}
\nc\renc{\renewcommand}
\nc{\A}{\bA}
\nc{\V}{\Tot}
\nc{\Ex}{\mrm{Ex}}
\nc{\gys}{\mrm{gys}}
\nc{\un}{\mbf{1}}
\nc{\vb}[1]{\langle{#1}\rangle}
\nc{\unit}{\mrm{unit}}
\nc{\counit}{\mrm{counit}}
\nc{\DVect}{\on{DVect}}
\nc{\MGL}{\mrm{MGL}}
\nc{\MGLmod}{\on{\mbf{D}_\MGL}}
\nc{\Nis}{\mrm{Nis}}
\nc{\Gm}{{\bG_{m}}}
\nc{\act}{\mrm{act}}
\nc{\Chom}{\on{C}_\bullet}
\nc{\oH}{{\mrm{H}}}
\newcommand{\quo}[1]{{}^{\dagger}{#1}}
\nc{\ssec}{\subsection}
\nc{\sssec}{\subsubsection}
\nc{\on}{\operatorname}
\nc{\term}[1]{#1\xspace}
\tikzset{
  commutative diagrams/.cd,
  arrow style=tikz,
  diagrams={>=latex}}
\tikzset{
  column sep/.code=\def\pgfmatrixcolumnsep{\pgf@matrix@xscale*(#1)},
  row sep/.code   =\def\pgfmatrixrowsep{\pgf@matrix@yscale*(#1)},
  matrix xscale/.code=%
    \pgfmathsetmacro\pgf@matrix@xscale{\pgf@matrix@xscale*(#1)},
  matrix yscale/.code=%
    \pgfmathsetmacro\pgf@matrix@yscale{\pgf@matrix@yscale*(#1)},
  matrix scale/.style={/tikz/matrix xscale={#1},/tikz/matrix yscale={#1}}}
\def\pgf@matrix@xscale{1}
\def\pgf@matrix@yscale{1}
\setlist[enumerate,1]{label={(\alph*)},itemsep=\parskip}
\newlist{enumcompress}{enumerate}{1}
\setlist[enumcompress,1]{
  label={(\alph*)},
  itemsep=0.3\parskip,
  leftmargin=*,
  align=left,
  topsep=0.5em
}
\newlist{thmlist}{enumerate}{2}
\setlist[thmlist,1]{
  label={\em(\roman*)}, ref={(\roman*)},
  itemsep=0.5em,
  align=right,widest=vi)}
\setlist[thmlist,2]{
  label={\em(\alph*)}, ref={(\alph*)},
  itemsep=0.75em,
  labelsep=0em,labelindent=0em,leftmargin=*,align=left,widest=vi),
  topsep=0.75em}
\newlist{thmlistbis}{enumerate}{1}
\setlist[thmlistbis,1]{
  label={\em(\roman*~\textit{bis})},
  ref={(\roman*}~\textit{bis}\upshape{)},
  itemsep=0.5em,
  align=right, widest=vi)}
\newlist{defnlist}{enumerate}{2}
\setlist[defnlist,1]{
  label={(\roman*)}, ref={(\roman*)},
  itemsep=0.5em,
  align=right, widest=vi)}
\setlist[defnlist,2]{
  label={(\alph*)}, ref={(\alph*)},
  itemsep=0.75em,
  labelsep=0em,labelindent=0em,leftmargin=*,align=left,widest=vi),
  topsep=0.75em}
\newlist{inlinelist}{enumerate*}{1}
\setlist[inlinelist,1]{label={(\alph*)}}
\newlist{inlinedefnlist}{enumerate*}{1}
\definecolor{green}{HTML}{38550C}
\setlist[inlinedefnlist,1]{label={\color{green}(\roman*)}}
\nc{\inftyCat}{\term{$\infty$-category}}
\nc{\inftyCats}{\term{$\infty$-categories}}
\nc{\inftyGrpd}{\term{$\infty$-groupoid}}
\nc{\inftyGrpds}{\term{$\infty$-groupoids}}
\nc{\cA}{\ensuremath{\mathcal{A}}\xspace}
\nc{\cB}{\ensuremath{\mathcal{B}}\xspace}
\nc{\cC}{\ensuremath{\mathcal{C}}\xspace}
\nc{\cD}{\ensuremath{\mathcal{D}}\xspace}
\nc{\cE}{\ensuremath{\mathcal{E}}\xspace}
\nc{\cF}{\ensuremath{\mathcal{F}}\xspace}
\nc{\cG}{\ensuremath{\mathcal{G}}\xspace}
\nc{\cH}{\ensuremath{\mathcal{H}}\xspace}
\nc{\cI}{\ensuremath{\mathcal{I}}\xspace}
\nc{\cJ}{\ensuremath{\mathcal{J}}\xspace}
\nc{\cK}{\ensuremath{\mathcal{K}}\xspace}
\nc{\cL}{\ensuremath{\mathcal{L}}\xspace}
\nc{\cM}{\ensuremath{\mathcal{M}}\xspace}
\nc{\cN}{\ensuremath{\mathcal{N}}\xspace}
\nc{\cO}{\ensuremath{\mathcal{O}}\xspace}
\nc{\cP}{\ensuremath{\mathcal{P}}\xspace}
\nc{\cQ}{\ensuremath{\mathcal{Q}}\xspace}
\nc{\cR}{\ensuremath{\mathcal{R}}\xspace}
\nc{\cS}{\ensuremath{\mathcal{S}}\xspace}
\nc{\cT}{\ensuremath{\mathcal{T}}\xspace}
\nc{\cU}{\ensuremath{\mathcal{U}}\xspace}
\nc{\cV}{\ensuremath{\mathcal{V}}\xspace}
\nc{\cW}{\ensuremath{\mathcal{W}}\xspace}
\nc{\cX}{\ensuremath{\mathcal{X}}\xspace}
\nc{\cY}{\ensuremath{\mathcal{Y}}\xspace}
\nc{\cZ}{\ensuremath{\mathcal{Z}}\xspace}
\nc{\bA}{\ensuremath{\mathbf{A}}\xspace}
\nc{\bB}{\ensuremath{\mathbf{B}}\xspace}
\nc{\bC}{\ensuremath{\mathbf{C}}\xspace}
\nc{\bD}{\ensuremath{\mathbf{D}}\xspace}
\nc{\bE}{\ensuremath{\mathbf{E}}\xspace}
\nc{\bF}{\ensuremath{\mathbf{F}}\xspace}
\nc{\bG}{\ensuremath{\mathbf{G}}\xspace}
\nc{\bH}{\ensuremath{\mathbf{H}}\xspace}
\nc{\bI}{\ensuremath{\mathbf{I}}\xspace}
\nc{\bJ}{\ensuremath{\mathbf{J}}\xspace}
\nc{\bK}{\ensuremath{\mathbf{K}}\xspace}
\nc{\bL}{\ensuremath{\mathbf{L}}\xspace}
\nc{\bM}{\ensuremath{\mathbf{M}}\xspace}
\nc{\bN}{\ensuremath{\mathbf{N}}\xspace}
\nc{\bO}{\ensuremath{\mathbf{O}}\xspace}
\nc{\bP}{\ensuremath{\mathbf{P}}\xspace}
\nc{\bQ}{\ensuremath{\mathbf{Q}}\xspace}
\nc{\bR}{\ensuremath{\mathbf{R}}\xspace}
\nc{\bS}{\ensuremath{\mathbf{S}}\xspace}
\nc{\bT}{\ensuremath{\mathbf{T}}\xspace}
\nc{\bU}{\ensuremath{\mathbf{U}}\xspace}
\nc{\bV}{\ensuremath{\mathbf{V}}\xspace}
\nc{\bW}{\ensuremath{\mathbf{W}}\xspace}
\nc{\bX}{\ensuremath{\mathbf{X}}\xspace}
\nc{\bY}{\ensuremath{\mathbf{Y}}\xspace}
\nc{\bZ}{\ensuremath{\mathbf{Z}}\xspace}
\nc{\bbA}{\ensuremath{\mathbb{A}}\xspace}
\nc{\bbB}{\ensuremath{\mathbb{B}}\xspace}
\nc{\bbC}{\ensuremath{\mathbb{C}}\xspace}
\nc{\bbD}{\ensuremath{\mathbb{D}}\xspace}
\nc{\bbE}{\ensuremath{\mathbb{E}}\xspace}
\nc{\bbF}{\ensuremath{\mathbb{F}}\xspace}
\nc{\bbG}{\ensuremath{\mathbb{G}}\xspace}
\nc{\bbH}{\ensuremath{\mathbb{H}}\xspace}
\nc{\bbI}{\ensuremath{\mathbb{I}}\xspace}
\nc{\bbJ}{\ensuremath{\mathbb{J}}\xspace}
\nc{\bbK}{\ensuremath{\mathbb{K}}\xspace}
\nc{\bbL}{\ensuremath{\mathbb{L}}\xspace}
\nc{\bbM}{\ensuremath{\mathbb{M}}\xspace}
\nc{\bbN}{\ensuremath{\mathbb{N}}\xspace}
\nc{\bbO}{\ensuremath{\mathbb{O}}\xspace}
\nc{\bbP}{\ensuremath{\mathbb{P}}\xspace}
\nc{\bbQ}{\ensuremath{\mathbb{Q}}\xspace}
\nc{\bbR}{\ensuremath{\mathbb{R}}\xspace}
\nc{\bbS}{\ensuremath{\mathbb{S}}\xspace}
\nc{\bbT}{\ensuremath{\mathbb{T}}\xspace}
\nc{\bbU}{\ensuremath{\mathbb{U}}\xspace}
\nc{\bbV}{\ensuremath{\mathbb{V}}\xspace}
\nc{\bbW}{\ensuremath{\mathbb{W}}\xspace}
\nc{\bbX}{\ensuremath{\mathbb{X}}\xspace}
\nc{\bbY}{\ensuremath{\mathbb{Y}}\xspace}
\nc{\bbZ}{\ensuremath{\mathbb{Z}}\xspace}
\nc{\mit}[1]{\ensuremath{\mathit{#1}}\xspace}
\nc{\mcal}[1]{\ensuremath{\mathcal{#1}}\xspace}
\nc{\msc}[1]{\ensuremath{\mathscr{#1}}\xspace}
\nc{\sub}{\subseteq}
\nc{\too}{\longrightarrow}
\nc{\hook}{\hookrightarrow}
\nc{\hooklongrightarrow}{\lhook\joinrel\longrightarrow}
\nc{\hooklong}{\hooklongrightarrow}
\nc{\hooklongleftarrow}{\longleftarrow\joinrel\rhook}
\nc{\twoheadlongrightarrow}{\relbar\joinrel\twoheadrightarrow}
\nc{\longrightleftarrows}{\ \raisebox{0.3ex}{\(\mathrel{\substack{\xrightarrow{\rule{1em}{0em}} \\[-1ex] \xleftarrow{\rule{1em}{0em}}}}\)}\ }
\renc{\cl}{\mrm{cl}}
\renc{\ge}{\geqslant}
\renc{\le}{\leqslant}
\nc{\id}{\mathrm{id}}
\DeclareMathOperator{\rk}{\mathrm{rk}}
\nc{\uHom}{\underline{\smash{\Hom}}}
\DeclareMathOperator{\Maps}{\on{Maps}}
\nc{\uEnd}{\underline{\smash{\End}}}
\renc{\lim}{\varprojlim}
\nc{\Cofib}{\on{Cofib}}
\nc{\initial}{\varnothing}
\DeclareMathOperator*{\fibprod}{\times}
\renc{\setminus}{\smallsetminus}
\newcommand\restr[2]{{
  \left.\kern-\nulldelimiterspace 
  #1 
  \vphantom{\big|} 
  \right|_{#2} 
  }}
\newcommand{\thmref}[1]{Theorem~\ref{#1}}
\newcommand{\subsectionref}[1]{\S\ref{#1}}
\newcommand{\lemref}[1]{Lemma~\ref{#1}}
\newcommand{\propref}[1]{Proposition~\ref{#1}}
\newcommand{\corref}[1]{Corollary~\ref{#1}}
\newcommand{\remref}[1]{Remark~\ref{#1}}
\renewcommand{\eqref}[1]{(\ref{#1})}
\newcommand{\itemref}[1]{\ref{#1}}
\newtheorem{thm}{Theorem}[subsection]
\newtheorem{lemma}[thm]{Lemma}
\newtheorem{prop}[thm]{Proposition}
\newtheorem{cor}[thm]{Corollary}
\newtheorem{defn-prop}[thm]{Definition-Proposition}
\newtheorem*{defthm*}{Definition/Theorem}
\newtheorem{lem}[thm]{Lemma}
\theoremstyle{remark}
\newtheorem{remark}[thm]{Remark} 
\newtheorem{defn}[thm]{Definition}
\newtheorem{const}[thm]{Construction}
\newtheorem{example}[thm]{Example}
\newtheorem{rem}[thm]{Remark}
\newtheorem{exam}[thm]{Example}
\newtheorem{notat}[thm]{Notation}
\newtheorem*{exam*}{Example}
\def\th@remark{%
  \thm@headfont{\bfseries}%
  \normalfont 
  \thm@preskip \thm@preskip 
  \thm@postskip\thm@preskip
}
\def\imod#1{\allowbreak\mkern5mu({\operator@font mod}\,\,#1)}
\numberwithin{equation}{subsection}
\title[]{Modularity of higher theta series II: \\ Chow group of the generic fiber}
\author{Tony Feng}
\address{University of California Berkeley, Department of Mathematics, Berkeley, CA 94720, USA}
\email{fengt@berkeley.edu}
\author{Adeel A. Khan}
\address{Institute of Mathematics, Academia Sinica, Taipei 10617, Taiwan}
\email{adeelkhan@gate.sinica.edu.tw}
\begin{document}

\begin{abstract}
Higher theta series on moduli spaces of Hermitian shtukas were constructed by Feng--Yun--Zhang and conjectured to be modular, parallel to classical conjectures in the Kudla program. In this paper we prove the modularity of higher theta series after restriction to the generic locus. The proof is an upgrade, using motivic homotopy theory, of earlier work of Feng--Yun--Zhang which established generic modularity of $\ell$-adic realizations. In the process, we develop some general tools of broader utility. One such is the \emph{motivic sheaf-cycle correspondence}, a categorical trace formalism for extracting computations in the Chow group from computations in Voevodsky's derived category of motives. Another new tool is the \emph{derived homogeneous Fourier transform}, which we use to implement a form of Fourier analysis for motives. 
\end{abstract}

\maketitle

\tableofcontents

\section{Introduction}

In this paper we synthesize two threads of research in the theory of algebraic cycles. The first thread comes from the lineage of the Birch and Swinnerton-Dyer Conjecture, and broadly speaking concerns the relationship between algebraic cycles on arithmetic moduli spaces and special values of $L$-functions. The second thread comes from the lineage of the Milnor Conjecture, through the $\A^1$-homotopy theory introduced by Voevodsky, and concerns a cohomological approach to motives. As these two domains have traditionally had little overlap, the Introduction will be aimed more broadly than usual so as to be accessible to audiences from both. 

\subsection{Number theory background}\label{ssec: nt background} The Birch and Swinnerton-Dyer Conjecture, and its generalizations such as the Beilinson--Bloch Conjecture and Bloch--Kato Conjecture\footnote{There is another ``Bloch--Kato Conjecture'' proved by Voevodsky, which generalizes the Milnor Conjecture about the relationship between Galois cohomology and Milnor K-theory. This is \emph{not} what we are referring to here, although it will become relevant later through the connection to $\A^1$-homotopy theory. We will exclusively use the phrase ``Norm residue isomorphism'' to refer to the Bloch--Kato Conjecture proved by Voevodsky.}, predict a deep relation between algebraic cycles and $L$-functions. (See \cite[\S 1.2]{Liu16} for a brief introduction to these Conjectures, or \cite{BK90} for a more extensive reference.) The classical work of Riemann \cite{Riemann} and Hecke \cite{Hecke2} founded the paradigm of accessing special values of $L$-functions through integral representations as periods of automorphic forms. The work of Gross--Zagier \cite{GZ86} introduced the idea of accessing the \emph{first derivative} of $L$-functions at special points\footnote{In turn, Gross emphasized to us the importance of Stark's work, in which the derivatives of Artin $L$-functions appear, as an inspiration for \cite{GZ86}.} as ``periods'' of geometric incarnations of automorphic forms on arithmetic moduli spaces called Shimura varieties. This opened the door to the rank 1 case of the Birch and Swinnerton-Dyer Conjecture; the higher rank case remains wide open. 
 
Theta functions are certain examples of automorphic forms built as generating functions for counting problems associated to lattices. Kudla introduced the concept of \emph{arithmetic theta functions} as an incarnation of theta functions in the arithmetic geometry of Shimura varieties. The so-called \emph{Kudla program} outlined in \cite{Kud04} (building on \cite{Kudla1997a, Kudla1997}) refers to a strategy to represent the first derivative of standard $L$-functions of cuspidal automorphic representations in terms of arithmetic geometry, giving a higher-dimensional generalization of the Gross--Zagier formula. This program involves several major conjectures: 
\begin{enumerate}
\item[(1)] The \emph{arithmetic Siegel--Weil formula}, which would relate the arithmetic volumes of arithmetic theta functions to the first derivatives of Siegel--Eisenstein series.
\item[(2)] The \emph{modularity of arithmetic theta functions}, which would enable the construction of arithmetic theta lifts. 
\item[(3)] The \emph{arithmetic inner product formula}, which would relate heights of arithmetic theta lifts to the first derivative of standard $L$-functions. 
\end{enumerate}
Remarkably, all of these problems have seen major progress in recent years. The works of Li--Zhang \cite{LZ1, LZ2}, Garcia--Sankaran \cite{GS19}, and Liu \cite{Liu11} establish the local arithmetic Siegel-Weil formula for the non-singular Fourier coefficients. The modularity has been proved in many cases; we postpone a more detailed discussion to \S \ref{ssec: discussion}. The arithmetic inner product formula was proved in many situations by Li--Liu \cite{LL1, LL2}. A modern introduction to these ideas, along with a more complete survey of recent developments, can be found in \cite{LiIHES}. 

In the function field context, the ``higher Gross--Zagier formula'' of Yun--Zhang \cite{YZ, YZII} revealed the possibility of accessing not only zeroth and first derivatives, but \emph
{all higher derivatives} of $L$-functions as periods of geometric incarnations of automorphic forms on moduli spaces of shtukas. In \cite{FYZ}, a \emph{higher Siegel--Weil formula} was established for non-singular terms, constituting the first step of a ``higher'' version of Kudla's program in this context. In \cite{FYZ2}, \emph{higher theta series} were constructed, and in this paper we prove their generic modularity (following the strategy of \cite{FYZ3}, which proved the generic modularity of the $\ell$-adic realization). This opens the door to ``higher theta-lifting'', of which one possible next application could be a ``higher arithmetic inner product formula'', but we view the modularity property as interesting in its own right. We note that the number field analogue of the modularity theorem has already had diverse applications in arithmetic geometry unrelated to the Kudla program.

\subsection{Main result} Let $X' \rightarrow X$ be an \'{e}tale double cover of smooth projective curves over a finite field $\F_q$ of characteristic $p>2$. Fix integers $n\ge m \geq 1$, and $r \geq 0$.

We recall the following definitions from \cite[\S 4.5]{FYZ2}:
\begin{itemize}
\item Let $\Bun_{GU^-(2m)}$ be the moduli stack of triples $(\cG, \frM, h)$ where $\cG$ is a vector bundle of rank $2m$ over  $X'$, $\frM$ is a line bundle over $X$, and $h$ is a skew-Hermitian isomorphism 
\[
h: \cG\isom \s^{*}\cG^{\vee}\ot \nu^{*}\frM=\s^{*}\cG^{*}\ot\nu^{*}(\om_{X}\ot \frM).
\] 
\item Let $\Bun_{\wt P_{m}}$ be the moduli stack of quadruples $(\cG,\frM, h,\cE)$ where $(\cG,\frM,h)\in \Bun_{GU(2m)}$, and $\cE\subset \cG$ is a Lagrangian sub-bundle (of rank $m$). 
\item Let $\Sht_{GU(n)}^r$ be the moduli stack of rank $n$ similitude Hermitian shtukas with $r$ legs. 
\end{itemize}
In \cite[\S 4]{FYZ2}, we constructed the \emph{higher theta series}
\[
\wt Z^{r}_{m}: \Bun_{\wt P_{m}}(k) \to \CH_{r(n-m)}(\Sht^{r}_{GU(n)}),
\]
a function assigning a cycle class on $\Sht^r_{GU(n)}$ to every quadruple $(\cG,\frM, h,\cE)$ as above.

\subsubsection{The Modularity Conjecture} 

 The map $\Bun_{\wt P_{m}}(k) \rightarrow \Bun_{GU(2m)}(k)$, given by forgetting the Lagrangian sub-bundle $\cE \subset \cG$, is surjective; and \cite[Modularity Conjecture 4.15]{FYZ2} predicts that $\wt{Z}^r_m$ descends through this map to a function $Z^{r}_{m}: \Bun_{GU(2m)}(k) \to \CH_{r(n-m)}(\Sht^{r}_{GU(n)})$, as in the diagram below. 
\[
\begin{tikzcd}
\Bun_{\wt P_{m}}(k)  \ar[d, twoheadrightarrow] \ar[dr, "\wt Z^{r}_{m}"] \\
 \Bun_{GU(2m)}(k) \ar[r, dashed, "Z^{r}_{m}"'] &  \CH_{r(n-m)}(\Sht^{r}_{GU(n)})
\end{tikzcd}
\]
In other words, the Modularity Conjecture says that the function $\wt Z^{r}_{m}$, which a priori depends on $(\cG, \frM, h, \cE)$, is actually independent of the Lagrangian sub-bundle $\cE \subset \cG$. 

\subsubsection{The generic locus}\label{sssec: generic locus} The stack $\Sht^{r}_{GU(n)}$ admits a ``leg map'' $\Sht^{r}_{GU(n)} \rightarrow (X')^r$. Let $\y=\Spec F'\to X'$ be the generic point. Let $\y^{r}=\Spec(F'\ot_{k}\cdots\ot_k F')\to (X')^{r}$. Note that $\y^{r}$ contains the generic point of $(X')^{r}$ but it also contains many more points such as the generic point of the diagonal $X' \inj (X')^r$. We refer to $\Sht^{r}_{GU(n)} \times_{(X')^r} \eta^{r}$ as the ``generic locus'' of $\Sht^r_{GU(n)}$. 

\subsubsection{The generic modularity theorem}We have a restriction map 
\[
\CH_{r(n-m)}(\Sht^{r}_{GU(n)}) \rightarrow \CH_{r(n-m)}(\Sht^{r}_{GU(n)} \times_{(X')^r} \eta^{r}).
\]

\begin{thm}[Modularity on the generic locus]\label{thm: main}
The composition 
\[
\Bun_{\wt P_{m}}(k)  \xrightarrow{\wt Z^{r}_{m}}  \CH_{r(n-m)}(\Sht^{r}_{GU(n)}) \rightarrow \CH_{r(n-m)}(\Sht^{r}_{GU(n)} \times_{(X')^r} \eta^{r})
\]
descends through $\Bun_{\wt P_{m}}(k)  \surj \Bun_{GU(2m)}(k)$. In other words, its value on $(\cG, \frM, h, \cE) \in \Bun_{\wt P_{m}}(k) $ is independent of the Lagrangian sub-bundle $\cE \subset \cG$. 
\end{thm}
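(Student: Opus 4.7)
The plan is to follow the strategy of \cite{FYZ3}, which establishes the $\ell$-adic analogue of Theorem~\ref{thm: main}, and to upgrade that argument from $\ell$-adic cohomology to Chow groups using the two technologies advertised in the abstract: the \emph{motivic sheaf-cycle correspondence} and the \emph{derived homogeneous Fourier transform}. The first step is to reformulate modularity geometrically: the fibers of $\Bun_{\wt P_m}(k) \surj \Bun_{GU(2m)}(k)$ parameterize the Lagrangian sub-bundles $\cE \subset \cG$ with $(\cG, \frM, h)$ fixed, so we must show that the image of $\wt Z^r_m(\cG, \frM, h, \cE)$ in $\CH_{r(n-m)}(\Sht^r_{GU(n)} \times_{(X')^r} \eta^r)$ is constant along each such fiber. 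I would package this by building a single motivic object on $\Bun_{GU(2m)} \times (\Sht^r_{GU(n)} \times_{(X')^r} \eta^r)$ whose categorical trace recovers $\wt Z^r_m$ after pullback along $\Bun_{\wt P_m} \to \Bun_{GU(2m)}$.

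Next, following \cite{FYZ3}, one realizes $\wt Z^r_m(\cG, \frM, h, \cE)$ as the outcome of pushing forward a fundamental class along an Abel--Jacobi type map from an auxiliary special-cycle stack. Because $\Bun_{\wt P_m}$ is a flag bundle with a natural vector-bundle-like structure over $\Bun_{GU(2m)}$, this pushforward has a Fourier-theoretic shape: it is, up to twists, the Fourier transform of a simpler kernel with a Lagrangian-type support condition. The role of the derived homogeneous Fourier transform is to pass to this dual presentation, in which the $\cE$-dependence of the input is absorbed into a manifest symmetry of the kernel on the dual side. Once on the dual side, independence of $\cE$ should be either tautological or an application of a Fourier inversion identity in $\DM$.

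The third step is to descend this motivic-sheaf identity to a Chow-group identity via the motivic sheaf-cycle correspondence. Conceptually, this correspondence asserts that suitable cycle classes can be read off as traces of morphisms in Voevodsky's derived category of motives, in a manner compatible with the full six-functor formalism; hence the geometric identity established on the Fourier-dual side mechanically implies the desired equality in $\CH_{r(n-m)}(\Sht^r_{GU(n)} \times_{(X')^r} \eta^r)$. The restriction to the generic locus $\eta^r$ is essential here: it guarantees the smoothness and transversality needed to apply the sheaf-cycle formalism cleanly, and it reduces the local structure of the shtuka moduli to an affine-Grassmannian picture where a motivic geometric-Satake-type input becomes available.

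The main obstacle, I expect, will be establishing that the derived homogeneous Fourier transform has all the formal properties---involutivity, base change against smooth pullback and proper pushforward, and compatibility with the sheaf-cycle correspondence---that are needed to mimic the $\ell$-adic argument step by step. In the $\ell$-adic setting one has the Deligne--Laumon Fourier transform built from an Artin--Schreier sheaf, but no such sheaf exists motivically, so a genuine substitute must be engineered (this is the ``homogeneous'' variant). The delicate point is not any single Chow-group computation, but rather verifying that this substitute retains enough of the Fourier-inversion calculus to import the $\ell$-adic proof of \cite{FYZ3} wholesale.
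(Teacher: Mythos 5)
Your high-level orientation is correct: the paper really does lift the $\ell$-adic argument of \cite{FYZ3} to Chow groups via a motivic sheaf-cycle correspondence (a Lu--Zheng-style categorical trace in $\Dmot{-}$) together with a derived homogeneous Fourier transform. However, several of the concrete mechanisms you describe diverge from what actually makes the argument work, and two of them are genuinely wrong.

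First, the role of the generic locus is misattributed. You claim the restriction to $\eta^r$ lets one reduce to an affine-Grassmannian picture and invoke a motivic geometric-Satake input; nothing of the sort happens. The genuine reason is much more elementary: after arranging the auxiliary torsion sheaves $\cT_1, \cT_2, Q$ to have disjoint supports, one passes to the open locus $X^\circ$ where the legs avoid those supports. There, the derived vector bundle $\Sht_V^{r,\circ}$ becomes an \'etale $\F_q$-vector space bundle over $\Sht^{r,\circ}_S$, which is exactly the structure needed for the \emph{arithmetic} Fourier transform on Chow groups to exist. Without this the Plancherel step has no meaning. Geometric Satake plays no role.

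Second, the geometric Fourier transform does not live on the fibration $\Bun_{\wt P_m} \to \Bun_{GU(2m)}$, which is not a vector bundle in any useful sense here. The transform lives on derived vector bundles $U \hook V \surj W$ (and their duals) over a Harder--Narasimhan truncation $S = \Bun^{\le\mu}_{U(n)}$ of the \emph{shtuka-side} moduli. These bundles encode $\RHom$'s from Hermitian bundles $\cF^*$ to the data $\cE_1^*, \wt Q_1, \sigma^*\cE_2[1]$ coming from the transverse-Lagrangian ansatz. The duality $U^\perp = (V/U)^\vee$ is precisely what exchanges the roles of $\cE_1$ and $\cE_2$ on the dual side. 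Your picture of a ``single motivic object on $\Bun_{GU(2m)} \times (\Sht^r \times \eta^r)$ whose trace recovers $\wt Z^r_m$'' is not what the paper constructs and would require additional justification.

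Third, you omit a structurally essential reduction: the problem is first reduced to comparing $\wt Z^r_m(\cG, \wt\cE_1)$ and $\wt Z^r_m(\cG, \wt\cE_2)$ for a pair of \emph{transverse} Lagrangians (\S\ref{sssec: transverse reduction}), and then to a fixed Harder--Narasimhan truncation (\S\ref{sssec: H-N truncation}). Without transversality, the whole geometry of $Q = \cG / (\wt\cE_1 \oplus \wt\cE_2)$ and the diagram \eqref{eq: Q diagram for saturation}--\eqref{eq: TL exact sequence} does not exist. Relatedly, ``tautological on the dual side or a Fourier inversion identity'' understates the final step: one writes each $\wt Z^r_m(\cG,\wt\cE_i)$ as a pairing of a pushed-forward virtual class against an explicit test function, applies the Plancherel identity of Lemma~\ref{lem: Plancherel for cycles} to transfer the arithmetic Fourier transform to the test function, and invokes Theorem~\ref{thm: AFT of distribution} (which itself rests on the trace/Fourier compatibility of Theorem~\ref{thm: trace compatible with FT} and the cohomological-correspondence identity of Theorem~\ref{thm: FT of pushforward correspondence}). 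The matching of constants, characters, and $q$-powers is not tautological and is where the two displays \eqref{eq: conclusion -3}--\eqref{eq: conclusion 2} carry the content.

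In short: your proposal is directionally right at the level of ``use $\Dmot{}$ traces plus a motivic Fourier transform to port \cite{FYZ3}'', but to turn it into a proof you would need to (a) replace the geometric-Satake motivation for the generic restriction with the \'etale $\F_q$-vector-bundle mechanism; (b) relocate the Fourier transform from the parabolic side to the Hitchin-type derived vector bundles over $\Bun^{\le\mu}_{U(n)}$; (c) add the transverse-Lagrangian and Harder--Narasimhan reductions; and (d) spell out the two-Fourier-transform bootstrap (geometric on correspondences, arithmetic on cycles via the sheaf-cycle trace and Plancherel).
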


\begin{remark}
For application to the Kudla program as outlined in \S \ref{ssec: nt background}, the generic form of modularity established in Theorem \ref{thm: main} is sufficient for arithmetic theta lifting and the arithmetic inner product formula, according to the paradigm of \cite{LL1, LL2}. 
\end{remark}

\subsection{Discussion}\label{ssec: discussion} We discuss some related results to Theorem \ref{thm: main}. Modularity of arithmetic theta series in the Chow group of Shimura varieties (of the generic fiber), which is analogous to the case $r=1$ of Theorem \ref{thm: main}, is known for unitary Shimura varieties of signature $(n-1,1)$ and orthogonal Shimura varieties of signature $(n-2,2)$ when the underlying CM field is norm-Euclidean, thanks to work of Borcherds \cite{Bor99}, Zhang \cite{Zh09}, and Bruinier--Westerholt-Raum \cite{BWR15}.\footnote{These results were then refined to obtain modularity in the Chow groups of integral models, for unitary Shimura varieties of signature $(n-1,1)$ by Bruinier--Howard--Kudla--Rapoport--Yang \cite{BHKRY1} in the divisor case, and for orthogonal Shimura varieties of signature $(n-2,2)$ by Howard--Madapusi \cite{HM22} in all codimensions.}  However, the methods behind those results seem completely inapplicable in our situation (there is a brief discussion of this in \cite[\S 1.2]{FYZ3}), hence the proofs are essentially disjoint, even at the level of ideas. 

The cohomological version of Theorem \ref{thm: main}, modularity of the $\ell$-adic realization on the generic locus, is established in \cite{FYZ3}, and our proof very much builds on the ideas of \emph{loc. cit.}, as will be discussed further in \S \ref{ssec: intro proof}. In the Shimura variety context, the analogous modularity of the Betti realization was proved much earlier in work of Kudla--Millson \cite{KM90}, and in complete generality. By contrast, modularity in the Chow group of the generic fiber is more difficult and the known results more restrictive. This is because there are various tools for accessing cohomology, whereas modularity in the Chow group amounts to the fundamentally difficult problem of producing motivic data. The cases in which this has been accessible are limited to those where the Shimura variety supports special cycles that are divisors, for then one can use the (intricate!) theory of Borcherds lifting to write down explicit functions which attest to the necessary relations between divisors. Beyond the case where the Shimura variety supports divisors, nothing seems to be known towards modularity; the motivic data that must be produced is of a more complicated and subtler nature. We note that Kudla has observed in \cite{Kud21} that assuming (presumably difficult) conjectures of Beilinson--Bloch on the injectivity of Abel--Jacobi maps, and due to an incidental miracle of Hodge diamonds, the modularity in the orthogonal case (on the generic fiber) is implied by modularity in Betti cohomology; Maeda proved an analogous statement in the unitary case \cite{Mae22}. We do not know if any such phenomenon occurs in the function field context; our proof completely bypasses such questions. 

In the function field context there is no analogue of Borcherds lifting, and we do not know any direct way to write down the explicit functions that attest to the necessary relations. We note that the arithmetic theta series live in codimension $mr$, so when $r>1$ we are beyond the existence of codimension 1 special cycles, putting us in the realm where the modularity is completely unknown in the Shimura variety context. Nevertheless, our proof produces the necessary relations. Implicitly we are constructing motivic data; for example, for $r=1$ and $m=1$ our proof implies the existence of certain rational functions on moduli of shtukas that should perhaps be considered as the correct analogues of Borcherds products. But instead of writing down this motivic data explicitly, we produce it as the output of a machine that we call the \emph{motivic sheaf-cycle correspondence}. 

It could be interesting to investigate potential applications of the motivic data produced in this way, such as the functions produced in the $m=1, r=1$ case, which seem to play the role of Borcherds products on moduli of shtukas. This approach to constructing units is vaguely reminiscent of the approach to modular units and Beilinson--Flach classes via the Manin--Drinfeld Theorem.

\subsection{Commentary on the proof}\label{ssec: intro proof}
The proof of Theorem \ref{thm: main} is patterned on the proof of \cite[Theorem 1.1.1]{FYZ3}, which established modularity after $\ell$-adic realization. In fact we refer the reader to the Introduction and \S 2.4 of \emph{loc. cit}. for a guide to the strategy of the proof. We will only describe the improvements of this present paper relative to \cite{FYZ3}. 

\subsubsection{Motivic sheaf-cycle correspondence} The classical sheaf-function correspondence is a formalism for extracting functions from sheaves via the trace of Frobenius. One innovation of \cite{FYZ3} is a ``sheaf-cycle'' correspondence that extracts the $\ell$-adic realization of cycles from sheaves. More precisely, the strategy of \cite{FYZ3} is to express the $\ell$-adic realization of higher theta series as the ``trace'' of a cohomological correspondence between $\ell$-adic sheaves, and then to deduce modularity from some appropriate form of modularity for cohomological correspondences. By definition the trace operation produces elements of Ext groups between $\ell$-adic sheaves, hence can only see $\ell$-adic realizations.

In this paper we upgrade the $\ell$-adic sheaf-cycle correspondence of \cite{FYZ3} to a \emph{motivic sheaf-cycle correspondence} that directly extracts Chow classes from a suitable notion of motivic sheaves.
Specifically, we will work with the triangulated category of motivic sheaves introduced by V.~Voevodsky in the course of his proofs of the Norm Residue isomorphism\footnote{This was conjectured by Bloch--Kato, but we avoid calling it the Bloch--Kato Conjecture since it is completely different from the other Bloch--Kato Conjecture which appeared in \S \ref{ssec: nt background}.} and the Beilinson--Lichtenbaum Conjecture. After further developments by J.~Ayoub, D.-C.~Cisinski, F.~D\'eglise, F.~Morel, and others, we have at our disposal a robust theory of triangulated categories of motivic sheaves over arbitrary base schemes, equipped with Grothendieck's six operations.
We may regard Voevodsky's category as the derived category of the hypothetical \emph{abelian} category of perverse motivic sheaves; while Grothendieck's Standard Conjectures obstruct the existence of this abelian category, or equivalently of the perverse motivic t-structure (see \cite{Bei12}), Voevodsky's insight was that the putative derived category can in fact be constructed independently of intractable questions about algebraic cycles.  For the purposes of our construction of a motivic sheaf-cycle correspondence, the key property of the derived category of motives is that the Ext groups calculate (higher) Chow groups.

We discuss some differences between motivic sheaves and $\ell$-adic sheaves. In the language of Ayoub \cite{Ay14}, $\ell$-adic sheaves are a ``transcendental'' invariant: they have strong finiteness properties, behave well in families, and are relatively computable; but their relationship to algebraic cycles is tenuous (highly conjectural at best). By contrast, motivic cohomology is what Ayoub calls an ``algebro-geometric invariant'', which is built directly out of objects of interest in algebraic geometry (e.g., algebraic cycles), but behaves ``chaotically'': it does not have good finiteness properties, it varies violently in families, and it is not amenable to computation. In particular, it is (a priori) ill-defined to form the trace of an endomorphism on motivic cohomology groups, since these groups are usually infinite-dimensional. This presents a challenge for the sheaf-cycle correspondence, which is implemented by formation of trace. 

The solution is to expand the meaning of the trace. Dold--Puppe \cite{DP80} codified the ``trace of an endomorphism of a dualizable object in a symmetric monoidal category'' as a generalization of the trace in linear algebra. In the category of vector spaces (over a given field), the dualizable objects are precisely the finite-dimensional vector spaces, and the \emph{categorical trace} in the sense of Dold--Puppe is equivalent to the usual trace. However, in a more general symmetric monoidal category, the trace is divorced from its linear algebraic origins, and can be formed without finite-dimensionality conditions. This is precisely how we make sense of the trace for motivic sheaves. We define a motivic version of the Lu--Zheng 2-category from \cite{LZ22}, in which \emph{universally strongly locally acyclic} motives form dualizable objects; dualizability provides the analogue of a finiteness property which allows to form the trace even without finite-dimensionality of motivic cohomology. We use Lu--Zheng's approach to prove a ``relative Verdier--Lefschetz formula'', which supplies compatibility of the motivic sheaf-cycle correspondence with proper pushforwards. We also introduce a dual version of the Lu--Zheng category to prove a ``relative local term formula'', which supplies compatibility of the motivic sheaf-cycle correspondence with smooth pullbacks; we note that derived algebraic geometry is crucial to formulate the pullback compatibility.

\subsubsection{Motivic Fourier analysis} The modularity of $\ell$-adic cohomological correspondences in \cite{FYZ3} comes from a derived generalization of Deligne--Laumon's $\ell$-adic Fourier transform, which allows to execute a sheaf-theoretic version of Poisson's argument for modularity of the classical theta function. To carry out such arguments, we need to develop a theory of this ``derived Fourier analysis'' for \emph{motivic sheaves}. 

While the $\ell$-adic Fourier transform requires an Artin-Schreier sheaf, hence only exists on spaces in characteristic $p$, there is a variant of Fourier analysis which is more robust, in the sense of being defined in very general geometric and sheaf-theoretic contexts. This variant is based on Laumon's theory of the \emph{homogeneous Fourier transform} \cite{Laumon}. In anticipation of future applications, we invest effort into developing the homogeneous Fourier transform in great generality in \S \ref{sec: derived homogeneous FT}, encompassing motivic sheaves but also all other known 6-functor formalisms. It turns out that the homogeneous Fourier theory is enough for our applications to the modularity of higher theta functions. (Jakob Scholbach informed us that he is writing up a motivic lift of the Deligne-Laumon Fourier theory; this would also be enough for our applications in the present paper, but at least one of the authors has followup applications in mind that require our more general context.) 

 We note that, as in \cite{FYZ3}, we need a \emph{derived} expansion of this theory, which encompasses generalizations of vector bundles called derived vector bundles. This derived generalization presents significant technical difficulties, for which we refer to \S \ref{sec: derived homogeneous FT} and \cite[\S 6 and Appendix A]{FYZ3} for further discussion.

\subsection{Outline} We give a brief outline of the paper. \S \ref{sec: motivic sheaves} establishes some preliminaries on motivic sheaves, especially the notion of universal strong local acyclicity (USLA) and its consequences. Then \S \ref{sec: coh corr} -- \S \ref{sec: local terms} develop the motivic sheaf-cycle correspondence and the tools to calculate with it. Next \S \ref{sec: derived homogeneous FT} constructs the derived homogeneous Fourier transform and establishes its properties, and \S \ref{sec: Fourier for motives} studies its interaction with the motivic sheaf-cycle correspondence. Finally, \S \ref{sec: generic modularity} assembles everything to prove Theorem \ref{thm: main}. 

\subsection{Acknowledgements} 
TF thanks Aravind Asok and Elden Elmanto for conversations about motivic sheaves, and especially Zhiwei Yun and Wei Zhang for their many insights shared in the collaboration on \cite{FYZ, FYZ2, FYZ3}, on which this paper builds. 

AK thanks Denis-Charles Cisinski and Tasuki Kinjo for invaluable discussions about Fourier transforms, and Fangzhou Jin for answering questions about his paper \cite{Jin23}.

We are grateful to Joseph Ayoub, Benedict Gross, Chao Li, Yifeng Liu, Jakob Scholbach, and Wei Zhang for comments and corrections on a draft of this paper. 

TF was supported by the NSF (DMS-2302520). AK acknowledges support from the grants AS-CDA-112-M01 (Academia Sinica), NSTC 110-2115-M-001-016-MY3, and NSTC 112-2628-M-001-0062030.

\section{Notation and conventions} The notation is consistent with that of \cite{FYZ3} (and therefore inconsistent with \cite{FYZ, FYZ2} in some ways, as noted there.) 

  \subsection{Spaces}

    Unless noted otherwise, we always work in the category of derived Artin stacks. Hence when we say ``Cartesian square'' we mean what might be called ``derived Cartesian square'' (sometimes we keep the adjective ``derived'' for emphasis), unless noted otherwise (the exception is in \S \ref{sec: local terms}).

    For a derived Artin stack $A$, we denote by $A_{\cl}$ its classical truncation.
    By definition, a map of derived Artin stacks is a \emph{closed embedding} or \emph{proper} if the induced map of classical truncations has this property.
    For example, the inclusion of the classical truncation $A_{\cl} \hook A$ is a closed embedding.

  \subsection{Perfect complexes}\label{ssec: notate perf}

    Let $S$ be a derived Artin stack.
    We let $\Perf(S)$ be the $\infty$-category of perfect complexes on $S$  (see e.g. \cite[\S 3]{GRI}).
    For $\cE \in \Perf(S)$ we write $\cE^* = \cRHom_S(\cE, \cO_S)$ for the linear dual of $\cE$.

    By a \emph{cochain complex of locally free sheaves} on $S$ of amplitude $[a,b]$, we will mean a diagram
    \begin{equation*}
      \cE^a
      \xrightarrow{d^{a}} \cdots
      \xrightarrow{d^{-2}} \cE^{-1}
      \xrightarrow{d^{-1}} \cE^0
      \xrightarrow{d^0} \cE^1
      \xrightarrow{d^1} \cdots
      \xrightarrow{d^{b-1}} \cE^b
    \end{equation*}
    in $\Perf(S)$ where each $\cE^i$ is of tor-amplitude $[0,0]$, together with null-homotopies $d^{i} \circ d^{i-1} \cong 0$ for all $i$.
    A \emph{morphism} of cochain complexes $\phi \co \cE^\bullet \to \cF^\bullet$ is a collection of morphisms $\phi^i \co \cE^i \to \cF^i$ (extending by zero if the complexes are not of the same amplitude), together with a homotopies $\phi^{i+1} \circ d^i \cong d^i \circ \phi^i$ as well as compatibilities between the null-homotopies $\phi^{i+1} \circ (d^{i} \circ d^{i-1}) \cong 0$ and $(d^{i} \circ d^{i-1}) \circ \phi^{i-1} \cong 0$.

    By taking iterated cofibres, a cochain complex $\cE^\bullet$ gives rise to a perfect complex $\cE \in \Perf(S)$ of tor-amplitude $[a,b]$ (note that we are using \emph{cohomological} grading even for tor-amplitude).
    Similarly, a morphism of cochain complexes gives rise to a morphism of perfect complexes.
    We refer to the cochain complex $\cE^\bullet$ as a \emph{global presentation} for $\cE$.
    More generally, we refer to a diagram of cochain complexes as a global presentation for the induced diagram of perfect complexes.

    When $S$ is affine, or more generally admits the derived resolution property in the sense of \cite[\S 1.7]{kstack}, every perfect complex admits a global presentation.
    For a general derived Artin stack $S$, every perfect complex $\cE \in \Perf(S)$ admits a global presentation smooth-locally on $S$.

  \subsection{Cotangent complexes}

    For a map $f \co X \to Y$ of derived Artin stacks, we denote by $\bL_f \coloneqq \bL_{X/Y} \in \Perf(X)$ the relative cotangent complex.

    Let $f \co X \to Y$ be a map of derived Artin stacks that is locally finitely presented on classical truncations.
    The map $f$ is \emph{étale} if the relative cotangent complex $\bL_f$ vanishes (i.e., is isomorphic to $0 \in \Perf(X)$).
    The map $f$ is \emph{smooth} (resp. \emph{quasi-smooth}) if the relative cotangent complex $\bL_f$ is perfect of tor-amplitude $[0, \infty)$ (resp. $[-1, \infty)$).
    
    Note that unlike properness, these properties cannot in general be detected on classical truncations.
    Moreover, while a smooth map is also smooth on classical truncations, quasi-smoothness is typically destroyed by classical truncation.
    See \cite[\S 2]{KhanRydh} for some background on quasi-smoothness.

    When $\bL_f$ is perfect, we write $d(f)$ for its virtual rank (or Euler characteristic), and call it the \emph{relative dimension} of $f$.

  \subsection{Derived vector bundles}\label{ssec: notate dvb}

    Let $S$ be a derived Artin stack.
    
    Given a perfect complex $\cE \in \Perf(S)$, we denote by $\V(\cE)$ the derived stack of sections of $\cE$, as in \cite[\S 6.1.1]{FYZ3}.
    We refer to \emph{$\Tot(\cE)$} as the \emph{derived vector bundle} associated with $\cE$.\footnote{We caution that some other sources (including \cite{khan2023derived}) use the dual convention, using Grothendieck's $\bV(-)$ construction.}
    In terms of the functor of points, $\Tot(\cE)$ is the derived stack over $S$ sending an $S$-scheme $u : T \to S$ to the mapping space $\Map_{\QCoh(T)}(\cO_T, u^*\cE)$.
    
    If $\cE$ is of tor-amplitude $\ge 0$, then we have
    \begin{equation*}
      \Tot(\cE) \cong \ul{\smash{\Spec}}_S(\Sym_{\cO_S}(\cE^*)).
    \end{equation*}
    Thus in that case the projection $\Tot(\cE) \to S$ is affine (but smooth if and only if $\cE$ is of tor-amplitude $[0,0]$).

    On the other hand, if $\cE$ is of tor-amplitude $\le 0$, then the projection $\Tot(\cE) \to S$ is smooth (but representable if and only if $\cE$ is of tor-amplitude $[0,0]$).

    The $\infty$-category $\DVect(S)$ of \emph{derived vector bundles} over $S$ is the essential image of the fully faithful functor $\cE \mapsto \V(\cE)$ from $\Perf(S)$ to the $\infty$-category of derived stacks over $S$ with $\bG_m$-action.    

    Throughout we use calligraphic letters such as $\cE$ for perfect complexes, and Roman letters such as $E$ for the corresponding total spaces. We will denote the dual derived vector bundle to $E=\Tot(\cE)$ by $\wh{E} = \Tot(\cE^*)$.

    Using the equivalence $\Tot(-) \co \Perf(S) \xrightarrow{\sim} \DVect(S)$, we can make sense of global presentations of (diagrams of) derived vector bundles just as in \S \ref{ssec: notate perf}.
    That is, a global presentation for $E = \Tot(\cE)$ is a global presentation for $\cE \in \Perf(S)$.

  \subsection{$\infty$-categories}

    In an $\infty$-category $\msf{C}$, we use the notation $\Map(c, c')$ for the mapping space between objects $c,c' \in \msf{C}$. We use the notation $\Hom(c,c') \coloneqq \pi_0 \Map(c,c')$, which is the group of morphisms from $c$ to $c'$ in the homotopy category of $\msf{C}$. We denote $\Ext^i(c,c') \coloneqq \Hom(c, c'[i])$.

  \subsection{Motives}

    We refer to \S \ref{ssec: dmot} for the precise definition of motivic sheaves adopted in this paper, and then \S \ref{ssec: notation for motives} for additional relevant notation.

\section{Motivic sheaf theory}\label{sec: motivic sheaves}

In this section we establish some general material on motivic sheaves and motivic cohomology. We define the notion of \emph{(universally) strongly locally acyclic} motivic sheaves and their properties; this part is similar to work of Jin \cite{Jin23} which is itself a motivic version of work of Lu--Zheng \cite{LZ22}. However, these earlier works focus on the case of schemes while for applications we need the generality of derived Artin stacks, so we formulate the statements in this generality, and give proofs when they need to be modified from the case of schemes. 

\subsection{The derived category of motives}\label{ssec: dmot}

For a derived Artin stack $S$, we have the stable \inftyCat $\Dmot{S}$ of motivic sheaves on $S$ with rational coefficients.

Recall that for a scheme $S$, the motivic stable homotopy category $\SH(S)$ along with the six-functor formalism for the assignment $S \mapsto \SH(S)$ was constructed by Morel and Voevodsky \cite{Voe98, MV99,deligne2001voevodsky} and developed further by Ayoub \cite{Ay07a, Ay07b} and Cisinski--Déglise \cite{CD19} (see also \cite[App.~C]{HoyoisLefschetz} or \cite{khansix} for non-noetherian bases).
The six-functor formalism descends to the étale-localized and rationalized categories $\SH_{\et}(S; \Q)$, and we take $\Dmot{-} \coloneqq \SH_{\et}(-; \Q)$ by definition on schemes.\footnote{%
  Since some older references operate with triangulated categories or model categories, we clarify that we will always use the $\infty$-categorical incarnation of $\Dmot{S}$.
  See e.g. \cite{khansix} for the construction of the six operations at the $\infty$-categorical level.
}
This is also known as Ayoub's category $\DA{S}$ of étale motives with rational coefficients (see \cite{Ay14} for an introduction).
This category has been defined and studied in various other guises, which are described and compared in \cite{CD19}:
\begin{itemize}
\item (Beilinson motives) By \cite[Theorem 16.2.13]{CD19}, $\Dmot{S}$ is equivalent to the category of \emph{Beilinson motives} over $S$ in the sense of \cite[\S 14]{CD19}.
In particular, if $S$ is noetherian and finite-dimensional, then by \cite[Theorem 5.2.2]{CD16} $\Dmot{S}$ is equivalent to the category $\DM_h(S; \Q)$ of \emph{$h$-motives} (with rational coefficients).
\item (Morel motives) By \cite[Theorem 16.2.18]{CD19}, $\Dmot{S}$ is equivalent to the category of \emph{Morel motives} over $S$ in the sense of \cite[\S 16.2]{CD19}.
\item (Voevodsky motives) If $S$ is excellent and geometrically unibranch, then by \cite[Theorem 16.1.4]{CD19} $\Dmot{S}$ is equivalent to the category $\DM(S; \Q)$ of \emph{Voevodsky motives} over $S$ (with rational coefficients).
\item ($H\bQ$-linear motivic spectra) For a commutative ring $\Lambda$, let $H\Lambda_S \in \SH(S)$ denote the $\Lambda$-linear motivic Eilenberg--MacLane spectrum as defined in \cite{zbMATH07015021}.
For $\Lambda=\bQ$, $H\bQ_S$ is isomorphic to the Beilinson motivic cohomology spectrum of \cite[Definition~14.1.2]{CD19} by \cite[Theorem~7.14]{zbMATH07015021}.
In particular, by \cite[Theorem~14.2.9]{CD19}, the \inftyCat $\on{D}_{H\Lambda}(S)$ of modules over $H\Lambda_S$ is equivalent to the \inftyCat of Beilinson motives over $S$, and hence to $\Dmot{S}$.
\end{itemize}

The generalization of $\Dmot{-}$ to derived algebraic spaces and derived Artin stacks is developed in \cite[Appendix A]{KhanI}. To explicate this, we remark that $\SH(S)$ and hence $\Dmot{S}$ is invariant under passing to the classical truncation $S_\cl$ by \cite{khanlocalization}. Then $\Dmot{-}$ is extended from derived schemes to derived Artin stacks by right Kan extension. Explicitly, this means that if $S$ is a derived Artin stack then
\[
\Dmot{S} = \limit \Dmot{T}
\]
where the limit is over the category of smooth morphisms $T \rightarrow S$ from derived schemes $T$.
If $T \twoheadrightarrow S$ is a smooth \emph{atlas} from a derived scheme, then $\Dmot{S}$ agrees with the category of Cartesian sheaves on the simplicial derived scheme $T_{\bu} = \{ T \times_S \ldots \times_S T\}$.
The six-functor formalism also extends to derived Artin stacks by \cite[Thm.~A.5]{KhanI}.

\subsection{Notations for motives}\label{ssec: notation for motives}
For a derived Artin stack $A$, we denote by $\Qsh{A}$ (or just $\Q$ if the context is clear) the unit of the symmetric monoidal category $\Dmot{A}$. 

The category $\Dmot{-}$ contains a ``Tate motive'' $\Q(1)$. For $\cK \in \Dmot{A}$, we write $\cK \tw{i} \coloneqq \cK[2i](i)$ for the indicated shift and Tate twist. 

For $\cK, \cK' \in \Dmot{A}$, we abbreviate 
\[
\Hom_A(\cK, \cK') \coloneqq \Hom_{\Dmot{A}}(\cK, \cK'). 
\]

For a map $f \co A \rightarrow S$ of derived Artin stacks, we denote by $\DD_{A/S}(-)$ the relative Verdier dual functor, 
\[
\DD_{A/S}(\cK) \coloneqq \cRHom_A(\cK, f^! \Qsh{S}). 
\]
We also abbreviate $\DD_{A/S} \coloneqq \DD_{A/S}(\Qsh{A})$ for the relative dualizing complex of $f$. 

\subsection{Geometric motives}\label{ssec: geometric motives} Given a smooth map of derived Artin stacks $f \co T \rightarrow S$, there is a functor
\[
f_{\sh} \co \Dmot{T} \rightarrow \Dmot{S}
\]
which is \emph{left} adjoint to the pullback $f^* \co \Dmot{S} \rightarrow \Dmot{T}$. If $S$ is a derived scheme, the subcategory $\Dmotg{S} \subset \Dmot{S}$ of \emph{geometric motives} is the thick subcategory generated by $f_{\sh} \Qsh{T}\tw{i}$ as $f  \co T \rightarrow S$ ranges over smooth morphisms of derived schemes and $i$ ranges over all integers.

If $S$ is a derived stack, then we say that a motive $\cK \in \Dmot{S}$ is \emph{geometric} if it is geometric after pullback to some (equivalently, any) atlas $S' \surj S$ where $S'$ is a derived scheme. We denote by $\Dmotg{S} \subset \Dmot{S}$ the full subcategory of geometric motives. 

\begin{example}
For any derived Artin stack $A$, the unit $\Qsh{A} \in \Dmotg{A}$ is geometric. 
\end{example}

\begin{remark}[Preservation under six functors]\label{rem: preservation of constructibility}
For $f \co S' \rightarrow S$ a map of derived schemes of finite type over a quasi-excellent scheme, the property of being geometric is preserved by the functors $f_!, f_*, f^*, f^!$ (see \cite[Theorem~15.2.1]{CD19}). It then follows that for a map $f \co A' \rightarrow A$ of derived Artin stacks locally of finite type over a quasi-excellent scheme, geometricity is  preserved by the functors $f^*$ and $f^!$; and geometricity is preserved by the functors $f_!$ and $f_*$ \emph{if} $f$ is representable in derived schemes. 

Finally, we note that $\cRHom(-,-)$ and $- \otimes -$ preserve geometric motives on schemes, and are compatible with smooth base change, hence they preserve geometric motives on derived Artin stacks. 
\end{remark}

\subsection{The effective homotopy t-structure}

For a derived scheme $S$, let $\Dmot{S}^{\le 0} \subseteq \Dmot{S}$ denote the full subcategory generated under colimits and extensions by objects of the form $a_!a^!(\Q_S)$, for $a \co X \to S$ a smooth morphism from a scheme.
This forms the connective part of the \emph{effective homotopy t-structure} on $\Dmot{S}$ (see \cite[Sect.~13, App.~B]{BachmannHoyois}).
The coconnective part $\Dmot{S}^{\ge 0}$ is thus spanned by those $\cK \in \Dmot{S}$ for which the groups
\begin{equation*}
  \rH^{-n}(X; \cK) \cong \Hom_{\Dmot{S}}(\Q_S[n],a_*a^*(\cK)) \cong \Hom_{\Dmot{S}}(a_!a^!(\Q_S)[n],\cK)
\end{equation*}
vanish for all $n>0$ and all smooth morphisms $a \co X \to S$ with $X$ a scheme.

For a derived Artin stack $S$, we say that an object $\cK \in \Dmot{S}$ belongs to $\Dmot{S}^{\le 0}$, resp. $\Dmot{S}^{\ge 0}$, if $u^*(\cK)$ belongs to $\Dmot{U}^{\le 0}$, resp. $\Dmot{U}^{\ge 0}$ for some smooth atlas $u \co U \twoheadrightarrow S$.
The proof of \cite[Prop.~5.3]{equilisse} applies verbatim to show that this defines a t-structure on $\Dmot{S}$.

\begin{lem}\label{lem:Qplus heart}
  For every derived Artin stack $S$ locally of finite type over a field $k$, the unit $\Q_S \in \Dmot{S}$ belongs to the heart of the effective homotopy t-structure.
\end{lem}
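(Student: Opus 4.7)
The plan is to verify both halves of the heart condition directly, after reducing to the case where $S$ is a classical scheme. By the definition of the effective homotopy t-structure on derived Artin stacks and the invariance of $\Dmot{-}$ under classical truncation, it suffices to treat a smooth atlas $U \twoheadrightarrow S$ and then pass to $U_\cl$, so we may assume $S$ is a classical scheme locally of finite type over $k$. The connective condition $\Q_S \in \Dmot{S}^{\le 0}$ is then immediate, since the generating object $a_! a^! \Q_S$ with $a = \id_S$ is just $\Q_S$ itself.

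For the coconnective condition $\Q_S \in \Dmot{S}^{\ge 0}$, using the $(a_!, a^!)$-adjunction and the smooth purity isomorphism $a^! \Q_S \cong \Q_X \tw{d(a)}$ for $a \co X \to S$ smooth from a scheme, the groups to control are
\[
\Hom_{\Dmot{S}}(a_! a^! \Q_S [n], \Q_S) \;=\; \Hom_{\Dmot{X}}(\Q_X, \Q_X[-n]) \;=\; H^{-n}_\cM(X; \Q(0)),
\]
for $n > 0$. Thus the problem reduces to showing that, for every scheme $X$ locally of finite type over $k$, the rational motivic cohomology $H^m_\cM(X; \Q(0))$ vanishes for $m < 0$. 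When $X$ is smooth over $k$, Bloch's identification with higher Chow groups gives $H^m_\cM(X; \Q(0)) \cong \CH^0(X, -m)_\Q$, which is zero for $-m > 0$ by direct computation in Bloch's cycle complex in codimension zero (the differentials there are alternating identities on copies of $\Q$ for each connected component). For general, possibly singular $X$, I would appeal to cdh descent for rational Beilinson motivic cohomology (as in Cisinski--D\'eglise) combined with de Jong's alterations, or Hironaka's resolution of singularities in characteristic zero, to construct a cdh hypercover $X_\bullet \to X$ by smooth $k$-schemes. The descent spectral sequence $E_1^{p,q} = H^q_\cM(X_p; \Q(0)) \Rightarrow H^{p+q}_\cM(X; \Q(0))$ has no terms in negative $q$ by the smooth case, forcing the required vanishing.

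The main obstacle is this final step: all preceding reductions are formal from the six-functor formalism and smooth purity, but controlling negative rational motivic cohomology for singular schemes genuinely uses the structure of the rational motivic category (cdh descent) combined with resolution- or alteration-type results over the base field $k$. I would expect that over a more general base, even the characterization via atlas may not suffice and one would have to work harder; here it is the hypothesis of being locally of finite type over a field that makes the alteration/resolution step available.
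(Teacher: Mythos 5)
The reductions and the verification of $\Q_S\in \Dmot{S}^{\le 0}$ agree with the paper, and the identification of the negative rational motivic cohomology as the obstruction is also the same. Where your approach diverges is in how the coconnectivity is established. The paper reduces (via de Jong--Gabber and $\ell$dh descent) to $Y$ \emph{regular} and then uses the fact that $\Dmot{-}$ is literally built from K-theory: it writes $\rH^{-n}(Y;\Q)\cong\Gr^0_\gamma\bigl(\on{K}_n(Y)_\Q\bigr)$ and observes that $\on{Fil}^1_\gamma\on{K}_n(Y)_\Q = \on{K}_n(Y)_\Q$ for $n>0$ by the definition of the augmented $\lambda$-structure. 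You instead reduce to $X$ \emph{smooth over $k$} and invoke Bloch's comparison $\rH^{-n}(X;\Q(0))\cong\CH^0(X,n)_\Q$, which vanishes for $n>0$ by the elementary computation in the codimension-$0$ cycle complex (your computation of the complex as $\cdots\xr{0}A\xr{\id}A\xr{0}A$ is correct). Both routes are valid in spirit; the paper's is more intrinsic to the Beilinson/$H\Q$ presentation of the category and doesn't need Voevodsky's comparison with higher Chow groups, whereas yours trades that dependency for a very transparent codimension-$0$ cycle computation.

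There is, however, a concrete gap in your reduction step, in positive characteristic. De Jong's alterations produce a proper surjective generically \emph{finite} (degree $>1$) morphism from a smooth scheme, which is an $h$-cover or an $\ell$dh-cover but not a cdh-cover (cdh is generated by Nisnevich covers and abstract blow-ups, i.e.\ proper morphisms that are \emph{birational}). So ``cdh descent combined with de Jong's alterations'' does not give you a cdh hypercover by smooth $k$-schemes, and your argument breaks down unless $\on{char}(k)=0$ and you use Hironaka. The fix is to use the stronger $h$-descent (also established by Cisinski--D\'eglise for $\Q$-linear motives) or $\ell$dh descent (Geisser), as the paper does. A second, smaller issue is that you invoke the descent spectral sequence of an unbounded hypercover, which raises a convergence question; the paper sidesteps this by observing that the target property (``cohomology concentrated in degrees $\ge 0$'', i.e.\ coconnectivity of the $R\Gamma$-spectrum) is closed under limits, so it passes along the totalization of the \v{C}ech complex without any spectral sequence. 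Once you replace cdh by $h$ (or $\ell$dh) and use the limit argument in place of the spectral sequence, your Bloch-theoretic version of the final step goes through.
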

\begin{proof}
  We may assume that $S$ is a scheme.
  We have $\Q_S \in \Dmot{S}^{\le 0}$ by definition, so it remains to show that for every scheme $Y$ which is smooth over $S$, the spectrum
  \begin{equation*}
    R\Gamma(Y, \Q_Y)
  \end{equation*}
  is connective, i.e., $\rH^{-n}(Y; \Q_Y) \cong 0$ for $n>0$.
  For any prime $\ell \ne \on{char}(k)$, there exists by de Jong--Gabber an $\ell$dh-hypercover $Y'_\bullet \to Y$ where each $Y'_n$ is a regular scheme.
  Since motivic cohomology with rational coefficients satisfies $\ell$dh descent by \cite[Thm.~1.2]{Geisser}, and connectivity is stable under limits, we may assume that $Y$ is regular.
  In this case we have
  \begin{equation*}
    \rH^{-n}(Y; \Q)
    \cong \rH^{-n}(Y; \mathrm{KGL}_\Q^{(0)})
    \cong \Gr^0_\gamma (\on{K}_n(Y)_\Q)
  \end{equation*}
  where $\on{K}_n(Y)_\Q \coloneqq \pi_n(\on{K}(Y)) \otimes \Q$ and $\on{K}(Y)$ is the algebraic K-theory spectrum of $Y$ (see \cite[\S 14.1]{CD19}).
  But $\on{Fil}^1_\gamma \on{K}_n(Y)_\Q = \on{K}_n(Y)_\Q$ holds for $n>0$ by definition of the augmented $\lambda$-ring structure on $\on{K}_n(Y)_\Q$ (see e.g. \cite[IV, \S 5, p.~345]{WeibelK}).
\end{proof}

\subsection{Chow groups as motivic Borel-Moore homology}\label{ssec: Chow}
Let $A$ be a derived Artin stack locally of finite type over a field $\F$.

We define the \emph{Chow groups} of $A$ (with rational coefficients) by
\[
\CH_{i}(A) \coloneqq \rH^{-2i}(A; \pi^! \Qsh{\Spec(F)} (-i) )
\cong \rH^0(A; \pi^!\Qsh{\Spec(F)}\vb{-i}), \quad \text{ for } i \in \bZ
\]
where $\pi \co A \rightarrow \Spec (\F)$ is the structural morphism.
This definition agrees with the rationalization of the classical definition of Chow groups \emph{under assumptions} that $A$ is ``reasonable''.\footnote{Our point of view is that when $A$ is unreasonable, then our definition of $\CH_i(A)$ is the ``correct'' one, being well-behaved from various technical perspectives.} More precisely, according to \cite[Example 2.10]{KhanI}, when $A$ is a classical $1$-Artin stack of finite type over $k$ with affine stabilizers, this recovers the Chow group (with $\Q$-coefficients) of Kresch \cite{Kr99}. If $A$ is a derived Artin stack, then by the derived invariance of $\Dmot{A}$, the inclusion of the classical truncation $A_{\cl} \inj A$ induces isomorphisms $\CH_i(A_{\cl}) \cong \CH_{i}(A)$. 

We define the \emph{Chow cohomology groups} of $A$ (with rational coefficients) by
\[
\CH^i(A) \coloneqq \rH^{2i}(A; \Qsh{A}(i)) \cong \rH^0(A; \Qsh{A}\vb{i}), \quad \text{ for } i \in \bZ.
\]
We caution that these map to, but are typically \emph{not} the same as, Fulton's operational Chow cohomology groups \cite[\S 17]{Ful98}, even for classical quasi-projective schemes (unless $A$ is smooth).

More generally, for a locally of finite type morphism $f \co A \rightarrow B$ of derived Artin stacks over a field $\F$, we define the \emph{relative Chow groups of $f$} to be
\[
\CH_{i}(A/B) \coloneqq \rH^{-2i}(A; f^! \Qsh{B}(-i))
\cong \rH^0(A; f^!\Qsh{B}\vb{-i}) , \quad \text{for } i \in \bZ.
\]
We have $\CH_{i}(A/A) \simeq \CH^{-i}(A)$ for all $i$.
Again, these are a refinement of the operational or bivariant Chow groups of \cite[\S 17]{Ful98}.

\subsection{Functoriality of Chow groups}
\label{ssec:Gys}

\sssec{Proper pushforward}

The Chow groups are covariantly functorial with respect to proper morphisms.
That is, if $f \co A \to B$ is a proper morphism of derived Artin stacks, then we have pushforward maps
\begin{equation}
f_* \co \CH_i(A) \to \CH_i(B)
\end{equation}
and more generally $f_* \co \CH_i(A/C) \to \CH_i(B/C)$ if $f$ is defined over some $C$.
In terms of the six functors, these are induced by the natural transformation $f_* f^! \to \id$, counit of the adjunction $(f_*, f^!)$.

\sssec{Gysin pullback}

Let $f \co A \rightarrow B$ be a quasi-smooth map of derived Artin stacks, of relative dimension $d(f)$.
There are (virtual) Gysin pullback maps
\begin{equation}\label{eq: Gysin pullback}
f^! \co \CH_i(B) \to \CH_{i+d(f)}(A),
\end{equation}
and more generally $f^! : \CH_i(B/C) \to \CH_{i+d(f)}(A/C)$ if $B$ is defined over $C$.
These are functorial and satisfy a base change formula with respect to proper pushforwards.

The maps \eqref{eq: Gysin pullback} are induced by a natural transformation
\begin{equation}\label{eq: gys}
\mrm{gys}_f \co f^* \rightarrow f^! \tw{-d(f)}
\end{equation}
called the Gysin transformation, constructed in \cite[\S 3]{KhanI}.
It satisfies various natural compatibilities detailed in \cite[\S 3.2]{KhanI} or \cite[\S 3.4]{FYZ3}.
For example, when $f$ is smooth, the Gysin transformation recovers the Poincaré duality isomorphism $f^* \cong f^! \tw{-d(f)}$.

In particular, one has a relative (virtual) fundamental class
\begin{equation}
[A/B] \coloneqq [f] \in \CH_{d(f)}(A/B)
\end{equation}
defined as the Gysin pullback of the unit in $\CH_0(B/B) \simeq \CH^0(B)$.
Equivalently, it is determined by the morphism
\begin{equation}\label{eq: Gysin}
\Qsh{A} \cong f^* \Qsh{B} \xrightarrow{[f]} f^! \Qsh{B} \tw{-d(f)}
\end{equation}
obtained by evaluating the Gysin transformation \eqref{eq: gys} on $\Qsh{B}$.

\subsection{USLA motives} Let $S$ be a derived Artin stack locally of finite type over a field. 

\begin{defn}\label{def: USLA}
Let $f \co A \rightarrow S$ be a map of derived Artin stacks. Following \cite[Definition 3.1.1]{Jin23}, we say that $\cK_A \in \Dmot{A}$ is \emph{strongly locally acyclic} (SLA) over $S$ if for any schematic map of derived Artin stacks $g \co T \rightarrow S$, inducing the Cartesian square
\begin{equation}\label{eq:prochronic}
\begin{tikzcd}
B \ar[r, "g'"] \ar[d, "f'"] &  A \ar[d, "f"] \\
T \ar[r, "g"] & S
\end{tikzcd}
\end{equation}
and any $\cK_T \in \Dmot{T}$, the canonical map
\begin{equation}\label{eq: USLA}
\cK_A \otimes f^* g_* \cK_T \rightarrow g'_* ((g')^* \cK_A \otimes (f')^* \cK_T)
\end{equation}
is an isomorphism. 

We say that $\cK_A \in \Dmot{A}$ is \emph{universally strongly locally acyclic} (USLA) over $S$ if for any morphism $S' \rightarrow S$, the $*$-pullback of $\cK_A$ to $S' \times_S A'$ is SLA over $S'$. 
\end{defn}



The property of being (U)SLA can be checked locally in the smooth topology on the source and target.

\begin{lemma}\label{lem: USLA smooth local}
Maintain the notation of Definition \ref{def: USLA}. 

(1) Let $h \co A' \rightarrow A$ be a smooth, surjective morphism of derived Artin stacks. Then $\cK_A$ is (U)SLA over $S$ if and only if $h^* \cK_A$ is (U)SLA over $S$.  

(2) Let $h \co S' \rightarrow S$ be a smooth, surjective morphism of derived Artin stacks. Let $h_A$ be the base change of $h$ to $A$. Then $\cK_A$ is (U)SLA over $S$ if and only if $h^*_A \cK_A$ is (U)SLA over $S'	$.
\end{lemma}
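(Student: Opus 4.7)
The plan is to reduce both equivalences to two standard inputs: smooth base change in $\Dmot{-}$ and the conservativity of smooth surjective pullback. Via smooth base change, pulling back the (SLA) comparison map \eqref{eq: USLA} along a smooth surjection should be identified with another instance of the same type of map; conservativity will then bridge the two.

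For part (1), I would consider a schematic test datum $(g \co T \to S, \cK_T \in \Dmot{T})$ and pull the Cartesian square \eqref{eq:prochronic} back along $h \co A' \to A$, obtaining a Cartesian square with corner $B' \coloneqq B \times_A A'$ and smooth surjective projection $h_B \co B' \to B$. Smooth base change gives $h^* g'_* \simeq g''_* h_B^*$ for the schematic $g'$; combining with $(f \circ h)^* = h^* f^*$, $(g'')^* h^* = h_B^* (g')^*$, and the projection formula should identify the (SLA) comparison map for $h^* \cK_A$ (tested with $(g, \cK_T)$) with $h^*$ applied to the (SLA) comparison map for $\cK_A$ (with the same test). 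Conservativity of $h^*$ then yields the SLA equivalence, and the USLA version follows by running the same argument after each further base change $S'' \to S$, since $A' \times_S S'' \to A \times_S S''$ remains smooth surjective.

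For part (2), the direction from USLA over $S$ to USLA over $S'$ should be immediate from the definition of USLA applied to the morphism $h \co S' \to S$, together with stability of USLA under further base change $S'' \to S'$ (which composes to a base change $S'' \to S$). For the converse, given a schematic $(g \co T \to S, \cK_T)$ with $T$ a derived scheme, I would form $T' \coloneqq T \times_S S'$ with $g_{S'} \co T' \to S'$ schematic and $h_T \co T' \to T$ smooth surjective, and then use two applications of smooth base change (along $h$ and along $h_A$) to identify $h_A^*$ of the (SLA) map for $\cK_A$ (tested with $(g, \cK_T)$) with the (SLA) map for $h_A^* \cK_A$ (tested with $(g_{S'}, h_T^* \cK_T)$). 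If $h_A^* \cK_A$ is SLA over $S'$, the latter is an isomorphism, so conservativity of $h_A^*$ gives SLA of $\cK_A$ over $S$. The USLA equivalence of (2) would then follow by iterating this SLA argument over all base changes $S'' \to S$, using the fact that $S' \times_S S'' \to S''$ is again smooth surjective.

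The main obstacle is essentially bookkeeping: tracking which test maps remain schematic under base change (they do, since base changes of schematic maps are schematic) and correctly matching the compositional structure of the pulled-back (SLA) map with the target (SLA) map through smooth base change and the projection formula. The argument is expected to parallel the scheme case treated by Jin \cite{Jin23}, with no essential new difficulty introduced by the stack-theoretic setting provided one uses smooth base change for motives on derived Artin stacks, which is available in the framework recalled in \S \ref{ssec: dmot}.
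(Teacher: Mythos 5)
Your approach matches the paper's: for part (1) the paper applies $h^*$ to the SLA comparison map, identifies the result with the SLA comparison map for $h^*\cK_A$ via smooth base change (plus monoidality of $h^*$ — no projection formula is actually needed), and concludes by conservativity of pullback along a smooth surjection; for part (2) the paper just says "the argument is similar," and your sketch fills that in the same way. The only slight imprecision is that the forward SLA-only direction of part (2) (as opposed to USLA) isn't really addressed, but the paper's own proof is equally silent on this point, so your proposal is faithful to it.
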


\begin{proof}
(1) Since $h$ is surjective, \eqref{eq: USLA} is an isomorphism if and only if 
\begin{equation}\label{eq: USLA smooth local 1}
h^* (\cK_A \otimes f^* g_* \cK_T ) \rightarrow h^* g'_* ((g')^* \cK_A \otimes (f')^* \cK_T)
\end{equation}
is an isomorphism. Given a schematic map $g \co T \rightarrow S$ of derived Artin stacks, we have a commutative diagram 
\begin{equation}\label{eq: composition USLA}
\begin{tikzcd}
B' \ar[r, "g''"] \ar[d, "h'"] & A' \ar[d, "h"]  \\
B \ar[r, "g'"] \ar[d, "f'"] &  A \ar[d, "f"] \\
T \ar[r, "g"] & S
\end{tikzcd}
\end{equation}
where all squares are derived Cartesian and $h,h'$ are smooth. Using smooth base change, this induces a commutative diagram 
\[
\begin{tikzcd}
h^* (\cK_A \otimes f^* g_* \cK_T ) \ar[r] \ar[d, "\sim"]  &  h^* g'_* ((g')^* \cK_A \otimes (f')^* \cK_T) \ar[d, "\sim"]  \\
h^* \cK_A \otimes h^* f^* g_* \cK_T \ar[r] \ar[u]   \ar[d, "\sim"]  & g''_* (h')^* ((g')^* \cK_A \otimes (f')^* \cK_T)   \ar[u] \ar[d, "\sim"] \\
(h^* \cK_A) \otimes (f \circ h)^* g_* \cK_T \ar[u]  \ar[r]   & g''_* (( g'')^* (h^* \cK_A) \otimes (f' \circ h')^* \cK_T) 
\end{tikzcd}
\]
and comparing the top and bottom rows shows that \eqref{eq: USLA smooth local 1} is equivalent to $h^* \cK_A$ being SLA over $S$. Running the same argument over all base changes shows that $\cK_A$ is USLA over $S$ if and only if $h^* \cK_A$ is USLA over $S$.

(2) The argument is similar. 
\end{proof}

\begin{example}\label{ex: USLA over point} If $S$ is a point (by which we mean the spectrum of a field) then every object of $\Dmot{A}$ is USLA over $S$. Indeed, if $A$ is a derived scheme this follows from \cite[2.1.14]{JY21}, and the general case follows by induction and cohomological descent.
\end{example}

The (U)SLA property is preserved by direct image along proper morphisms. 

\begin{lemma}\label{lem: proper pushforward preserves ULA}
Let $h \co A' \rightarrow A$ be a proper morphism of derived Artin stacks over $S$. Let $\cK'_A \in \Dmot{A'}$ be (U)SLA over $S$. Then $h_! \cK_A' \in \Dmot{A}$ is (U)SLA over $S$. 
\end{lemma}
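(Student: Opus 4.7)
The plan is to reduce the (U)SLA property for $h_!\cK'_A$ to the (U)SLA property for $\cK'_A$ by combining the projection formula with proper base change. Since $h$ is proper we have $h_!\cong h_*$ throughout, and any base change of $h$ is still proper.

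First I would set up the geometry. Given a schematic morphism $g\co T\to S$, form the Cartesian square \eqref{eq:prochronic} and enlarge it by adjoining $B'\coloneqq T\times_S A'\cong B\times_A A'$, with resulting maps $g''\co B'\to A'$, $h_T\co B'\to B$, $f''\coloneqq f'\c h_T\co B'\to T$. Note that $g''$ is schematic, being a base change of $g$, and the outer square with top row $B'\to A'$ and vertical maps $f'',\,f\c h$ is Cartesian. The goal is to show that the canonical map
\[
(h_*\cK'_A)\otimes f^*g_*\cK_T\;\longrightarrow\;g'_*\bigl((g')^*(h_*\cK'_A)\otimes (f')^*\cK_T\bigr)
\]
is an isomorphism for arbitrary $\cK_T\in\Dmot{T}$.

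Next I would manipulate the left-hand side. Applying the projection formula for the proper morphism $h$ rewrites it as $h_*(\cK'_A\otimes (f\c h)^*g_*\cK_T)$. Since $\cK'_A$ is SLA over $S$ relative to $f\c h\co A'\to S$, this identifies canonically with $h_*g''_*((g'')^*\cK'_A\otimes (f'')^*\cK_T)$, which is $g'_*(h_T)_*((g'')^*\cK'_A\otimes (f'')^*\cK_T)$ by functoriality of pushforward. For the right-hand side, proper base change gives $(g')^*h_*\cK'_A\cong (h_T)_*(g'')^*\cK'_A$, and then the projection formula for $h_T$ (proper, as a base change of $h$) turns $g'_*\bigl((h_T)_*(g'')^*\cK'_A\otimes (f')^*\cK_T\bigr)$ into $g'_*(h_T)_*\bigl((g'')^*\cK'_A\otimes (h_T)^*(f')^*\cK_T\bigr)=g'_*(h_T)_*((g'')^*\cK'_A\otimes (f'')^*\cK_T)$. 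Thus both sides are canonically identified, and the check that this identification is induced by the SLA comparison map is a naturality chase in the six-functor formalism.

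Finally, for the USLA statement, the same argument applies after arbitrary base change $S'\to S$: the base change $h_{S'}\co A'_{S'}\to A_{S'}$ remains proper, the $*$-pullback of $\cK'_A$ to $A'_{S'}$ is SLA over $S'$ by hypothesis, and proper base change identifies the $*$-pullback of $h_!\cK'_A$ to $A_{S'}$ with $(h_{S'})_!$ applied to this pullback. Applying the SLA case just established over $S'$ concludes the proof.

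The main obstacle is bookkeeping: one must check that the identification of the two sides produced by chaining the projection formula, SLA for $\cK'_A$, proper base change, and a second projection formula really does realize the canonical SLA comparison morphism for $h_*\cK'_A$. This is formal once one notes that each of the four isomorphisms used is the canonical one from the six-functor formalism on $\Dmot{-}$, which on derived Artin stacks is available through the extension of \cite[Thm.~A.5]{KhanI}.
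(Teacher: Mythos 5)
Your proposal is correct and follows essentially the same route as the paper's proof: reduce USLA to SLA via proper base change, then chain the projection formula for $h$, the SLA property of $\cK'_A$, the push-push compatibility $h_!\,g''_*\cong g'_*\,h'_!$ (valid since $h$, $h'$ are proper), the projection formula for $h'$, and proper base change. The only cosmetic difference is that you identify both sides with a common object $g'_*(h_T)_*((g'')^*\cK'_A\otimes(f'')^*\cK_T)$, while the paper organizes the same isomorphisms into a commutative diagram with the SLA comparison as the bottom edge.
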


\begin{proof}
By proper base change, it suffices to show that $f_! \cK_A'$ is SLA over $S$ if $\cK_A'$ is SLA over $S$. Consider the commutative diagram \eqref{eq: composition USLA}
where all squares are derived Cartesian and $h,h'$ are proper. We want to show that the map 
\[
h_! 	\cK_A' \otimes  f^* g_* \cK_T \rightarrow  g'_*  ((g')^* h_! \cK_A' \otimes (f')^* \cK_T)
\]
is an isomorphism. We have a commutative diagram 
\[
\begin{tikzcd}
h_! 	\cK_A' \otimes  f^* g_* \cK_T  \ar[r]  &  g'_*  ((g')^* h_! \cK_A' \otimes (f')^* \cK_T) \ar[d, "\sim", "\text{proper base change}"']  \\
 &  g'_* (h'_! (g'')^* \cK_A' \otimes (f')^* \cK_T)   \ar[u]  \\
  &  g'_* h'_! ((g'')^*  \cK_A' \otimes (h')^* (f')^*  \cK_T)  \ar[d, equals, "\text{$h$ proper } \implies"'] \ar[u, "\sim"', "\text{projection formula}"] \\
 h_! (\cK_A' \otimes h^* f^* g_* \cK_T) \ar[r]  \ar[uuu, "\sim"', "\text{projection formula}"] & h_! g''_* ((g'')^*  \cK_A' \otimes (h')^* (f')^*  \cK_T ) 
\end{tikzcd}
\]
The bottom horizontal map is an isomorphism by definition of $\cK_A'$ being SLA over $S$, hence so is the top horizontal map. 
\end{proof}

\subsection{Relative K\"{u}nneth formulae} Let $S$ be a derived Artin stack.

\begin{notat}Let $A_0, A_1$ be derived stacks over $S$ and $\cK_0 \in \Dmot{A_0}$, $\cK_1 \in \Dmot{A_1}$. We write 
\[
\cK_0 \boxtimes_S \cK_1 \coloneqq \pr_0^* \cK_0 \otimes \pr_1^* \cK_1 \in \Dmot{A_0 \times_S A_1}. 
\]
\end{notat}

\begin{lemma}\label{lem: kunneth}
Let $f_0 \co A_0 \rightarrow B_0$ and $f_1 \co A_1 \rightarrow B_1$ be locally finite type morphisms of derived Artin stacks over $S$.
Then the commutative diagram
\[
\begin{tikzcd}
A_0 \ar[d, "f_0"] & A_0 \times_{S} A_1 \ar[l] \ar[d, "f_0 \times_S f_1"] \ar[r] & A_1 \ar[d, "f_1"] \\
B_0 & \ar[l] B_0 \times_{S} B_1 \ar[r] & B_1
\end{tikzcd}
\]
induces an isomorphism
\begin{equation}\label{eq: ! push kunneth}
f_{0!} \cK_0 \boxtimes_S f_{1!} \cK_1 \xrightarrow{\sim} (f_0 \times_S f_1)_! (\cK_0 \boxtimes_S \cK_1),
\end{equation}
natural in $\cK_0 \in \Dmot{A_0}$ and $\cK_1 \in \Dmot{A_1}$.
\end{lemma}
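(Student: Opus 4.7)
The plan is to reduce the two-variable Künneth formula to two successive applications of base change and the projection formula for $f_!$, both of which are part of the standard six-functor formalism for motivic sheaves on derived Artin stacks (as extended in \cite[Thm.~A.5]{KhanI}).

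First, I would factor the morphism $f_0 \times_S f_1$ as
\[
A_0 \times_S A_1 \xrightarrow{\,f_0 \times_S \id_{A_1}\,} B_0 \times_S A_1 \xrightarrow{\,\id_{B_0} \times_S f_1\,} B_0 \times_S B_1,
\]
so that $(f_0 \times_S f_1)_! \cong (\id \times_S f_1)_! \circ (f_0 \times_S \id)_!$ by functoriality of $(-)_!$. Each of these two maps sits in a derived Cartesian square of the form
\[
\begin{tikzcd}
A_0 \times_S A_1 \ar[r, "\pr_{A_0}"] \ar[d, "f_0 \times \id"'] & A_0 \ar[d, "f_0"] \\
B_0 \times_S A_1 \ar[r, "\pr_{B_0}"'] & B_0
\end{tikzcd}
\qquad\text{and}\qquad
\begin{tikzcd}
B_0 \times_S A_1 \ar[r, "\pr_{A_1}"] \ar[d, "\id \times f_1"'] & A_1 \ar[d, "f_1"] \\
B_0 \times_S B_1 \ar[r, "\pr_{B_1}"'] & B_1
\end{tikzcd}
\]
obtained as base changes of $f_0$ and $f_1$ respectively.

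Next, I would compute $(f_0 \times_S \id)_!(\cK_0 \boxtimes_S \cK_1)$. Writing $\cK_0 \boxtimes_S \cK_1 = \pr_{A_0}^* \cK_0 \otimes \pr_{A_1}^* \cK_1$ and using that $\pr_{A_1} = \pr'_{A_1} \circ (f_0 \times \id)$ (where $\pr'_{A_1} \co B_0 \times_S A_1 \to A_1$), we get $\pr_{A_1}^* \cK_1 \cong (f_0 \times \id)^* (\pr'_{A_1})^* \cK_1$. The projection formula then gives
\[
(f_0 \times \id)_!\bigl(\pr_{A_0}^* \cK_0 \otimes (f_0 \times \id)^*(\pr'_{A_1})^* \cK_1\bigr) \;\cong\; (f_0 \times \id)_! \pr_{A_0}^* \cK_0 \;\otimes\; (\pr'_{A_1})^* \cK_1,
\]
and base change along the first Cartesian square identifies $(f_0 \times \id)_! \pr_{A_0}^* \cK_0 \cong \pr_{B_0}^* f_{0!} \cK_0$. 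Assembling these isomorphisms produces
\[
(f_0 \times_S \id)_!(\cK_0 \boxtimes_S \cK_1) \;\cong\; f_{0!}\cK_0 \boxtimes_S \cK_1 \in \Dmot{B_0 \times_S A_1}.
\]
Applying the same argument to the second Cartesian square yields $(\id \times_S f_1)_!(f_{0!}\cK_0 \boxtimes_S \cK_1) \cong f_{0!}\cK_0 \boxtimes_S f_{1!}\cK_1$, and composing produces the desired isomorphism \eqref{eq: ! push kunneth}.

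I do not expect any real obstacles: the entire argument is a formal manipulation with base change and projection formula, and naturality in $\cK_0$ and $\cK_1$ follows because each step is natural. The only point to check is that the base change and projection formula isomorphisms we invoke are valid for $f_!$ of locally finitely presented morphisms between derived Artin stacks, which is part of the six-functor formalism developed in \cite[Appendix A]{KhanI}; in particular, the fibre products involved are automatically interpreted in the derived sense, as stipulated in our conventions.
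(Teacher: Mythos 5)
Your argument is correct and is essentially the standard proof: factor $f_0 \times_S f_1$ through $B_0 \times_S A_1$, then apply base change for $(-)_!$ along the Cartesian squares together with the projection formula. The paper proves this lemma by citing that the proof of \cite[Lemma~2.2.3]{JY21} applies verbatim, and that proof proceeds by exactly the same factorization-plus-projection-formula argument, so your proposal recovers the intended proof in expanded form. One point worth making explicit: the isomorphism you construct goes in the opposite direction to the one displayed in the statement, but since it is invertible this is immaterial; and each step (base change, projection formula, the identification $\pr_{A_1} = \pr'_{A_1}\circ(f_0\times\id)$) is natural in $\cK_0$ and $\cK_1$, which gives the required naturality.
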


\begin{proof}
The proof of \cite[Lemma 2.2.3]{JY21} works verbatim. 
\end{proof}

Suppose we have a commutative diagram of derived Artin stacks
\begin{equation}\label{eq: relative kunneth diagram}
\begin{tikzcd}
T \times_S A \ar[r] \ar[d]  & S' \times_S A \ar[d]   \ar[r] & A \ar[d] \\
T \ar[r, "f"]  &  S'  \ar[r] & S
\end{tikzcd}
\end{equation}
There is a natural map 
\begin{equation}\label{eq: relative projection *}
f_* \cK_T \boxtimes_S \cK_A  \rightarrow  (f \times_S \Id_A)_* (\cK_T \boxtimes_S  \cK_A)  \in \Dmot{S' \times_S A}
\end{equation}
defined by adjunction from the composition of maps
\[
 (f \times_S \Id_A)^* (f_* \cK_T \boxtimes_S \cK_A)  \cong f^* f_* \cK_T \boxtimes_S \cK_A  \xrightarrow{\mrm{counit}} \cK_T \boxtimes_S \cK_A \in \Dmot{T \times_S A}.
\]

\begin{lemma}\label{lem: relative Kunneth} Let notation be as in diagram \eqref{eq: relative kunneth diagram}. Let $\cK_A \in \Dmot{A}$ be USLA over $S$. 

(1) If $f \co T \rightarrow S'$ is schematic, then for any $\cK_T \in \Dmot{T}$, the canonical morphism
\[
f_* \cK_T \boxtimes_S \cK_A  \xrightarrow{\eqref{eq: relative projection *}} (f \times_S \Id_A)_* (\cK_T \boxtimes_S  \cK_A) \in \Dmot{S' \times_S A}
\]
is an isomorphism. 

(2) If $f \co T \rightarrow S'$ is locally of finite type, then for any $\cK_{S'} \in \Dmot{S'}$, the natural morphism (adjoint to \eqref{eq: ! push kunneth}) 
\[
f^! \cK_{S'} \boxtimes_S \cK_A \rightarrow (f \times_S \Id_A)^! (\cK_{S'} \boxtimes_S  \cK_A) \in \Dmot{T \times_S A}
\]
is an isomorphism. 
\end{lemma}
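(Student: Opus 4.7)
The strategy is to reduce each part to a direct application of the (U)SLA hypothesis after a suitable base change, exploiting the universality built into USLA.

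For Part (1), denote by $\pi_{S'} \co S' \times_S A \to S'$ and $\pi_A \co S' \times_S A \to A$ the two projections. The pullback $\pi_A^*\cK_A$ on $S' \times_S A$ is USLA over $S'$ (with respect to $\pi_{S'}$) by the universality clause of Definition~\ref{def: USLA} applied to the base-change map $S' \to S$; in particular it is SLA over $S'$. Since $f \co T \to S'$ is schematic by hypothesis, we apply Definition~\ref{def: USLA} with $\pi_A^*\cK_A$ in place of $\cK_A$ and $f$ in place of $g$, obtaining an isomorphism
\[
\pi_A^*\cK_A \otimes \pi_{S'}^* f_* \cK_T \xrightarrow{\sim} (f \times_S \Id_A)_*\bigl((f \times_S \Id_A)^*\pi_A^*\cK_A \otimes \pi_T^*\cK_T\bigr),
\]
where $\pi_T \co T \times_S A \to T$ is the projection. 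Unwinding the definition of $\boxtimes_S$, the left-hand side is $f_*\cK_T \boxtimes_S \cK_A$ and the right-hand side is $(f \times_S \Id_A)_*(\cK_T \boxtimes_S \cK_A)$, giving Part (1).

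For Part (2), the strategy is parallel, but exploits a $!$-pullback reformulation of the SLA condition. By the general theory of SLA developed in \cite{Jin23} (adapted from schemes to derived Artin stacks via Lemma~\ref{lem: USLA smooth local}), SLA of $\cK_A$ over $S$ via $f \co A \to S$ is equivalent to the following: for every Cartesian square as in~\eqref{eq:prochronic} with $g$ locally of finite type, and every $\cK_T \in \Dmot{T}$, the natural map
\[
g'^*\cK_A \otimes f'^!\cK_T \to g'^!(\cK_A \otimes f^*\cK_T)
\]
is an isomorphism. Applying this $!$-version to $\pi_A^*\cK_A$ (USLA over $S'$) and to $f \co T \to S'$ (which is locally of finite type), and unwinding $\boxtimes_S$ exactly as in Part (1), produces the desired isomorphism; a formal check identifies it with the map adjoint to~\eqref{eq: ! push kunneth}.

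The main obstacle is establishing the $!$-pullback reformulation of SLA invoked in Part (2). This equivalence between the $*$-pushforward version (Definition~\ref{def: USLA}) and the $!$-pullback version is a formal consequence of the six-functor formalism and is worked out in \cite{Jin23} in the scheme case; the extension to derived Artin stacks proceeds by smooth descent using Lemma~\ref{lem: USLA smooth local}, after which the rest of the proof is bookkeeping with the notational conventions for $\boxtimes_S$.
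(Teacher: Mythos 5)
Your proof of Part (1) is correct and is precisely the paper's argument: pass to $S'$ via the universality built into USLA, then read off the conclusion from the definition of SLA.

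Part (2) has a genuine gap. You appeal to a ``$!$-pullback reformulation of SLA'' which you assert is a formal consequence of the six-functor formalism and is worked out in \cite{Jin23}. Two problems. First, the statement as you wrote it is not well-formed: you apply $f^*$ (where $f \co A \to S$) to an object $\cK_T \in \Dmot{T}$, and similarly $f'^!$ (where $f'\co B \to T$) to something you later want to compare against a sheaf on $S$, so the map
\[
g'^*\cK_A \otimes f'^!\cK_T \to g'^!(\cK_A \otimes f^*\cK_T)
\]
does not typecheck as written. Second, and more fundamentally, even after fixing the indices the assertion that such an equivalence is ``worked out in \cite{Jin23}'' is circular: Jin's Lemma 3.1.4 \emph{is} the relative K\"unneth lemma you are trying to prove (and the paper's proof explicitly says it follows Jin's argument). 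There is no independent $!$-characterization of SLA in \cite{Jin23} that you can cite to shortcut the work; the dualizability/Hom-tensor characterizations (Corollary~\ref{cor: ULA duality}, Proposition~\ref{prop: ULA hom tensor}) are themselves deduced \emph{from} this lemma.

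The argument that actually closes Part (2) is more concrete. After base-changing to $S'$ (as in Part (1)), one checks the statement smooth-locally on $T$ and factors $f$ as a closed immersion followed by a smooth morphism. For $f$ smooth, $f^! \cong f^*\tw{d(f)}$ by Poincar\'e duality, and the claim reduces to compatibility of $*$-pullbacks, which is immediate. For $f$ a closed immersion with open complement $j \co U \hook S'$, one compares the two localization (excision) triangles
\[
f^!\cK_{S'} \to f^*\cK_{S'} \to f^*j_*j^*\cK_{S'}
\quad\text{and}\quad
f_A^!(\cK_{S'}\boxtimes_S\cK_A) \to f_A^*(\cK_{S'}\boxtimes_S\cK_A) \to f_A^*j_{A*}j_A^*(\cK_{S'}\boxtimes_S\cK_A).
\]
The middle terms agree trivially, and the right-hand terms agree by Part (1) applied to the open immersion $j$ (which is schematic). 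By the two-out-of-three property of isomorphisms in a triangle, the left-hand map is an isomorphism. This reduction of the $!$-pullback compatibility to the $*$-pushforward compatibility via excision is the key idea missing from your proposal.
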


\begin{proof} The proof is the essentially the same as that of \cite[Lemma 3.1.4]{Jin23}. 

(1) This is exactly the definition of $\cK_A$ being SLA after base change along $S' \rightarrow S$.

(2) The statement can be checked smooth-locally on $T$.
Thus we may assume $f$ is schematic and moreover factors through a closed immersion and a smooth morphism. If $f$ is smooth, then the result follows from the Poincaré duality isomorphism \S \ref{ssec:Gys} and its compatibility with base change. If $f$ is a closed embedding, then write $j \co U \inj S'$ for the complementary open. Abbreviate $f_A \coloneqq (f \times \Id_A)$ and $j_A \coloneqq (j \times \Id_A)$. We have a map of excision sequences in $\Dmot{T \times_S A}$: 
\[
\begin{tikzcd}
(f^! \cK_{S'} )\boxtimes_S \cK_A \ar[d] \ar[r] & (f^* \cK_{S'} ) \boxtimes_S \cK_A  \ar[d] \ar[r] & (f^* j_* j^* \cK_{S'}) \boxtimes_S \cK_A \ar[d] \\
f_A^! (\cK_{S'} \boxtimes_S \cK_A) \ar[r] & f_A^* (\cK_{S'} \boxtimes_S \cK_A) \ar[r] &  f_A^* j_{A*} j_A^* (\cK_{S'} \boxtimes_S \cK_A)
\end{tikzcd}
\]
The middle vertical map is obviously an isomorphism. The right vertical map is an isomorphism by item (1) applied to $U \inj S'$. Therefore the left vertical map is an isomorphism. 
\end{proof}

\begin{cor}\label{cor: ULA duality}
Let $\cK_A \in \Dmot{A}$ be USLA over $S$. Then for every derived Artin stack $T$ over $S$, the canonical map 
\[
\DD_{T/S} \boxtimes_S  \cK_A \rightarrow \pr_A^! \cK_A \in \Dmot{T \times_S A}
\]
is an isomorphism. 
\end{cor}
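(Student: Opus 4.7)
The plan is to derive this as a direct consequence of Lemma~\ref{lem: relative Kunneth}(2). Concretely, I will apply that lemma in the special case $S' = S$, with $f \co T \to S$ the structure morphism (which is locally of finite type, as implicitly required for $\DD_{T/S} = f^!\Qsh{S}$ to be defined), and with $\cK_{S'} = \Qsh{S} \in \Dmot{S}$.

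Under these choices, the diagram~\eqref{eq: relative kunneth diagram} collapses: the middle column $S \times_S A$ is canonically identified with $A$, and the base-changed map $f \times_S \Id_A \co T \times_S A \to S \times_S A \cong A$ is precisely the projection $\pr_A$. Since the structure sheaf is the monoidal unit, we have $\Qsh{S} \boxtimes_S \cK_A \cong \pr_A^*\Qsh{S} \otimes \pr_A^*\cK_A \cong \cK_A$ in $\Dmot{A}$. By definition $f^!\Qsh{S} = \DD_{T/S}$. Plugging all of this into the conclusion of Lemma~\ref{lem: relative Kunneth}(2) yields the asserted isomorphism
\[
\DD_{T/S} \boxtimes_S \cK_A \;\xrightarrow{\sim}\; \pr_A^! \cK_A.
\]

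The only step that requires any thought is verifying that the canonical map produced by this specialization of Lemma~\ref{lem: relative Kunneth}(2) agrees with the canonical map appearing in the statement of the corollary. This is a matter of unpacking how the map in Lemma~\ref{lem: relative Kunneth}(2) is built (as the adjoint of the Künneth map~\eqref{eq: ! push kunneth}) and checking it reduces, under the above identifications, to the usual natural transformation relating $\DD_{T/S} \boxtimes_S (-)$ and $\pr_A^!$ built from the base change formalism. I expect no real obstacle here beyond routine bookkeeping with units, counits, and base-change isomorphisms, since both maps are determined by the $(f_!, f^!)$ adjunction and the projection formula in a transparent way.
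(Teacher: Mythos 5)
Your proposal is correct and matches the paper's proof exactly: the paper likewise derives the corollary by applying Lemma~\ref{lem: relative Kunneth}(2) with $S' = S$, $f \co T \to S$ the structure morphism, and $\cK_{S'} = \Qsh{S}$. Your additional remarks unpacking the identifications and noting the compatibility check are reasonable elaborations, but the core argument is identical.
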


\begin{proof}
Apply Lemma \ref{lem: relative Kunneth}(2) with $S' = S$, $f$ the morphism $T \rightarrow S$, and $\cK_S = \bQ_S$.
\end{proof}

\begin{prop}\label{prop: ULA hom tensor} Let $\cK_A \in \Dmot{B}$ be USLA over $S$ and $\cK_B \in \Dmotg{B}$. Then
the canonical morphism 
\begin{equation}\label{eq: ULA hom tensor}
(\DD_{B/S} \cK_B) \boxtimes_S \cK_A \rightarrow \cRHom_{B \times_S A} (\pr_B^*  \cK_B, \pr_A^! \cK_A)
\end{equation}
is an isomorphism. 
\end{prop}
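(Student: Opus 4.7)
My approach is a reduction to generators. Both sides of \eqref{eq: ULA hom tensor} define exact contravariant functors of $\cK_B$: the LHS because $\DD_{B/S}$ is exact contravariant and $\boxtimes_S$ is exact in each variable, the RHS because $\cRHom(-,?)$ is exact contravariant in its first argument. Hence the collection of $\cK_B \in \Dmotg{B}$ for which \eqref{eq: ULA hom tensor} is an isomorphism forms a thick subcategory. First I would reduce to the case where $B$ is a derived scheme by checking the statement smooth-locally on $B$; this uses that $\cK_B$ being geometric ensures $u^*$ for $u$ smooth commutes with $\cRHom$ in its first argument, together with the standard Poincar\'e duality compatibilities of $\DD_{B/S}$ and $\pr_A^!$ with smooth pullback.

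Once $B$ is a derived scheme, $\Dmotg{B}$ is generated as a thick subcategory by $f_\sharp \Qsh{T}\tw{i}$ for $f \co T \to B$ smooth from a derived scheme. Via Poincar\'e duality $f_\sharp \cong f_! \tw{d(f)}$, and since the claim is stable under Tate twists, it suffices to verify \eqref{eq: ULA hom tensor} for $\cK_B = f_! \Qsh{T}$, with $f$ smooth and schematic. I would then compute both sides explicitly. For the LHS, the identity $\DD_{B/S} \circ f_! \cong f_* \circ \DD_{T/S}$, which follows from the $(f_!, f^!)$-adjunction applied inside $\cRHom$, identifies the LHS with $(f_* \DD_{T/S}) \boxtimes_S \cK_A$. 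For the RHS, let $\tilde f \co T \times_S A \to B \times_S A$ denote the base change of $f$, and $\pr_{T,A} \co T \times_S A \to A$ the projection. Proper base change gives $\pr_B^* f_! \Qsh{T} \cong \tilde f_! \Qsh{T \times_S A}$, and the $(\tilde f_!, \tilde f^!)$-adjunction then yields
\[
\cRHom(\tilde f_! \Qsh{T \times_S A},\, \pr_A^! \cK_A) \;\cong\; \tilde f_* \tilde f^! \pr_A^! \cK_A \;\cong\; \tilde f_* \pr_{T,A}^! \cK_A.
\]

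To match the two expressions, I would apply \corref{cor: ULA duality} to the structure morphism $T \to S$ (legitimate since $\cK_A$ is USLA over $S$) to get $\pr_{T,A}^! \cK_A \cong \DD_{T/S} \boxtimes_S \cK_A$. Then \lemref{lem: relative Kunneth}(1) applied to the schematic morphism $f \co T \to B$ (again invoking USLA of $\cK_A$) gives $\tilde f_* (\DD_{T/S} \boxtimes_S \cK_A) \cong (f_* \DD_{T/S}) \boxtimes_S \cK_A$, which matches the LHS. The main technical obstacle is bookkeeping, namely verifying that the composition of these adjunctions, base change maps, and USLA identifications agrees with the canonical map \eqref{eq: ULA hom tensor} rather than merely producing some isomorphism between the same pair of objects. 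The USLA hypothesis enters essentially and exactly twice, through the two invocations above; everything else is formal manipulation with the six operations.
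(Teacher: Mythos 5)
Your proposal is correct and follows essentially the same route as the paper's proof: reduce smooth-locally to $B$ a derived scheme, then to $\cK_B$ a generator of the form $f_\sharp\Qsh{T}\cong f_!\Qsh{T}\tw{d(f)}$, and then combine proper base change, the $(f_!,f^!)$-adjunction, Corollary~\ref{cor: ULA duality}, and Lemma~\ref{lem: relative Kunneth}(1). The only cosmetic difference is that you normalize away the Tate twist and compute both sides of \eqref{eq: ULA hom tensor} symmetrically, whereas the paper carries $f_\sharp$ through and transforms only the right-hand side; the paper also reduces $A$ to a scheme, though as your argument shows this is not strictly needed.
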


\begin{proof} By smooth base change, the map \eqref{eq: ULA hom tensor} can be checked to be an isomorphism smooth locally on $A$ and $B$. Since the hypotheses are also stable under smooth base change (using \S \ref{lem: USLA smooth local}), we may assume that $A$ and $B$ are schemes. By the definition of geometric motives, it suffices to check this on for $\cK_B$ of the form $f_{\sharp} \Qsh{T}$ for smooth $f \co T \rightarrow B$ (since the statement is evidently compatible with shifts and Tate twists). We refer to the commutative diagram 
\[
\begin{tikzcd}
T \times_S A \ar[d, "\pr_T"] \ar[r, "f_A"]  & B \times_S A \ar[d, "\pr_B"] \ar[r, "\pr_A"] & A \ar[d] \\
T \ar[r, "f"] & B \ar[r] & S
\end{tikzcd}
\]
Using that $f_{\sharp} \Qsh{T} \cong f_!  f^! \Qsh{B} \cong f_! \Qsh{T} \tw{d(f)}$, proper base change gives
\begin{align*}
\cRHom_{B \times_S A}(\pr_B^* f_{\sharp} \Qsh{T}, \pr_A^! \cK_A)  & \cong \cRHom_{B \times_S A} (f_{A!} \pr_T^* \Qsh{T} \tw{d(f)}, \pr_A^! \cK_A)  \\
& \cong f_{A*} \cRHom_{T \times_S A} (\pr_T^*  \Qsh{T} \tw{d(f)}, f_A^! \pr_A^! \cK_A)  
\end{align*}
where we write $f_A \co T \times_S A \rightarrow B \times_S A$ for the pullback of $f$. Since $\cK_A$ is assumed to be USLA over $S$, from Corollary \ref{cor: ULA duality} we have 
\[
\cRHom_{T \times_S A} (\pr_T^* \Qsh{T} \tw{d(f)}, f_A^! \pr_A^! \cK_A)  \cong \DD_{T/S} (f^! \Qsh{B}) \boxtimes_S  \cK_A.
\]
Again since $\cK_A$ is USLA over $S$, Lemma \ref{lem: relative Kunneth}(1) applies to give 
\[
f_{A*} (\DD_{T/S}(f^! \Qsh{B}) \boxtimes_S  \cK_A)    \cong (f_* \DD_{T/S} f^! \Qsh{B}  )\boxtimes_S  \cK_A  \cong \DD_{B/S}(f_! f^! \Qsh{B}) \boxtimes_S \cK_A,
\]
as desired. 
\end{proof}

\section{Cohomological correspondences}\label{sec: coh corr}
In this section we establish some general material related to cohomological correspondences. In \S \ref{ssec: base change natural transformations}, we recall the notion of ``pushable'' and ``pullable'' squares from \cite[\S 4, \S 5]{FYZ3} and the base change natural transformations that they entail. In \S \ref{ssec: correspondences} we recall the notion of cohomological correspondence, and in \S \ref{ssec: functoriality for cohomological correspondences} we formulate the notion of pushforward and pullback for cohomological correspondences. Finally in \S \ref{ssec: base change for cc} we state the Base Change Theorem for cohomological correspondences. The constructions and proofs carry over verbatim from $\ell$-adic sheaves as considered in \cite[\S 3, \S 4]{FYZ3} to $\Dmot{-}$, so we just formulate the statements without proof.

\subsection{Cohomological correspondences}\label{ssec: correspondences}

Let $A_{0}$ and $A_{1}$ be  derived Artin stacks. A {\em correspondence between $A_{0}$ and $A_{1}$} is a diagram of derived Artin stacks
\[
\begin{tikzcd}
A_0 & C \ar[l, "c_0"'] \ar[r, "c_1"]  & A_1
\end{tikzcd}
\]
where $c_1$ is locally of finite type.
A \emph{map of correspondences} from $(A_0 \xleftarrow{c_0} C \xrightarrow{c_1} A_1)$ to $(B_0 \xleftarrow{d_0} D \xrightarrow{d_1} B_1)$ is a commutative diagram 
\[
\begin{tikzcd}
A_0 \ar[d] & C \ar[l, "c_0"'] \ar[r, "c_1"]  \ar[d] & A_1 \ar[d] \\
B_0 & D \ar[l, "d_0"'] \ar[r, "d_1"] & B_1 
\end{tikzcd}
\]

Let $\cK_0 \in \Dmot{A_0}$, $\cK_1 \in \Dmot{A_1}$. A \emph{cohomological correspondence from $\cK_0$ to $\cK_1$ supported on $C$} is a map 
$c_0^* \cK_0 \rightarrow c_1^! \cK_1$ in $\Dmot{C}$. The vector space of such is denoted  
\begin{equation}
\Corr_{C}(\cK_{0}, \cK_{1}) \coloneqq\Hom_{C}(c_0^* \cK_0 , c_1^! \cK_1).
\end{equation}

\subsubsection{Fixed points of a self-correspondence}\label{sssec: fixed points}
Suppose that we have a fixed isomorphism $A_0 \xrightarrow{\sim} A_1$, which we will sometimes use to identify $A_0$ with $A_1$; however, it will also be convenient to distinguish them at times. Let $\Delta  \co A_0 \rightarrow A_0 \times A_1$ be the diagonal embedding. Define $\Fix(C)$ as the fibered product 
\begin{equation}
\begin{tikzcd}
\Fix(C) \ar[r, "\Delta'"] \ar[d, "c'"] & C \ar[d, "c"] \\
A_0 \ar[r, "\Delta"] & A_0 \times A_1
\end{tikzcd}
\end{equation}
where $c = (c_0, c_1)$. 

\subsection{Base change transformations}\label{ssec: base change natural transformations}
In order to discuss the functoriality of cohomological correspondences, we make a brief detour on base change transformations.

\subsubsection{Pushable and pullable squares} The notions of \emph{pushable} and \emph{pullable} squares were defined in \cite[Definition 3.1.1]{FYZ3} in order to codify situations where base change natural transformations can be constructed. Later we realized that the notion of pushable square appears at least implicitly in \cite{Zh15} for the same reason. We repeat the definitions for the convenience of the reader.

Let
\begin{equation}\label{eq: commutative square}
\begin{tikzcd}
A \ar[r, "g'"] \ar[d, "f'"'] & B \ar[d, "f"] \\
C \ar[r, "g"] & D 
\end{tikzcd}
\end{equation}
be a commutative square of derived Artin stacks. 
Denote by $\wt B=C\times_{D}B$ the derived fibered product so that the square \eqref{eq: commutative square} decomposes into a commutative diagram
\begin{equation}\label{eq: pushable square}
\xymatrix{ A\ar[dr]^{a}\ar@/_1pc/[ddr]_{f'}\ar@/^1pc/[drr]^{g'} \\
& \wt B\ar[d]^{\wt f}\ar[r]^{\wt g} & B\ar[d]^{f}\\
& C \ar[r]^{g}& D
}
\end{equation} 
where the bottom right square is derived Cartesian. 

\begin{defn}\label{def: pushable pullable} The square \eqref{eq: commutative square} is called 
\begin{itemize}
\item {\em pushable}, if $a$ is proper.
\item {\em pullable}, if $a$ is quasi-smooth. In this case, the {\em defect} of the square is by definition the relative dimension $d(a)$.
\end{itemize}
\end{defn}

\begin{remark}
Note that pushability is a purely topological notion: it can be checked on classical truncations (and even on underlying reduced stacks). By contrast, pullability is sensitive to the derived structure, and most of the pullable squares that arise for us would not be pullable on classical truncations. 
\end{remark}

\begin{example}\label{ex: pushable/pullable examples}
If $f$ is separated and $f'$ is proper, then \eqref{eq: pushable square} is pushable. If $f$ is smooth and $f'$ is quasi-smooth, then \eqref{eq: pushable square} is pullable.
\end{example}

\begin{remark}\label{rem: top pullable}
We will also have occasion to consider the following variant: we say \eqref{eq: commutative square} is \emph{topologically pullable} if the morphism $a$ is a finite radicial surjection (e.g. if it is an isomorphism on reduced classical truncations).
In this case the defect is zero by convention.
\end{remark}

\subsubsection{Proper base change}

Suppose \eqref{eq: commutative square} is Cartesian after taking classical truncations and then underlying reduced stacks. Then there is a proper base change natural isomorphism 
\begin{equation}\label{eq: pbc di}
g^* f_! \xrightarrow{\di} f'_! (g')^*
\end{equation}
of functors $\Dmot{B} \rightarrow \Dmot{C}$ which we label by ``$\di$''. We use the same notation for the natural isomorphism
\[ 
f'_* (g')^! \xrightarrow{\di} g^! f_* .
\]
By adjunction, \eqref{eq: pbc di} induces natural transformations 
\[
f_! (g')_* \xrightarrow{\di} g_* f'_!
\]
and
\[
(g')^* f^! \xrightarrow{\di} (f')^! g^*
\]
which we will also label by ``$\di$''.

\subsubsection{Push-pull base change transformation}\label{sssec: push-pull} Suppose \eqref{eq: commutative square} is pushable. Then we have a natural transformation of functors $\Dmot{B} \rightarrow \Dmot{C}$
\begin{equation}\label{eq: push-pull gen} 
g^{*}f_{!} \xrightarrow{\td} f'_{!}(g')^{*}
\end{equation}
defined as the composition
\[
g^{*}f_{!}\xr{\di} \wt f_{!}\wt g^{*}\xrightarrow{\unit(a)} \wt f_{!}a_{*}a^{*}\wt g^{*}=\wt f_{!}a_{!}a^{*}\wt g^{*}=f'_{!}(g')^{*}
\]
Here we used that $\mrm{fsupp}_a \co a_! \rightarrow a_*$ is invertible because $a$ is proper. We sometimes denote this base change transformation by $\td$.

\subsubsection{Push-push base change transformations}\label{sssec: push-push}

Suppose \eqref{eq: commutative square} is pushable. Then we have a natural transformation of functors $\Dmot{A} \rightarrow \Dmot{D}$
\begin{equation}\label{eq: push-push gen}
f_{!}g'_{*}\to g_{*}f'_{!}
\end{equation}
defined as the composition
\[
f_{!}g'_{*}=f_{!}\wt g_{*}a_{*}\xr{\di}g_{*}\wt f_{!}a_{*}=g_{*}\wt f_{!}a_{!}\to g_{*}f'_{!}.
\]
Again we used that $\mrm{fsupp}_a \co a_! \rightarrow a_*$ is invertible because $a$ is proper. We sometimes denote this base change transformation by $\td$.

\subsubsection{Pull-pull base change transformation}\label{ssec: pull-pull}

Suppose that \eqref{eq: commutative square} is pullable with defect $\d$. Then we have a natural transformation of functors $\Dmot{D} \rightarrow \Dmot{A}$
\begin{equation}\label{eq: pull-pull gen}
(f')^{*}g^{!}  \xrightarrow{\tu} (g')^{!} f^{*}\tw{-\d}
\end{equation}
defined as the composition 
\[
(f')^{*}g^{!} =  a^{*} \wt f^{*}g^{!} \xr{a^* \di}a^{*} \wt g^{!} f^{*}\xr{[a]}a^{!} \wt g^{!} f^{*}\tw{-\d}=(g')^{!} f^{*}\tw{-\d}.
\]
We often denote such a natural transformation induced by a pullable square by $\tu$. 

\begin{remark} \label{rem: p-p} If \eqref{eq: commutative square} is pullable, the map \eqref{eq: pull-pull gen} induces by adjunction a map
\begin{equation}\label{eq: p-p}
g'_{!}(f')^{*} \xrightarrow{\tu}  f^{*}g_{!}\tw{-\d}.
\end{equation}
\end{remark}

\begin{remark}\label{rem:top pullable tu}
If \eqref{eq: commutative square} is topologically pullable as in \remref{rem: top pullable}, then we have a canonical isomorphism $a^* \cong a^!$ by topological invariance (see \cite[Remark~2.1.13]{zbMATH07194962}).
Therefore we may define a natural transformation $\tu \colon (f')^*g^! \to (g')^!f^*$ just as in \eqref{eq: pull-pull gen}.
\end{remark}

\subsubsection{Compatibility with compositions}

The natural transformations $\td$ and $\tu$ are compatible with compositions in the following sense. Suppose we have a commutative diagram 
\begin{equation}\label{eq: commutative composition}
\begin{tikzcd}
A \ar[r, "g''"] \ar[d, "f'"'] & B \ar[d, "f"] \\
C \ar[r, "g'"] \ar[d, "h'"'] & D \ar[d, "h"]  \\
E \ar[r, "g"] & F 
\end{tikzcd}
\end{equation}
According to \cite[Lemma 3.2.2]{FYZ3} and \cite[Lemma 3.5.3]{FYZ3}:
\begin{enumerate}
\item If both the upper square and the lower square are pushable, then the outer square formed by $(A,B,E,F)$ is also pushable.
\item If both up the upper square and the lower square are pullable, say of defects $\delta_{\upp}$ and $\delta_{\low}$, then the outer square is also pullable, with defect $\delta_{\out} \coloneqq \delta_{\upp} + \delta_{\low}$. 
\end{enumerate}

Suppose the outer and lower squares in \eqref{eq: commutative composition} are pushable. Then by the same argument as in proof of \cite[Proposition 3.2.3]{FYZ3}, we have the following commutative diagrams of natural transformations $\Dmot{B} \rightarrow \Dmot{E}$, resp. $\Dmot{A} \rightarrow \Dmot{F}$: 
\begin{equation}\label{eq: comp push-push}
\xymatrix{g^* h_! f_! \ar@{=}[d] \ar[r]^-{\td f_{!}} & h'_! (g')^* f_! \ar[r]^-{h'_{!}\td} & h'_! f'_! (g'')^*\ar@{=}[d]\\
g^{*}(h\c f)_{!}\ar[rr]^-{\td} &&   (h'\c f')_{!}(g'')^{*}
} \hspace{1cm} \xymatrix{h_! f_! g''_* \ar@{=}[d] \ar[r]^-{h_! \td} & h_! g'_* f'_! \ar[r]^-{\td f'_!} & g_* h'_! f'_! \ar@{=}[d]\\
(h\c f)_{!} g''_* \ar[rr]^-{\td} &&   g_* (h'\c f')_{!}
}
\end{equation}

Suppose the upper and lower squares in \eqref{eq: commutative composition} are pullable, of defects $\delta_{\upp}$ and $\delta_{\low}$. Then by the same argument as in proofs of \cite[Proposition 3.5.4]{FYZ3}, we have the following commutative diagram of natural transformations $\Dmot{F} \rightarrow \Dmot{A}$:
\begin{equation}\label{eq: comp pull}
\xymatrix{(f')^{*}(h')^{*}g^{!}\ar[r]^-{f'^{*}\tu}\ar@{=}[d] & (f')^{*}(g')^{!}h^{*}\tw{-\d_{\low}}\ar[r]^-{\tu h^{*}} & (g'')^{!}f^{*}h^{*}\tw{-\d_{\low}-\d_{\upp}}\ar@{=}[d]
\\
(h'\c f')^{*}g^{!} \ar[rr]^-{\tu} && (g'')^{!}(h\c f)^{*}\tw{-\d_{\out}}}
\end{equation}

\subsection{Functoriality for cohomological correspondences}\label{ssec: functoriality for cohomological correspondences} We tabulate some situations where cohomological correspondences can be pushed forward or pulled back. 

\subsubsection{Pushforward functoriality for cohomological correspondences}\label{ssec: pushforward functoriality for CC}

Suppose we have a map of correspondences
\begin{equation}\label{eq: pushforward cohomological correspondences diagram}
\begin{tikzcd}
A_0  \ar[d, "f_0"'] & C \ar[l, "c_0"'] \ar[r, "c_1"]  \ar[d, "f"]  & A_1  \ar[d, "f_1"] \\
B_0  & D \ar[l, "d_0"'] \ar[r, "d_1"]  & B_1  
\end{tikzcd}
\end{equation}

\begin{defn}\label{def: corr pushable} The map of correspondences \eqref{eq: pushforward cohomological correspondences diagram} is called {\em left pushable} if the square with vertices $(C, A_{0}, D, B_{0})$ is pushable in the sense of Definition \ref{def: pushable pullable}. 
\end{defn}

Assume \eqref{eq: pushforward cohomological correspondences diagram} is left pushable. Then for any cohomological correspondence $c_0^* \cK_0 \xrightarrow{\cc} c_1^! \cK_1$, there is a ``pushforward correspondence'' $f_{!}(\cc): d_0^* f_{0!} \cK_0  \rightarrow d_1^!  f_{1!} \cK_1$, defined as the composition 
\[
d_0^* f_{0!} \cK_0 \xrightarrow{\td}	 f_! c_0^* \cK_0 \xrightarrow{\cc} f_! c_1^! \cK_1 \rightarrow d_1^! f_{1!} \cK_1
\]
where the rightmost map is the natural base change transformation. Thus $\cc \mapsto f_!(\cc)$ defines a linear map
\begin{equation}
f_{!}: \Corr_{C}(\cK_{0}, \cK_{1})\to \Corr_{D}(f_{0!}\cK_{0}, f_{1!}\cK_{1}).
\end{equation}

\subsubsection{Pullback functoriality for cohomological correspondences} \label{ssec: pullback functoriality for CC}
Consider the diagram of correspondences in \eqref{eq: pushforward cohomological correspondences diagram}.

\begin{defn}\label{def: corr pullable} The diagram of correspondences \eqref{eq: pushforward cohomological correspondences diagram} is called {\em right pullable} if the square with vertices $(C, A_{1}, D, B_{1})$ is pullable in the sense of Definition \ref{def: pushable pullable}. 

In this case, we also say that the map of correspondences $f:C\to D$ is right pullable, with {\em defect} $\d_{f}$ defined to be the defect of the square $(C,A_{1}, D,B_{1})$, i.e., the relative dimension of the quasi-smooth map $\wt{c}_1 \co C\to D\times_{A_1} B_{1}$. 
\end{defn}

Suppose \eqref{eq: pushforward cohomological correspondences diagram} is right pullable. Then for any cohomological correspondence $d_0^* \cK_0 \xrightarrow{\cc} d_1^! \cK_1$ there is a ``pullback correspondence'' $f^{*}(\cc): c_0^* f_{0}^{*} \cK_0  \rightarrow c_1^!  f_{1}^{*} \cK_1\tw{-\d_f}$ defined as the composition 
\[
c_0^* f_0^*  \cK_0 = f^* d_0^*  \cK_0 \xrightarrow{\cc} f^* d_1^! \cK_1 \xrightarrow{\tu}   c_1^!  f_{1}^* \cK_1 \tw{-\d_f}.
\]
Thus $\cc \mapsto f^*(\cc)$ defines a linear map
\begin{equation}
f^*: \Corr_{D}(\cK_{0}, \cK_{1})\to \Corr_{C}(f_{0}^{*}\cK_{0}, f_{1}^{*}\cK_{1}\tw{-\d_f}).
\end{equation}

\begin{remark}\label{rem:right top pullable}
Similarly, we say the map of correspondences $f : C \to D$ is \emph{right topologically pullable} (with defect $\d_f = 0$) if the square with vertices $(C, A_1, D, B_1)$ is topologically pullable in the sense of Remark~\ref{rem: top pullable}.
We can then similarly define a pullback operation
\begin{equation}
f^*: \Corr_{D}(\cK_{0}, \cK_{1})\to \Corr_{C}(f_{0}^{*}\cK_{0}, f_{1}^{*}\cK_{1})
\end{equation}
using Remark~\ref{rem:top pullable tu}.
\end{remark}

\subsection{Base change for cohomological correspondences}\label{ssec: base change for cc}
In this subsection we formulate a base change result for cohomological correspondences (Theorem~\ref{thm: descent of pushforward correspondence}), following \cite[\S 5]{FYZ3}. 

\subsubsection{Setup}\label{sssec: BC assumptions}

Suppose we are given a commutative diagram of derived Artin stacks 
\begin{equation}\label{eq: two cubes}
\adjustbox{scale=0.8,center}{
\xymatrix{ U_{0}\ar[ddd]^{\pi_{0}}\ar[dr]^{f_{0}} && \ar[ll]_-{a_{0}}\ar[rr]^-{a_{1}}C_{U}\ar[ddd]^{\pi}\ar[dr]^{f} && U_{1}\ar[ddd]^{\pi_{1}}\ar[dr]^{f_{1}} \\
& V_{0}\ar[ddd]^{g_{0}} && \ar[ll]_-{b_{0}}\ar[rr]^-{b_{1}} C_{V}\ar[ddd]^{g} && V_{1}\ar[ddd]^{g_{1}}\\
\\
S_{0}\ar[dr]^{z_{0}} && \ar[ll]^-{h_{0}}\ar[rr]_-{h_{1}} C_{S}\ar[dr]^{z} && S_{1}\ar[dr]^{z_{1}}\\
& W_{0} && \ar[ll]_-{c_{0}}\ar[rr]^-{c_{1}} C_{W} && W_{1} 
}
}
\end{equation}
satisfying the following conditions:  
\begin{enumerate}
\item The middle vertical parallelogram
\begin{equation}
\adjustbox{scale = 0.7, center}{
\xymatrix{C_{U}\ar[ddd]^{\pi}\ar[dr]^{f}\\
& C_{V}\ar[ddd]^{g}\\
\\
C_{S}\ar[dr]^{z}\\
& C_{W}}
}
\end{equation}
is derived Cartesian.

\item The three squares in the following diagram are pushable:
\begin{equation}\label{eq:  UVW derived Cartesian}
\adjustbox{scale=0.7,center}{
\xymatrix{
U_{0}\ar[ddd]^{\pi_{0}}\ar[dr]^{f_{0}} && \ar[ll]_{a_{0}}C_{U}\ar[dr]^{f} && \\
& V_{0}\ar[ddd]^{g_{0}} &&  \ar[ll]_{b_{0}}C_{V}&& \\
\\
S_{0}\ar[dr]^{z_{0}} && \ar[ll]_(.6){h_{0}}C_{S}\ar[dr]^{z} &&\\
& W_{0} && \ar[ll]_{c_{0}}C_{W} && 
}
}
\end{equation}

\item The three squares in the following diagram are pullable:
\begin{equation}
\adjustbox{scale=0.7,center}{
\xymatrix{&& \ar[rr]^-{a_{1}}C_{U}\ar[ddd]^{\pi} && U_{1}\ar[ddd]^{\pi_{1}}\ar[dr]^{f_{1}} \\
&&& \ar[rr]^-{b_{1}} C_{V}\ar[ddd]^{g} && V_{1}\ar[ddd]^{g_{1}}\\
\\
&& \ar[rr]^(.4){h_{1}} C_{S} && S_{1}\ar[dr]^{z_{1}}\\
&&& \ar[rr]^-{c_{1}} C_{W} && W_{1} 
}
}
\end{equation}
Moreover, the right square $(U_{1}, V_{1}, S_{1}, W_{1})$ above has defect zero.
\end{enumerate}

\subsubsection{} We view $C_{S}$ as a correspondence between $S_{0}$ and $S_{1}$, and similarly for $C_{U}, C_{V}$ and $C_{W}$.  Let $\cK_{i}\in \Dmot{S_{i}}$ for $i\in \{0,1\}$ and $\frs\in \Corr_{C_{S}}(\cK_{0}, \cK_{1})$.

\subsubsection{Push $\circ$ pull} By assumption, the back face of \eqref{eq: two cubes} is pullable as a map of correspondences $\pi: C_{U}\to C_{S}$, so the map
\begin{equation}\label{eq: bc cc 1}
\pi^{*}: \Corr_{C_{S}}(\cK_{0}, \cK_{1})\to \Corr_{C_{U}}(\pi_{0}^{*}\cK_{0}, \pi_{1}^{*}\cK_{1}\tw{-\d_{\pi}})
\end{equation}
is defined (where the defect $\d_{\pi}$ is defined in Definition \ref{def: corr pushable}). By assumption, the top face of \eqref{eq: two cubes} is pushable as a map of correspondences $f: C_{U}\to C_{V}$, so the map
\begin{equation}\label{eq: bc cc 2}
f_{!}: \Corr_{C_{U}}(\pi^{*}_{0}\cK_{0}, \pi_{1}^{*}\cK_{1}\tw{-\d_{\pi}})\to \Corr_{C_{V}}(f_{0!}\pi_{0}^{*}\cK_{0}, f_{1!}\pi_{1}^{*}\cK_{1}\tw{-\d_{\pi}})
\end{equation}
is defined. The composition of \eqref{eq: bc cc 1} and \eqref{eq: bc cc 2} applied to $\mf{s} \in \Corr_{C_{S}}(\cK_{0}, \cK_{1})$ gives an element
\begin{equation}
f_{!}\pi^{*}(\frs)\in \Corr_{C_{V}}(f_{0!}\pi_{0}^{*}\cK_{0}, f_{1!}\pi_{1}^{*}\cK_{1}\tw{-\d_{\pi}}).
\end{equation}

\subsubsection{Pull $\circ$ push} Similarly, since the bottom face of the diagram \eqref{eq: two cubes} is left pushable and the front face is right pullable, the cohomological correspondence
\begin{equation}\label{eq: bc cc 3}
g^{*}z_{!}(\frs)\in \Corr_{C_{V}}(g_{0}^{*}z_{0!}\cK_{0}, g_{1}^{*}z_{1!}\cK_{1}\tw{-\d_{g}})
\end{equation}
is defined. 

\subsubsection{} We are now ready to formulate the base change theorem, expressing the compatibility of push $\circ$ pull and pull $\circ$ push.

By assumption, the square $(U_{0},V_{0}, S_{0}, W_{0})$ in \eqref{eq: two cubes} is pushable, so we get a base change natural transformation
\begin{equation}\label{eq: bc cc 4}
g^{*}_{0}z_{0!}\xr{\td} f_{0!}\pi_{0}^{*}: \Dmot{S_{0}} \to \Dmot{V_{0}}.
\end{equation}
By assumption, the square $(U_{1},V_{1}, S_{1}, W_{1})$ in \eqref{eq: two cubes} is pullable with defect zero, so we get a base change natural transformation
\begin{equation}\label{eq: bc cc 5}
\pi_{1}^{*}z_{1}^{!}\xr{\tu} f_{1}^{!}g^{*}_{1}: \Dmot{W_{1}} \to \Dmot{U_{1}}.
\end{equation}
By adjunction (cf. Remark \ref{rem: p-p}), \eqref{eq: bc cc 5} gives a base change natural transformation
\begin{equation}\label{eq: match target}
 f_{1!}\pi_{1}^{*}\to g^{*}_{1}z_{1!}: \Dmot{S_{1}} \to \Dmot{V_{1}}.
\end{equation}
We have an equality of defects $\d_{\pi}=\d_{g}$ \cite[Lemma 5.1.1]{FYZ3}.

\begin{example}
Suppose $(U_{0},V_{0}, S_{0}, W_{0})$ and $(U_{1},V_{1}, S_{1}, W_{1})$ are derived Cartesian. In this case, the sources and targets of $f_{!}\pi^{*}(\frs)$ and $g^{*}z_{!}(\frs)$ are identified by the proper base change isomorphisms
\begin{equation}\label{eq: match source target Cart}
f_{0!}\pi_{0}^{*}\cK_{0}\xr{\di} g^{*}_{0}z_{0!}\cK_{0} \quad \text{and} \quad f_{1!}\pi_{1}^{*}\cK_{1}\tw{-\d_{\pi}}\xr{\di} g^{*}_{1}z_{1!}\cK_{1}\tw{-\d_{g}}.
\end{equation}
\end{example}

\begin{thm}[Base change for cohomological correspondences]\label{thm: descent of pushforward correspondence}
Let the notation be as in \S \ref{sssec: BC assumptions}.
Then for every $\cK_{0}\in \Dmot{S_0}$, $\cK_{1}\in \Dmot{S_1}$, and $\frs\in \Corr_{C_{S}}(\cK_{0}, \cK_{1})$, the following diagram commutes:
\begin{equation}
\xymatrix{g^{*}_{0}z_{0!}\cK_{0}\ar[rr]^-{g^{*}z_{!}(\frs)}\ar[d]_{\eqref{eq: bc cc 4}} & & g^{*}_{1}z_{1!}\cK_{1}\tw{-\d_{g}} \\
f_{0!}\pi_{0}^{*}\cK_{0} \ar[rr]^-{f_{!}\pi^{*}(\frs)} && f_{1!}\pi_{1}^{*}\cK_{1}\tw{-\d_{\pi}}\ar[u]_{\eqref{eq: match target}}
}
\end{equation}
(Here we use \cite[Lemma 5.1.1]{FYZ3} to match the twists.) 

In particular, when both $(U_{0},V_{0}, S_{0}, W_{0})$ and $(U_{1},V_{1}, S_{1}, W_{1})$ are derived Cartesian, we have an equality of cohomological correspondences on $C_{V}$
\begin{equation}
f_{!}\pi^{*}(\frs)=g^{*}z_{!}(\frs)
\end{equation}
under the isomorphisms \eqref{eq: match source target Cart}.
\end{thm}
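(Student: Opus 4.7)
The plan is to verify the commutative square by expanding both cohomological correspondences into explicit compositions of base change natural transformations, and then reducing the equality to a sequence of commutative subdiagrams, each of which expresses a universal coherence in the six-functor formalism for $\Dmot{-}$.

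First I would unwind the definitions from \S\ref{ssec: pullback functoriality for CC} and \S\ref{ssec: pushforward functoriality for CC}: the correspondence $f_{!}\pi^{*}(\frs)$ is the composition
\[
b_{0}^{*} f_{0!}\pi_{0}^{*}\cK_{0} \xr{\td} f_{!} a_{0}^{*}\pi_{0}^{*}\cK_{0} = f_{!}\pi^{*} h_{0}^{*}\cK_{0} \xr{f_{!}\pi^{*}(\frs)} f_{!}\pi^{*} h_{1}^{!}\cK_{1} \xr{\tu} f_{!} a_{1}^{!}\pi_{1}^{*}\cK_{1}\tw{-\d_{\pi}} \to b_{1}^{!} f_{1!}\pi_{1}^{*}\cK_{1}\tw{-\d_{\pi}},
\]
obtained by applying the pushable top face and the pullable back face of \eqref{eq: two cubes}. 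Dually, $g^{*}z_{!}(\frs)$ unwinds via the pushable bottom face and the pullable front face.

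Second I would insert the comparison morphisms \eqref{eq: bc cc 4} and \eqref{eq: match target} to form the target square in $\Dmot{C_{V}}$, and subdivide it into rectangular pieces corresponding to the faces of the two cubes. Each such piece should commute because of one of the following inputs: \emph{(i)} the composition compatibilities \eqref{eq: comp push-push} for $\td$ and \eqref{eq: comp pull} for $\tu$, applied to decompositions of the composite squares built from the faces of \eqref{eq: two cubes}; \emph{(ii)} proper base change $\di$ for the derived Cartesian middle parallelogram, together with naturality of $\frs$; \emph{(iii)} the identification of defects $\d_{\pi}=\d_{g}$ from \cite[Lemma 5.1.1]{FYZ3}, which matches the twists on both sides.

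The main obstacle will be the corner of the square where $\td$ (arising from the bottom pushable face) meets $\tu$ (arising from the back pullable face). Here the derived Cartesian condition on the middle parallelogram is essential: it allows one to factor both $g^{*}z_{!}(\frs)$ and $f_{!}\pi^{*}(\frs)$ through a common ``diagonal'' intermediate object, namely $f_{!}\pi^{*}h_{0}^{*}\cK_{0} \cong g^{*}z_{!}h_{0}^{*}\cK_{0}$, built from the middle parallelogram by proper base change. Concretely, this reduces to a cube lemma expressing that in a commutative cube of derived Artin stacks where opposite pairs of faces are respectively pushable and pullable and the middle is derived Cartesian, the four possible composites of base change morphisms agree.

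Since this cube lemma is an entirely formal consequence of the six-functor formalism and the Gysin transformations $\mrm{gys}_{f} \co f^{*} \to f^{!}\tw{-d(f)}$ of \S\ref{ssec:Gys} (rather than of any specific feature of $\ell$-adic coefficients), and since both inputs are available for $\Dmot{-}$ thanks to \cite{CD19} and \cite{KhanI}, the argument from \cite[\S 5]{FYZ3} carries over verbatim. The only modification is that whenever \emph{loc. cit.} invokes smooth base change or Poincaré duality, one replaces it by the corresponding statement for motivic sheaves, which is already built into the Gysin formalism of \cite{KhanI}.
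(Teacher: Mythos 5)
Your proposal is correct and takes essentially the same approach as the paper, which simply observes that the proof of \cite[Theorem 5.1.3]{FYZ3} works verbatim for $\Dmot{-}$ once the six-functor ingredients (proper base change, the Gysin transformation $\mrm{gys}_{f}$, and the composition compatibilities for $\td$ and $\tu$) are supplied by the motivic formalism of \cite{CD19, KhanI}. Your expansion of the two composite correspondences and your identification of the derived Cartesian middle parallelogram as the common intermediary through which both factor is a faithful reconstruction of that argument's skeleton, and the defect match $\d_{\pi}=\d_{g}$ from \cite[Lemma 5.1.1]{FYZ3} handles the twists exactly as you say.
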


\begin{proof}
The proof of \cite[Theorem 5.1.3]{FYZ3} works verbatim. 
\end{proof}

\section{The Lu--Zheng categorical trace}\label{sec: lu-zheng}In this section we adapt the framework of Lu--Zheng \cite{LZ22}, which gave a new perspective on ULA sheaves and Lefschetz-Verdier formulas in the $\ell$-adic setting, to the derived category of motives. For a derived Artin stack $S$ over a field, we define symmetric monoidal 2-categories $\LZ(S)_!$ and $\LZ(S)^*$, in which objects are motivic sheaves on derived Artin stacks over $S$, morphisms are cohomological correspondences, and 2-morphisms are either pushforward or pullback of cohomological correspondences. Strongly universally locally acyclic motives are dualizable, so one can define the \emph{categorical trace} of their endomorphisms. We use this to study relative Lefschetz-Verdier pairings and their behavior under pullback and pushforward.

Compared to \cite{LZ22} we introduce some technical enhancements, which would apply equally well to the $\ell$-adic setting and simplify some proofs in \cite{FYZ3}. 
\begin{itemize}
\item We introduce an extended version of the Lu--Zheng category whose morphisms include higher Exts. This means that the trace of an endomorphism (of a dualizable object) can be valued in higher degree Chow groups, which is responsible for eventually promoting the sheaf-function correspondence to a sheaf-\emph{cycle} correspondence. 
\item We introduce a variant of the Lu--Zheng category adapted to pullbacks instead of pushforwards. This is eventually used to prove the compatibility of the sheaf-cycle correspondence with pullbacks. For this, it is essential to incorporate derived algebraic geometry (unlike the pushforward version, where all the content is already found in the classical truncation). 
\item We work with derived Artin stacks (as opposed to schemes), a generality which is needed in applications. 
\end{itemize}

\subsection{Motivic Lu--Zheng categories}

Let $S$ be a locally finite type derived Artin stack over a field. Following work of Lu--Zheng \cite{LZ22}, we will define two 2-categories, which we denote $\LZ(S)_!$ and $\LZ(S)^*$. 

\subsubsection{Objects and morphisms} Both $\LZ(S)_!$ and $\LZ(S)^*$ have the following objects and 1-morphisms:
\begin{itemize}
\item The objects are pairs $(A, \cK_A)$ where $A$ is a locally finite type derived Artin stack over $S$, and $\cK_A \in \Dmot{A}$. 
\item A \emph{morphism} from $(A_0, \cK_0)$ to $(A_1, \cK_1)$ is a triple $(c, i, \cc)$ where $c = (A_0 \xleftarrow{c_0} C \xrightarrow{c_1} A_1)$ is a correspondence, $i \in \Z$, and $\cc \in \Corr_C(\cK_0, \cK_1 \tw{-i})$ is a cohomological correspondence from $\cK_0$ to $\cK_1\tw{-i}$ with respect to $c$.  
\end{itemize}
In particular, $\LZ(S)_!$ and $\LZ(S)^*$ have the same objects and 1-morphisms. 

\begin{notat}We will generally use a lower-case Roman letter such as $c$ or $d$, with no subscripts, as shorthand for a correspondence involving the corresponding upper-case Roman letter, such as $C$ or $D$. 
\end{notat}

The composition of morphisms $(c, i, \cc) \co (A_0, \cK_0) \rightarrow (A_1, \cK_1) $ and $(d, j, \dd) \co  (A_1, \cK_1) \rightarrow (A_2, \cK_2)$ is $(e, i+j, \ee)$, where $e$ is the outer correspondence in the diagram 
\[
\begin{tikzcd}
& & E \ar[dl, "d_0'"'] \ar[dr, "c_1'"] \\
& C \ar[dl, "c_0"'] \ar[dr, "c_1"]  & & D \ar[dl, "d_0"'] \ar[dr, "d_1"] \\
A_0 & & A_1  && A_2 \end{tikzcd}
\]
where the diamond is (derived) Cartesian, and $\ee$ is the composition 
\[
(d_0')^*  c_0^* \cK_1 \xrightarrow{\cc}  (d_0')^* c_1^!  \cK_1  \tw{-i}  \xrightarrow{\di} (c_1')^! d_0^* \cK_1  \tw{-i} \xrightarrow{\dd} (c_1')^! d_1^! \cK_2 \tw{-i-j}
\]

\subsubsection{2-morphisms} The 2-morphisms of $\LZ(S)_!$ and $\LZ(S)^*$ differ: 
\begin{itemize}
\item In $\LZ(S)_!$: Given two morphisms $(c, i, \cc)$ and $(d, j, \dd)$ from $(A_0, \cK_0)$ to $(A_1, \cK_1)$, a \emph{2-morphism} $(c, i, \cc) \rightarrow (d, j, \dd)$ in $\LZ(S)_!$ is a map of correspondences 
\begin{equation}\label{eq: 2-morphism}
\begin{tikzcd} 
A_0  \ar[d, equals] & C \ar[r, "c_1"] \ar[l, "c_0"'] \ar[d, "f"] & A_1 \ar[d, equals] \\
A_0 & D \ar[r, "d_1"] \ar[l, "d_0"'] & A_1
\end{tikzcd}
\end{equation}
in which $f$ is proper (hence the map of correspondences is left pushable by Example \ref{ex: pushable/pullable examples}), and such that $\dd = f_! \cc$ in the sense of \S \ref{ssec: pushforward functoriality for CC} (so that $i=j$).
\item In $\LZ(S)^*$: Given two morphisms $(c, i, \cc)$ and $(d, j, \dd)$ from $(A_0, \cK_0)$ to $(A_1, \cK_1)$, a \emph{2-morphism} $(c,  i, \cc) \rightarrow (d, j, \dd)$ is a map of correspondences \eqref{eq: 2-morphism} in which $f$ is quasi-smooth (hence the map of correspondences is right pullable by Example \ref{ex: pushable/pullable examples}), and such that $\cc = f^* \dd$ (so that $i-j$ equals the defect $\delta_f$).
\end{itemize}
The composition of 2-morphisms is given by the obvious construction. 

\begin{remark}
The category $\LZ(S)_!$ is a graded motivic version of the category $\cC_S$ from \cite[Construction 2.6]{LZ22}. This category is adapted to the purpose of proving relative Lefschetz-Verdier formulas, which concern the compatibility of pushforwards with the Lefschetz-Verdier pairing (cf. \S \ref{ssec: Lefschetz-Verdier}). The category $\LZ(S)^*$ (which to our knowledge has not been previously considered) is adapted to proving compatibility of pullbacks with the Lefschetz-Verdier pairing.
\end{remark}

\begin{example}\label{ex: natural}
Let $f \co A \rightarrow A'$ be a morphism of derived Artin stacks over $S$. Then for $\cK \in \Dmot{A}$, we have a 1-morphism in $\LZ(S)_!$ or $\LZ(S)^*$ 
\[
f_\natural \co (A, \cK) \rightarrow (A', f_! \cK)
\]
given by the correspondence 
\[
\begin{tikzcd} 
A &  \ar[l, equals] A \ar[r, "f"] & A'
\end{tikzcd}
\]
equipped with the cohomological correspondence $\cK = \Id^* \cK \xrightarrow{\mrm{unit}} f^! f_! \cK$. 

For $\cK' \in \Dmot{A'}$, we have a 1-morphism in $\LZ(S)_!$ or $\LZ(S)^*$ 
\[
f^\natural \co (A', \cK') \rightarrow (A, f^* \cK)
\]
given by the correspondence 
\[
\begin{tikzcd}
A' & \ar[l, "f"'] A \ar[r, equals] & A
\end{tikzcd}
\]
equipped with the tautological cohomological correspondence $f^* \cK'  \rightarrow  \Id^! (f^* \cK') $. 
\end{example}

\begin{example}\label{ex: 2-morphism} Let 
\begin{equation}\label{diag: 2-morphism}
\begin{tikzcd}
A_0  \ar[d, "f_0"] & \ar[l, "c_0"'] C \ar[r, "c_1"]  \ar[d, "f"]  & A_1\ar[d, "f_1"]  \\
A_0' & \ar[l, "c_0'"'] C' \ar[r, "c_1'"]  & A_1'
\end{tikzcd}
\end{equation}
be a commutative diagram of derived Artin stacks over $S$. 

Suppose \eqref{diag: 2-morphism} is left pushable as a map of correspondences. Let $(c,i, \cc) \co (A_0, \cK_0) \rightarrow (A_1, \cK_1)$ be a morphism in $\LZ(S)_!$. Then, unraveling the definitions, there is tautologically a unique 2-morphism in $\LZ(S)_!$ fitting into a commutative diagram with the pushforward cohomological correspondence $f_! \cc$ from \S \ref{ssec: pushforward functoriality for CC}: 
\begin{equation}\label{eq: push 2-morphism}
\xymatrix{
(A_0,\cK_0) \ar[d]^{f_{0\natural}}\ar[r]^{(c,i, \cc)} & (A_1,\cK_1)\ar[d]^{f_{1\natural}}\\
(A_0',f_{0!} \cK_0)\ar[r]_{(c', i, f_!\cc)} & (A'_1,f_{1!} \cK_1)\ultwocell\omit{^{}}}
\end{equation}

Now suppose instead that \eqref{diag: 2-morphism} is right pullable as a map of correspondences. Let $(c',i', \cc') \co (A_0', \cK_0') \rightarrow (A_1', \cK_1')$ be a morphism in $\LZ(S)^*$. Then, unraveling the definitions, there is tautologically a unique 2-morphism in $\LZ(S)^*$ fitting into a commutative diagram with the pullback cohomological correspondence $f^* \cc'$ from \S \ref{ssec: pullback functoriality for CC}: 
\begin{equation}\label{eq: pull 2-morphism}
\xymatrix{
(A_0,f_0^* \cK_0) \ar[r]^{(c,i, f^* \cc' )}   & (A_1,f_1^* \cK_1) \dltwocell\omit{^{}} \\
(A_0', \cK_0')\ar[r]_{(c', i', \cc')} \ar[u]^{f_{0}^\natural} & (A'_1, \cK_1') \ar[u]^{f_1^\natural} 
}
\end{equation}
where $i = i' + \delta_f$.

\end{example}

\subsubsection{Symmetric monoidal structure} We construct a symmetric monoidal structure on $\LZ(S)^*$ and $\LZ(S)_!$. In both cases, we define the tensor product of objects as 
\[
(A, \cK_A) \otimes (B, \cK_B) \coloneqq (A \times_S B, \cK_A \boxtimes_S \cK_B)
\]
where we recall that $\cK_A \boxtimes_S \cK_B \coloneqq \pr_A^* \cK_A \otimes \pr_B^* \cK_B$ for the projection maps $A \xleftarrow{\pr_A} A \times_S B \xrightarrow{\pr_B} B$. 

The tensor product of 1-morphisms $(c, i, \cc) \co (A_0, \cK_0) \rightarrow (A_1, \cK_1)$ and $(d, j,  \dd) \co (B_0, \cL_0) \rightarrow (B_1, \cL_1)$ is the product (over $S$) correspondence $c \times_S d$ equipped with the cohomological correspondence
\[
(c_0 \times_S d_0)^* (\cK_0 \boxtimes_S \cL_0) \cong c_0^*(\cK_0) \boxtimes_S d_0^*(\cL_0) \xrightarrow{\cc \boxtimes_S \dd} c_1^!  (\cK_1) \tw{-i} \boxtimes_S d_1^! (\cL_1) \tw{-j} \rightarrow (c_1 \times_S d_1)^!  (\cK_1 \boxtimes \cL_1) \tw{-i-j}
\]
where the last map is adjoint to the K\"{u}nneth formula 
\[
(c_1 \times_S d_1)_! (\cK_1' \boxtimes_S \cL_1' ) \cong  c_{1!} \cK_1' \boxtimes_S d_{1!} \cL_1'
\]
from Lemma \ref{lem: kunneth}.

In both $\LZ(S)^*$ and $\LZ(S)_!$, the tensor product of 2-morphisms is induced by product of morphisms of stacks over $S$. 

\begin{example}
The monoidal unit in both $(\LZ(S)_!, \otimes)$ and $(\LZ(S)^*, \otimes)$ is the object $(S, \Qsh{S})$. 
\end{example}

\subsection{Dualizable objects} Any symmetric monoidal category  $(\msf{C}, \otimes)$ has a notion of \emph{dualizable object}: this means an object $c \in \msf{C}$ such that there exists a \emph{dual} $c^\vee \in \msf{C}$ and evaluation (resp. coevaluation) morphisms $\ev_c \co c^\vee \otimes c \rightarrow 1_{\msf{C}}$ (resp. $\coev_c \co 1_{\msf{C}} \rightarrow c \otimes c^\vee$) such that the composites
\[
c \xrightarrow{\coev_c \otimes \Id_c} c \otimes c^\vee  \otimes c \xrightarrow{\Id_c \otimes \ev_c} c, \hspace{1cm} 
c^\vee \xrightarrow{\Id_{c^\vee} \otimes \coev_c} c^\vee \otimes c \otimes c^\vee \xrightarrow{\ev_c \otimes \Id_{c^\vee}} c^\vee 
\]
are isomorphic to the respective identity morphisms.

\begin{prop}
Let $(A, \cK)$ be a dualizable object of $\LZ(S)_!$ or $\LZ(S)^*$. Then the dual of $(A, \cK)$ is $(A, \DD_{A/S} \cK)$. 
\end{prop}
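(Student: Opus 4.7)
The strategy is to exhibit $(A, \DD_{A/S}\cK)$ as a dual of $(A, \cK)$ by explicit evaluation and coevaluation 1-morphisms, and then invoke uniqueness of duals in a symmetric monoidal 2-category to identify this with any given dual. A preliminary observation, parallel to the characterizations of dualizability in \cite{LZ22} for $\ell$-adic sheaves and \cite{Jin23} for motives, is that dualizability of $(A, \cK)$ in $\LZ(S)_!$ or $\LZ(S)^*$ forces $\cK$ to be USLA over $S$. I would isolate this as a separate lemma so that the USLA hypothesis is available for constructing the structure maps.

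Granting USLA, define the evaluation $\ev \co (A, \DD_{A/S}\cK) \otimes (A, \cK) \to (S, \Qsh{S})$ via the diagonal correspondence
\[
A \times_S A \xleftarrow{\Delta} A \xrightarrow{\pi} S
\]
equipped with the internal evaluation pairing
\[
\Delta^{*}(\DD_{A/S}\cK \boxtimes_S \cK) \simeq \DD_{A/S}\cK \otimes \cK \too \DD_{A/S} = \pi^{!}\Qsh{S}
\]
in $\Dmot{A}$. Dually, define the coevaluation $\coev \co (S, \Qsh{S}) \to (A, \cK) \otimes (A, \DD_{A/S}\cK)$ via the flipped correspondence $S \xleftarrow{\pi} A \xrightarrow{\Delta} A \times_S A$. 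To specify its underlying cohomological correspondence $\Qsh{A} \to \Delta^!(\cK \boxtimes_S \DD_{A/S}\cK)$, apply \propref{prop: ULA hom tensor} (after swapping factors) to obtain
\[
\cK \boxtimes_S \DD_{A/S}\cK \simeq \cRHom_{A \times_S A}(\pr_2^* \cK, \pr_1^! \cK),
\]
so that the required map is adjoint to the identity of $\cK$ via the $(\Delta_!, \Delta^!)$-adjunction combined with the projection formula.

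Next, verify the two zigzag identities, which in a symmetric monoidal 2-category need only hold up to invertible 2-morphisms in $\LZ(S)_!$ (resp.\ $\LZ(S)^*$). Unpacking the composition rule, the composite $(\ev \otimes \id_{(A, \cK)}) \circ (\id_{(A, \cK)} \otimes \coev)$ is given by a correspondence $A \leftarrow C \to A$ constructed from a derived fibered product of the two diagonal correspondences; the geometry supplies a proper (resp.\ quasi-smooth) map of correspondences from $C$ to the identity correspondence $A \xleftarrow{\id} A \xrightarrow{\id} A$. Compatibility of the induced pushforward (resp.\ pullback) of cohomological correspondences with $\id_\cK$ then reduces, after unwinding the USLA identifications above, to the evident fact that $\id_\cK$ paired with internal evaluation recovers $\id_\cK$. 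The other zigzag is symmetric. By uniqueness of duals in a symmetric monoidal 2-category, the given dual of $(A, \cK)$ is canonically isomorphic to $(A, \DD_{A/S}\cK)$.

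The main obstacle is this zigzag verification, which demands careful bookkeeping through the derived fibered product diagrams and their interplay with the USLA-based identifications. For $\LZ(S)_!$ this parallels the classical Lu--Zheng construction transported to the motivic context (cf.\ \cite{Jin23}); for $\LZ(S)^*$ it is genuinely new and hinges on derived algebraic geometry, since the 2-morphisms must be quasi-smooth in the derived sense and hence only behave well once one commits to the derived setting.
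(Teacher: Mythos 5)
Your plan runs backwards through the paper's logic and leaves a substantive gap. You propose to first establish a lemma that dualizability forces $\cK$ to be USLA, then reproduce the explicit evaluation/coevaluation construction (which is exactly the content of the later Proposition~\ref{prop: USLA c is dualizable}), and finally invoke uniqueness of duals. But the preliminary lemma is merely asserted, not proven, and it is genuinely nontrivial: in \cite{LZ22} the implication "dualizable $\Rightarrow$ ULA'' is the hard half of their Theorem~2.16, which comes well \emph{after} their Proposition~2.11 and is not available for it. The present paper mirrors that ordering: Proposition~\ref{prop: USLA c is dualizable} only gives the direction USLA + geometric $\Rightarrow$ dualizable, and the paper establishes the current proposition by the direct Lu--Zheng argument, not by first recognizing $\cK$ as USLA.

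There is a second, unacknowledged gap even if USLA were granted. Your coevaluation relies on Proposition~\ref{prop: ULA hom tensor}, which requires $\cK \in \Dmotg{A}$. Dualizability in $\LZ(S)_!$ or $\LZ(S)^*$ places no a priori geometricity constraint on $\cK$ (the objects of these 2-categories are arbitrary pairs $(A,\cK)$), so you would need a further lemma that dualizability forces geometricity; this is neither proven in the paper nor obviously true. Until both the USLA and geometricity claims are supplied, your construction of the coevaluation is unavailable and the argument does not go through. The paper's actual proof, imported verbatim from \cite[Prop.~2.11]{LZ22}, extracts the identification of the dual directly from the zigzag identities satisfied by an \emph{arbitrary} duality datum, with no reliance on USLA, geometricity, or a prior explicit construction of the duality structure.
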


\begin{proof}
The proof of \cite[Proposition 2.11]{LZ22} works verbatim.
\end{proof}

\begin{cor}Let $(A, \cK)$ be a dualizable object of $\LZ(S)_!$ or $\LZ(S)^*$. Then the canonical map $\cK \rightarrow \DD_{A/S} (\DD_{A/S} \cK)$ is an isomorphism. 
\end{cor}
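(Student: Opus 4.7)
The strategy is to combine the preceding Proposition with the universal fact that every dualizable object $c$ in a symmetric monoidal category is reflexive: the dual $c^\vee$ is itself dualizable, and the canonical biduality morphism $c \to (c^\vee)^\vee$ is an isomorphism (a formal consequence of the triangle identities). Applied here, the Proposition first identifies the dual $(A, \cK)^\vee = (A, \DD_{A/S} \cK)$; applying it once more to the dualizable object $(A, \DD_{A/S} \cK)$ identifies the double dual as $(A, \DD_{A/S}(\DD_{A/S} \cK))$. Reflexivity thus yields an isomorphism
\[
(A, \cK) \isom (A, \DD_{A/S}(\DD_{A/S} \cK))
\]
in $\LZ(S)_!$ (respectively, in $\LZ(S)^*$).

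To conclude the Corollary, it remains to verify that this abstract biduality isomorphism is represented, via the tautological assignment sending a motivic map $\phi \co \cK \to \cK'$ to the 1-morphism in $\LZ(S)_!$ (or $\LZ(S)^*$) given by the cohomological correspondence $\phi$ on the identity correspondence $A \xleftarrow{\id} A \xrightarrow{\id} A$, by the standard motivic biduality map $\cK \to \DD_{A/S}(\DD_{A/S} \cK)$ in $\Dmot{A}$ --- namely, the adjoint to the canonical pairing $\cK \otimes \DD_{A/S} \cK \to f^! \Qsh{S}$, where $f \co A \to S$ is the structure morphism. Once this identification is secured, the motivic biduality map is an isomorphism because its image under the tautological assignment is.

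The main obstacle is the compatibility check in the last step. It is handled by unwinding, from the proof of the preceding Proposition (patterned on \cite[Proposition 2.11]{LZ22}), the explicit description of $\ev_{(A, \cK)}$ and $\coev_{(A, \cK)}$: both are cohomological correspondences supported on the diagonal $\Delta \co A \to A \times_S A$, encoding respectively the motivic pairing $\cK \otimes \DD_{A/S} \cK \to f^! \Qsh{S}$ and its adjoint $\Qsh{A} \to \cRHom_A(\cK, \cK)$. Substituting these into the categorical composite defining the biduality morphism and simplifying via base change along $\Delta$ and the projection formula then recovers the motivic biduality map, viewed as a cohomological correspondence supported on the identity correspondence of $A$. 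This compatibility closes the argument.
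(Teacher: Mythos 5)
Your proof takes essentially the same route as the paper, whose one-line argument simply invokes the formal fact that a dualizable object in any symmetric monoidal category is canonically isomorphic to its double dual, combined with the preceding Proposition's identification of the dual of $(A,\cK)$ as $(A,\DD_{A/S}\cK)$. You additionally make explicit --- and correctly sketch how to verify --- the compatibility that this abstract biduality $1$-isomorphism in $\LZ(S)_!$ (or $\LZ(S)^*$), once its underlying correspondence and cohomological correspondence are unwound from the $\ev$/$\coev$ data, is indeed the usual motivic biduality map $\cK \to \DD_{A/S}(\DD_{A/S}\cK)$; the paper treats this step as implicit.
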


\begin{proof}
In any symmetric monoidal category, any dualizable object is isomorphic to its double dual. 
\end{proof}



\begin{prop}\label{prop: USLA c is dualizable} Let $\cK \in \Dmotg{A}$ be USLA over $S$. Then $(A,\cK)$ is dualizable in $\LZ(S)_!$ or $\LZ(S)^*$.
\end{prop}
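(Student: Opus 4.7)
The plan is to exhibit explicit evaluation and coevaluation 1-morphisms making $(A, \DD_{A/S}\cK)$ the dual of $(A, \cK)$, and to verify the triangle identities. This closely adapts the proof of \cite[Proposition 2.14]{LZ22} (where Lu--Zheng's category $\cC_S$ plays the role of our $\LZ(S)_!$) to the motivic and derived setting, and similarly to the variant $\LZ(S)^*$. Let $\pi_A \co A \to S$ denote the structural morphism and $\Delta \co A \to A \times_S A$ the diagonal.

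\textbf{Evaluation.} I define
\[
\ev \co (A, \DD_{A/S}\cK) \otimes (A, \cK) \to (S, \Qsh{S})
\]
to be supported on the correspondence $A \times_S A \xleftarrow{\Delta} A \xrightarrow{\pi_A} S$, with cohomological correspondence the canonical pairing
\[
\Delta^*(\DD_{A/S}\cK \boxtimes_S \cK) \cong \DD_{A/S}\cK \otimes \cK \to \DD_{A/S} \cong \pi_A^!\Qsh{S}.
\]

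\textbf{Coevaluation.} I define
\[
\coev \co (S, \Qsh{S}) \to (A, \cK) \otimes (A, \DD_{A/S}\cK)
\]
to be supported on $S \xleftarrow{\pi_A} A \xrightarrow{\Delta} A \times_S A$. For the cohomological correspondence, I invoke \propref{prop: ULA hom tensor} (which requires both USLA and geometricity of $\cK$), and combine it with the symmetry of $\boxtimes_S$ to obtain
\[
\cK \boxtimes_S \DD_{A/S}\cK \xrightarrow{\sim} \cRHom_{A \times_S A}(\pr_2^* \cK, \pr_1^! \cK).
\]
Applying $\Delta^!$ and using $\pr_i \circ \Delta = \id_A$ then yields
\[
\Delta^!(\cK \boxtimes_S \DD_{A/S}\cK) \cong \cRHom_A(\cK, \cK),
\]
and I take the coevaluation cohomological correspondence to be the morphism $\Qsh{A} \to \cRHom_A(\cK, \cK)$ adjoint to $\id_{\cK}$.

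\textbf{Triangle identities.} The composite 1-morphism
\[
(A, \cK) \xrightarrow{\coev \otimes \id} (A, \cK) \otimes (A, \DD_{A/S}\cK) \otimes (A, \cK) \xrightarrow{\id \otimes \ev} (A, \cK)
\]
is supported on an iterated derived fiber product of correspondences. This composite correspondence receives a canonical morphism from $A$ via the triple diagonal, giving an invertible 2-morphism to the identity 1-morphism of $(A, \cK)$; this 2-morphism is available in both $\LZ(S)_!$ (using properness of the diagonal) and $\LZ(S)^*$ (using quasi-smoothness). Tracing the induced cohomological correspondence through the adjunctions and the identification from \propref{prop: ULA hom tensor} shows it is identified with $\id_\cK$, establishing the first triangle identity. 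The second triangle identity is verified by an analogous argument.

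\textbf{Main obstacle.} The principal technical step is the bookkeeping needed to identify the composite cohomological correspondence with $\id_\cK$, which requires compatibility of the isomorphism in \propref{prop: ULA hom tensor} with pullback along diagonals as well as various unit/counit maps in the ambient adjunctions. The argument is structurally parallel for $\LZ(S)_!$ and $\LZ(S)^*$, with the only substantive difference being the form of the invertible 2-morphism witnessing each triangle identity.
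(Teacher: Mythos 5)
Your proposal is correct and follows essentially the same approach as the paper: identifying the dual as $(A, \DD_{A/S}\cK)$, constructing evaluation on $A \times_S A \xleftarrow{\Delta} A \to S$ via the canonical pairing, constructing coevaluation on $S \leftarrow A \xrightarrow{\Delta} A \times_S A$ by applying \propref{prop: ULA hom tensor} to identify $\Delta^!(\cK \boxtimes_S \DD_{A/S}\cK) \cong \cRHom_A(\cK,\cK)$, and then verifying the triangle identities by unwinding the composite correspondence. The minor differences (ordering of tensor factors via symmetry, your extra commentary on where the bookkeeping lies) are cosmetic.
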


\begin{proof}
We will show that $(A, \DD_{A/S}(\cK))$ is dual to $(A, \cK)$ by explicitly constructing the evaluation and coevaluation morphisms (satisfying the necessary properties). In fact, the construction of the evaluation morphism does not invoke the USLA hypothesis: it is given by the correspondence 
\[
\begin{tikzcd}
& A \ar[dl, "\Delta"'] \ar[dr] \\
A \times_S A& & S
\end{tikzcd}
\]
equipped with the tautological cohomological correspondence 
\[
\Delta^*(\cK \boxtimes_S \DD_{A/S} \cK) \cong \cK \otimes_S \DD_{A/S}(\cK) \rightarrow \DD_{A/S}.
\]

The coevaluation morphism will have underlying correspondence
\[
\begin{tikzcd}
& A \ar[dl] \ar[dr, "\Delta"] \\
S & & A \times_S A
\end{tikzcd}
\]
Using Proposition \ref{prop: ULA hom tensor}, we have the following isomorphisms in $\Dmot{A}$: 
\[
\begin{tikzcd}[column sep = huge]
\Delta^! (\cK \boxtimes_S \DD_{A/S} \cK) \ar[r, "\sim", "\text{Prop \ref{prop: ULA hom tensor}}"'] & \Delta^! \cHom_{A\times_S A}(\pr_1^* \cK, \pr_0^! \cK) \cong \cHom_A( \cK, \cK).
\end{tikzcd}
\]
Thus $\Id_{\cK} \in \Hom_A(\cK, \cK)$ induces a map $\Qsh{A} \rightarrow \Delta^! (\cK \boxtimes_S \DD_{A/S} \cK)$, which defines the coevaluation morphism. 

The composite $(\Id_c \otimes \ev_c) \circ (\coev_c \otimes \Id_c )$ is supported on the outer correspondence in 
\[
\begin{tikzcd}
& & A \ar[dl, "\Delta"'] \ar[dr, "\Delta"]  \\
& A \times_S A \ar[dl, "\pr_0"'] \ar[dr, "\pr_{01} \times \Id"'] & & A \times_S A \ar[dl, "\Id \times \pr_{12}"] \ar[dr, "\pr_1"] \\
A & & A \times_S A \times_S A & &  A
\end{tikzcd}
\]
and unraveling the definitions shows that the resulting cohomological correspondence is isomorphic to the identity morphism. A similar analysis applies to $(\ev_c \otimes \Id_{c^\vee}) \circ (\Id_{c^\vee} \otimes \coev_c)$. 
\end{proof}

\subsection{Categorical traces}\label{ssec: traces}
Recall that any endomorphism $t \in \End(c)$ of a dualizable object $c$ in any symmetric monoidal category $(\msf{C}, \otimes)$ with unit $\un_{\msf{C}}$ has a notion of \emph{trace} $\Tr(t) \in \End(\un_{\msf{C}})$, defined as the composite
\[
\un_{\msf{C}} \xrightarrow{\coev_c} c \otimes c^\vee \xrightarrow{t \otimes \Id} c \otimes c^\vee  \cong c^\vee \otimes c \xrightarrow{\ev_c} \un_{\msf{C}}.
\]
If $\msf{C}$ is a 2-category, then $\End(\un_{\msf{C}})$ forms a 1-category, denoted $\Omega \msf{C}$. 

Specializing this construction, let $S$ be a derived Artin stack locally of finite type over a field.
We obtain two categories $\Omega \LZ(S)_!$ and $\Omega\LZ(S)^*$ with the same objects: in both cases the objects are triples $(F, i, \alpha)$ where $F$ is a derived Artin stack locally of finite type over $S$ (identified with the correspondence $S \leftarrow F \rightarrow S$), $i \in \Z$, and $\alpha  \in  \Corr_F(\Qsh{S}, \Qsh{S}\tw{-i}) = \CH_{i}(F/S)$ is a relative Chow cycle over $S$ of degree $i$.

The morphisms in $\Omega \LZ(S)_!$ and $\Omega\LZ(S)^*$ differ:
\begin{itemize}
\item In $\Omega \LZ(S)_!$, morphisms $(F, i, \alpha) \rightarrow (F',j, \beta)$ are proper maps $f \co F \rightarrow F'$ over $S$ such that $f_* \alpha = \beta \in \CH_{j}(F'/S)$ (so that $i=j$). 
\item In $\Omega \LZ(S)^*$, morphisms $(F,i, \alpha) \rightarrow (F', j, \beta)$ are quasi-smooth maps $f \co F \rightarrow F'$ over $S$ such that $f^! \beta = \alpha  \in \CH_{i}(F/S)$ (so that $i-j = d(f)$ is the relative dimension of $f$). 
\end{itemize}

Given a dualizable object $(A, \cK) \in \LZ(S)_!$ or $\LZ(S)^*$, an endomorphism of $(A, \cK)$ in $\LZ(S)_!$ or $\LZ(S)^*$ consists of a correspondence $(A \xleftarrow{c_0} C \xrightarrow{c_1} A)$, an integer $i \in \Z$, and a cohomological correspondence $\cc \in \Hom_C(c_0^* \cK, c_1^! \cK \tw{-i})$. The \emph{trace} of such an endomorphism is the triple $(\Fix(C) , i, \alpha \in \CH_{i}(\Fix(C)/S)$). We will refer to $\alpha$ as the \emph{trace of $\cc$}, and write 
\begin{equation}
\Tr_C(\cc) \coloneqq \alpha \in \CH_{i}(\Fix(C)/S).
\end{equation}

In other words, for every correspondence $(A \xleftarrow{c_0} C \xrightarrow{c_1} A)$ of derived Artin stacks over $S$ and every $\cK \in \Dmot{A}$, we obtain a canonical linear map
\begin{equation}
\Tr_C \co \Corr_C(\cK, \cK \tw{-i}) \to \CH_i(\Fix(C)/S).
\end{equation}
In particular, if $S=\Spec(\F)$, we obtain a trace map valued in cycle classes on $\Fix(C)$.

\subsection{Lefschetz-Verdier pairings}\label{ssec: Lefschetz-Verdier}
The general theory of pairings in symmetric monoidal 2-categories is documented in \cite[\S 1]{LZ22}. In particular, given a symmetric monoidal 2-category $(\msf{C}, \otimes)$ and morphisms $u \co c \rightarrow d$ and $v \co d \rightarrow c$ in $\msf{C}$, with $c$ dualizable, we have the pairing 
\begin{equation}
\pair{u,v} \coloneqq \Tr(v \circ u) \in \Omega \msf{C}. 
\end{equation}

\begin{example}
If $d = c$ and $v = \Id_c$, we have
\[
\pair{u, \Id_c} = \Tr(u) \in \Omega \msf{C}. 
\]
\end{example}

Suppose we have a diagram in $\msf{C}$
\begin{equation}\label{e.twosquare}
\xymatrix{c\ar[r]^u\ar[d]_f & d\ar[r]^v \ar[d]^g & c\ar[d]^f\\
c'\ar[r]^{u'} & d'\ar[r]^{v'}\ultwocell\omit{^\alpha} & c'\ultwocell\omit{^\beta}}
\end{equation}
with $c$ and $c'$ dualizable. Then by \cite[Construction 1.8]{LZ22} we have a morphism 
\begin{equation}\label{eq: fix pairing morphism}
\langle u,v\rangle\to \langle u',v'\rangle \in \Omega \msf{C}.
\end{equation}

\subsubsection{Compatibility with pushforward}

Let 
\begin{equation}\label{diag: lefschetz-verdier push}
\begin{tikzcd}
A_0 \ar[d, "f_0"] & C \ar[l, "c_0"'] \ar[d, "f"] \ar[r, "c_1"] & A_1  \ar[d, "f_1"] & D \ar[l, "d_1"'] \ar[r, "d_0"] \ar[d, "g"]  & A_0 \ar[d, "f_0"]\\
A'_0 & C' \ar[l, "c_0'"'] \ar[r, "c_1'"] & A_1' & D' \ar[l, "d_1'"'] \ar[r, "d_0"] & A'_0
\end{tikzcd}
\end{equation}
be a commutative diagram of derived Artin stacks over $S$. Write $E$ (resp. $E')$ for the composite correspondence of $C$ and $D$ (resp. $C'$ and $D'$) and $h$ for the induced map $h \co E \rightarrow E'$. 

\begin{thm}\label{thm: proper push pairing}
Assume in \eqref{diag: lefschetz-verdier push} that $f$ is proper, each $f_i$ is separated, and the square with verticles $A_1, A_1', D, D'$ is pushable. Let $\cK_0 \in \Dmotg{A_0}$ be such that $\cK_0$ is USLA over $S$ and $f_{0!} \cK_0 \in \Dmotg{A_0}$ is USLA over $S$.\footnote{This second assumption is automatic if $f_0$ is proper, by Lemma \ref{lem: proper pushforward preserves ULA}, which will be satisfied in all the situations where we will apply Theorem \ref{thm: proper push pairing}.} Let $\cK_1 \in \Dmotg{A_1}$ and $u \co c_0^* \cK_0 \rightarrow c_1^! \cK_1\tw{-i}$ and $v \co d_1^* \cK_1 \rightarrow d_0^! \cK_0\tw{-j}$. View $u$ and $v$ as morphisms in $\Hom_{\LZ(S)_!}((A_0, \cK_0), (A_1, \cK_1))$ and $\Hom_{\LZ(S)_!}((A_1, \cK_1), (A_0, \cK_0))$, respectively. 

Then $\Fix(h) \co \Fix(E) \rightarrow \Fix(E')$ is proper and\footnote{In forming this trace, we are implicitly using that $f_{0!} \cK$ is geometric by Remark \ref{rem: preservation of constructibility} (since $f_0$ is assumed to be representable).}
\[
\Fix(h)_* \pair{u,v} = \pair{f_! u, g_! v} \in \CH_{i+j}(\Fix(E')).
\]
\end{thm}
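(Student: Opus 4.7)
The approach is to reduce the statement to an abstract fact about the symmetric monoidal 2-category $\LZ(S)_!$. First, by Proposition~\ref{prop: USLA c is dualizable}, the USLA and geometricity hypotheses ensure that both $(A_0, \cK_0)$ and $(A_0', f_{0!}\cK_0)$ are dualizable in $\LZ(S)_!$. Thus the Lefschetz--Verdier pairings $\langle u, v\rangle$ and $\langle f_! u, g_! v\rangle$ are defined as the categorical traces of $v \circ u$ and $g_! v \circ f_! u$ respectively, realized as objects of $\Omega \LZ(S)_!$ supported on the fixed-point stacks $\Fix(E)$ and $\Fix(E')$ equipped with relative Chow classes of degree $i + j$.

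To obtain a morphism between these two pairings, I first check that the 2-cells of Example~\ref{ex: 2-morphism}, relating $u$ and $v$ to their pushforwards $f_! u$ and $g_! v$, exist in $\LZ(S)_!$. For $v$, left pushability of the relevant map of correspondences is exactly the hypothesis that the square with vertices $(A_1, D, A_1', D')$ is pushable. For $u$, left pushability amounts to the map $C \to A_0 \times_{A_0'} C'$ being proper; since its composition with the base change $A_0 \times_{A_0'} C' \to C'$ of the separated morphism $f_0$ recovers the proper map $f$, the cancellation principle supplies the required properness. With these 2-cells in hand, the general construction of \cite[Construction 1.8]{LZ22}, adapted verbatim to the motivic 2-category $\LZ(S)_!$, produces a 2-morphism between endomorphisms of the unit object, i.e. a morphism in $\Omega \LZ(S)_!$:
\[
\langle u, v\rangle \longrightarrow \langle f_! u, g_! v\rangle.
\]

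By the definition of $\Omega \LZ(S)_!$ in \S\ref{ssec: traces}, such a morphism is \emph{a priori} a proper map between the underlying fixed-point stacks over $S$, together with the equality of Chow classes after pushforward. Identifying this map with $\Fix(h)$ then simultaneously yields the properness of $\Fix(h)$ and the asserted identity $\Fix(h)_* \langle u, v\rangle = \langle f_! u, g_! v\rangle$. The main technical work lies in this identification: one unwinds the composites of correspondences to see that $v \circ u$ is supported on $E = C \times_{A_1} D$ while $g_! v \circ f_! u$ is supported on $E' = C' \times_{A_1'} D'$, and then verifies that the 2-cell provided by Example~\ref{ex: 2-morphism} is tautologically given by the canonical map $h \colon E \to E'$, which restricts to $\Fix(h)$ on fixed loci after applying the evaluation morphism. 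This is the direct motivic analogue of the $\ell$-adic relative Lefschetz--Verdier formula in \cite{LZ22}, and the argument transcribes essentially verbatim from there.
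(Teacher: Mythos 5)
Your proof is correct and follows essentially the same strategy as the paper's: verify that the two 2-cells of Example~\ref{ex: 2-morphism} exist so as to form the ladder \eqref{e.twosquare}, invoke \cite[Construction 1.8]{LZ22} via \eqref{eq: fix pairing morphism}, and then read off properness of $\Fix(h)$ and the pushforward identity from the resulting morphism in $\Omega\LZ(S)_!$. The one small variation is in establishing the 2-cell over $u$: you apply cancellation directly (the map $C \to A_0 \times_{A_0'} C'$ is proper because its composition with the separated base change of $f_0$ recovers the proper map $f$), whereas the paper factors the left face of \eqref{diag: lefschetz-verdier push} into two pushable squares — one pushable because $f_0$ is separated, the other because $f$ is proper — introducing an intermediate morphism $w$ in $\LZ(S)_!$ and concatenating the resulting 2-cells. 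The two arguments are equivalent, and your cancellation-based verification is the marginally more direct of the two.
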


\begin{proof}
The analogous result for schemes and $\ell$-adic coefficients is \cite[Theorem 2.21]{LZ22}, and the argument is the same in essence. From the assumed left pushability of $g$ regarded as a map of correspondences, applying \eqref{eq: push 2-morphism} gives a commutative diagram 
\begin{equation}\label{eq: push rLV 1}
\xymatrix{
(A_1,\cK_1) \ar[d]^{f_{1\natural}}\ar[r]^{v} & (A_0,\cK_0)\ar[d]^{f_{0\natural}}\\
(A_1',f_{1!} \cK_1)\ar[r]_{g_!v} & (A'_0,f_{0!} \cK_0)\ultwocell\omit{^{}}}
\end{equation}
Decompose the left side of \eqref{diag: lefschetz-verdier push} as 
\begin{equation}\label{eq: push rLV 2}
\xymatrix{A_0 \ar[d]_{f_0} & C\ar[l]_{c_0}\ar@{=}[d]\ar[r]^{c_1} & A_1\ar@{=}[d]\\
A'_0\ar@{=}[d] & C\ar[l]_{f_0c_0}\ar[d]^{f}\ar[r]^{c_1} & A_1\ar[d]^{f_1}\\
A'_0  & C'\ar[l]_{c_0'}\ar[r]^{c_1'} & A_1'}
\end{equation}
The bottom left square is pushable by the assumption that $f$ is proper. The top left square is pushable by the assumption that $f_0$ is separated. Hence we may apply \eqref{eq: push 2-morphism} to obtain a commutative diagram 
\begin{equation}\label{eq: push rLV 3} 
\xymatrix{ 	
 (A_0,\cK_0) \ar[d]^{f_{0\natural}} \ar[r]^{u} & (A_1,\cK_1) \ar@{=}[d]  \\
(A_0',f_{0!} \cK_0) \ar[r]^w \ar@{=}[d] & (A_1,\cK_1) \ar[d]^{f_{1\natural}}  \ultwocell\omit{^{}}  \\
(A_0',f_{0!} \cK_0) \ar[r]_{f_!u}  & (A_1',f_{1!} \cK_1)  \ultwocell\omit{^{}}
}
\end{equation}
Concatenating \eqref{eq: push rLV 3} with \eqref{eq: push rLV 1} gives the commutative diagram 
\[
\xymatrix{(A_0,\cK_0)\ar[r]^{u}_\Downarrow\ar[d]_{f_{0\natural}} & (\cA_1,\cK_1) \ar[d]^{f_{1\natural}} \ar[r]^{v} & (A_0, \cK_0)\ar[d]^{f_{0\natural}}\\
(A'_0,f_{0!}\cK_0)\ar[r]_{f_!u}^\Downarrow\ar[ru]|{w} & (A_1',f_{1!} \cK_1 )\ar[r]_{g_	!v} & (A_0',f_{0!}\cK_0)\ultwocell\omit{^{}}}
\]
Then by \eqref{eq: fix pairing morphism} we have a map $(\Fix(E), \pair{u,v} ) \rightarrow (\Fix(E'), \pair{f_! u, g_! v})$ in $\Omega \LZ(S)_!$, which by definition entails the equality $\Fix(f)_* \pair{u,v} = \pair{f_! u, g_! v}$. 
\end{proof}

\subsubsection{Compatibility with pullback}
Consider the same diagram \eqref{diag: lefschetz-verdier push} and again write $E$ (resp. $E')$ for the composite correspondence of $C$ and $D$ (resp. $C'$ and $D'$) and $h$ for the induced map $h \co E \rightarrow E'$. 

\begin{thm}\label{thm: qsm pull pairing} Assume in \eqref{diag: lefschetz-verdier push} that $f$ is quasi-smooth and each $f_i$ is smooth, and the square with vertices $D,D',A_0,A_0'$ is pullable. Let $\cK_0' \in \Dmotg{A'_0}$ be such that $\cK_0'$ is USLA over $S$ (hence $f_0^* \cK_0' \in \Dmotg{A_0}$ is USLA over $S$ by Lemma \ref{lem: USLA smooth local}(1)). Let $\cK_1' \in \Dmotg{A_1'}$ and $u' \co (c_0')^* \cK_0' \rightarrow (c_1')^! \cK_1' \tw{-i}$ and $v \co (d_0')^* \cK_1' \rightarrow (d_1')^! \cK_0'\tw{-j}$. Identify $u'$ and $v'$ with the corresponding morphisms in $\Hom_{\LZ(S)^*}((A_0', \cK_0'), (A_1', \cK_1'))$ and $\Hom_{\LZ(S)^*}((A_1', \cK_1'), (A_0', \cK_0'))$, respectively. 

Then $\Fix(h) \co \Fix(E) \rightarrow \Fix(E)'$ is quasi-smooth and\footnote{In forming this trace, we are implicitly using that $f_0^* \cK$ is geometric by Remark \ref{rem: preservation of constructibility}.}
\[
\Fix(h)^! \pair{u',v'} = \pair{f^* u', g^* v'} \in \CH_{i+j +  \delta_h}(\Fix(E)).
\]
\end{thm}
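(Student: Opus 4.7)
The plan is to mirror the proof of \thmref{thm: proper push pairing} but working in the 2-category $\LZ(S)^*$ in place of $\LZ(S)_!$, and using the pullback 2-morphism \eqref{eq: pull 2-morphism} in place of the pushforward 2-morphism \eqref{eq: push 2-morphism}. The role played in the push case by left pushability of maps of correspondences is now played by right pullability, and pushforward along proper maps of fixed-point loci is replaced by Gysin pullback along quasi-smooth maps.

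I will first verify the inputs. Since $\cK_0'$ is USLA over $S$ and geometric and $f_0$ is smooth, the pullback $f_0^*\cK_0'$ is USLA over $S$ by \lemref{lem: USLA smooth local}(1) and geometric by \remref{rem: preservation of constructibility}; hence both $(A_0', \cK_0')$ and $(A_0, f_0^*\cK_0')$ are dualizable in $\LZ(S)^*$ by \propref{prop: USLA c is dualizable}, and the pairings $\pair{u', v'}$ and $\pair{f^*u', g^*v'}$ are both defined. For the quasi-smoothness of $\Fix(h)$: the map of correspondences $f \co c \to c'$ is right pullable because the square with vertices $(C, A_1, C', A_1')$ is pullable by \examref{ex: pushable/pullable examples} (using that $f$ is quasi-smooth and $f_1$ is smooth), while $g \co d \to d'$ is right pullable by the hypothesis on the square $(D, A_0, D', A_0')$. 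By the composition rule for pullable squares (cf.\ \cite[Lemma 3.5.3]{FYZ3}), the induced map $h \co E \to E'$ of composite correspondences is quasi-smooth of defect $\delta_h = \delta_f + \delta_g$, and $\Fix(h)$ inherits the same property as a base change of $h$ along the diagonal $A_0 \to A_0 \times A_0$.

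The core of the argument then consists of two applications of \eqref{eq: pull 2-morphism}. Applied to $u'$ and the right-pullable map $f$, it produces a 2-cell in $\LZ(S)^*$ relating the 1-morphisms $(f^*u') \circ f_0^\natural$ and $f_1^\natural \circ u'$ from $(A_0', \cK_0')$ to $(A_1, f_1^*\cK_1')$; applied to $v'$ and $g$, it produces a 2-cell relating $(g^*v') \circ f_1^\natural$ and $f_0^\natural \circ v'$. Concatenating these 2-cells horizontally assembles a diagram in $\LZ(S)^*$ of the shape \eqref{e.twosquare}, with $u', v'$ in one row and $f^*u', g^*v'$ in the other. Feeding this into the categorical pairing construction \eqref{eq: fix pairing morphism} produces a morphism in $\Omega \LZ(S)^*$ between $\pair{u', v'}$ and $\pair{f^*u', g^*v'}$. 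Unwinding the definition of $\Omega \LZ(S)^*$ in \S \ref{ssec: traces}, this morphism is precisely a quasi-smooth map identifiable with $\Fix(h) \co \Fix(E) \to \Fix(E')$ together with the claimed identity $\Fix(h)^! \pair{u', v'} = \pair{f^*u', g^*v'}$ in $\CH_{i+j+\delta_h}(\Fix(E))$.

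I expect the primary obstacle to be bookkeeping rather than conceptual: the directions of each 2-cell through the concatenation and through the pairing construction must be tracked carefully, so as to confirm that the morphism extracted in $\Omega \LZ(S)^*$ corresponds to the Gysin pullback along $\Fix(h)$ (and not along a map in the opposite direction), and that the shift is matched correctly with $\delta_h = \delta_f + \delta_g$. A related subtlety to flag is that the entire argument depends on $\Fix(E)$ being interpreted as a derived fibered product $E \times_{A_0 \times A_0} A_0$: its classical truncation would generically fail to be quasi-smooth over $\Fix(E')$, so the whole argument is set within derived algebraic geometry, as stressed in the introduction to Section \ref{sec: lu-zheng}.
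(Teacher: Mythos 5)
Your proposal is correct and tracks the paper's proof closely. The one place you diverge is a minor economy: the paper decomposes the left-hand map of correspondences $f \co C \to C'$ into a two-step vertical factorization (as in \eqref{eq: pull rLV 2}), producing an intermediate $1$-morphism $w$ and applying \eqref{eq: pull 2-morphism} once to each factor — the top square pullable because $f_1$ is smooth, the bottom because $f$ is quasi-smooth — before concatenating with the $2$-cell coming from the $g$-side as in \eqref{eq: pull rLV 1}. You instead observe that the full square $(C, A_1, C', A_1')$ is already pullable by Example~\ref{ex: pushable/pullable examples} (using $f$ quasi-smooth and $f_1$ smooth simultaneously) and apply \eqref{eq: pull 2-morphism} once. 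Both are valid; the paper's factorization simply isolates the roles of the two hypotheses, while yours is marginally more compact. Everything else — the invocation of Lemma~\ref{lem: USLA smooth local} and Remark~\ref{rem: preservation of constructibility} to establish dualizability, the assembly into a diagram of shape \eqref{e.twosquare}, the appeal to \eqref{eq: fix pairing morphism}, and the reading of the resulting morphism in $\Omega\LZ(S)^*$ as the Gysin identity — matches the paper's argument.
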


\begin{proof}
From the pullability of the right half one has a 2-morphism in $\LZ(S)^*$, applying \eqref{eq: pull 2-morphism} gives a commutative diagram 
\begin{equation}\label{eq: pull rLV 1}
\xymatrix{
(A_1,f_1^*\cK_1')  \ar[r]^{g^* v'} & (A_0, f_0^* \cK_0') \dltwocell\omit{^{}} \\
(A_1',\cK_1')\ar[r]_{v'} \ar[u]^{f_1^\natural
} & (A_0',\cK_0') \ar[u]^{f_0^\natural}
}
\end{equation}
Decompose the left side of \eqref{diag: lefschetz-verdier push} as
\begin{equation}\label{eq: pull rLV 2}
\xymatrix{A_0 \ar@{=}[d] & C\ar[l]_{c_0}\ar@{=}[d]\ar[r]^{c_1} & A_1 \ar[d]^{f_1}\\
A_0 \ar[d]_{f_0} & C\ar[l]_{c_0}\ar[d]^{f}\ar[r]^{f_1 c_1} & A_1'\ar@{=}[d]\\
A_0' & C'\ar[l]_{c_0'}\ar[r]^{c_1'} & A_1'}
\end{equation}
The bottom right square is pullable by the assumption that $f$ is quasi-smooth. The top right square is pullable by the assumption that $f_1$ is smooth. Hence we may apply \eqref{eq: pull 2-morphism} to obtain a commutative diagram 
\begin{equation}\label{eq: pull rLV 3} 
\xymatrix{ 
 (A_0, f_0^* \cK_0')\ar@{=}[d] \ar[r]^{f^* u'} & (A_1, f_1^*\cK_1')  \dltwocell\omit{^{}} \\
(A_0,f_0^*\cK_0')\ar[r]_{w} & (A_1',\cK_1') \ar@{=}[d]   \dltwocell\omit{^{}}  \ar[u]^{f_1^\natural} \\
(A_0', \cK_0') \ar[r]_{u'} \ar[u]^{f_0^\natural}  & (A_1',\cK_1')  
}
\end{equation}
Concatenating \eqref{eq: pull rLV 3} with \eqref{eq: push rLV 1} gives the commutative diagram 
\[
\xymatrix{
(A_0,f_0^*\cK_0')\ar[r]^{f^* u'}_\Downarrow \ar[dr]|{w}  & (A_1,f_1^* \cK_1')   \ar[r]^{g^* v'} & (A_0, f_0^*\cK_0')  \dltwocell\omit{^{}} \\
(A_0',\cK_0') \ar[u]_{f_0^\natural}  \ar[r]_{ u' }^\Downarrow  & (A_1',\cK_1')\ar[r]_{ v'} \ar[u]^{f_1^\natural} & (A'_0,\cK_0') \ar[u]^{f_0^\natural}
}
\]
Then by \eqref{eq: fix pairing morphism} we have a map $(\Fix(E), \pair{f^*u',f^*v'} ) \rightarrow (\Fix(E'), \pair{u', v'}) \in \Omega \LZ(S)^*$, which by definition entails the equality $\Fix(h)^! \pair{u',v'} = \pair{f^* u', g^* v'}$. 
\end{proof}

\section{Motivic sheaf-cycle correspondence}\label{sec: motivic sheaf-cycle} In this section we develop the motivic sheaf-cycle correspondence, which is based on the trace map constructed in \S \ref{ssec: traces}.
This material is a motivic lift of \cite[\S 4]{FYZ3}, much of which carries over verbatim (in those cases we omit the proofs). The only aspect that is substantially different is Proposition~\ref{prop: trace commutes with smooth pullback}, which we prove using the motivic Lu--Zheng categorical trace.

\subsection{The trace}

Let
\begin{equation}
\begin{tikzcd}
A & C \ar[l, "c_0"'] \ar[r, "c_1"]  & A
\end{tikzcd}
\end{equation}
be a correspondence of derived Artin stacks over a field $\F$.
For every $\cK \in \Dmotg{A}$ and $i\in\bZ$, the construction of \S \ref{ssec: traces} gives a trace map
\begin{equation}
\Tr_C \co \Corr_C(\cK, \cK \tw{-i}) \to \CH_i(\Fix(C)).
\end{equation}
Indeed, since $\cK$ is geometric (by assumption) and USLA over $\Spec (k)$ (Example~\ref{ex: USLA over point}), Proposition~\ref{prop: USLA c is dualizable} implies that $(A, \cK)$ is dualizable in $\LZ(\F)^*$ or $\LZ(\F)_!$. 

\subsection{Functoriality of the trace}\label{ssec: trace cc functoriality} We study the compatibility of pushforward and pullback on cohomological correspondences and Chow groups (see \S \ref{ssec: functoriality for cohomological correspondences}) under formation of traces.
We consider a map of correspondences of derived Artin stacks over a field $\F$:
\begin{equation}\label{eq: functoriality for trace}
\begin{tikzcd}
A \ar[d, "f_0"'] & C \ar[l, "c_0"'] \ar[r, "c_1"]  \ar[d, "f"]  & A  \ar[d, "f_1=f_0"] \\
B  & D \ar[l, "d_0"'] \ar[r, "d_1"]  & B  
\end{tikzcd}
\end{equation}

\subsubsection{Proper pushforward} The following result is a motivic lift of \cite[Proposition 4.5.1]{FYZ3}. 

\begin{prop}\label{prop: trace commutes with proper push}
With notation as in \eqref{eq: functoriality for trace}, assume that $f_0, f$ are proper and let $\cc\in \Corr_{C}(\cK,\cK\tw{-i})$ where $\cK \in \Dmotg{A}$. Then $\Fix(f) \co \Fix(C) \rightarrow \Fix(D)$ is proper and we have
\[
\Tr_D(f_{!}  \cc ) =\Fix(f)_* (\Tr_C(\cc)) \in \CH_{i}(\Fix(D)).
\]
\end{prop}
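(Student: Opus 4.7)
The plan is to deduce this proposition as a special case of Theorem~\ref{thm: proper push pairing} (the relative Lefschetz--Verdier formula for proper pushforward), by specializing the "second" correspondence to be the identity so that the Lefschetz--Verdier pairing collapses to the trace. Indeed, by definition of the trace as $\langle -, \Id \rangle$ in the categorical-trace formalism of \S\ref{ssec: Lefschetz-Verdier}, it suffices to exhibit the equality $\Tr_D(f_!\cc) = \Fix(f)_*\Tr_C(\cc)$ as the $\langle-,\Id\rangle$-instance of the pushforward-compatibility result already proved.

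Concretely, I will form the diagram \eqref{diag: lefschetz-verdier push} with $A_0 = A_1 = A$, $A_0' = A_1' = B$, $f_0 = f_1$ the given proper map, $C$ the correspondence in the hypothesis, $C'$ the pushforward correspondence $D$ (renamed here to avoid a collision of notation with the $D$ in Theorem~\ref{thm: proper push pairing}), and $D = A$, $D' = B$ the identity self-correspondences with $g = f_0$. Take $u = \cc$ and $v = \Id_\cK$, the latter being interpreted as an endomorphism of $\cK$ supported on the identity correspondence. Then the composite correspondence $E = C \circ A = C$ and similarly $E' = C'$, so $\Fix(E) = \Fix(C)$, $\Fix(E') = \Fix(C')$, and the induced map $h$ becomes exactly $f$.

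The hypotheses of Theorem~\ref{thm: proper push pairing} are easy to verify in this setup: $f$ is proper by assumption; $f_0$ is proper hence separated; and the square with vertices $A_1, A_1', D, D'$ is Cartesian (being built from identity horizontal maps), hence trivially pushable. The USLA hypotheses are automatic because we work over $S = \Spec(\F)$: every object of $\Dmot{A}$ is USLA over a point by Example~\ref{ex: USLA over point}, so both $\cK$ and $f_{0!}\cK$ satisfy the required condition (the latter is furthermore geometric since $f_0$ is proper, hence representable in derived schemes). Applying Theorem~\ref{thm: proper push pairing} then yields
\[
\Fix(f)_* \langle \cc, \Id_\cK \rangle = \langle f_! \cc, f_{0!} \Id_\cK \rangle,
\]
and unwinding the $\langle -, \Id\rangle$-pairing on both sides gives the claimed identity in $\CH_i(\Fix(D))$.

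For the separate assertion that $\Fix(f)$ is proper: this follows because $\Fix(C) = C \times_{A \times A} A$ and $\Fix(D) = D \times_{B \times B} B$ (even in the derived sense), and the morphism $\Fix(C) \to \Fix(D)$ is base-changed from the proper morphisms $f$ and $f_0$. I do not expect any step to be a serious obstacle; the entire content of the proposition is packaged in Theorem~\ref{thm: proper push pairing}, whose specialization to $v = \Id$ is precisely the desired pushforward compatibility.
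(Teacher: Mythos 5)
Your proposal is correct and takes essentially the same approach as the paper: the paper's proof is a one-liner that applies Theorem~\ref{thm: proper push pairing} with $S = \Spec\F$, $A_0 = A_1 = A$, $\cK_0 = \cK_1 = \cK$, $u = \cc$, and $v = \Id$. Your more detailed verification of the hypotheses (pushability of the identity square, USLA over a point from Example~\ref{ex: USLA over point}, geometricity of $f_{0!}\cK$ via representability) is the correct unpacking of what the paper leaves implicit.
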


\begin{proof}Apply Theorem \ref{thm: proper push pairing} with $S \coloneqq \Spec \F$, $A_0 = A_1 \coloneqq A$, $\cK_0 = \cK_1 \coloneqq \cK$, $u  \coloneqq \cc$, and $v \coloneqq \Id$. 
\end{proof}

\subsubsection{Quasi-smooth pullback} The following result is a motivic lift of \cite[Proposition 4.5.4]{FYZ3}.

\begin{prop}\label{prop: trace commutes with smooth pullback}
With notation as in \eqref{eq: functoriality for trace}, assume that $f_1$ is smooth and  $f$ is quasi-smooth. Let  $\d=d(f)-d(f_{1})$. Let $\mf{d} \in \Corr_{D}(\cK,\cK\tw{-i})$ where $\cK \in \Dmotg{B}$. Then $\Fix(f) \co \Fix(C) \rightarrow \Fix(D)$ is quasi-smooth and we have
\[
\Tr_C(f^{*} \mf{d}) = \Fix(f)^{!} (\Tr_D(\mf{d})) \in \CH_{i+\d}(\Fix(C)).
\]
\end{prop}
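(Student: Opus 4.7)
The plan is to apply Theorem~\ref{thm: qsm pull pairing} in a manner exactly dual to how Proposition~\ref{prop: trace commutes with proper push} applied Theorem~\ref{thm: proper push pairing}. Set $S = \Spec \F$. View $\mf{d}$ as a $1$-morphism $(B, \cK) \to (B, \cK)$ in $\LZ(\F)^*$ supported on $D$, and $\Id_{\cK}$ as a $1$-morphism $(B, \cK) \to (B, \cK)$ supported on the trivial self-correspondence $B \xleftarrow{=} B \xrightarrow{=} B$. In the notation of Theorem~\ref{thm: qsm pull pairing}, take $A_0' = A_1' = B$, $\cK_0' = \cK_1' = \cK$, $u' = \mf{d}$, and $v' = \Id_{\cK}$; and take $A_0 = A_1 = A$ with top correspondence $C$, together with the trivial self-correspondence on $A$ playing the role of the second top correspondence. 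Then the map $g$ of correspondences becomes $\Id_B$ (smooth, defect zero) and the pullability/smoothness hypothesis on the right half of \eqref{diag: lefschetz-verdier push} is trivially satisfied.

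Under these identifications, the composite top correspondence $E$ is $C$, the composite bottom correspondence $E'$ is $D$, and the induced map $h \co E \to E'$ is exactly $f \co C \to D$. The remaining hypotheses of Theorem~\ref{thm: qsm pull pairing} become exactly the hypotheses of the proposition: $f$ is quasi-smooth, the maps $f_i$ are smooth, and the pullability of the square $(D, D', A_0, A_0')$ is the pullability of $f$ as a map of correspondences. The required USLA property of $\cK \in \Dmotg{B}$ over $\Spec \F$ is automatic by Example~\ref{ex: USLA over point}.

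With these inputs, the pairings unwind as $\pair{u', v'} = \Tr_D(\mf{d})$ and $\pair{f^* u', g^* v'} = \Tr_C(f^* \mf{d})$, and Theorem~\ref{thm: qsm pull pairing} directly gives the desired identity $\Fix(f)^! \Tr_D(\mf{d}) = \Tr_C(f^* \mf{d})$. The only nontrivial bookkeeping is to match the defect $\delta_h$ that appears in Theorem~\ref{thm: qsm pull pairing} (i.e.\ the relative dimension $d(\Fix(f))$) with the quantity $\d = d(f) - d(f_1)$ appearing in the statement. This is the main obstacle, and is essentially a calculation with cotangent complexes: using the derived Cartesian description $\Fix(C) = C \times_{A \times A} A$ and likewise for $\Fix(D)$, one factors $\Fix(f)$ through $\Fix(D) \times_B A$ and computes relative dimensions stepwise, obtaining $d(\Fix(f)) = d(f) - d(f_1)$. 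Once this identification is in place, the Tate twists in Theorem~\ref{thm: qsm pull pairing} match the Chow degree shift in the proposition, completing the proof.
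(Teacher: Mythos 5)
Your proposal is correct and reproduces the paper's proof, which is a one-line application of Theorem~\ref{thm: qsm pull pairing} with exactly the data you specify (and your placement of primes on $A_i'$, $\cK_i'$ is in fact more careful than the paper's own wording, which writes $\cK_0 = \cK_1 \coloneqq \cK$ without primes). One small slip: the vertical map $g$ between the second pair of correspondences is $f_0 : A \to B$, not $\Id_B$ --- the second top correspondence is the trivial one on $A$ and the second bottom is the trivial one on $B$, so the induced map is $f_0$, and with $A_1 = A$ setting $g = \Id_B$ would not even give a well-formed diagram. This does not affect anything downstream: the square $(D, D', A_0, A_0')$ is still trivially Cartesian, hence pullable with defect zero, and $g^* v' = \Id_{f_0^*\cK}$. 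Your matching of $\delta_h = d(\Fix(f))$ with $\delta = d(f) - d(f_1)$ is exactly the right bookkeeping; equivalently, $\delta_h$ is the defect of $f$ as a right-pullable map of correspondences, namely $d(\wt{c}_1 : C \to D \times_B A)$, which equals $d(f) - d(f_1)$ since $D \times_B A \to D$ is a base change of $f_1$.
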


\begin{proof}Apply Theorem \ref{thm: qsm pull pairing} with $S \coloneqq \Spec \F$, $A_0 = A_1 \coloneqq A$, $\cK_0 = \cK_1 \coloneqq \cK$, $u  \coloneqq \dd$, and $v \coloneqq \Id$. 
\end{proof}

\subsection{The fundamental class as a trace}\label{ssec: VFC as trace}

Let $A \xleftarrow{c_0} C \xrightarrow{c_1} A$ be a correspondence of derived Artin stacks over $\F$.
If $c_1$ is quasi-smooth and $A$ is smooth over $\F$, then $\Fix(C)$ is quasi-smooth over $\F$ of relative dimension $d(c_1)$, so there is a fundamental class (\S \ref{ssec:Gys})
\begin{equation}
[\Fix(C)] \in \CH_{d(c_1)}(\Fix(C)).
\end{equation}

On the other hand, regarding the relative fundamental class $[c_1] \in \CH_{d(c_1)}(C/ A)$ as a map $c_1^* \Qsh{A} \rightarrow c_1^! \Qsh{A} \tw{-d(c_1)}$ in $\Dmot{C}$, the composite
\[
c_0^* \Qsh{A} \cong \Qsh{A} \cong c_1^* \Qsh{A} \xrightarrow{[c_1]} c_1^! \Qsh{A} \tw{-d(c_1)}
\]
defines a cohomological correspondence $\cc_{A} \in \Corr_C(\Qsh{A}, \Qsh{A}\tw{-d(c_1)})$.

\begin{prop}\label{prop: smooth derived local terms} If $c_1$ is quasi-smooth and $A$ is smooth, then we have
\[
[\Fix(C)] = \Tr_C(\cc_A) \in \CH_{d(c_1)} (\Fix(C)).
\]
\end{prop}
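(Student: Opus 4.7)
The plan is to realize $\cc_A$ as a pullback of the trivial cohomological correspondence on a point and then invoke Proposition~\ref{prop: trace commutes with smooth pullback}. Consider the map of correspondences
\[
\begin{tikzcd}
A \ar[d, "\pi"'] & C \ar[l, "c_0"'] \ar[r, "c_1"] \ar[d, "f"] & A \ar[d, "\pi"] \\
\Spec(\F) & \Spec(\F) \ar[l, equal] \ar[r, equal] & \Spec(\F)
\end{tikzcd}
\]
where $\pi \co A \to \Spec(\F)$ is the structure map and $f = \pi \circ c_1 = \pi \circ c_0$. Smoothness of $A$ ensures that $\pi$ is smooth of relative dimension $d(A)$, and since $c_1$ is quasi-smooth of relative dimension $d(c_1)$, the composite $f$ is quasi-smooth of relative dimension $d(A)+d(c_1)$. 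Hence the hypotheses of Proposition~\ref{prop: trace commutes with smooth pullback} hold, with $\delta = d(f) - d(\pi) = d(c_1)$.

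Take $\mf{d} = \Id \in \Corr_{\Spec(\F)}(\Qsh{\Spec(\F)},\Qsh{\Spec(\F)})$, so $\Tr_{\Spec(\F)}(\mf{d}) = [\Spec(\F)] \in \CH_0(\Spec(\F))$. The key verification is that $f^*\mf{d} = \cc_A$. Unwinding the definition of pullback in \S\ref{ssec: pullback functoriality for CC}, this reduces to unwinding the $\tu$-transformation from \S\ref{ssec: pull-pull} attached to the Cartesian square with vertices $C, A, \Spec(\F), \Spec(\F)$: the factorization \eqref{eq: pushable square} here takes $\wt B = A$ with $a = c_1$, so the only nontrivial piece in the composite defining $\tu$ is the Gysin morphism $[c_1] \co c_1^* \Qsh{A} \to c_1^! \Qsh{A}\tw{-d(c_1)}$. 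This is exactly the definition of $\cc_A$.

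Applying Proposition~\ref{prop: trace commutes with smooth pullback} then yields
\[
\Tr_C(\cc_A) \;=\; \Fix(f)^!\bigl(\Tr_{\Spec(\F)}(\mf{d})\bigr) \;=\; \Fix(f)^![\Spec(\F)] \in \CH_{d(c_1)}(\Fix(C)).
\]
It remains to identify $\Fix(f)^![\Spec(\F)]$ with $[\Fix(C)]$. Since $\Fix(C) = C \times_{A \times A} A$ and the diagonal $A \to A \times A$ is quasi-smooth of relative dimension $-d(A)$ (as $A$ is smooth), base change shows $\Fix(f) \co \Fix(C) \to \Spec(\F)$ is quasi-smooth of relative dimension $(d(A)+d(c_1)) - d(A) = d(c_1)$, and its Gysin pullback of $[\Spec(\F)]$ is by definition the virtual fundamental class $[\Fix(C)]$, as recalled in \S\ref{ssec:Gys}.

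The main obstacle is the bookkeeping of the pullback identification $f^*\mf{d} = \cc_A$: one must chase through several base-change natural transformations, confirming that the only piece that does not collapse to an identity is the Gysin morphism attached to $c_1$, which recovers $\cc_A$ on the nose. All other steps are essentially dimension counting.
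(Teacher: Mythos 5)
Your proof is correct and follows exactly the same route as the paper's own argument: exhibit $\cc_A$ as the pullback $\pi^*\cc_{\pt}$ of the trivial correspondence along the right-pullable map of correspondences $C \to \pt$, then apply Proposition~\ref{prop: trace commutes with smooth pullback} and identify $\Fix(f)^![\pt]$ with $[\Fix(C)]$. The only difference is cosmetic: where the paper cites an external lemma for the identification $\pi^*\cc_{\pt} = \cc_A$, you unwind the $\tu$-transformation explicitly, which is a welcome expansion.
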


\begin{proof}
Consider the map of correspondences 
\begin{equation}\label{eq: pull from point}
\begin{tikzcd}
A  \ar[d, "\pi_0"] & \ar[l, "c_0"'] C \ar[r, "c_1"] \ar[d, "\pi"]  & A \ar[d, "\pi_1"] \\
\pt &  \ar[l] \pt \ar[r] & \pt 
\end{tikzcd}
\end{equation}
where $\pt=\Spec(\F)$.
By assumption $\pi_1$ is smooth and $c_1$ is quasi-smooth, so $\pi$ is quasi-smooth and \eqref{eq: pull from point} is right pullable.
Unravelling definitions, we can write $\cc_A$ as the pullback of the trivial cohomological correspondence $\cc_{\pt} \in \Corr_{\pt}(\Q, \Q)$ (cf. \cite[Lemma 9.4.2]{FYZ3}):
\[
\pi^* \cc_{\pt} = \cc_A \in \Corr_C(\Qsh{A}, \Qsh{A}\tw{-d(c_1)}).
\]
By Proposition~\ref{prop: trace commutes with smooth pullback} we have
\[
\Tr_C(\pi^* \cc_{\pt}) = \Fix(\pi)^! (\Tr_{\pt}(\cc_{\pt})) = \Fix(\pi)^![\pt] = [\Fix(C)] \in \CH_{d(c_1)}(\Fix(C)),
\]
whence the claim.
\end{proof}

\subsection{Frobenius-twisted trace} \label{ssec: tr sht}

\subsubsection{Frobenius}

We take $\F = \F_q$ to be a finite field.
Any derived Artin stack over $\F_q$ is equipped with a Frobenius endomorphism $\Frob$, which in terms of the functor of points is the absolute Frobenius $\Frob_q$ on the test scheme.
That is, for any derived Artin stack $X$ over $\F_q$, we denote by $\Frob : X \to X$ the morphism sending an $R$-point $x : \Spec(R) \to X$ to the composite
\[ \Spec(R) \xrightarrow{\Frob_q} \Spec(R) \xrightarrow{x} X \]
for every commutative $\F_q$-algebra $R$.

\subsubsection{Fix vs. Sht}\label{sssec: fix vs sht}

For a correspondence
\[
\begin{tikzcd}
 A & \ar[l, "c_0"'] C \ar[r, "c_1"] & A
\end{tikzcd}
\]
over $\F_q$, we will let $\Sht(C)$ (or sometimes $\Sht_A$) be the derived fibered product 
\begin{equation}\label{eq: fix vs sht 1}
\begin{tikzcd}
\Sht(C) \ar[r] \ar[d] & C \ar[d, "{(c_0, c_1)}"] \\
A \ar[r, "{(\Id, \Frob)}"] & A \times A
\end{tikzcd}
\end{equation}
This derived fibered product can be also be presented by the derived Cartesian square 
\begin{equation}\label{eq: fix vs sht 2}
\begin{tikzcd}
\Sht(C) \ar[r] \ar[d] & C \ar[d, "{(\Frob \circ c_0, c_1)}"] \\
A \ar[r, "\Delta"] & A \times A
\end{tikzcd}
\end{equation}
which is the ``fixed point Cartesian square'' for the correspondence 
\begin{equation}\label{eq: fix vs sht 3}
\begin{tikzcd}
A & \ar[l, "\Frob \circ c_0"'] C^{(1)} \ar[r, "c_1"] & A
\end{tikzcd}
\end{equation}
where $C^{(1)} \coloneqq C$ but with the left map twisted by $\Frob$. In other words, we have a canonical identification
\begin{equation}
\Sht(C)=\Fix(C^{(1)}).
\end{equation}

\subsubsection{Sht-valued trace}\label{sssec: sht val tr}

Given $\cK_0, \cK_1 \in \Dmot{A}$ and a cohomological correspondence $\cc \co c_0^* \cK_0 \rightarrow c_1^! \cK_1$ on $C$, plus the canonical Weil structure $\Frob^* \cK_0 \cong \cK_0$ (because $A$ is defined over $\F_{q}$), we have a cohomological correspondence $\cc^{(1)} \co (\Frob \circ c_0)^* \cK_0 \rightarrow c_1^! \cK_1$. In this way we obtain a linear isomorphism
\begin{equation*}
\Corr_{C}(\cK_{0}, \cK_{1})\isom \Corr_{C^{(1)}}(\cK_{0}, \cK_{1})
\end{equation*}
sending $\cc \mapsto \cc^{(1)}$. If $\cK_0$ is geometric and $\cK_1 = \cK_0 \tw{-i}$, then we define 
\[
\Tr_C^{\Sht}(\cc)\coloneqq\Tr_C(\cc^{(1)}) \in \CH_{i}( \Fix(C^{(1)}))=\CH_{i}( \Sht(C)).
\]
This determines a linear map
\begin{equation}
\Tr_C^{\Sht} : \Corr_C(\cK_0, \cK_0\vb{-i}) \to \CH_i(\Sht(C)).
\end{equation}

\subsubsection{The fundamental class of \texorpdfstring{$\Sht(C)$}{Sht(C)}}

In the situation of \S \ref{sssec: fix vs sht}, Proposition~\ref{prop: smooth derived local terms} yields:

\begin{cor}\label{cor: smooth derived local terms Sht}
If $c_1$ is quasi-smooth and $A$ is smooth over $\F_q$, then we have 
\[
 \Tr_{C}^{\Sht}(\cc_A) = [\Sht(C)] \in \CH_{d(c_1)}(\Sht(C))
\]
where $\Qsh{A}$ is equipped with its natural Weil structure.
\end{cor}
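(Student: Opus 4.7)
The plan is to deduce this immediately from \propref{prop: smooth derived local terms} applied to the Frobenius-twisted correspondence $C^{(1)}$. Recall from \S \ref{sssec: fix vs sht} that by definition $\Sht(C) = \Fix(C^{(1)})$, where $C^{(1)}$ has the same underlying stack as $C$ but with the left leg $c_0$ replaced by $\Frob \c c_0$. Crucially, the right leg of $C^{(1)}$ remains $c_1$, which is quasi-smooth by hypothesis; and $A$ is smooth over $\F_q$. Therefore the hypotheses of \propref{prop: smooth derived local terms} are satisfied for $C^{(1)}$, yielding
\[
[\Sht(C)] = [\Fix(C^{(1)})] = \Tr_{C^{(1)}}(\cc_A^{C^{(1)}}) \in \CH_{d(c_1)}(\Fix(C^{(1)})),
\]
where $\cc_A^{C^{(1)}} \in \Corr_{C^{(1)}}(\Qsh{A}, \Qsh{A}\tw{-d(c_1)})$ denotes the canonical cohomological correspondence for $C^{(1)}$ built from the relative fundamental class $[c_1]$, as in \S \ref{ssec: VFC as trace}.

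To finish, I would verify that $\cc_A^{C^{(1)}}$ coincides with the Frobenius-twist $(\cc_A)^{(1)}$ of $\cc_A$, under the natural Weil structure $\Frob^* \Qsh{A} \xrightarrow{\sim} \Qsh{A}$. Granted this, the definition of the Sht-valued trace in \S \ref{sssec: sht val tr} gives
\[
\Tr_{C^{(1)}}(\cc_A^{C^{(1)}}) = \Tr_{C^{(1)}}((\cc_A)^{(1)}) = \Tr_C^{\Sht}(\cc_A),
\]
which combined with the display above is exactly the claim.

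The identification $\cc_A^{C^{(1)}} = (\cc_A)^{(1)}$ is essentially a definition-chase: both sides are the composite
\[
(\Frob \c c_0)^* \Qsh{A} = c_0^* \Frob^* \Qsh{A} \xrightarrow{\sim} c_0^* \Qsh{A} \cong c_1^* \Qsh{A} \xrightarrow{[c_1]} c_1^! \Qsh{A} \tw{-d(c_1)},
\]
where the isomorphism on the left is precisely the canonical Weil structure on $\Qsh{A}$ (this Weil structure enters the construction of $(\cc_A)^{(1)}$ in \S \ref{sssec: sht val tr}, and enters the construction of $\cc_A^{C^{(1)}}$ via the tautological identification $(\Frob \c c_0)^* \Qsh{A} \cong \Qsh{C^{(1)}}$). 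No serious obstacle arises; the only substantive task is carefully tracking the Frobenius twist through the definitions to confirm this compatibility, after which the corollary is immediate.
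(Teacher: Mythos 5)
Your proposal is correct and takes essentially the same route as the paper, which gives no separate proof beyond citing Proposition~\ref{prop: smooth derived local terms} in the situation of \S\ref{sssec: fix vs sht}. The content of the corollary is exactly what you identify: Proposition~\ref{prop: smooth derived local terms} applies to the twisted correspondence $C^{(1)}$ because its right leg is still $c_1$ (quasi-smooth) and $A$ is still smooth, and the cohomological correspondence $\cc_A^{C^{(1)}}$ coincides with $(\cc_A)^{(1)}$ under the canonical Weil structure on $\Qsh{A}$, since both arise from the tautological identification $(\Frob \circ c_0)^*\Qsh{A} \cong \Qsh{C}$ followed by $[c_1]$.
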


\begin{remark}
It is interesting to ask in what generality Corollary~\ref{cor: smooth derived local terms Sht} holds without the smoothness of $A$.
Indeed, it is shown in \cite[Lemma 4.2.1]{FYZ3} that $\Sht(C)$ is quasi-smooth (and hence admits a fundamental class) as long as $c_1$ is quasi-smooth.
\end{remark}

\subsection{Shift and twist}\label{ssec: trace variants}

\subsubsection{}

Let $A \xleftarrow{c_0} C \xrightarrow{c_1} A$ be a correspondence over a field $\Spec \F$. Given $\cK \in \Dmot{A}$ and $\cc\in \Corr_{C}(\cK_{0},\cK_{1})$, the map $\cc: c_{0}^{*}\cK_{0}\to c_{1}^{!}\cK_{1}$ induces for every $m,n\in\Z$ a map
\begin{equation*}
c_{0}^{*}\cK_{0}[m](n)\to c_{1}^{!}\cK_{1}[m](n)
\end{equation*}
which we denote by $\TT_{[m](n)}\cc$.
The assignment $\cc\mapsto \TT_{[m](n)}\cc$ defines an isomorphism
\begin{equation*}
\TT_{[m](n)} \co \Corr_{C}(\cK_{0},\cK_{1})\isom\Corr_{C}(\cK_{0}[m](n),\cK_{1}[m](n)).
\end{equation*}

\subsubsection{}

The trace map $\Tr_C \co \Corr_{C}(\cK,\cK\tw{-i}) 
\rightarrow \CH_{i}(\Fix(C))$ satisfies the identity
\begin{equation*}
\Tr_C(\TT_{[m](n)}\cc)=(-1)^{m}\cdot \Tr_C(\cc)\in \CH_{i}(\Fix(C)).
\end{equation*}

\subsubsection{Sht-valued trace}

Suppose $\F=\F_q$ and consider the map $\Tr^{\Sht}_C \co \Corr_{C}(\cK,\cK\tw{-i}) \rightarrow \CH_{i}(\Sht(C))$ (\S\ref{sssec: sht val tr}).
We have the identity
\begin{equation}\label{eq: Tr Sht sh tw}
\Tr_C^{\Sht}(\TT_{[m](n)}\cc)=(-1)^{m}q^{-n} \cdot \Tr_C^{\Sht}(\cc)\in \CH_{i}(\Sht(C)).
\end{equation}

\section{Specialization and motivic local terms}\label{sec: local terms}

The main result of this section is Theorem \ref{thm: contracting fixed terms}, which says that for a correspondence $c = (Y \leftarrow C \rightarrow Y)$ of derived Artin stacks over a field $\F$, the trace of a cohomological correspondence supported on $c$ can be calculated after restriction to a closed substack $Z \inj Y$, provided that $c$ is ``contracting near $Z$''. We refer to Definition~\ref{def: contracting correspondence} for the meaning of the latter condition; for now we just mention that it is a condition on classical truncations. In fact, the entirety of this section deals only with properties and constructions of underlying classical truncations. Hence \textbf{for this section alone, we change our default conventions so that all constructions (fibered products, etc.) occur within classical algebraic geometry}.

The results in this section have previously appeared for schemes and $\ell$-adic coefficients in \cite{Var07}, and then for schemes and motivic coefficients in \cite{Jin23}. Our only contribution is of a technical nature: we generalize their arguments to (higher) Artin stacks and motivic coefficients. This is needed in applications, in the present paper as well as in in other work-in-progress.

\subsection{Ayoub's nearby cycles functor} We work over a field $\F$. Let $i \co s \inj \A^1_{\F}$ be the origin and $j \co \eta \inj \A^1_{\F}$ its complement. Suppose we have a morphism of Artin stacks $f \co Y \rightarrow \A^1_{\F}$. For $? \in \{s, \eta\}$ the subscript $?$ will denote base change to $?$. Thus we have a commutative diagram 

\begin{equation}\label{eq: specialization diagram}
\begin{tikzcd}
Y_s \ar[d, "f_s"] \ar[r, hook, "i_Y"] & Y \ar[d, "f"] & Y_\eta \ar[d, "f_\eta"] \ar[l, hook', "j_Y"']  \\
s \ar[r, "i", hook] & \A^1_\F & \eta \ar[l, hook', "j"'] 
\end{tikzcd}
\end{equation}
where the squares are Cartesian. 

Ayoub constructed and analyzed the \emph{motivic nearby cycles functor} on schemes, in \cite{Ay07a, Ay07b, Ay14}. In \cite[\S A.2]{HPL22}, Ayoub's construction of the \emph{tame nearby cycles functor} (part of the total motivic nearby cycles) is extended to Artin stacks for $\Dmot{-}$, essentially by repeating Ayoub's construction verbatim. We denote this functor
\[
\Psi^{\mrm{t}}_Y \co \Dmot{Y_\eta} \rightarrow \Dmot{Y_s}.
\]
It satisfies the following properties:
\begin{enumerate}
\item $\Psi^{\mrm{t}}$ is lax-monoidal, so in particular there are binatural transformations 
\begin{equation}\label{eq: Psi lax monoidal}
\Psi^{\mrm{t}}_{Y}(\cK) \otimes  \Psi^{\mrm{t}}_{Y}(\cK') \rightarrow \Psi^{\mrm{t}}_{Y} (\cK \otimes \cK') \in \Dmot{Y_s}
\end{equation}
for all $\cK,\cK'  \in \Dmot{Y_\eta}$. If $Y_0, Y_1$ are Artin stacks over $\A^1_\F$, then as a special case of \eqref{eq: Psi lax monoidal} we have natural maps\footnote{If we worked with the total nearby cycles functor instead of the tame part, then these maps would be isomorphisms.} 
\begin{equation}\label{eq: Psi external tensor}
\Psi^{\mrm{t}}_{Y_0}(\cK_0) \boxtimes_s \Psi^{\mrm{t}}_{Y_1}(\cK_1) \rightarrow \Psi^{\mrm{t}}_{Y_0 \times_{\A^1_\F} Y_1} (\cK_0 \boxtimes_\eta \cK_1) \in \Dmot{Y_{0s} \times_s Y_{1s}}
\end{equation}
for all $\cK_i \in \Dmot{Y_i}$. 

\item For any morphism $g \co Y' \rightarrow Y$, there are natural transformations
\begin{equation}\label{eq: pull Psi}
g_s^* \circ \Psi^{\mrm{t}}_Y \rightarrow \Psi^{\mrm{t}}_{Y'} \circ g_\eta^* \co \Dmot{Y_\eta} \rightarrow \Dmot{Y'_s}
\end{equation}
and 
\begin{equation}\label{eq: Psi shriek pull}
\Psi^{\mrm{t}}_{Y'} \circ  g_\eta^! \rightarrow g_s^! \circ \Psi^{\mrm{t}}_{Y}  \co \Dmot{Y_\eta} \rightarrow \Dmot{Y'_s},
\end{equation}
which are both isomorphisms if $g$ is smooth. 

\item The are natural transformations 
\begin{equation}\label{eq: Psi push}
\Psi^{\mrm{t}}_Y \circ g_{\eta*} \rightarrow g_{s*}  \circ \Psi^{\mrm{t}}_{Y'} \co \Dmot{Y'_\eta} \rightarrow \Dmot{Y_s}
\end{equation}
and 
\begin{equation}\label{eq: shriek push Psi}
g_{s!} \circ \Psi^{\mrm{t}}_{Y'} \rightarrow \Psi^{\mrm{t}}_Y \circ g_{u!} \co \Dmot{Y'_\eta} \rightarrow \Dmot{Y_s},
\end{equation}
which are both isomorphisms if $g$ is proper.

\item $\Psi^{\mrm{t}}_Y$ commutes with shifts and Tate twists. 
\end{enumerate}

\begin{lemma}\label{lem: Psi geometric}
The functor $\Psi^{\mrm{t}}_Y$ preserves geometricity (in the sense of \S \ref{ssec: geometric motives}).
\end{lemma}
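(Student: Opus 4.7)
The plan is to reduce the lemma to the case where $Y$ is a scheme, where it is a theorem of Ayoub.

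For the reduction, I would use that by the definition of geometric motives on Artin stacks (\S \ref{ssec: geometric motives}), a motive $\cL \in \Dmot{Y_s}$ lies in $\Dmotg{Y_s}$ if and only if $u_s^* \cL$ lies in $\Dmotg{Y'_s}$ for some smooth scheme atlas $u_s \co Y'_s \twoheadrightarrow Y_s$. One obtains such an atlas by base-changing a smooth scheme atlas $u \co Y' \twoheadrightarrow Y$ along $Y_s \hook Y$. Since $u$ is smooth, property (2) above (asserting that \eqref{eq: pull Psi} is an isomorphism when $g$ is smooth) yields a natural isomorphism
\[
u_s^* \Psi^{\mrm{t}}_Y(\cK) \cong \Psi^{\mrm{t}}_{Y'}(u_\eta^* \cK),
\]
and $u_\eta^* \cK \in \Dmotg{Y'_\eta}$ by Remark \ref{rem: preservation of constructibility}. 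Hence the lemma for $Y$ follows from the lemma for the scheme $Y'$.

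For $Y$ a scheme, I would invoke Ayoub's theorem that the tame nearby cycles functor restricts to a functor $\Dmotg{Y_\eta} \to \Dmotg{Y_s}$; see \cite[Ch.~3]{Ay07b} and \cite[\S 10]{Ay14}. This rests on Ayoub's explicit model for $\Psi^{\mrm{t}}$ built from Kummer covers of $\bG_m$, together with a verification that this model sends a geometric generator $f_{\sh} \Qsh{T_\eta}\tw{i}$, for $f \co T \to Y$ smooth, to a geometric motive on $Y_s$.

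The main obstacle is the scheme case itself, which is the substantive input but is already available in Ayoub's work; the descent from schemes to Artin stacks is formal once the smooth-pullback compatibility of $\Psi^{\mrm{t}}$ is in hand.
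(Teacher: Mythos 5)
Your reduction to the scheme case via the smooth-pullback compatibility \eqref{eq: pull Psi} is exactly the paper's argument. However, for the scheme case itself, the paper cites Jin \cite[Lemma 6.1.9(2)]{Jin23} and remarks that Ayoub's \cite[Theorem~3.5.14]{Ay07b} only covers the case where $\F$ has characteristic zero; since this paper works throughout over a finite field $\F_q$, invoking Ayoub alone (as you do) does not suffice, and you need a positive-characteristic reference such as Jin's result — unless the specific passage of \cite{Ay14} you have in mind does treat positive characteristic, which the paper's authors apparently do not think it does. The structure of the argument is otherwise identical.
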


\begin{proof}
By \eqref{eq: pull Psi} and the definition of geometricity, the statement can be checked after base change to a smooth atlas $Y' \rightarrow Y$ where $Y'$ is a derived scheme.
Then the claim is \cite[Lemma 6.1.9(2)]{Jin23} (see also \cite[Theorem~3.5.14]{Ay07b} for the case where $\F$ is of characteristic zero).
\end{proof}

\subsection{Contracting correspondences}

Let $c = (Y \xleftarrow{c_0} C \xrightarrow{c_1} Y)$ be a correspondence of locally noetherian (classical) Artin stacks. The following definitions generalize those of Varshavsky in \cite[Definition 1.5.1 and Definition 2.1.1]{Var07}. 

\begin{defn}[Contracting correspondences]\label{def: contracting correspondence}
Let $Z \inj Y$ be a closed embedding defined by an ideal sheaf $\cI_Z \subset \cO_Y$.
\begin{enumerate}
\item We say that $Z$ is \emph{$c$-invariant} if $c_1^{-1}(Z)$ is set-theoretically contained in $c_0^{-1}(Z)$. 
\item We say that $c$ \emph{stabilizes} $Z$ if $c_0^* (\cI_Z) \subset c_1^*(\cI_Z)$ (i.e., $c_1^{-1}(Z)$ is scheme-theoretically contained in $c_0^{-1}(Z)$).
\item We say that $c$ is \emph{contracting near} $Z$ if $c$ stabilizes $Z$ and there exists $n \in \N$ such that $c_0^* (\cI_Z)^n \subset c_1^* (\cI_Z)^{n+1}$. 
\end{enumerate}
Note that (c) $\implies$ (b) $\implies$ (a).
\end{defn}

\begin{example}[Frobenius contracts]\label{ex: Frobenius twist contracting}
Suppose $Y$ is defined over a finite field $\F=\F_q$ and $c = (c_0, c_1) \co C \rightarrow Y \times Y$ is a correspondence stabilizing $Z \inj Y$. Then the Frobenius-twisted correspondence $c^{(1)} \coloneqq (\Frob \circ c_0, c_1) \co C \rightarrow Y \times Y$ (\S\ref{sssec: fix vs sht}) is contracting near $Z$.
Indeed, working locally on $Y$ we may assume $Y$ is noetherian, in which case this is proven in \cite[Lemma~2.2.3]{Var07}.
\end{example}

\begin{const}[Restricting correspondences to stable substacks]\label{const: restrict correspondence to closed} For any closed substack $Z \inj Y$ such that $c$ stabilizes $Z$, define the base-changed correspondence $c_Z$ by the upper row in the commutative diagram
\begin{equation}\label{eq: sub-correspondence}
\begin{tikzcd}
Z \ar[d, hook] & C_Z \ar[l, "c_{0Z}"'] \ar[r, "c_{1Z}"] \ar[d, hook] & Z \ar[d, hook] \\
Y & \ar[l, "c_0"']  C \ar[r, "c_1"] & Y
\end{tikzcd}
\end{equation}	
where the right square is Cartesian. (The hypothesis that $c$ stabilizes $Z$ is used to ensure that the pullback $C_Z \inj C \xrightarrow{c_0} Y$ factors over $Z$.) 
\end{const}

\begin{lemma}\label{lem: zero-section}
Let 
\[
\begin{tikzcd}
Z' \ar[r, hook] \ar[d] & Y' \ar[d, "f"] \\
Z \ar[r, hook] & Y
\end{tikzcd}
\]
be a commutative diagram of locally noetherian Artin stacks. Then the induced map of normal cones $N_{Z'}(Y') \rightarrow N_Z(Y)$ has set-theoretic image in the zero-section $Z \subset N_Z(Y)$ if and only if there exists $n$ such that $f^* (\cI_Z^n)  \subset \cI_{Z'}^{n+1} $. 
\end{lemma}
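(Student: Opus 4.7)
The plan is to reduce to an affine/Noetherian ring situation, identify the normal cone with the spectrum of the associated graded algebra, and translate the condition of having image in the zero section into a numerical condition on powers of ideals.

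First I would note that both the condition $f^\ast(\cI_Z^n) \subseteq \cI_{Z'}^{n+1}$ and the statement about $N_{Z'}(Y') \to N_Z(Y)$ can be checked smooth-locally on $Y$ and on $Y'$: the normal cone is compatible with smooth base change, the zero section is detected smooth-locally, and closed immersions plus their ideal sheaves pull back under smooth maps. Using smooth atlases, I reduce to the case where $Y = \Spec A$ and $Y' = \Spec A'$ are affine Noetherian schemes, $Z = V(I)$ and $Z' = V(I')$ for ideals $I \subseteq A$ and $I' \subseteq A'$, and $f$ is induced by a ring map $A \to A'$ (which we still call $f^\ast$) with $f^\ast(I) \subseteq I'$.

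Then $N_Z(Y) = \Spec_Z \bigoplus_{n \geq 0} I^n/I^{n+1}$ and the zero section $Z \hookrightarrow N_Z(Y)$ is cut out by the augmentation ideal $\bigoplus_{n \geq 1} I^n/I^{n+1}$. Since $A$ is Noetherian, the graded $A/I$-algebra $\bigoplus_n I^n/I^{n+1}$ is generated in degree $1$ by $I/I^2$. Consequently, the induced morphism $N_{Z'}(Y') \to N_Z(Y)$ has set-theoretic image in the zero section if and only if every element of $I/I^2$, viewed as a degree-$1$ element of $\bigoplus_n I'^n/I'^{n+1}$ via the graded algebra map induced by $f^\ast$, is nilpotent. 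Unwinding, this says: for every $a \in I$ there exists $n$ with $f^\ast(a)^n \in I'^{n+1}$.

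It remains to upgrade this pointwise nilpotence to the uniform statement $f^\ast(I^n) \subseteq I'^{n+1}$ for some single $n$. This is where I expect the only real content to lie. Pick generators $I = (a_1,\dots,a_k)$ (using Noetherianity). By the previous paragraph and by taking the maximum, choose $n_0$ so that $f^\ast(a_i)^{n_0} \in I'^{n_0+1}$ for all $i$. Set $N = kn_0$. Any generator of $I^N$ is a monomial $a_1^{m_1}\cdots a_k^{m_k}$ with $\sum m_i = N$, so pigeonhole gives some $i$ with $m_i \geq n_0$. Then $f^\ast(a_i)^{m_i} = f^\ast(a_i)^{n_0} \cdot f^\ast(a_i)^{m_i - n_0} \in I'^{n_0+1} \cdot I'^{m_i - n_0} = I'^{m_i + 1}$, while each other factor $f^\ast(a_j)^{m_j}$ lies in $I'^{m_j}$. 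Multiplying, $f^\ast(a_1^{m_1}\cdots a_k^{m_k}) \in I'^{N+1}$, so $f^\ast(I^N) \subseteq I'^{N+1}$, as required. The converse direction is immediate: if $f^\ast(I^n) \subseteq I'^{n+1}$, then for each $a \in I$, $f^\ast(a)^n = f^\ast(a^n) \in I'^{n+1}$, which witnesses nilpotence of the image of $I/I^2$ in $\bigoplus_m I'^m/I'^{m+1}$.
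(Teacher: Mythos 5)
Your proof is correct and gives the argument that the paper defers to Varshavsky's Lemma~1.4.3(b): identify the normal cone with $\Spec$ of the associated graded ring, observe it is generated in degree one, translate the set-theoretic condition into nilpotence of the degree-one classes, and then convert pointwise nilpotence into a uniform power bound via a finite generating set and the pigeonhole principle. One subtlety worth flagging: the passage to a single affine chart, and hence the extraction of one uniform $n$ with $f^*(\cI_Z^n) \subseteq \cI_{Z'}^{n+1}$, implicitly requires $Y'$ to be quasi-compact, and ``locally noetherian'' alone does not give this — e.g. $Y' = Y = \bigsqcup_{m \ge 1} \Spec(k[x]/(x^{m+2}))$ with $Z' = Z = \bigsqcup_m \Spec(k)$ and $f = \mathrm{id}$ satisfies the normal-cone condition (each component is a one-point space) but admits no uniform $n$. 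The same implicit hypothesis sits inside the paper's citation of Varshavsky (whose setting is finite-type schemes), and every application of the lemma in this paper first shrinks to a noetherian open, so this is a gap in the statement of the lemma rather than a defect in your argument.
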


\begin{proof}
The same proof as in \cite[Lemma 1.4.3(b)]{Var07} works verbatim. 
\end{proof}

\subsection{Specialization} Next we discuss specialization of Chow groups and cohomological correspondences in families. 

\subsubsection{Specialization on Chow groups}\label{sssec: sp chow}

For a commutative diagram \eqref{eq: specialization diagram} in which all squares are Cartesian, there is constructed in \cite[\S 4.5.6]{DJK21}\footnote{%
Strictly speaking, \cite{DJK21} operated in the schematic context, but \cite{KhanI} generalizes all the ingredients of the construction to derived Artin stacks.
} a specialization map on relative Chow groups, 
\begin{equation}
\fsp_Y \co \CH_*(Y_\eta/\eta) \rightarrow \CH_*(Y_s/s).
\end{equation}

\subsubsection{Specialization on cohomological correspondences}\label{sssec: specialize cc}

Suppose we have a correspondence of Artin stacks $Y_0 \xleftarrow{c_0} C \xrightarrow{c_1} Y_1$ over $\A^1_\F$ and a cohomological correspondence $\cc \in \Corr_{C_\eta}(\cK_0, \cK_1)$ supported on $C_\eta$, where $\cK_i \in \Dmot{Y_{i\eta}}$.\footnote{We emphasize again that the subscripts $? \in \{\eta, s\}$ indicate base change to $?$.}
Then we have a cohomological correspondence
\begin{equation}\label{eq: Psi cc}
\Psi_C^{\mrm{t}}(\cc) \co c_{0s}^* \Psi^{\mrm{t}}_{Y_0}(\cK_0) \rightarrow c_{1s}^! \Psi^{\mrm{t}}_{Y_1}  (\cK_1)
\end{equation}
defined as the composition 
\[
\begin{tikzcd}
c_{0s}^* \Psi^{\mrm{t}}_{Y_0}(\cK_0) \ar[r, "\eqref{eq: pull Psi}"] & \Psi^{\mrm{t}}_{C} (c_{0 \eta}^* \cK_0) \ar[r, "\Psi^{\mrm{t}}_C(\cc)"] & \Psi^{\mrm{t}}_C (c_{1\eta}^! \cK_1 )\ar[r, "\eqref{eq: Psi shriek pull}"] &  c_{1s}^! (\Psi^{\mrm{t}}_{Y_1}  \cK_1).
\end{tikzcd}
\]
The assignment $\cc \mapsto \Psi_C^{\mrm{t}}(\cc)$ defines a map
\begin{equation}\label{eq:Psi Corr}
\Psi^{\mrm{t}} \co \Corr_{C_\eta}(\cK_{0}, \cK_1) \rightarrow \Corr_{C_s}(\Psi^{\mrm{t}}_{Y_0} (\cK_0) , \Psi^{\mrm{t}}_{Y_1} (\cK_1)).
\end{equation}


\subsubsection{Specialization vs. trace}

Under favorable conditions, the specialization of cohomological correspondences is compatible with formation of trace. This was shown by Varshavsky for $\ell$-adic sheaves on schemes in \cite[Proposition 1.3.5]{Var07}, and Jin adapted the argument to motivic sheaves on schemes in \cite[Lemma 6.2.4]{Jin23}. We record the statement in our more general context. 

\begin{prop}\label{prop: specialization compatible with trace}
Let $Y \xleftarrow{c_0} C \xrightarrow{c_1} Y$ be a correspondence of derived Artin stacks over $\A^1_\F$.
If $\cK \in \Dmotg{Y_\eta}$ is USLA over $\eta$, then the following diagram commutes:\footnote{To form the trace, we are implicitly using that $\Psi^{\mrm{t}}_{Y_0} \cK$ is dualizable in $\LZ(S)_!$ and $\LZ(S)^*$. This is because  $\Psi^{\mrm{t}}_{Y_0} \cK$ is USLA, by Example \ref{ex: USLA over point}, and geometric, by Lemma \ref{lem: Psi geometric}, hence dualizable by Proposition \ref{prop: USLA c is dualizable}.}
\begin{equation}
\begin{tikzcd}
\Corr_{C_\eta}(\cK_, \cK \tw{-i}) \ar[r, "\Psi_C^{\mrm{t}}"] \ar[d, "\Tr_{C_\eta}"]  &  \Corr_{C_s}(\Psi^{\mrm{t}}_{Y} \cK  , \Psi^{\mrm{t}}_{Y} \cK \tw{-i}) \ar[d, "\Tr_{C_s}"] \\
\CH_{i}(\Fix(C_\eta)/\eta) \ar[r, "\fsp_C"] & \CH_{i}(\Fix(C_s)/s)
\end{tikzcd}
\end{equation}
\end{prop}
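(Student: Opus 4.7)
The plan is to follow the strategy of Varshavsky \cite[Proposition~1.3.5]{Var07} and Jin \cite[Lemma~6.2.4]{Jin23}, using the motivic Lu--Zheng categorical trace formalism developed in \S\ref{sec: lu-zheng}. The idea is that both $\Tr_{C_\eta}$ and $\Tr_{C_s}$ are built out of symmetric monoidal data (evaluation/coevaluation for dualizable objects), and $\Psi^{\mrm{t}}$ is a lax symmetric monoidal functor whose lax structure maps become isomorphisms precisely on USLA objects. Hence $\Psi^{\mrm{t}}$ should transport the trace diagram from $\eta$ to $s$.

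First, I would verify the key compatibility that under the USLA hypothesis, $\Psi^{\mrm{t}}$ commutes with all the operations needed to construct the trace. Concretely:
\begin{enumerate}
\item The natural transformation \eqref{eq: Psi external tensor} yields an isomorphism
\[
\Psi^{\mrm{t}}_{Y}(\cK)\boxtimes_s \Psi^{\mrm{t}}_{Y}(\DD_{Y_\eta/\eta}\cK) \isom \Psi^{\mrm{t}}_{Y\times_{\A^1_\F} Y}(\cK\boxtimes_\eta \DD_{Y_\eta/\eta}\cK);
\]
\item $\Psi^{\mrm{t}}_Y$ commutes with relative Verdier duality on $\cK$, i.e., $\Psi^{\mrm{t}}_Y(\DD_{Y_\eta/\eta}\cK)\cong \DD_{Y_s/s}(\Psi^{\mrm{t}}_Y\cK)$.
\end{enumerate}
Both are known for schemes by Jin's results. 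Using Lemma~\ref{lem: USLA smooth local} and the smooth base change isomorphism \eqref{eq: pull Psi}, these extend to derived Artin stacks via smooth descent to a scheme atlas. By Lemma \ref{lem: Psi geometric} and Example \ref{ex: USLA over point}, $\Psi^{\mrm{t}}_Y\cK \in \Dmotg{Y_s}$ is USLA over $s$, hence dualizable in $\LZ(s)^*$ (and $\LZ(s)_!$) by Proposition~\ref{prop: USLA c is dualizable}. Thus $(Y_\eta,\cK)\mapsto(Y_s,\Psi^{\mrm{t}}_Y\cK)$ preserves dualizability in a way compatible with its dual.

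Next, I would unwind the trace $\Tr_{C_\eta}(\cc) \in \CH_i(\Fix(C_\eta)/\eta)=\Hom_{\Fix(C_\eta)}(\Q, \pi_\eta^!\Q\vb{-i})$ as a composition of the coevaluation $\coev_\cK$, the morphism $\cc$, and the evaluation $\ev_\cK$, all pulled back to $\Fix(C_\eta)$; and similarly on the $s$-side. The specialization map $\fsp_C$ on Chow groups from \S\ref{sssec: sp chow} is, by its construction in \cite[\S 4.5.6]{DJK21}, induced on Hom-groups by applying $\Psi^{\mrm{t}}_{\Fix(C)}$ and using the natural transformations of \eqref{eq: Psi shriek pull}. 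Therefore the commutativity of the desired diagram reduces to the assertion that $\Psi^{\mrm{t}}$ intertwines the evaluation and coevaluation morphisms built from $\cK$ with those built from $\Psi^{\mrm{t}}_Y\cK$. This follows from the two compatibilities above together with the standard naturality properties \eqref{eq: pull Psi}, \eqref{eq: Psi shriek pull} of $\Psi^{\mrm{t}}$ with respect to the diagonal $\Delta\co Y\to Y\times_{\A^1_\F} Y$ and the projection $Y\to \A^1_\F$.

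The main obstacle I anticipate is Step 1 above: upgrading the lax monoidal structure map of $\Psi^{\mrm{t}}$ to an isomorphism on $\cK\boxtimes_\eta \DD_{Y_\eta/\eta}\cK$, and likewise checking $\Psi^{\mrm{t}}$-compatibility with relative Verdier duality of $\cK$. In the scheme case these are precisely the motivic analogues of the classical facts that $\Psi^{\mrm{t}}$ preserves external tensor products and duality on (U)SLA sheaves, established in \cite{Jin23}. The generalization to derived Artin stacks is not substantively new: it goes through by picking a smooth atlas $Y'\twoheadrightarrow Y$, noting that USLA is smooth-local (Lemma~\ref{lem: USLA smooth local}) and that $\Psi^{\mrm{t}}$ commutes with smooth pullback via \eqref{eq: pull Psi}, so we may check both assertions after pullback to $Y'$ where they are available. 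Once these two compatibilities are in hand, the remainder is a diagram chase in the symmetric monoidal 2-categories $\LZ(\eta)^*$ and $\LZ(s)^*$, and the equality $\fsp_C\circ\Tr_{C_\eta} = \Tr_{C_s}\circ \Psi^{\mrm{t}}$ drops out formally.
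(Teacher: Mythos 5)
Your proposal is correct and follows essentially the same route as the paper. The paper's proof is a single sentence: ``By the same formal diagram chase as in the proof of [Jin23, Lemma~6.2.4], this is reduced to the lax-monoidality of $\Psi^{\mrm{t}}$.'' Your two compatibilities -- that the lax structure map of $\Psi^{\mrm{t}}$ becomes an isomorphism on $\cK \boxtimes_\eta \DD_{Y_\eta/\eta}\cK$ under the USLA hypothesis, and that $\Psi^{\mrm{t}}$ intertwines relative Verdier duality on such $\cK$ -- are exactly the content of Jin's diagram chase (the duality compatibility is in fact a formal consequence of the first one once one knows duals are unique in the Lu--Zheng category), so your more detailed account is a faithful fleshing-out rather than a different argument.

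One small caveat worth keeping in mind: you assert that the specialization map $\fsp_C$ of \cite[\S 4.5.6]{DJK21} is ``by its construction'' the map induced on $\Hom$-groups by $\Psi^{\mrm{t}}_{\Fix(C)}$ together with \eqref{eq: Psi shriek pull}. This identification is correct but is not literally the definition in \emph{loc.\ cit.}; it is itself a compatibility (verified in Jin's paper) between the purely sheaf-theoretic specialization and the nearby-cycles construction. If you wanted to make the argument self-contained you would need to cite this comparison rather than treat it as tautological; as a pointer to Jin's Lemma~6.2.4 it is fine.
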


\begin{proof}
By the same formal diagram chase as in the proof of \cite[Lemma 6.2.4]{Jin23}, this is reduced to the lax-monoidality of $\Psi^{\mrm{t}}$. 
\end{proof}

\subsection{Specialization to the normal cone}

Let $\iota \co Z \inj Y$ be a closed immersion of Artin stacks over $\F$. Then there is a \emph{deformation to the normal cone} $D_Z(Y)$, which is a family of Artin stacks over $\A^1_{\F}$ which restricts to the constant family $Y \times \G_m$ over $\eta = \G_m$ and the normal cone $N_Z(Y)$ over $s$.
It may be constructed by forming the blow-up of $Y \times \A^1$ along $Z \times \{0\}$ and taking out the blow-up of $Y \times \{0\}$ along $Z \times \{0\}$.
(We emphasize that we are considering the \emph{classical} deformation to the normal cone rather than the derived version.)

The construction of $D_Z(Y)$ is functorial in $Z$ and $Y$. Following Varshavsky \cite{Var07}, we use the notation $\wt{(-)}$ for constructions induced by deformation to the normal cone, and $\wt{(-)}_\eta$ or $\wt{(-)}_s$ for the base changes to $\eta$ or $s$, respectively. For example, given a commutative diagram 
\begin{equation}\label{diag: deformation normal cone functorial}
\begin{tikzcd}
Z' \ar[r, hook, "\iota'"]  \ar[d, "h"] &  Y' \ar[d, "g"] \\
Z \ar[r, "\iota"] & Y
\end{tikzcd}
\end{equation}
we get a map $ \wt{g} \co D_{Z'} (Y') \rightarrow D_Z(Y)$.

\sssec{Specialization of cycle classes}
\label{sssec: sp norm cyc}

Applying the specialization construction of \S\ref{sssec: sp chow} to $D_Z(Y)$, we get a map
\begin{equation}
\fsp_{D_Z(Y)} \co \CH_*(Y \times \eta/\eta) \to \CH_*(N_Z(Y)).
\end{equation}
Composing with the pullback map $\CH_*(Y) \to \CH_*(Y \times \eta/\eta)$, we get a map\footnote{%
This can be defined more directly as in \cite[Def.~3.2.4]{DJK21} or \cite[Constr.~3.1]{KhanI}, but this alternative description will be convenient for us.}
\begin{equation}
\fsp_{Y,Z} \co \CH_*(Y) \to \CH_*(N_Z(Y)).
\end{equation}

\sssec{Specialization of sheaves}
\label{sssec: sp norm sh}

Formation of nearby cycles with respect to $D_Z(Y)$ gives rise to a functor of specialization to the normal cone, 
\begin{equation}\label{eq: spc}
\spc_{Y,Z}	 \co \Dmot{Y} \rightarrow \Dmot{N_Z(Y)}
\end{equation}
defined by the formula
\begin{equation}
\spc_{Y,Z}(\cK) \coloneqq \Psi^{\mrm{t}}_{D_Z(Y)} (\pr^* \cK)
\end{equation}
where $\pr \co Y \times \G_m \rightarrow Y$ is the projection to the first factor.

Given a commutative square \eqref{diag: deformation normal cone functorial}, \eqref{eq: pull Psi} gives a natural transformation 
\begin{equation}\label{eq: specialization pullback}
\wt{g}^*_s \spc_{Y,Z} \rightarrow \spc_{Y', Z'} g^* \co \Dmot{Y} \rightarrow \Dmot{N_{Z'}(Y')}.
\end{equation}

\begin{example}
If $\iota \co Z \inj Y$ is an isomorphism, then $D_Z(Y)$ is the constant family $Z \times \A^1$, so $\spc_{Y,Z}= \Id$ in that case. Taking $Z' \inj Y'$ to be the identity map $Z = Z$ in \eqref{diag: deformation normal cone functorial}, we get a closed embedding $\wt{\iota} \co Z \times \A^1 \inj D_Z(Y)$, which restricts to the constant embedding $Z \times \G_m \inj Y \times \G_m$ over $\eta$ and the zero section $Z \inj N_Z(Y)$ over $s$. Hence in this case, \eqref{eq: specialization pullback} gives a map
\begin{equation}\label{eq: Verdier map}
\wt{\iota}_s^*  \spc_{Y,Z}(\cK) \rightarrow \iota^* \cK \in \Dmot{Z}.
\end{equation}
\end{example}

\begin{prop}[Verdier, Jin]\label{prop: Verdier}
For all $\cK \in \Dmot{Y}$, the map \eqref{eq: Verdier map} is an isomorphism. 
\end{prop}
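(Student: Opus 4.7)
The plan is to reduce the statement to the schematic case, which is a theorem of Jin \cite{Jin23} (ultimately tracing back to Verdier's original $\ell$-adic computation). The key observation is that the entire construction—the deformation $D_Z(Y)$, the specialization functor $\spc_{Y,Z}$, the zero-section inclusion $\wt{\iota}$, and the comparison map \eqref{eq: Verdier map}—is compatible with smooth base change on $Y$. Given a smooth morphism $u \co Y' \to Y$ and setting $Z' \coloneqq Z \times_Y Y'$, formation of the deformation to the normal cone commutes with smooth base change, producing a smooth morphism $\wt{u} \co D_{Z'}(Y') \to D_Z(Y)$ sitting in a Cartesian square over $\A^1_\F$. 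Because \eqref{eq: pull Psi} is an isomorphism for smooth maps, the natural transformation \eqref{eq: specialization pullback} applied to $u$ yields a canonical isomorphism $\wt{u}_s^*\, \spc_{Y,Z}(\cK) \xrightarrow{\sim} \spc_{Y',Z'}(u^*\cK)$, and $\wt{\iota}$ pulls back to the analogous zero-section inclusion for $(Y', Z')$.

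Now choose a smooth atlas $u \co Y' \twoheadrightarrow Y$ with $Y'$ a derived scheme; after passing to classical truncations (using derived invariance of $\Dmot{-}$) we may assume $Y'$ is a classical scheme. Since $\Dmot{-}$ satisfies smooth descent, the restriction $(u|_{Z'})^* \co \Dmot{Z} \to \Dmot{Z'}$ is conservative, so it suffices to verify that the analogous map \eqref{eq: Verdier map} for the pair $(Y', Z')$ is an isomorphism. This is precisely the schematic case handled by Jin.

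The only real work beyond the schematic case lies in verifying the naturality claims above: concretely, that forming $D_{\bullet}(\bullet)$, passing to tame nearby cycles, and restricting along $\wt{\iota}$ all intertwine correctly with a smooth pullback $u$. This amounts to a diagram chase using the extension of the tame nearby cycles formalism to Artin stacks recorded in \cite[\S A.2]{HPL22}, together with the smooth base change properties of $\Psi^{\mrm{t}}$. The main (mild) obstacle is keeping track of the compatibilities between the various natural transformations \eqref{eq: pull Psi}, \eqref{eq: Psi shriek pull}, and \eqref{eq: specialization pullback}, in particular that the map \eqref{eq: Verdier map} built from them is strictly compatible with smooth pullbacks; once that compatibility is established, descent along a smooth atlas is formal and the statement follows from Jin's theorem.
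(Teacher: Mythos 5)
Your proposal is correct and takes essentially the same approach as the paper: the paper's proof is exactly the one-line reduction ``the statement can be checked smooth-locally on $Y$, so it reduces to the case where $Y$ (and hence $Z$) is a scheme, where it is \cite[Proposition 6.3.14]{Jin23}.'' Your write-up simply spells out the smooth-base-change compatibilities that justify the reduction, which the paper leaves implicit.
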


\begin{proof}
The statement can be checked smooth locally on $Y$, so it reduces to the case where $Y$ (and hence $Z$) is a scheme. Then it is \cite[Proposition 6.3.14]{Jin23}.\footnote{%
For schemes and $\ell$-adic sheaves, the analogous statement appears in work of Verdier \cite[\S 8]{Ver83} with a sketch of proof. A full proof is given by Varshavsky in \cite[\S 3]{Var07}.}
\end{proof}

\subsubsection{Specialization of cohomological correspondences}
\label{sssec: sp norm corr}

Let $c = (Y \xleftarrow{c_0} C \xrightarrow{c_1} Y)$ be a correspondence of Artin stacks locally of finite type over a field $\F$. Suppose $c$ stabilizes a closed substack $Z \inj Y$. Then $c$ can be restricted to a correspondence $(Z \leftarrow C_Z \rightarrow Z)$ by Construction \ref{const: restrict correspondence to closed}. We consider the correspondence $\wt{c}$
\begin{equation}\label{eq: deformation normal cone of correspondence}
\begin{tikzcd}
D_Z(Y) &  D_{C_Z}(C) \ar[r, "\wt{c}_1"] \ar[l, "\wt{c}_0"'] & D_Z(Y)
\end{tikzcd}
\end{equation}
over $\A^1_F$, defined by deformation to the normal cone with respect to the vertical closed embeddings in \eqref{eq: sub-correspondence}.
Its fibers away from the origin are isomorphic to $c$ and over $s : \Spec(\F) \inj \A^1_\F$ it degenerates to the correspondence of normal cones:
\begin{equation}\label{eq: normal cone correspondence}
\begin{tikzcd}
N_Z(Y) &  N_{C_Z}(C) \ar[r, "N(c_1)"] \ar[l, "N(c_0)"'] & N_Z(Y)
\end{tikzcd}
\end{equation}

Given $\cK_0,\cK_1 \in \Dmot{Y}$, consider the specialization map on cohomological correspondences
\begin{equation}
\spc_{C, C_Z} \co \Corr_C(\cK_0, \cK_1) \rightarrow \Corr_{N_{C_Z}(C)}(\spc_{Y,Z}(\cK_0),\spc_{Y,Z}(\cK_1))
\end{equation}
defined as the composite of the pullback $\pr^* \co \Corr_C(\cK_0, \cK_1) \to \Corr_{C\times\G_m}(\pr^*\cK_0, \pr^*\cK_1)$ and the map $\Psi^{\mrm{t}}_{D_{C_Z}(C)}$ \eqref{eq:Psi Corr}.

\subsubsection{Specialization vs. trace}

Applied to the deformation to the normal cone, Proposition~\ref{prop: specialization compatible with trace} yields the following compatibility between the specialization maps of \S\ref{sssec: sp norm cyc} and \S\ref{sssec: sp norm corr}.

\begin{cor}\label{cor: sp norm compatible with trace}
Let $c = (Y \xleftarrow{c_0} C \xrightarrow{c_1} Y)$ be a correspondence of Artin stacks locally of finite type over a field $\F$ and let $Z \inj Y$ be a closed substack stabilized by $c$. Then for every $\cK \in \Dmotg{Y}$, the following diagram commutes:
\begin{equation}
\begin{tikzcd}
\Corr_{C}(\cK, \cK \tw{-i}) \ar[r, "\spc_{C,C_Z}"] \ar[d, "\Tr_{C}"]  &  \Corr_{N_{C_Z}(C)}(\spc_{C_Z,C} \cK  , \spc_{C_Z,C} \cK \tw{-i}) \ar[d, "\Tr_{N_{C_Z}(C)}"] \\
\CH_{i}(\Fix(C)) \ar[r, "\sp_{C,C_Z}"] & \CH_{i}(\Fix(N_{C_Z}(C))).
\end{tikzcd}
\end{equation}
\end{cor}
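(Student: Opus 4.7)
The plan is to reduce the statement directly to Proposition~\ref{prop: specialization compatible with trace} applied to the deformation to the normal cone of $c$. First I would set up the family: consider $\wt c = (D_Z(Y) \xleftarrow{\wt c_0} D_{C_Z}(C) \xrightarrow{\wt c_1} D_Z(Y))$ from \eqref{eq: deformation normal cone of correspondence} as a correspondence over $\A^1_\F$, whose generic fiber over $\eta = \G_m$ is the constant family $c \times \G_m$ and whose special fiber is the normal cone correspondence \eqref{eq: normal cone correspondence}. Take the sheaf $\tilde\cK \coloneqq \pr^* \cK \in \Dmot{Y \times \G_m}$, pulled back along $\pr \co Y \times \G_m \to Y$. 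Then $\tilde\cK$ is geometric by Remark~\ref{rem: preservation of constructibility} and USLA over $\G_m$ by base change from Example~\ref{ex: USLA over point} (which provides USLA of $\cK$ over $\Spec \F$).

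Next, I would apply Proposition~\ref{prop: specialization compatible with trace} to the correspondence $\wt c$ and the sheaf $\tilde\cK$. This yields the commutative square
\[
\begin{tikzcd}[column sep=small]
\Corr_{C \times \G_m}(\pr^*\cK, \pr^*\cK \tw{-i}) \ar[r, "\Psi^{\mrm{t}}"] \ar[d, "\Tr"]  &  \Corr_{N_{C_Z}(C)}(\spc_{Y,Z}\cK, \spc_{Y,Z}\cK \tw{-i}) \ar[d, "\Tr"] \\
\CH_{i}(\Fix(C \times \G_m)/\G_m) \ar[r, "\fsp"] & \CH_{i}(\Fix(N_{C_Z}(C)))
\end{tikzcd}
\]
The diagram of the corollary is obtained by precomposing this square on the left with the smooth pullback along $\pr$: indeed, by construction, $\spc_{C, C_Z} = \Psi^{\mrm{t}}_{D_{C_Z}(C)} \circ \pr^*$ on cohomological correspondences (\S\ref{sssec: sp norm corr}), and likewise the Chow-group specialization $\sp_{C,C_Z}$ factors as $\fsp_{D_{C_Z}(C)} \circ \pr^*$ (\S\ref{sssec: sp norm cyc}). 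The remaining input is that the trace commutes with the smooth pullback $\pr^*$, i.e., that $\Tr_{C \times \G_m}(\pr^* \cc)$ coincides with the smooth pullback of $\Tr_C(\cc)$; this is an instance of Proposition~\ref{prop: trace commutes with smooth pullback} applied to the evident map of correspondences whose three vertical arrows are $\pr_Y, \pr_C, \pr_Y$, all smooth of relative dimension $1$, giving defect $\delta = d(\pr_C) - d(\pr_Y) = 0$.

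The main obstacle in writing this up carefully is the bookkeeping of derived and virtual dimensions on $\Fix$: the derived stack $\Fix(C \times \G_m)$ is \emph{not} $\Fix(C) \times \G_m$ as a derived stack — their classical truncations agree, but $\Fix(C \times \G_m)$ carries a nontrivial derived structure coming from the derived self-intersection of the diagonal in $(Y \times \G_m)^2$ along the $\G_m$ factor — and the absolute versus $\G_m$-relative Chow groups appearing in Proposition~\ref{prop: trace commutes with smooth pullback} versus Proposition~\ref{prop: specialization compatible with trace} must be matched. However, these identifications are ultimately formal, following either from the derived invariance of $\Dmot{-}$ together with standard shift conventions, or from running the Lu--Zheng categorical trace of \S\ref{sec: lu-zheng} over the base $\G_m$ rather than $\Spec \F$ so that the $\G_m$-relative trace coincides directly with the source of the specialization map $\fsp_{D_{C_Z}(C)}$.
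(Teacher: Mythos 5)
Your proof takes essentially the same approach as the paper, which simply observes that $\pr^*\cK$ is USLA over $\G_m$ (by Example~\ref{ex: USLA over point} and Lemma~\ref{lem: USLA smooth local}) and then cites Proposition~\ref{prop: specialization compatible with trace} applied to the deformation to the normal cone, leaving the precomposition-with-$\pr^*$ step and the matching of traces implicit as you spell out. The worry in your last paragraph is, however, moot: the trace in Proposition~\ref{prop: specialization compatible with trace} is the categorical trace over the base $\eta=\G_m$, so the $\Fix$ appearing there is computed via the fibered product over $Y_\eta \times_\eta Y_\eta$, not over $Y_\eta \times_\F Y_\eta$; for the constant family this gives $\Fix_{\G_m}(C\times\G_m) \cong \Fix(C)\times\G_m$ on the nose, with no derived surplus, and the left square then commutes by compatibility of the Lu--Zheng trace with the smooth base change $\G_m \to \Spec\F$ (the same functoriality underlying Theorem~\ref{thm: qsm pull pairing}).
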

\begin{proof}
Since $\cK$ is USLA over $\Spec(\F)$ (Example~\ref{ex: USLA over point}), $\pr^*\cK$ is USLA over $\G_m$ where $\pr : Y \times \G_m \to Y$ is the projection (Lemma~\ref{lem: USLA smooth local}).
Hence we may apply Proposition~\ref{prop: specialization compatible with trace}.
\end{proof}

\subsection{Motivic local terms}

We will now prove the following result, which appears in the case of $\ell$-adic sheaves on schemes in \cite[Theorem 2.1.3]{Var07} and in the case of motivic sheaves on schemes in \cite[Theorem 5.2.14]{Jin23}.

Given a correspondence $(Y \xleftarrow{c_0} C \xrightarrow{c_1} Y)$ of Artin stacks locally of finite type over a field $\F$ and a closed substack $\iota \co Z \inj Y$ stabilized by $c$, we consider again the base-changed correspondence $(Z \xleftarrow{c_{0Z}} C_Z \xrightarrow{c_{1Z}} Z)$ as in \eqref{eq: sub-correspondence}.
Since the right square in \eqref{eq: sub-correspondence} is Cartesian, $\iota$ induces a right topologically pullable map of correspondences in the sense of Remark~\ref{rem:right top pullable}, and there is a pullback operation $\iota^*$ on cohomological correspondences (see \S \ref{ssec: pullback functoriality for CC}).

\begin{thm}\label{thm: contracting fixed terms} Let $c = (c_0, c_1) \co C \rightarrow Y \times Y$ be a correspondence of locally finite type Artin stacks over $\F$. Let $\iota \co Z \inj Y$ be a closed substack such that $C$ is contracting near $Z$. Let $\cK \in \Dmotg{Y}$. Then $\Fix(C_Z) \rightarrow \Fix(C)$ is open-closed, and for any $\cc \in \Corr_C(\cK, \cK\tw{-i})$, we have
\[
\Tr_C(\cc)|_{\Fix(C_Z)} = \Tr_{C_Z}(\iota^* \cc) \in \CH_{d}(\Fix(C_Z)).
\]
\end{thm}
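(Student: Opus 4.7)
The strategy follows Varshavsky's approach for $\ell$-adic sheaves on schemes \cite[Theorem 2.1.3]{Var07} and its motivic extension to schemes by Jin \cite[Theorem 5.2.14]{Jin23}, based on deformation to the normal cone. The adaptation to Artin stacks is of a technical nature, obtained by working smooth-locally and invoking the constructions developed above.

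First I would establish the open-closed claim for $\Fix(C_Z) \hookrightarrow \Fix(C)$. Closedness is immediate from closedness of $C_Z \hookrightarrow C$. For openness, the contracting condition guarantees that any fixed point of $C$ whose image in $Y$ lies sufficiently close to $Z$ must in fact lie over $Z$; smooth-locally on $Y$ this reduces to the schematic assertion of \cite[Lemma 1.5.1]{Var07}, whose proof iterates the inclusion $c_0^*\cI_Z^n \subset c_1^*\cI_Z^{n+1}$ at a fixed point.

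Next I would apply specialization along the deformation to the normal cone $D_{C_Z}(C) \to \A^1$. Corollary~\ref{cor: sp norm compatible with trace} yields
\[
\fsp_{C, C_Z}\bigl(\Tr_C(\cc)\bigr) = \Tr_{N_{C_Z}(C)}\bigl(\spc_{C,C_Z}(\cc)\bigr) \in \CH_i(\Fix(N_{C_Z}(C))),
\]
and by functoriality of the normal cone construction $\Fix(N_{C_Z}(C)) \cong N_{\Fix(C_Z)}(\Fix(C))$. The open-closed decomposition of $\Fix(C)$ splits $\Tr_C(\cc)$ into pieces supported on $\Fix(C_Z)$ and on $\Fix(C) \setminus \Fix(C_Z)$. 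The latter specializes to zero (cycles disjoint from the center specialize trivially), while the former specializes by pushforward along the zero section $s \co \Fix(C_Z) \hookrightarrow N_{\Fix(C_Z)}(\Fix(C))$. This gives
\[
s_*\bigl(\Tr_C(\cc)|_{\Fix(C_Z)}\bigr) = \Tr_{N_{C_Z}(C)}\bigl(\spc_{C,C_Z}(\cc)\bigr).
\]

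The final step is to identify the right-hand side with $s_*\bigl(\Tr_{C_Z}(\iota^*\cc)\bigr)$, after which injectivity of $s_*$ on cycles supported on $\Fix(C_Z)$ concludes. The contracting hypothesis, combined with Lemma~\ref{lem: zero-section}, shows that $N(c_0) \co N_{C_Z}(C) \to N_Z(Y)$ factors set-theoretically through the zero section $Z \hookrightarrow N_Z(Y)$; together with Proposition~\ref{prop: Verdier}, which identifies $\spc_{Y,Z}(\cK)$ on the zero section with $\iota^*\cK$, the specialized cohomological correspondence $\spc_{C,C_Z}(\cc)$ is recognized, up to $\G_m$-monodromy, as arising from $\iota^*\cc$. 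The trace on $N_{C_Z}(C)$ is then matched with $s_*\bigl(\Tr_{C_Z}(\iota^*\cc)\bigr)$ using $\G_m$-equivariance of the specialization and monodromy invariance of the categorical trace. I expect this last identification to be the main obstacle: Varshavsky's original argument uses monodromy input specific to $\ell$-adic sheaves, whereas Jin's motivic argument proceeds via a second specialization along the contracting $\G_m$-action on the normal cone, together with the smooth-pullback compatibility of the trace (Proposition~\ref{prop: trace commutes with smooth pullback}). Either route should extend to Artin stacks by smooth descent from the schematic case, but the bookkeeping of $\G_m$-equivariances and the compatibilities between specialization, trace, and normal-cone formation requires care at each stage.
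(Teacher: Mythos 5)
Your proposal captures the right tools — deformation to the normal cone, specialization compatibility with traces (Corollary~\ref{cor: sp norm compatible with trace}), and Varshavsky's framework — but the route you sketch diverges from the paper's precisely at the step you flag as ``the main obstacle.'' The paper never identifies $\spc_{C,C_Z}(\cc)$ with $\iota^*\cc$ and never invokes $\G_m$-monodromy. Instead, it decomposes the \emph{cohomological correspondence} rather than the output cycle: letting $j \co V \hookrightarrow Y$ be the open complement of $Z$, one forms the open and closed sub-correspondences and, using additivity of the categorical trace across the localization triangle $j_!j^*\cK \to \cK \to \iota_*\iota^*\cK$, obtains $\Tr_C(\cc) = \Tr_C(j_!j^*\cc) + \Tr_C(\iota_*\iota^*\cc)$. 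The closed summand is handled directly by the proper-pushforward compatibility (Proposition~\ref{prop: trace commutes with proper push}), which yields $\Tr_C(\iota_*\iota^*\cc) = \Fix(\iota)_*\Tr_{C_Z}(\iota^*\cc)$ with no specialization whatsoever. The open summand is killed on $\Fix(C_Z)$ by Corollary~\ref{cor: LT vanish boundary}: since $\iota^*j_!j^*\cK \cong 0$, the specialized correspondence $\spc_{C,C_Z}(j_!j^*\cc)$ is zero, and Proposition~\ref{prop: LT vanish sp} (which uses Proposition~\ref{prop: constant fixed locus} to make the specialization map on Chow groups an isomorphism) forces $\Tr_C(j_!j^*\cc)|_{\Fix(C_Z)} = 0$. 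The specialization machinery thus enters only to prove a \emph{vanishing} of the specialized correspondence, which is far more tractable than the \emph{identification} you were trying to pin down; you would also need to justify the claim $\Fix(N_{C_Z}(C)) \cong N_{\Fix(C_Z)}(\Fix(C))$, which is not used or asserted in the paper (normal cones do not generally commute with the fibered products defining $\Fix$; the paper instead proves the weaker and sufficient Proposition~\ref{prop: constant fixed locus} at the level of reduced substacks). Your route, if completed, would hew closer to Varshavsky's original argument at the junctures where he needs monodromy input; the paper (following Jin) sidesteps that entirely via the open-closed decomposition and trace additivity.
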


\begin{lemma}\label{lem: contracting deformation zero-section}
The correspondence $c$ is contracting near $Z$ if and only if it stabilizes $Z$ and the set-theoretic image of $\wt{c}_{0s} = N(c_0)$ is contained in the zero-section $Z \inj N_Z(Y)$. 
\end{lemma}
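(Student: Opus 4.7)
The plan is to reduce this lemma to a direct invocation of Lemma~\ref{lem: zero-section}. Observe first that the stabilization hypothesis appears on both sides of the biconditional, so the content is the following: given that $c$ stabilizes $Z$, show that there exists $n \in \N$ with $c_0^*(\cI_Z)^n \subset c_1^*(\cI_Z)^{n+1}$ if and only if $N(c_0) \co N_{C_Z}(C) \to N_Z(Y)$ has set-theoretic image in the zero section $Z \inj N_Z(Y)$.

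First I would identify the relevant ideal sheaves. By Construction~\ref{const: restrict correspondence to closed}, $C_Z$ is the (classical) Cartesian product $C \times_{c_1, Y, \iota} Z$, so the ideal sheaf $\cI_{C_Z} \subset \cO_C$ of the closed embedding $C_Z \inj C$ equals $c_1^{-1}(\cI_Z)\cdot \cO_C$. Moreover, the stabilization assumption on $c$ is exactly what makes the square
\[
\begin{tikzcd}
C_Z \ar[r, hook] \ar[d] & C \ar[d, "c_0"] \\
Z \ar[r, hook, "\iota"] & Y
\end{tikzcd}
\]
commute, so we are in a position to apply Lemma~\ref{lem: zero-section} to this square with $f = c_0$.

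Next I would invoke Lemma~\ref{lem: zero-section} to conclude that $N(c_0) \co N_{C_Z}(C) \to N_Z(Y)$ has set-theoretic image in the zero section if and only if there exists $n$ with $c_0^*(\cI_Z^n) \subset \cI_{C_Z}^{n+1}$. Finally, I would unwind this using the standard compatibility of pullback with powers of ideal sheaves, namely $c_0^{-1}(\cI_Z^n)\cdot \cO_C = (c_0^{-1}(\cI_Z)\cdot \cO_C)^n$ and $\cI_{C_Z}^{n+1} = (c_1^{-1}(\cI_Z)\cdot \cO_C)^{n+1}$, to rewrite the condition as $c_0^*(\cI_Z)^n \subset c_1^*(\cI_Z)^{n+1}$. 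This is precisely the contracting condition in Definition~\ref{def: contracting correspondence}(c), completing the proof.

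There is essentially no obstacle here beyond the bookkeeping of ideal sheaves: the only subtle point is remembering that the restriction $C_Z$ is cut out of $C$ by $c_1^*(\cI_Z)$ rather than $c_0^*(\cI_Z)$, since the Cartesian square defining $C_Z$ uses $c_1$. Once this identification is made, the lemma is an immediate translation of Lemma~\ref{lem: zero-section} along the Cartesian square above.
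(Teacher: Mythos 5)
Your proof is correct and takes exactly the approach the paper intends: the paper's proof of this lemma consists entirely of the sentence ``Follows from Lemma~\ref{lem: zero-section}'' (with a pointer to Jin), and what you have written is precisely the bookkeeping that citation is asking the reader to supply. The two points you identify are the right ones: the stabilization hypothesis is what makes the square with vertical map $c_0$ commute (so that Lemma~\ref{lem: zero-section} applies at all), and the ideal of $C_Z$ in $C$ is $c_1^*(\cI_Z)$ rather than $c_0^*(\cI_Z)$ because the Cartesian square in Construction~\ref{const: restrict correspondence to closed} uses $c_1$. Once those are in place, $c_0^*(\cI_Z^n) = c_0^*(\cI_Z)^n$ and $\cI_{C_Z}^{n+1} = c_1^*(\cI_Z)^{n+1}$ convert the condition from Lemma~\ref{lem: zero-section} into Definition~\ref{def: contracting correspondence}(c) exactly as you say.
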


\begin{proof}
Follows from \lemref{lem: zero-section}.
See also \cite[Lemma 6.4.2(1)]{Jin23}.
\end{proof}

\begin{lemma}\label{lem: Cartesian cube}
Continuing to assume that $c$ stabilizes $Z$, the commutative cube 
\[
\begin{tikzcd}
\Fix(C_Z) \ar[dr] \ar[dd]  \ar[rr] & & C_Z \ar[dd] \ar[dr, hook]  \\
& \Fix(C) \ar[rr
] \ar[dd]  & & C \ar[dd, "c"] \\
Z \ar[dr, hook] \ar[rr] & & Z \times Z \ar[dr, hook] \\
& Y  \ar[rr, "\Delta"] &  & Y \times Y 
\end{tikzcd}
\]
has all squares Cartesian. 
\end{lemma}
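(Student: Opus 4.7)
The plan is to first verify four of the six faces of the cube are Cartesian directly, then obtain the other two by pasting.

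First, I would identify the four ``easy'' Cartesian faces. The bottom face
\[
\begin{tikzcd}
Z \ar[r, "\Delta_Z"] \ar[d, hook, "\iota"'] & Z\times Z \ar[d, hook, "\iota\times\iota"] \\
Y \ar[r, "\Delta_Y"] & Y\times Y
\end{tikzcd}
\]
is Cartesian since $\iota$ is a monomorphism. The back face is Cartesian by the very definition of $\Fix(C)$ in \S\ref{sssec: fixed points}, and similarly the top-front face is Cartesian by the definition of $\Fix(C_Z)$ applied to the restricted correspondence. For the front-right face
\[
\begin{tikzcd}
C_Z \ar[r, hook] \ar[d, "{(c_{0Z},c_{1Z})}"'] & C \ar[d, "c=(c_0,c_1)"] \\
Z\times Z \ar[r, hook] & Y\times Y
\end{tikzcd}
\]
the key point is the hypothesis that $c$ stabilizes $Z$: by definition this means $c_1^{-1}(Z)\subseteq c_0^{-1}(Z)$ scheme-theoretically, so since $C_Z = C\times_Y Z$ (along $c_1$) by Construction~\ref{const: restrict correspondence to closed}, the map $c_{0Z}$ automatically factors through $Z$, and one checks $C_Z = c^{-1}(Z\times Z)$ scheme-theoretically, giving Cartesianness.

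Second, I would obtain the remaining two Cartesian squares (the top face and the left face) by the pasting lemma. Pasting the back face with the bottom face shows
\[
\Fix(C) \times_Y Z \;\cong\; C\times_{Y\times Y} Y \times_Y Z \;\cong\; C\times_{Y\times Y} Z,
\]
while pasting the top-front face with the front-right face shows
\[
\Fix(C_Z) \;\cong\; C_Z\times_{Z\times Z} Z \;\cong\; C\times_{Y\times Y}(Z\times Z)\times_{Z\times Z} Z \;\cong\; C\times_{Y\times Y} Z.
\]
Combining these two identifications yields $\Fix(C_Z)\cong \Fix(C)\times_Y Z$, which is the statement that the left face
\[
\begin{tikzcd}
\Fix(C_Z) \ar[r] \ar[d] & \Fix(C) \ar[d] \\
Z \ar[r, hook] & Y
\end{tikzcd}
\]
is Cartesian. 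Finally, the top face
\[
\begin{tikzcd}
\Fix(C_Z) \ar[r] \ar[d] & C_Z \ar[d, hook] \\
\Fix(C) \ar[r] & C
\end{tikzcd}
\]
now follows from the pasting lemma applied to the composite ``top $+$ back'' square, whose outer rectangle equals the composite ``left $+$ bottom $+$ front-right $+$ top-front'' rectangle, each of which is already Cartesian.

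There is no real obstacle here; the content is entirely diagrammatic, provided one correctly uses the stabilization hypothesis to get the front-right face. Since by the convention of this section all fibered products are classical, no derived subtleties intervene.
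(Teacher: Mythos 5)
Your proof is correct and takes essentially the same approach as the paper's: check the same four faces directly (bottom, the two $\Fix$-defining squares, and the square expressing $C_Z = C\times_{Y\times Y}(Z\times Z)$), then deduce the remaining two by the two-out-of-three property for pasted Cartesian squares. Two small expositional issues are worth noting. First, your face-naming is idiosyncratic (you call the $\Fix(C)$-defining square the ``back face'' and the $\Fix(C_Z)$-defining square the ``top-front face''), which makes the argument harder to follow than it needs to be. Second, the final sentence describing how the top face follows is garbled: the ``composite \textit{left $+$ bottom $+$ front-right $+$ top-front} rectangle'' is not a well-defined pasting of four squares into a $2\times 2$ grid (the edges do not line up). The intended and correct statement is simply that pasting \emph{top} with \emph{front} along $\Fix(C)\to C$ yields the same outer square as pasting \emph{back} with \emph{bottom} along $Z\to Z\times Z$, namely the diagonal square on vertices $\Fix(C_Z), C_Z, Y, Y\times Y$; since back, bottom, and front are Cartesian, two-out-of-three gives the top face. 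This is exactly the paper's argument, and your identification of the left face via the explicit fibre-product computation $\Fix(C_Z)\cong C\times_{Y\times Y}Z \cong \Fix(C)\times_Y Z$ is likewise equivalent to the paper's pasting of back $+$ right against left $+$ front.
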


\begin{proof}
Since $Z \inj Y$ is a closed embedding, we have $Z \cong Z \times_Y Z \cong (Z \times Z) \times_{Y \times Y} Y$, so the bottom square is Cartesian. The back face is Cartesian by definition. Hence the diagonal square with vertices $\Fix(C_Z), C_Z, Y, Y \times Y$ is Cartesian. The front face is Cartesian by definition, hence the top face is also Cartesian. 

The right face is Cartesian by the assumption that $c$ stabilizes $Z$. Hence the diagonal square with vertices $\Fix(C_Z), C, Z, Y \times Y$ is Cartesian. As the front face is Cartesian, the left face is also Cartesian. We have now checked that all squares are Cartesian.  
\end{proof}

\begin{prop}\label{prop: contracting fixed locus}
If $c$ is contracting near $Z$, then the map $\Fix(C_Z) \rightarrow \Fix(C)$ is open-closed on reduced substacks. 
\end{prop}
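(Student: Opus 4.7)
The plan is to verify the two halves of open-closedness separately. For closedness, observe that the top face of the Cartesian cube of Lemma~\ref{lem: Cartesian cube} exhibits $\Fix(C_Z) \to \Fix(C)$ as the base change of the closed embedding $C_Z \hookrightarrow C$ (which is a closed embedding because the right square of \eqref{eq: sub-correspondence} is Cartesian and $Z \hookrightarrow Y$ is one), so the induced map is itself a closed embedding. For openness, it suffices to show $\Fix(C_Z)^{\mrm{red}}$ coincides topologically with $\Fix(C)^{\mrm{red}}$ in a neighborhood of every point of $\Fix(C_Z)$.

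For this, first reduce smooth-locally to the case where $Y$ and $C$ are locally Noetherian schemes of finite type over $\F$: the contracting condition of Definition~\ref{def: contracting correspondence}, the Cartesian diagrams of Lemma~\ref{lem: Cartesian cube}, and the topological notion of open-closedness on reduced substacks are all preserved and reflected by smooth surjective base change. Fix $y_0 \in \Fix(C_Z)$, view it as a point of $C$, and set $A = \cO_{C, y_0}$, $I_j = c_j^*(\cI_Z) \cdot A$ for $j \in \{0, 1\}$, and $B = \cO_{\Fix(C), y_0} = A/K$, where $K$ is the ideal generated by $\{c_0^*(f) - c_1^*(f) : f \in \cO_Y\}$.

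The key calculation is then commutative-algebraic. In $B$, the two pullbacks $c_0^*, c_1^* \colon \cO_Y \to B$ agree by construction, so the images of $I_0$ and $I_1$ in $B$ coincide; call this common ideal $I \subseteq B$. The contracting hypothesis yields $I_0^n \subseteq I_1^{n+1}$ in $A$ for some $n$, which descends to $I^n \subseteq I^{n+1}$ in $B$, and hence $I^n = I^{n+1}$. Because $y_0 \in \Fix(C_Z)$, we have $c_0(y_0) \in Z$, so $I_0 \subseteq \fm_A$ and thus $I \subseteq \fm_B$. Therefore $I^n = I \cdot I^n \subseteq \fm_B \cdot I^n$, and Nakayama's lemma (applied to the finitely generated $B$-module $I^n$) gives $I^n = 0$. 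Hence $I$ is nilpotent, so $\Fix(C_Z) = V(I)$ agrees with $\Spec(B) = \Fix(C)$ on reduced local rings at $y_0$, yielding the required openness.

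The principal obstacle is really just bookkeeping: verifying that the smooth-local reduction is sound for Artin stacks and that the local-ring manipulations transport faithfully between $C$, $\Fix(C)$, and their reduced substacks. No serious difficulty is anticipated; the schematic version of this statement appears in \cite[Cor.~2.1.4]{Var07} and \cite[Prop.~6.4.2]{Jin23}, and the only new content here is the passage to stacks via smooth descent.
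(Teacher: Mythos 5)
Your proof is correct, and it routes around part of the paper's machinery. Both arguments ultimately produce the same key ideal relation on $\Fix(C)$—that the ideal $J$ of $c'^{-1}(Z) \cong \Fix(C_Z)$ satisfies $J^n = J^{n+1}$—and then conclude by Nakayama. The difference is how you transfer the contracting hypothesis from $C$ down to $\Fix(C)$. The paper goes through the deformation to the normal cone: it applies Lemma~\ref{lem: contracting deformation zero-section} to rephrase the contracting condition as a normal-cone statement for $N(c_0)$, composes with the functoriality map $N_{\Fix(C_Z)}(\Fix(C)) \to N_{C_Z}(C)$, and then invokes Lemma~\ref{lem: zero-section} to translate back into an ideal-theoretic inclusion on $\Fix(C)$. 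You short-circuit all of this by the elementary observation that $c_0^*$ and $c_1^*$ become equal after restriction to $\Fix(C)$ (since $\Fix(C) \to C \to Y\times Y$ factors through the diagonal), so the two ideals $c_0^*\cI_Z$ and $c_1^*\cI_Z$ coincide there, and the contracting inclusion $I_0^n \subseteq I_1^{n+1}$ collapses directly to $I^n \subseteq I^{n+1}$. Your transfer is cleaner and avoids the normal-cone detour; the paper's phrasing is more uniform with the rest of the section, where deformation to the normal cone is used repeatedly (e.g.\ Proposition~\ref{prop: constant fixed locus}, Corollary~\ref{cor: sp norm compatible with trace}). One small thing to tighten: after the local-ring computation you should observe that $I^n$ is a coherent ideal whose stalk at $y_0$ vanishes, so it vanishes on a Zariski neighborhood of $y_0$; that spreading-out step is what turns the germ statement into the asserted openness.
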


\begin{proof}
In the schematic case this is \cite[Theorem 2.1.3(a)]{Var07}, and the argument is the same in essence. By passing to an open subset of $C$, we may assume that $\Fix(C)$ is connected and noetherian and $\Fix(C_Z)$ is non-empty.
Since $c$ is contracting near $Z$, the map $\wt{c}_{0s} = N(c_0)$ has set-theoretic image contained in the zero-section $Z \inj N_Z(Y)$ by Lemma \ref{lem: contracting deformation zero-section}. Hence the same holds for the composite map
\[
N_{\Fix(C_Z)} (\Fix(C)) \to N_{C_Z}(C) \xrightarrow{N(c_0)} N_Z(Y).
\]
Under the identification $\Fix(C_Z) \cong c'^{-1}(Z)$ where $c' : \Fix(C) \to Y$ (Lemma~\ref{lem: Cartesian cube}), it follows by Lemma~\ref{lem: zero-section} that $(c')^* \cI_Z^n \subset (\cI_{{c'}^{-1}(Z)})^{n+1}$ for some $n$.
Since $(c')^* \cI_Z^n = (\cI_{{c'}^{-1}(Z)})^n$, we deduce that $(\cI_{{c'}^{-1}(Z)})^{n} = (\cI_{{c'}^{-1}(Z)})^{n+1}$ for some $n$. The noetherianity and connectedness then imply that $(\cI_{{c'}^{-1}(Z)})^n = 0$, so $\Fix(C)_{\red} \cong {c'}^{-1}(Z)_{\red} \cong \Fix(C_Z)_{\red}$.
\end{proof}

\begin{prop}\label{prop: constant fixed locus}
If $c$ is contracting near $Z$, then $\Fix(D_{C_Z}(C))_{\red}$ is isomorphic to the constant family $\Fix(C_Z)_{\red}$ over $\A^1$.
\end{prop}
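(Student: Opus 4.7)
The plan is to apply Proposition~\ref{prop: contracting fixed locus} to the deformed correspondence $\wt{c} = (D_Z(Y) \xleftarrow{\wt{c}_0} D_{C_Z}(C) \xrightarrow{\wt{c}_1} D_Z(Y))$ together with the closed substack $Z \times \A^1 \inj D_Z(Y)$. Here the latter embedding is obtained from functoriality of deformation to the normal cone applied to the commutative square with vertices $(Z, Z, Z, Y)$ with identity top and left arrows, together with the canonical identification $D_Z(Z) = Z \times \A^1$; similarly there is a closed embedding $C_Z \times \A^1 \inj D_{C_Z}(C)$. The restriction $\wt{c}_{Z \times \A^1}$ in the sense of Construction~\ref{const: restrict correspondence to closed} will be identified with the constant family $c_Z \times \A^1$ over $\A^1$, whose fixed locus is manifestly $\Fix(C_Z) \times \A^1$.

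To carry this out, I would first check that $\wt{c}$ stabilizes $Z \times \A^1$ and that $\wt{c}_{Z \times \A^1} \cong c_Z \times \A^1$. Both assertions follow from functoriality of the deformation to the normal cone: $\wt{c}_j^{-1}(Z \times \A^1)$ is canonically the deformation of $c_j^{-1}(Z)$ to its normal cone inside $C_{c_j^{-1}(Z)}$, and since $c$ stabilizes $Z$ we have $c_0^{-1}(Z) = C_Z$ and $c_1^{-1}(Z) \subseteq C_Z$ scheme-theoretically, so both deformations collapse to $C_Z \times \A^1$. In particular $\wt{c}$ stabilizes $Z \times \A^1$, and upon restriction the two structure maps become $c_{0Z} \times \id_{\A^1}$ and $c_{1Z} \times \id_{\A^1}$, as required.

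The main work — and the key obstacle — is to verify that $\wt{c}$ is contracting near $Z \times \A^1$. The contracting condition is a condition on ideal sheaves on classical truncations and is stable under smooth base change, so by working smooth-locally we reduce to the schematic setting. In affine charts for the Rees-algebra presentation of $D_{C_Z}(C)$ and $D_Z(Y)$, one verifies by a direct local calculation that the inclusion $c_0^*(\cI_Z^n) \subseteq c_1^*(\cI_Z^{n+1})$ propagates to $\wt{c}_0^*(\cI_{Z \times \A^1}^n) \subseteq \wt{c}_1^*(\cI_{Z \times \A^1}^{n+1})$. This is the same computation carried out in \cite[Proposition~2.2.2]{Var07} and \cite[Lemma~6.4.3]{Jin23}; smooth descent is what allows it to be quoted here in the Artin-stack setting.

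Combining these inputs, Proposition~\ref{prop: contracting fixed locus} applied to $\wt{c}$ and $Z \times \A^1$ yields
\[
\Fix(D_{C_Z}(C))_{\red} \;\cong\; \Fix(\wt{c}_{Z \times \A^1})_{\red} \;\cong\; \Fix(C_Z)_{\red} \times \A^1,
\]
which is the asserted isomorphism of families over $\A^1$. Only the contracting verification requires genuine local argument; all the rest is formal functoriality of deformation to the normal cone combined with the already-established Proposition~\ref{prop: contracting fixed locus}.
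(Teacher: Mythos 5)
Your approach is a genuine departure from the paper's: instead of analyzing the generic and special fibers of $\Fix(D_{C_Z}(C))$ separately, you try to apply Proposition~\ref{prop: contracting fixed locus} in one shot to the deformed correspondence $\wt{c}$ over $D_Z(Y)$ with closed substack $Z \times \A^1$. The strategy is conceptually attractive, but the two steps you present as ``formal'' actually carry the weight of the argument, and neither is established.

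The first gap is the claim that $\wt{c}$ is contracting near $Z \times \A^1$. This is the crucial new ingredient, and you defer it entirely to a ``direct local calculation'' with citations to Var07 and Jin23 that I cannot match to actual statements; the paper itself never proves or uses this claim, and what it does use (Lemma~\ref{lem: contracting deformation zero-section}) is a weaker set-theoretic assertion about the special fiber only. The contraction of $\wt{c}$ is plausible---a Rees-algebra calculation suggests the contracting exponent does propagate---but writing it out carefully, for Artin stacks and in a way that respects the ideal of $Z \times \A^1$ globally over $D_Z(Y)$ rather than fiberwise, is at least as much work as the paper's approach. You cannot quote it.

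The second gap is more structural: even granting the contraction of $\wt{c}$, Proposition~\ref{prop: contracting fixed locus} only tells you that $\Fix(\wt{c}_{Z\times\A^1})_{\red} \to \Fix(D_{C_Z}(C))_{\red}$ is an \emph{open-and-closed immersion}. Your displayed ``$\cong$'' asserts surjectivity, which is precisely the content of the proposition being proved and does not follow from anything you have said. Concretely, nothing in your argument excludes connected components of $\Fix(D_{C_Z}(C))$ disjoint from $\Fix(C_Z)\times\A^1$. The paper handles this: after restricting to a connected noetherian open of $C$ with $\Fix(C_Z)\neq\emptyset$ (so that $\Fix(C)_{\red}=\Fix(C_Z)_{\red}$ by Proposition~\ref{prop: contracting fixed locus}), the embedding is automatically surjective over $\eta$, and surjectivity over $s=0$ is shown by proving $\Fix(N_{C_Z}(C))$ is set-theoretically contained in the zero section $C_Z$. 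Your proof still needs this special-fiber analysis; it has not been avoided, only elided.

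One minor point: by Construction~\ref{const: restrict correspondence to closed}, $C_Z = c_1^{-1}(Z)$, and stabilization gives $c_1^{-1}(Z) \subseteq c_0^{-1}(Z)$; you have these two reversed. In fact $\wt{c}_0^{-1}(Z\times\A^1)$ is strictly larger than $C_Z\times\A^1$ in general (its fiber over $0$ is all of $N_{C_Z}(C)$ by Lemma~\ref{lem: contracting deformation zero-section}), so ``both deformations collapse to $C_Z\times\A^1$'' is false, although the only identification you actually need is $\wt{c}_1^{-1}(Z\times\A^1)\cong C_Z\times\A^1$, which is correct.
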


\begin{proof}
In the schematic case this is \cite[Theorem 2.1.3(b)]{Var07}, and the argument is the same in essence. By passing to an open subset of $C$, we may assume that $\Fix(C)$ is connected and noetherian and $\Fix(C_Z)$ is non-empty. Then thanks to Proposition \ref{prop: contracting fixed locus} we have $\Fix(C)_{\red} \cong \Fix(C_Z)_{\red}$. As $\wt{(c_Z)}_{\red}$ is a constant family of correspondences over $\A^1$, we have $\Fix(C)_{\red} \times \A^1 \cong \Fix(D_{C_Z}(C_Z))_{\red}$, which by Lemma \ref{lem: Cartesian cube} admits a closed embedding into $\Fix(D_{C_Z}(C))_{\red}$, which is moreover an isomorphism over $\eta \subset \A^1$. It then suffices to show that the special fiber $\Fix(N_{C_Z}(C))$ is set-theoretically supported within $\Fix(C)$. 

By Lemma \ref{lem: contracting deformation zero-section}, $\wt{c}_{0s} [\Fix(N_{C_Z}(C))]$ has set-theoretic image contained in the zero-section $Z \inj N_Z(Y)$. Hence the same is true for $\wt{c}_{1s} [\Fix(N_{C_Z}(C))]$, so $\Fix(N_{C_Z}(C))$ has set-theoretic image contained in the zero-section $C_Z \inj N_{C_Z}(C)$ under both $\wt{c}_{0s}$ and $\wt{c}_{1s}$. But the restriction of $\wt{c}_{s}$ to $c_1^{-1}(Z)$ equals the correspondence $c_Z$, so we deduce that $\Fix(N_{C_Z}(C))$ is set-theoretically contained in $\Fix(C_Z) \subset \Fix(C)$, as desired. 
\end{proof}

Recall the specialization to the normal cone map $\spc_{C,C_Z}$ on cohomological correspondences defined in \S\ref{sssec: sp norm corr}.

\begin{prop}\label{prop: LT vanish sp}
Suppose $c$ is contracting near $Z$.
Let $\cK \in \Dmotg{Y}$ and let $\cc \in \Corr_C(\cK, \cK\tw{-i})$ be a cohomological correspondence.
If $\spc_{C,C_Z}(\cc) = 0$, then we have
\[
\Tr_C(\cc)|_{\Fix(C_Z)} = 0 \in \CH_{i}(\Fix(C_Z)).
\]
\end{prop}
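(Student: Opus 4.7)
The plan would be to combine the compatibility of specialization with the categorical trace (Corollary~\ref{cor: sp norm compatible with trace}) with the geometric rigidity of the fixed locus of the deformation to the normal cone in the contracting situation (Proposition~\ref{prop: constant fixed locus}).

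First, applying Corollary~\ref{cor: sp norm compatible with trace} to the hypothesis $\spc_{C,C_Z}(\cc) = 0$ yields
\[
\sp_{C,C_Z}\bigl(\Tr_C(\cc)\bigr) = \Tr_{N_{C_Z}(C)}\bigl(\spc_{C,C_Z}(\cc)\bigr) = 0 \in \CH_i(\Fix(N_{C_Z}(C))).
\]
Separately, Proposition~\ref{prop: contracting fixed locus} asserts that $\Fix(C_Z) \hookrightarrow \Fix(C)$ is an open-closed embedding on reduced substacks, inducing a direct sum decomposition
\[
\CH_i(\Fix(C)) = \CH_i(\Fix(C_Z)) \oplus \CH_i(R),
\]
where $R$ denotes the reduced complement, and under which the restriction map $\alpha \mapsto \alpha|_{\Fix(C_Z)}$ is projection onto the first summand. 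Hence it suffices to show that the composite
\[
\CH_i(\Fix(C_Z)) \hookrightarrow \CH_i(\Fix(C)) \xrightarrow{\sp_{C,C_Z}} \CH_i(\Fix(N_{C_Z}(C)))
\]
is injective.

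For this, I would appeal to Proposition~\ref{prop: constant fixed locus}: after localizing to those connected components of $C$ on which $\Fix(C_Z)$ is non-empty (which does not alter the $\Fix(C_Z)$-summand of the trace), the reduced family $\Fix(D_{C_Z}(C))_{\mathrm{red}}$ is the constant family $\Fix(C_Z)_{\mathrm{red}} \times \A^1$ over $\A^1$. Unwinding $\sp_{C,C_Z}$ as the composition of the pullback $\pr^* \colon \CH_i(\Fix(C)) \to \CH_i(\Fix(C) \times \G_m/\G_m)$ with the Chow specialization for the family $\Fix(D_{C_Z}(C)) \to \A^1$, and using that rational Chow groups are insensitive to non-reduced structure, this restricts on the $\Fix(C_Z)$-summand to the analogous pullback-then-specialize for the constant family $\Fix(C_Z) \times \A^1 \to \A^1$, which is the identity on $\CH_i(\Fix(C_Z))$. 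This yields the required injectivity, and hence the vanishing $\Tr_C(\cc)|_{\Fix(C_Z)} = 0$.

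The main technical point will be this last identification: carefully tracking the Chow specialization through the (a priori non-reduced) deformation $\Fix(D_{C_Z}(C)) \to \A^1$ and identifying it with the identity on the $\Fix(C_Z)$-component. This rests on the standard fact that Chow specialization in a trivial family is the identity, combined with rational insensitivity to nilpotents and the constancy provided by Proposition~\ref{prop: constant fixed locus}.
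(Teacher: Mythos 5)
Your argument is correct and follows essentially the same route as the paper's proof, which in one compressed sentence asserts that $\fsp_{C,C_Z}$ is an isomorphism (via Proposition~\ref{prop: constant fixed locus}) and concludes $\Tr_C(\cc)=0$; you unpack this by making the direct-sum decomposition $\CH_i(\Fix(C)) \cong \CH_i(\Fix(C_Z)) \oplus \CH_i(R)$ and the identification of the specialization with the trivial-family map explicit.

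One small ordering slip worth flagging: as written, ``it suffices to show the composite $\CH_i(\Fix(C_Z)) \hookrightarrow \CH_i(\Fix(C)) \xrightarrow{\fsp_{C,C_Z}} \CH_i(\Fix(N_{C_Z}(C)))$ is injective'' is not literally sufficient, because the hypothesis only gives $\fsp(\alpha_1) + \fsp(\alpha_2) = 0$, and injectivity on the first summand does not by itself yield $\alpha_1 = 0$ without controlling the image of the $R$-summand. What actually closes this gap in your argument is the localization step (pass to an open of $C$ where $\Fix(C_Z)$ is non-empty and $\Fix(C)$ is connected/noetherian, whence $\Fix(C)_{\red} = \Fix(C_Z)_{\red}$ by Proposition~\ref{prop: contracting fixed locus} and the $R$-summand vanishes). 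This is exactly the localization the paper performs inside the proof of Proposition~\ref{prop: constant fixed locus}; it would be cleaner to invoke the localization first and only then reduce to injectivity of $\fsp_{C,C_Z}$.
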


\begin{proof}
By Corollary~\ref{cor: sp norm compatible with trace} we have 
\[
\fsp_{C,C_Z}(\Tr_C(\cc)) = \Tr_{N_{C_Z}(C)}(\spc_{C, C_Z}(\cc)) = 0.
\]
Since the map $\fsp_{C,C_Z}$ is an isomorphism in this case by Proposition~\ref{prop: constant fixed locus}, we conclude that $\Tr_C(\cc) = 0$.
\end{proof}

\begin{cor}\label{cor: LT vanish boundary}
Suppose $c$ is contracting near $Z$.
Let $\cK \in \Dmotg{Y}$ and let $\cc \in \Corr_C(\cK, \cK\tw{-i})$ be a cohomological correspondence.
If $\iota^* \cK \cong 0 \in \Dmot{Z}$, then we have
\[
\Tr_C(\cc)|_{C_Z} = 0 \in \CH_{i}(\Fix(C_Z)).
\]
\end{cor}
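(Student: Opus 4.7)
The plan is to deduce this corollary from Proposition \ref{prop: LT vanish sp}. It suffices to show that, under the hypothesis $\iota^* \cK \cong 0$ in $\Dmot{Z}$, the specialized cohomological correspondence $\spc_{C,C_Z}(\cc) \in \Corr_{N_{C_Z}(C)}(\spc_{Y,Z}(\cK), \spc_{Y,Z}(\cK)\tw{-i})$ vanishes. Since it is a morphism in $\Dmot{N_{C_Z}(C)}$ whose source is $N(c_0)^*\spc_{Y,Z}(\cK)$, it is enough to check that this source vanishes.

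First I would invoke Proposition \ref{prop: Verdier}: the hypothesis $\iota^* \cK \cong 0$ is equivalent to the vanishing of $\wt{\iota}_s^* \spc_{Y,Z}(\cK)$, where $\wt{\iota}_s \co Z \hookrightarrow N_Z(Y)$ is the zero-section. Next I would exploit the contracting hypothesis: by Lemma \ref{lem: contracting deformation zero-section}, the fact that $c$ is contracting near $Z$ means precisely that $N(c_0) \co N_{C_Z}(C) \to N_Z(Y)$ has set-theoretic image contained in the zero-section $Z \subset N_Z(Y)$. Combining these, I would conclude that $N(c_0)^*\spc_{Y,Z}(\cK) \cong 0$ in $\Dmot{N_{C_Z}(C)}$, whence $\spc_{C,C_Z}(\cc) = 0$, and Proposition \ref{prop: LT vanish sp} gives the result.

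The only mildly technical point is upgrading the set-theoretic factorization of $N(c_0)$ through $Z$ to a sheaf-theoretic vanishing. This is handled by the topological invariance of $\Dmot{-}$ under nilpotent thickenings (equivalently, $\Dmot{X} \cong \Dmot{X_{\red}}$): after passing to reduced classical truncations the map $N(c_0)_{\red}$ factors scheme-theoretically through $Z_{\red} \hookrightarrow Z \hookrightarrow N_Z(Y)$, so the vanishing of $\wt{\iota}_s^*\spc_{Y,Z}(\cK)$ propagates along this factorization to give $N(c_0)^*\spc_{Y,Z}(\cK) \cong 0$. No new obstacle is expected; this is the expected analogue of \cite[Corollary 2.1.4]{Var07} and \cite[Corollary 6.4.3]{Jin23}, whose proofs proceed by exactly this route.
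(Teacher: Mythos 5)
Your proposal is correct and follows essentially the same route as the paper's proof: apply Verdier's theorem (Proposition~\ref{prop: Verdier}) to translate $\iota^*\cK \cong 0$ into $\wt{\iota}_s^*\spc_{Y,Z}(\cK)\cong 0$, use Lemma~\ref{lem: contracting deformation zero-section} to see that $N(c_0)=\wt{c}_{0s}$ factors set-theoretically through the zero-section, deduce $N(c_0)^*\spc_{Y,Z}(\cK)\cong 0$ and hence $\spc_{C,C_Z}(\cc)=0$, and conclude by Proposition~\ref{prop: LT vanish sp}. The one detail you elaborate on---passing to reduced truncations and invoking topological invariance of $\Dmot{-}$ to upgrade the set-theoretic factorization to a sheaf-theoretic vanishing---is left implicit in the paper, and your justification of it is correct.
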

\begin{proof}
By Proposition~\ref{prop: Verdier} we have $\spc_{Y,Z}(\cK)|_Z \cong \cK|_Z \cong 0$.
By Lemma~\ref{lem: contracting deformation zero-section}, the map $\wt{c}_{0s}$ has set-theoretic image contained in the zero-section $Z \subset N_Z(Y)$, hence also $\wt{c}_{0s}^* \spc_{Y,Z}(\cK) \cong 0$.
In particular, $\spc_{C, C_Z}(\cc) = 0$.
We conclude by Proposition~\ref{prop: LT vanish sp}.
\end{proof}

\begin{proof}[Proof of Theorem~\ref{thm: contracting fixed terms}]
Since the diagram \eqref{eq: sub-correspondence} is left pushable (as the vertical maps are closed embeddings, hence proper, cf. Example \ref{ex: pushable/pullable examples}), $\iota_! \iota^* \cc \in \Corr_C(\iota_! \iota^* \cK_0, \iota_!\iota^* \cK_1 \tw{-i})$ is defined.
By Proposition~\ref{prop: trace commutes with proper push} we have $
\Tr_C(\iota_* \iota^* \cc) = \Fix(\iota)_* (\Tr_{C_Z}(\iota^* \cc))$.

Let $j \co V \inj Y$ be the open embedding complementary to $\iota$. Since $Z$ is $c$-invariant, we have $c_0^{-1}(V) \subset c_1^{-1}(V)$. Let $C_V \coloneqq c_0^{-1}(V)$. Then the map of correspondences 
\begin{equation}\label{eq: open sub-correspondence}
\begin{tikzcd}
V \ar[d, hook] & C_V \ar[l, "c_{0V}"'] \ar[r, "c_{1V}"] \ar[d, hook] & V \ar[d, hook] \\
Y & \ar[l, "c_0"']  C \ar[r, "c_1"] & Y
\end{tikzcd}
\end{equation}
is right pullable (since the vertical maps are open embeddings, hence smooth, cf. Example \ref{ex: pushable/pullable examples}), so $j^* \cc \in \Corr_{C_V}(j^* \cK, j^* \cK\tw{-i})$ is defined. Since the left square of \eqref{eq: open sub-correspondence} is Cartesian, it is also left pushable, so $j_! j^* \cc \in \Corr_{C}(j_!j^* \cK, j_!j^* \cK\tw{-i})$ is defined. 
Since $\iota^* j_! j^* \cK \cong 0$ in $\Dmot{Z}$, we have $\Tr_C(j_! j^* \cc)|_{\Fix(C_Z)} = 0$ by Corollary~\ref{cor: LT vanish boundary}.

We have
\begin{equation}\label{eq: trace additive}
\Tr_C(\cc) = \Tr_C(\iota_* \iota^* \cc) + \Tr_C(j_! j^* \cc) = \Fix(\iota)_* (\Tr_{C_Z}(\iota^* \cc))
\end{equation}
by additivity of the trace (see \cite[Lemma 5.2.7]{Jin23}, whose proof works verbatim for stacks). Then restricting to $\Fix(C_Z)$, and we conclude using that $\Tr_C(j_! j^* \cc)|_{\Fix(C_Z)} = 0$ as found in the preceding paragraph. 
\end{proof}

\section{Derived homogeneous Fourier transform}\label{sec: derived homogeneous FT}

Let $E$ be a vector bundle over a scheme $S$ and $\wh{E}$ the dual vector bundle\footnote{We also use the notation $E^*$ for the dual vector bundle to $E$. However, this leads to an inconvenient notation for the dual of the map, so we avoid it when discussing the Fourier transform. We note that the notation $E^\vee$ is also reserved for the \emph{Serre dual} of $E$.}. Laumon \cite{Laumon} introduced a geometric Fourier transform
\begin{equation*}
  \cD^{\Gm}(E; \overline{\bQ}_\ell)
  \to \cD^{\Gm}(\wh{E}; \overline{\bQ}_\ell)
\end{equation*}
on bounded constructible derived categories of \emph{homogeneous} (i.e., $\Gm$-equivariant) $\ell$-adic sheaves.
It can be regarded as a uniform version of the $\ell$-adic Fourier transform of Deligne-Laumon \cite{Lau87} (for base fields of characteristic $p>0$) and the $D$-module 
Fourier transform of Brylinski--Malgrange--Verdier (for base fields of characteristic $0$).

We will describe an extension of this construction from vector bundles to \emph{derived vector bundles}, i.e., total spaces of perfect complexes (cf. \cite[\S 6.1]{FYZ3}). As explained in \S \ref{ssec: notate dvb}, the total space of a perfect complex $\cE$ exists naturally as a derived Artin stack $\Tot(\cE)$, which can exhibit both derived and stacky behavior depending on the amplitude of the complex.
Thus in this context, the Fourier transform manifests a duality between derived and stacky phenomena.

For this section (only), the scope of the sheaf theory will be expanded significantly. In fact, we will show that the homogeneous Fourier transform is well-behaved in the context of any reasonable six functor formalism. More precisely, we will work in the generality of any \emph{topological weave} in the sense of \cite{Weaves}. This includes classical six functor formalisms such as the derived \inftyCat of sheaves of abelian groups (over $\bC$) or the derived \inftyCat of $\ell$-adic sheaves (over a base on which $\ell$ is invertible), but also various motivic \inftyCats: Voevodsky motives, MGL-modules, or motivic spectra (not even orientability is required). We include this added generality because the arguments are uniform, and also for the sake of forthcoming applications in different sheaf-theoretic contexts. 

Here is an outline of this section. We begin in \S\ref{sec:main} by defining the homogeneous Fourier transform for derived vector bundles and stating our results about it. \S\ref{sec:equivariant} recalls the Contraction Principle and its consequences. \S \ref{sec:j} carries out some technical computations. In \S\ref{sec:j} we redo some straightforward computations from \cite{Laumon} that don't involve derived vector bundles, adapting the arguments from \emph{op. cit.} to the generality we work in here. In \S\ref{sec:easy} we prove the easier properties that are independent of involutivity. The proof of involutivity is achieved in \S\ref{sec:cosuppinvol} -- \S\ref{sec:cosupp}. Finally the remaining properties of the Fourier transform are derived from involutivity in \S \ref{sec:funct2}.

\subsection{Conventions and notation}

  \subsubsection{Sheaves}

    Throughout the section we fix a \emph{topological weave} $\bD$, which is an axiomatization of a sheaf theory with the six functor formalism introduced in \cite{Weaves}.
    Roughly speaking this is a six-functor formalism with the following properties:
    \begin{enumcompress}
      \item\label{item:weave/loc}
      \emph{Localization:}
      The \inftyCat $\bD(\initial)$ is zero.
      For any derived Artin stack $Y$ and any closed-open decomposition $i : Z \to Y$, $j : Y \setminus Z \to Y$, there is a canonical exact triangle of functors
      \begin{equation}\label{eq:loc}
        j_! j^* \to \id \to i_*i^*.
      \end{equation}

      \item\label{item:weave/htp}
      \emph{Homotopy invariance:}
      For any derived Artin stack $Y$ and vector bundle $\pi : E \to Y$, the unit morphism $\id \to \pi_*\pi^*$ is invertible.
    \end{enumcompress}
    Note that localization implies \emph{derived invariance}: for a derived Artin stack $Y$, the inclusion of the classical truncation $Y_\cl \hook Y$ induces an equivalence $\bD(Y) \cong \bD(Y_\cl)$ which commutes with each of the six functors.
    We also have \emph{Poincaré duality}, which for a topological weave gives a canonical isomorphism
    \begin{equation}\label{eq:Poin}
      f^!(-) \cong f^*(-)\vb{T_f}
    \end{equation}
    for $f$ any smooth morphism with relative tangent bundle $T_f$. (Here $\vb{T_f}$ is the \emph{Thom twist} by $T_f$.) 
    If $\bD$ admits an orientation (as in the first five examples below), we may identify $\vb{T_f} \cong \vb{d} \coloneqq (d)[2d]$ where $d$ is the relative dimension of $f$.

    On (derived) schemes, examples of weaves are as follows:
    \begin{enumcompress}
      \item\emph{Betti sheaves (over $\bC$):}
      The assignment $Y \mapsto \bD(Y)$ sending $Y$ to the derived \inftyCat $\on{D}(Y(\bC); \bZ)$ of sheaves of abelian groups on the topological space $Y(\bC)$.

      \item\emph{Étale sheaves (finite coefficients, over $k$ with $n \in k^\times$):}
      The assignment $Y \mapsto \bD(Y)$ sending $Y$ to the derived \inftyCat $\on{D}_{\et}(Y; \bZ/n\bZ)$ of sheaves of $\bZ/n\bZ$-modules on the small étale site of $Y$.

      \item\emph{Étale sheaves ($\ell$-adic coefficients, over $k$ with $\ell \in k^\times$):}
      The assignment $Y \mapsto \bD(Y)$ sending $Y$ to the $\ell$-adic derived \inftyCat $\on{D}_{\et}(X; \bZ_\ell)$ of sheaves on the small étale site of $Y$, i.e., the limit of $\on{D}_{\et}(Y; \bZ/\ell^n\bZ)$ over $n>0$.

      \item\label{item:weave/motives}\emph{Motives:}
      For any commutative ring $\Lambda$, the assignment $Y \mapsto \bD(Y)$ sending $Y$ to the \inftyCat $\cD_{\mrm{mot}}(Y;\Lambda) \coloneqq \on{D_{H\Lambda}}(Y)$ of modules over the $\Lambda$-linear motivic Eilenberg--MacLane spectrum $H\Lambda_Y$ (defined as in \cite{zbMATH07015021}).

      \item\emph{Cobordism motives:}
      The assignment $Y\mapsto \bD(Y)$ sending $Y$ to the \inftyCat $\MGLmod(Y)$ of modules over Voevodsky's algebraic cobordism spectrum $\MGL_Y$.

      \item\emph{Motivic spectra:}
      The assignment $X\mapsto \bD(Y)$ sending $Y$ to the \inftyCat $\mrm{SH}(Y)$ of motivic spectra over $Y$.
    \end{enumcompress}

The categories $\bD(Y)$ are symmetric monoidal, and we denote by $\un_Y$ (or just $\un$ when context is clear) the monoidal units.    
    
    When the weave satisfies étale descent, it can be extended to (derived) Artin stacks following the method of \cite{LiuZheng} (see also \cite[App.~A]{KhanI}).
    This is the case for the first three examples, as well as for the weave of interest in the rest of this paper, $Y \mapsto \Dmot{Y}$.

    In general, $\bD$ only satisfies Nisnevich descent.
    In that case the six-functor formalism extends to $\Nis$-Artin stacks (see \cite[\S 4]{Weaves}, \cite[\S 1]{equilisse}).
    For $\tau\in\{\Nis,\et\}$ we define $(\tau,n)$-Artin and $\tau$-Artin stacks as in \cite[0.2.2]{equilisse}:
    \begin{defnlist}
      \item
      A stack is $(\et,0)$-Artin, resp. $(\Nis,0)$-Artin, if it is an algebraic space (resp. a decent algebraic space).
      Here an algebraic space is \emph{decent} if it is Zariski-locally quasi-separated, or equivalently Nisnevich-locally a scheme.
      
      \item
      For $n>0$, $X$ is \emph{$(\tau,n)$-Artin} if it has $(\tau,n-1)$-representable diagonal and admits a smooth morphism $U \twoheadrightarrow X$ with $\tau$-local sections from some scheme $U$.
      A stack is \emph{$\tau$-Artin} if it is $(\tau,n)$-Artin for some $n$.
    \end{defnlist}
    For $\tau=\et$, these are the usual notions of $n$-Artin stacks and Artin stacks, while e.g. $(\Nis,1)$-Artin stacks are the same as quasi-separated $1$-Artin stacks with separated diagonal by \cite[\S 6.7]{LMB}.

    If our chosen weave $\bD$ does not satisfy étale descent, then we adopt the convention that \emph{Artin} means ``$\Nis$-Artin''.

  \subsubsection{\texorpdfstring{$\bG_m$}{Gm}-Equivariant sheaves}

    If $X$ is a derived Artin stack with $\Gm$-action, we define the \inftyCat of $\Gm$-equivariant sheaves on $X$ as the \inftyCat of sheaves on the quotient stack:
    \begin{equation*}
      \bD^{\Gm}(X) \coloneqq \bD([X/\Gm]).
    \end{equation*}
    There is a forgetful functor
    \begin{equation*}
      \bD^{\Gm}(X) \to \bD(X)
    \end{equation*}
    defined by $*$-pullback along the (smooth) quotient morphism $X \twoheadrightarrow [X/\Gm]$.

    For any $\Gm$-equivariant morphism $f : X \to Y$ the four operations $f^*$, $f_*$, $f_!$, $f^!$ are defined on the $\Gm$-equivariant category using the induced morphism $[X/\Gm] \to [Y/\Gm]$.
    Since each operation commutes with smooth $*$-inverse image, they each commute with forgetting equivariance.

    Note that, when the weave $\bD$ does not satisfy étale descent, we are implicitly using the fact that the group scheme $\Gm$ is special in the sense of Serre (see \cite[\S 4.1]{SCC_1958__3__A1_0}), so that $X \twoheadrightarrow [X/\Gm]$ admits Zariski-local sections and hence the quotients are $\Nis$-Artin whenever $X$ is.

\subsubsection{Derived vector bundles} In addition to \S \ref{ssec: notate dvb}, we will use the following notations and properties of derived vector bundles. 

    \begin{enumerate}
      \item We write $E[n] \coloneqq \V(\cE[n])$ for every integer $n$.
      \item We say $E$ is \emph{of amplitude $\ge 0$} (resp. $\le 0$, $[a,b]$) if $\cE$ is of tor-amplitude\footnote{We remind that we are using the \emph{cohomological} indexing here.} $\ge 0$ (resp. $\le 0$, $[a,b]$). Note that since $\cE$ is perfect, $E$ is of some finite amplitude.
    \end{enumerate}

    \begin{notat}
      Given a derived vector bundle $E$ over a derived Artin stack $S$, we denote by $\pi_E : E \to S$ the projection and $0_E : S \to E$ the zero section.
    \end{notat}

    \begin{notat}\label{notat: quo}
      For every $E \in \DVect(S)$, it will be convenient to denote the quotient by the $\bG_m$-scaling action by:
      \begin{equation}
        \quo{E} \coloneqq [E/\bG_m].
      \end{equation}
      Given a morphism of derived vector bundles $\phi : E' \to E$, we also write $\quo{\phi} : \quo{E'} \to \quo{E}$ for the induced morphism.
      We will also use the same notation for $\bG_m$-invariant subsets of $E$, e.g., $\quo{\bG_{m,S}} = S$.
    \end{notat}

    If the weave $\bD$ does not satisfy étale descent, we need the following:

    \begin{prop}
      Let $S$ be a derived stack and $E \in \DVect(S)$ a derived vector bundle over $S$.
      If $E$ is of amplitude $\ge 0$, then the projection $\pi_E \co E \to S$ is affine.
      If $E$ is of amplitude $\ge -n$, where $n>0$, then $\pi_E$ is $(n,\Nis)$-Artin.
    \end{prop}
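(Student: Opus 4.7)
The first assertion is already contained in the description recalled at the end of \S\ref{ssec: notate dvb}: when $\cE$ is of tor-amplitude $\ge 0$, one has $\Tot(\cE) \cong \ul{\Spec}_S(\Sym_{\cO_S}(\cE^*))$, so $\pi_E$ is affine. For the second assertion, the plan is to proceed by induction on $n$, with the base case $n=0$ subsumed by the first assertion.

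For the inductive step, since the conclusion is smooth-local on $S$ and stable under base change, I would first reduce to the case where $S$ is a derived scheme admitting a global presentation of $\cE$ as a cochain complex $[\cE^{-n} \to \cE^{-n+1} \to \cdots \to \cE^b]$ of vector bundles. The naive truncation removing the lowest-degree term then fits into a cofiber sequence in $\Perf(S)$
\begin{equation*}
  \cE^{-n}[n] \to \cE \to \cF,
\end{equation*}
with $\cF$ a perfect complex of tor-amplitude $\ge -(n-1)$. Since $\Tot(-)$ preserves fiber sequences (as $\Map_{\QCoh(T)}(\cO_T, -)$ does), applying it produces a fiber sequence of derived Artin stacks over $S$
\begin{equation*}
  B^n V \to \Tot(\cE) \to \Tot(\cF),
\end{equation*}
where $V = \Tot(\cE^{-n})$ is an ordinary vector bundle, exhibiting $\Tot(\cE) \to \Tot(\cF)$ as a torsor under the commutative group stack $B^n V$.

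By the inductive hypothesis applied to $\cF$, the projection $\Tot(\cF) \to S$ is $(n-1, \Nis)$-Artin; a separate easy induction (starting from the fact that $V$ is affine) shows that $B^n V$ is itself $(n, \Nis)$-Artin. To conclude, it suffices to check that the $B^n V$-torsor $\Tot(\cE) \to \Tot(\cF)$ is $(n, \Nis)$-representable, which combined with the atlas of $\Tot(\cF)$ assembles into an atlas realizing $\Tot(\cE)$ as $(n, \Nis)$-Artin over $S$. For the representability, the key input is that $B^n V$-torsors over schemes are Zariski- (a fortiori Nisnevich-) locally trivial: they are classified by $H^{n+1}(-, V)$, which for $V$ a vector bundle agrees with Zariski cohomology and hence vanishes in positive degrees on affines.

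The principal technical point is this local-triviality verification for $B^n V$-torsors, together with the bookkeeping of Artin levels along the torsor fibration and the iterated atlas; the remaining steps are formal consequences of the behavior of $\Tot(-)$ on cofiber sequences in $\Perf(S)$ and of the inductive setup.
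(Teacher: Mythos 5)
Two issues, one of them a genuine gap.

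\textbf{The cofiber sequence is mis-oriented.} For $\cE = [\cE^{-n} \xrightarrow{d^{-n}} \cE^{-n+1} \to \cdots \to \cE^b]$, the complex $\cF = [\cE^{-n+1} \to \cdots \to \cE^b]$ is a \emph{subcomplex} of $\cE$: the degree-wise inclusion (zero in degree $-n$, identity elsewhere) commutes with the differentials, whereas the degree-wise projection $\cE \to \cF$ does not unless $d^{-n}=0$. The quotient is $\cE/\cF \cong \cE^{-n}[n]$, and there is no canonical map $\cE^{-n}[n] \to \cE$. So the correct triangle is $\cF \to \cE \to \cE^{-n}[n]$, and applying $\Tot(-)$ gives a fiber sequence $\Tot(\cF) \to \Tot(\cE) \to B^n V$. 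In particular the natural map is $\Tot(\cF) \to \Tot(\cE)$, exhibiting $\Tot(\cF)$ as the pullback of the zero section $0 : S \to B^n V$ along $\Tot(\cE) \to B^n V$; there is no ``$B^n V$-torsor $\Tot(\cE) \to \Tot(\cF)$'' for you to trivialize. With the corrected orientation the argument in fact simplifies: the relevant input is not the $H^{n+1}$-classification of $B^n V$-torsors, but that $\pi_0 B^n V(T) \cong H^n(T, \cE^{-n}) = 0$ for $T$ affine and $n>0$, which shows $0 : S \to B^n V$ is smooth and admits sections over affines (its fiber over any affine point is $T \fibprod_S B^{n-1}V$), hence so does its pullback $\Tot(\cF) \to \Tot(\cE)$. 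Composing with a scheme atlas of the $(\Nis,n-1)$-Artin stack $\Tot(\cF)$ then produces an atlas for $\Tot(\cE)$. This is close in spirit to what the paper does, except the paper brutally truncates at cohomological degree $0$ in one step, so that the resulting atlas is directly affine instead of being produced by a further appeal to the inductive hypothesis.

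\textbf{The diagonal is never addressed.} Being $(\Nis,n)$-Artin requires \emph{two} things: a smooth Nisnevich atlas from a scheme \emph{and} $(\Nis,n-1)$-representability of the diagonal. Your proposal only constructs the atlas. The paper handles the other half separately, by observing that $\Tot(\cE[-1])$ has amplitude $\ge -(n-1)$ and is therefore $(\Nis,n-1)$-Artin by the inductive hypothesis, and that this controls the diagonal of $\Tot(\cE)$ (the loop object $S \fibprod_{\Tot(\cE)} S$ is $\Tot(\cE[-1])$). Some such step is needed; it does not follow formally from the existence of an atlas.
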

    \begin{proof}
      The proof is the same as that of $n$-Artinness which is well-known.
      We recall it anyway for the reader's convenience.
      Suppose $E$ is of amplitude $\ge -n$ where $n=0$ (resp. $n>0$).
      It is enough to show that if $S$ is affine, then $E$ is affine (resp. $(n,\Nis)$-Artin).
      In the $n=0$ case we have $E = \V(\cE) \cong \Spec(\Sym(\cE^*))$ (\S \ref{ssec: notate dvb}).
      Thus assume $n>0$.

      Let us assume the statement known for $n-1$ and argue by induction.
      Since $E[-1]$ is of amplitude $\ge -n+1$, it is $(\Nis,n-1)$-Artin by inductive hypothesis.
      This implies that $E$ has $(\Nis,n-1)$-representable diagonal.

      We now construct a $\Nis$-atlas for $E$.
      Since $S$ is affine we may choose a presentation of $\cE$ as a cochain complex of vector bundles (as in \S \ref{ssec: notate perf}) and let $\cE^{\le 0}$ and $\cE^{>0}$ denote the brutal truncations so that we have an exact triangle $\cE^{>0} \to \cE \to \cE^{\le 0}$.
      Taking total spaces we have a derived Cartesian square
      \begin{equation*}
        \begin{tikzcd}
          E^{\le 0} \ar{r}{i}\ar{d}
          & E \ar{d}
          \\
          S \ar{r}{0}
          & E^{>0}.
        \end{tikzcd}
      \end{equation*}
      We claim that $0 \co S \to E^{>0}$ is smooth and admits Nisnevich-local sections.
      Since $E^{\le 0}$ is affine by the $n=0$ case, $i \co E^{\le 0} \to E$ will then be a $\Nis$-atlas for $E$.

      Let $T$ be an affine derived scheme over $S$ and $f : T \to E^{>0}$ an $S$-morphism.
      Since $E^{>0}$ is of amplitude $<0$ and $T$ is affine, we have $\pi_0 \Maps_S(T, E^{>0}) \cong \{0\}$ by the definition of $\Tot(-)$ in \S \ref{ssec: notate dvb}.
      In other words, $f$ factors through the zero section $0 : S \to E^{>0}$.
      The base change of $0 \co S \to E^{>0}$ along $f$ is therefore identified with the projection $E^{>0}[1]\fibprod_S T \to T$.
      The latter clearly admits a section over $T$ (e.g. the zero section), and is smooth because $E^{>0}[1]$ is of amplitude $[-n+1,0]$.
      The claim follows.
    \end{proof}

    \begin{rem}
      In fact one can easily show: $E$ is of amplitude $\ge 0$ if and only if $\pi_E$ is affine, if and only if $\pi_E$ is representable, if and only if $0_E$ is a closed immersion.
      Similarly, $E$ is of amplitude $\le 0$ if and only if $\pi_E$ is smooth, if and only if $\pi_{\wh{E}}$ is affine.
    \end{rem}

\subsection{Definition and properties of the derived homogeneous Fourier transform}
\label{sec:main}

  \subsubsection{Homogeneous Fourier kernel} The quotient stack $\quo{E}$ classifies pairs $(L, \phi \co L \rightarrow E)$. Consider the homogeneous evaluation morphism
    \begin{equation}
      \ev_E : \quo{\wh{E}} \fibprod_S \quo{E} \to \quo{\A^1_S}
    \end{equation}
    sending a pair
    \begin{equation*}
      \big((L, \phi : L \to \wh{E}), (L', \psi : L' \to E)\big)
    \end{equation*}
    to
    \begin{equation*}
      \big( L \otimes L', L \otimes L' \xrightarrow{\phi \otimes \psi} \wh{E} \otimes_S E \xrightarrow{\ev} \A^1_S \big).
    \end{equation*}
    We define
    \begin{equation}
      \sP_E \coloneqq \ev_E^* ( \quo{j_*}(\un)) \in \bD(\quo{\wh{E}} \fibprod_S \quo{E})
    \end{equation}
    where $j : \bG_{m,S} \to \A^1_S$ is the inclusion and $\quo{j} : S = \quo{\bG_{m,S}} \hook \quo{\A^1_S}$ is the induced map of quotient stacks.

  \subsubsection{Homogeneous Fourier transform}\label{sssec:FT}
  
    Let $\pr_1$ and $\pr_2$ denote the respective projections
    \begin{equation*}
      \begin{tikzcd}
        & {\quo{\wh{E}} \fibprod_S \quo{E}} \ar{rd}{\pr_2}\ar[swap]{ld}{\pr_1} &
        \\
        {\quo{\wh{E}}} & & {\quo{E}}
      \end{tikzcd}
    \end{equation*}

    The \emph{homogeneous Fourier transform} on $E$ is the functor
    \begin{equation*}
      \FT_E : \bD^{\Gm}(E) \to \bD^{\Gm}(\wh{E})
    \end{equation*}
    defined by
    \begin{equation}\label{eq: FT definition}
      \cK \mapsto \pr_{1!}(\pr_2^*(\cK) \otimes \sP_E)[-1].
    \end{equation}
At times it will be convenient to think of $\FT_E$ equivalently as a functor $\bD(\quo{E}) \rightarrow \bD(\quo{\wh{E}})$. 

\subsubsection{Geometricity}\label{sssec: weave gm}
We define the full subcategory $\bD_{\mrm{gm}}(-)$ as in \S \ref{ssec: geometric motives}: 
\begin{enumerate}
\item For $S$ a derived scheme,  $\bD_{\mrm{gm}}(S)$ is the thick subcategory of $\bD(S)$ generated by all Thom twists of all $f_{\sh}(\un_T)$ as $f \co T \rightarrow S$ ranges over smooth schemes over $S$, where $f_{\sh}$ is the left adjoint of $f^*$. 
\item For $S$ a derived Artin stack, $\bD_{\mrm{gm}}$ is the full subcategory of objects that become geometric on any (equivalently all) atlas from a derived scheme.
\end{enumerate}

The arguments in the proof of \cite[Theorem~4.2.29]{CD19} show that:

\begin{thm}\label{thm:dgm}
  Let $\bD$ be a topological weave on derived schemes locally of finite type over a quasi-excellent scheme.
  If $\bD$ is $\bQ$-linear and satisfies étale descent and topological invariance (e.g. $\bD=\cD_{\mrm{mot}}(-; \bQ)$), then the six operations restrict to $\bD_{\mrm{gm}}$.
\end{thm}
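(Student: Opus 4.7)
The plan is to adapt the proof of \cite[Theorem~4.2.29]{CD19} to our setting of derived schemes, exploiting the derived invariance that a topological weave enjoys to reduce to the classical case. Concretely, by derived invariance the inclusion $S_\cl \hook S$ induces an equivalence $\bD(S) \cong \bD(S_\cl)$ intertwining each of the six operations, and this equivalence sends $\bD_{\mrm{gm}}(S)$ to $\bD_{\mrm{gm}}(S_\cl)$ because geometricity is defined by generators of the form $f_{\sh}(\un_T)\vb{v}$ with $f$ smooth and $v$ a virtual vector bundle, a property preserved by the equivalence. Hence it suffices to prove the stability statement for classical schemes.

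Once reduced to the classical case, I would establish the stability one operation at a time, in increasing order of difficulty. Closure under $f^*$, $\otimes$, Thom twists, and $f_{\sh}$ for $f$ smooth is immediate from the definition, and closure under $f_!$ for $f$ a closed immersion follows from the localization triangle \eqref{eq:loc} together with closure under $j_{\sh}=j_!$ for $j$ the complementary open immersion (which is itself smooth). Using Nagata compactification, every separated locally of finite type morphism factors as an open immersion followed by a proper morphism, so closure under $f_!$ in general reduces to the proper case. For proper $f$, the key step is to invoke de~Jong--Gabber alterations (available over a quasi-excellent base): any $S$-scheme admits, smooth-locally, an alteration by a smooth scheme, and the $\bQ$-linearity hypothesis allows us to split the trace map associated to such an alteration, exhibiting $f_!(\un)$ as a direct summand of the proper pushforward from a smooth scheme. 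This is where the main difficulty of the argument lies, and where $\bQ$-linearity is essential; in the étale or $\ell$-adic settings one argues by induction on the dimension using the localization triangle to pass from the regular locus (where Poincaré duality is available) to the singular locus. Closure under $f^!$ then follows by combining the smooth Poincaré duality \eqref{eq:Poin} with the above and the compatibility of $f^!$ with smooth base change.

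Closure under the internal Hom is the most delicate remaining point. One deduces it from the fact that, after the above stability properties are established, the subcategory $\bD_{\mrm{gm}}$ consists of \emph{locally dualizable} objects in the sense that over any atlas by a smooth scheme the generators $f_{\sh}(\un_T)\vb{v}$ are dualizable (with dual computed by Poincaré duality and Verdier duality), so $\cRHom(\cK, \cK') \cong \DD(\cK)\otimes\cK'$ on generators, and both sides belong to $\bD_{\mrm{gm}}$ by the already established closure under $\otimes$, $f_!$, and $f^!$. Assembling these pieces as in \cite[\S 4.2]{CD19} yields the stability of $\bD_{\mrm{gm}}$ under all six operations. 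Finally, the example $\bD = \cD_{\mrm{mot}}(-;\bQ)$ satisfies all the hypotheses by the results recalled in \S\ref{ssec: dmot}.
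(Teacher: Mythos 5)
Your proposal follows the same route the paper points to, namely an adaptation of \cite[Theorem~4.2.29]{CD19} via derived invariance; the reduction to classical schemes via $\bD(S)\cong\bD(S_\cl)$ and the outline through Nagata compactification, localization, de~Jong--Gabber alterations, and $\bQ$-linearity is consistent with the Cisinski--D\'eglise argument.

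There is, however, a genuine gap in your treatment of internal Hom. The generators $f_\sh(\un_T)\vb{v}$ with $f \colon T \to S$ smooth but not proper are \emph{not} dualizable in the monoidal sense, and the formula $\cRHom(\cK,\cK') \cong \DD(\cK)\otimes\cK'$ fails for such $\cK$: to derive it you would need the projection formula $f_*(A)\otimes B \cong f_*(A\otimes f^*B)$, which holds only when $f$ is proper or $B$ is dualizable. What does hold, and what \cite{CD19} actually use, is the adjunction identity
\[
\cRHom(f_\sh(\un_T), \cK') \;\cong\; f_* f^* \cK',
\]
which reduces closure under $\cRHom$ to closure under $f_*$ (and $f^*$). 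This also reveals a second omission: your list covers $f_!$ but not $f_*$, a separate one of the six operations. For an open immersion $j$, stability of $j_*$ is the genuine difficulty, handled in \cite{CD19} via localization together with absolute purity --- exactly the d\'evissage through the regular locus that you attributed to ``the \'etale or $\ell$-adic settings'', but which is in fact the motivic argument as well. Once $j_*$-stability is established, stability under $f_*$ for general separated finite-type $f$ follows by Nagata since proper $f_*$ agrees with $f_!$, and the adjunction identity above then settles $\cRHom$.
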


Under the assumptions of \thmref{thm:dgm}, consider the lisse extension of $\bD$ to derived Artin stacks (locally of finite type over a quasi-excellent scheme).
It is then immediate that the six operations preserve geometricity, \emph{with the possible exception of} the operations $f_*$ and $f_!$ for $f$ a non-representable morphism, just as in Remark \ref{rem: preservation of constructibility}.
We will see in Corollary \ref{cor: FT preserve local constructibility} that $\FT$ also preserves geometricity in this case.

\subsubsection{Zero bundle}
\label{ssec:main/zero}

  By abuse of notation, we denote the zero bundle $\V_S(0) = S$ by $0_S \in \DVect(S)$.

  \begin{prop}\label{prop:zero}
    Let $o : B\bG_{m,S} \to B\bG_{m,S}$ be the involution $L \mapsto \wh{L}$.
    There are natural isomorphisms $\FT_{0_S} \cong o^* \cong o_* \cong o_! \cong o^!$.
  \end{prop}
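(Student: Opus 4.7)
The latter three isomorphisms are formal consequences of $o$ being an involutory automorphism of $B\bG_{m,S}$: it is proper and étale with vanishing relative tangent complex, so the weave axioms give $o_! = o_*$ and $o^* = o^!$ via \eqref{eq:Poin}; the identity $o \circ o = \id$ then forces $(o^*)^2 = \id$, so $o^* = o_!$. The real content is the isomorphism $\FT_{0_S}(\cK) \cong o^*\cK$, which I plan to prove as follows.

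First I simplify the Fourier kernel. For $E = 0_S$, both $\quo{E}$ and $\quo{\wh{E}}$ equal $B\bG_{m,S}$, and the evaluation map $\ev_{0_S}(L_1, L_2) = (L_1 \otimes L_2, 0)$ factors through the zero-section as $\ev_{0_S} = i \circ \mu$, where $\mu \co B\bG_{m,S} \times_S B\bG_{m,S} \to B\bG_{m,S}$ is the tensor product of line bundles and $i \co B\bG_{m,S} \hook \quo{\A^1_S}$ is the closed complement of $\quo{j}$. Thus $\sP_{0_S} = \mu^*\cF_0$ with $\cF_0 \coloneqq i^*\quo{j}_*\un$. Identify $\quo{\A^1_S} = \Tot(\cL)$ for $\cL$ the tautological line bundle on $B\bG_{m,S}$, and let $p \co \Tot(\cL) \to B\bG_{m,S}$ denote the projection, so that $p \circ i = \id$ while the atlas $e \co S \to B\bG_{m,S}$ factors as $e = p \circ \quo{j}$. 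Applying $p_!$ (resp.\ $i^*$) to the localization triangles $\quo{j}_!\un \to \un \to i_*\un$ and $i_*i^!\un \to \un \to \quo{j}_*\un$ on $\Tot(\cL)$, and invoking the Thom isomorphism $p_!\un \cong \un\vb{-\cL}$ together with Poincaré duality $i^!\un \cong \un\vb{-\cL}$ (the normal bundle of $i$ being $\cL$), one obtains matching descriptions
\begin{equation*}
  e_!\un \cong \mrm{fib}(\un\vb{-\cL} \to \un) \qquad \text{and} \qquad \cF_0 \cong \mrm{cofib}(\un\vb{-\cL} \to \un),
\end{equation*}
yielding a natural isomorphism $\cF_0 \cong e_!\un[1]$.

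Next I reinterpret $\FT_{0_S}$ as a convolution. The isomorphism $\alpha \co B\bG_{m,S} \times_S B\bG_{m,S} \to B\bG_{m,S} \times_S B\bG_{m,S}$ given by $(L, M) \mapsto (L \otimes M, M)$ satisfies $\pr_1 \circ \alpha^{-1} = \mu \circ (\id \times o)$, $\pr_2 \circ \alpha^{-1} = \pr_2$, and $\mu \circ \alpha^{-1} = \pr_1$. Applying the change-of-variables identity $\pr_{1!}(G) \cong (\pr_1 \circ \alpha^{-1})_!(\alpha^{-1,*}G)$ to $G = \pr_2^*\cK \otimes \mu^*\cF_0$, combined with $(\id \times o)_! = (\id \times o)^*$ (since $o$ is involutory) and the identification $(\id \times o)^*(\cF_0 \boxtimes_S \cK) \cong \cF_0 \boxtimes_S o^*\cK$, produces
\begin{equation*}
  \FT_{0_S}(\cK) \cong \mu_!(\cF_0 \boxtimes_S o^*\cK)[-1] \cong (\cF_0 \star o^*\cK)[-1],
\end{equation*}
where $\cF \star \cG \coloneqq \mu_!(\cF \boxtimes_S \cG)$ denotes convolution on the commutative group stack $B\bG_{m,S}$. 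Substituting $\cF_0 \cong e_!\un[1]$ from the previous step and using that $e_!\un$ is the convolution unit---a direct base change and projection formula computation along the Cartesian square attached to $\pr_2$ and $e$---gives $\FT_{0_S}(\cK) \cong e_!\un \star o^*\cK \cong o^*\cK$.

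The principal technical challenge is bookkeeping: every identification must be natural in $\cK$ and must be realized within the axioms of an arbitrary topological weave, avoiding orientation hypotheses or other simplifying assumptions. All ingredients required---Poincaré duality \eqref{eq:Poin}, homotopy invariance \itemref{item:weave/htp}, the localization triangle \eqref{eq:loc}, and smooth and proper base change---are nevertheless built into the weave formalism, so no further input should be needed beyond careful six-functor manipulation.
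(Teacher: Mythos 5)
Your argument is correct in outline and arrives at the same kernel identification $\sP_{0_S} \cong m^*(e_!\un)[1]$ (the paper's $m^* q_!(\un)[1]$), but the route is more elaborate than the paper's and has one unaddressed compatibility. Two points of comparison.

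\emph{The kernel computation.} You derive $\cF_0 := \quo{i}^*\quo{j}_*(\un) \cong e_!\un[1]$ by applying $p_!$ to the localization triangle $\quo{j}_!\un \to \un \to \quo{i}_*\un$ and $\quo{i}^*$ to the triangle $\quo{i}_*\quo{i}^!\un \to \un \to \quo{j}_*\un$, then invoking the Thom isomorphism for $p_!$ and purity for $\quo{i}^!$ to present $e_!\un$ as a fiber and $\cF_0$ as a cofiber of maps $\un\vb{-\cL} \to \un$. But those two maps arise from two different adjunctions (the unit of $(\quo{i}^*,\quo{i}_*)$ transported through $p_!$; the counit of $(\quo{i}_!,\quo{i}^!)$ transported through $\quo{i}^*$), and your conclusion requires that they agree under the contraction identification $p_! \cong \quo{i}^!$. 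That compatibility is true (both are the Euler class of $\cL$) but is not automatic in an arbitrary topological weave and needs to be checked. The paper sidesteps this completely: it applies $\quo{\pr}_!$ to the \emph{single} localization triangle $\quo{j}_!\un \to \quo{j}_*\un \to \quo{i}_*\cF_0$, uses $\quo{\pr}_!\,\quo{j}_* \cong 0$ (contraction), $\quo{\pr}_!\,\quo{j}_! = e_!$, and $\quo{\pr}_!\,\quo{i}_* \cong \id$, and reads off $\cF_0 \cong e_!\un[1]$ directly — no Thom isomorphism, no Poincaré duality, no Euler-class comparison. You should either supply the compatibility or adopt this cleaner one-triangle argument.

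\emph{The final reduction.} You change coordinates via $\alpha(L,M) = (L\otimes M, M)$ to recast $\FT_{0_S}(\cK)$ as the convolution $e_!\un \star o^*\cK$ on the group stack $B\bG_{m,S}$ and then check $e_!\un$ is the convolution unit. The paper instead writes down the Cartesian square with $(\id,o): B\bG_{m,S} \to B\bG_{m,S}\times_S B\bG_{m,S}$ over $q : S \to B\bG_{m,S}$, deduces $m^*q_!(\un) \cong (\id,o)_!(\un)$ by base change, and finishes with the projection formula, noting $\pr_1\circ(\id,o)=\id$ and $\pr_2\circ(\id,o)=o$. These are equivalent — your $\alpha$ encodes the same Cartesian square — but the paper's version is shorter and avoids introducing the convolution structure. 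Both approaches are sound; the convolution framing is clarifying but not necessary here.
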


  The proof is in \S \ref{ssec:easy/zero}.

\subsubsection{Involutivity}
\label{ssec:main/invol}

  In the most general case, our statement of involutivity is up to twisting with a canonical $\otimes$-invertible object. 

  \begin{lem}\label{lem:L}
    Let $E \in \DVect(S)$.
    The object
    \begin{equation}\label{eq:L}
      \sL^{E} \coloneqq
      \pi_{E!} \FT_{\wh{E}}(\un_{\wh{E}})
      \cong 0_{E}^! \FT_{\wh{E}}(\un_{\wh{E}})
      \in \bD^{\Gm}(S)
    \end{equation}
    is $\otimes$-invertible.
  \end{lem}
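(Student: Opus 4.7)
The plan proceeds in two parts, treating the isomorphism and the invertibility separately.

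For the isomorphism $\pi_{E!} \FT_{\wh{E}}(\un_{\wh{E}}) \cong 0_E^! \FT_{\wh{E}}(\un_{\wh{E}})$: by construction $\FT_{\wh{E}}(\un_{\wh{E}}) \in \bD^{\bG_m}(E)$ is a $\bG_m$-equivariant object, where $\bG_m$ acts on $E$ by scaling. The identification is then a direct instance of the Contraction Principle (to be recalled in \S \ref{sec:equivariant}): for any $\cF \in \bD^{\bG_m}(E)$ with $E$ a derived vector bundle and $\bG_m$ acting by scaling, there is a canonical isomorphism $\pi_{E!}(\cF) \cong 0_E^!(\cF)$ in $\bD^{\bG_m}(S)$. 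Applied to $\cF = \FT_{\wh{E}}(\un_{\wh{E}})$ this yields the claimed isomorphism.

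For $\otimes$-invertibility, I would reduce smooth-locally on $S$. The formation of $\sL^E$ commutes with smooth base change on $S$ because each operation in its definition does (this uses smooth base change for the six operations, and the compatibility of the Fourier kernel $\sP_E$ with smooth pullback on $S$, both of which are easy to check from the definitions in \S \ref{sssec:FT}). Invertibility is also a smooth-local property. So I may assume $S$ is affine and $E$ admits a global presentation as a cochain complex of vector bundles, as in \S \ref{ssec: notate dvb}.

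With $E$ globally presented, the plan is to establish a multiplicativity property: for every cofiber sequence $E' \to E \to E''$ in $\DVect(S)$, one has a canonical isomorphism $\sL^E \cong \sL^{E'} \otimes_S \sL^{E''}$ in $\bD^{\bG_m}(S)$. The content of this is that the Fourier kernel $\sP_E$ pulls back along the map $\quo{E'} \times_S \quo{E''} \to \quo{E}$ (and dually for $\wh{E}$) to the external product $\sP_{E'} \boxtimes \sP_{E''}$; after applying $\pi_!$ along $E$ (or equivalently $0^!$), the K\"unneth formula for topological weaves yields the desired splitting. Using induction along the presentation of $E$, this reduces invertibility to the case where $E$ is a shifted line bundle $\Tot(\cL[n])$.

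In that rank-one base case, $\FT_{\wh{E}}(\un_{\wh{E}})$ can be computed explicitly by combining the definition \eqref{eq: FT definition}, the localization triangle \eqref{eq:loc} applied to $j \co \bG_{m,S} \hookrightarrow \A^1_S$, and proper base change; the upshot is that $\FT_{\wh{E}}(\un_{\wh{E}})$ is a Thom-type twist of the delta sheaf $(0_E)_* \un_S$, and hence $\sL^E = \pi_{E!} \FT_{\wh{E}}(\un_{\wh{E}})$ is a (shifted, twisted) $\otimes$-invertible object such as $\un_S\vb{\cL[n]}$, which is invertible in any topological weave by Poincaré duality \eqref{eq:Poin}. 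The hardest step is expected to be the multiplicativity isomorphism $\sL^E \cong \sL^{E'} \otimes \sL^{E''}$ for a cofiber sequence of derived vector bundles: unlike the classical case of honest vector bundles, the cofiber sequence need not split, so this comparison requires a genuine base-change argument at the level of the Fourier kernel over the $\bG_m$-quotient stacks, and is where the derived algebraic geometry enters nontrivially.
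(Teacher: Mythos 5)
Your first step — identifying $\pi_{E!}\FT_{\wh E}(\un_{\wh E})\cong 0_E^!\FT_{\wh E}(\un_{\wh E})$ via the Contraction Principle — is correct and is exactly what Corollary \ref{cor:contractder} provides. The smooth-local reduction to affine $S$ with a global presentation is also correct and matches the paper's first move. After that, your route diverges from the paper's, and there is a genuine gap.

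Your plan hinges on a multiplicativity statement $\sL^E\cong \sL^{E'}\otimes\sL^{E''}$ for every cofiber sequence $E'\to E\to E''$ in $\DVect(S)$, and on the claimed mechanism that $\sP_E$ restricts to $\sP_{E'}\boxtimes\sP_{E''}$ along a map $\quo{E'}\times_S\quo{E''}\to\quo{E}$. This mechanism fails in two ways. First, for a non-split cofiber sequence there is no natural map $\quo{E'}\times_S\quo{E''}\to\quo{E}$ at all, let alone one compatible with evaluation pairings; you acknowledge this is "where the derived algebraic geometry enters nontrivially" but do not supply the argument, and this is precisely the content that is missing. Second — and this is a problem even in the split case $E=E'\oplus E''$ — the homogeneous kernel $\sP_E=\ev_E^*\,\quo{j}_*(\un)$ is built from the evaluation map to $\quo{\A^1_S}$, and for a direct sum $\ev_E$ couples the two factors through the \emph{addition} $\A^1\times\A^1\to\A^1$. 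That addition is equivariant only for the \emph{diagonal} $\Gm$-action, so it does not descend to a map $\quo{\A^1_S}\times_S\quo{\A^1_S}\to\quo{\A^1_S}$, and hence $\sP_{E'\oplus E''}$ is \emph{not} the external product $\sP_{E'}\boxtimes\sP_{E''}$. Any correct multiplicativity statement at the kernel level must pass through the auxiliary morphism $c\colon\quo{(E\times_S E)}\to\quo{E}\times_S\quo{E}$ and an analysis like Lemma \ref{lem:pnpqnq}, which is substantially more work than a Künneth formula and which you have not carried out. There is also a latent circularity risk: the most natural proofs of such a multiplicativity statement invoke involutivity or functoriality of $\FT$, but invertibility of $\sL^E$ is itself an input to several of those statements, so the order of dependencies needs to be spelled out.

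For comparison, the paper sidesteps rank-one induction entirely. It writes $E$ as the fiber of a map $d\colon E^-\to E^+$ with $E^-$ of amplitude $\le 0$ and $E^+$ of amplitude $\ge 0$ (available once $S$ is affine and $E$ is globally presented), replaces $E$ by $\wh E$, and computes $\pi_{\wh{E^+}}^*\sL^{\wh E}$ directly: one applies Corollary \ref{cor:contractder}, the exchange isomorphism $\Ex^{*!}$ along the smooth surjection $\wh{i}$, functoriality \eqref{eq:funA}, the explicit computation \eqref{eq:hackneyer} (which rests on the support property Proposition \ref{prop:suppinvol} for $E^+$ and involutivity for $\wh{E^-}$, both of amplitude $\ge 0$), and finally Corollary \ref{cor:chlorine}, arriving at the Thom twist $\un\vb{\wh{E^+}}\vb{-E^-}$, which is visibly $\otimes$-invertible. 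This route has the advantage of only needing the support/cosupport machinery for bundles of one-sided amplitude, which has already been established at the point in the paper where Lemma \ref{lem:L} is proved, avoiding both the non-split and the homogeneous-kernel difficulties above.
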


  The proof is in \S \ref{ssec:cosupp/L}.

  \begin{thm}\label{thm:invol}
    For every $E \in \DVect(S)$, there is a canonical isomorphism
    \begin{equation}\label{eq:hygrothermal}
      (-) \otimes \pi_{E}^*(\sL^{E}) \to \FT_{\wh{E}} \circ \FT_{E}(-)
    \end{equation}
    of functors $\bD^{\Gm}(E) \to \bD^{\Gm}(E)$.
  \end{thm}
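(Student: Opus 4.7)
The plan is to realize $\FT_{\wh{E}} \circ \FT_E$ as convolution against a single kernel on $\quo{E} \times_S \quo{E}$, and then to identify this kernel with the pushforward along the diagonal of $\pi_E^* \sL^E$ (up to a shift). Specifically, unwinding the definition \eqref{eq: FT definition} and applying proper base change plus the projection formula, one obtains
\[
\FT_{\wh{E}} \FT_E(\cK) \simeq p_{13,!}\bigl(p_3^*(\cK) \otimes \sQ_E\bigr)[-2]
\]
where $\sQ_E \coloneqq p_{12}^* \sP_{\wh{E}} \otimes p_{23}^* \sP_E$ lives on the triple fibre product $X \coloneqq \quo{E} \times_S \quo{\wh{E}} \times_S \quo{E}$, and the $p_i$, $p_{ij}$ denote the canonical projections. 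Writing $q \coloneqq p_{13}$ and $\Delta \co \quo{E} \hookrightarrow \quo{E}\times_S\quo{E}$ the diagonal, involutivity reduces to establishing an isomorphism
\[
q_!(\sQ_E) \simeq \Delta_!(\pi_E^* \sL^E)\,[2],
\]
since a further application of the projection formula then yields the claimed isomorphism \eqref{eq:hygrothermal}.

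The convolution identity is established in two steps, corresponding to the organization of \S\ref{sec:cosuppinvol} and \S\ref{sec:cosupp}. First, one proves the \emph{cosupport} statement that $q_!(\sQ_E)$ is supported along the diagonal. The key input is the Contraction Principle of \S\ref{sec:equivariant}: away from the diagonal, the scaling $\bG_m$-action on the middle factor $\quo{\wh{E}}$ together with the homogeneity of the evaluation pairing $\ev_E$ exhibits $\sQ_E$ as pulled back from a $\bG_m$-equivariant sheaf whose contracted pushforward vanishes, ultimately by virtue of the identity $0^* \quo{j_*}(\un) \simeq 0$ on $\quo{\A^1}$ (the geometric content of $j_*\un$ vanishing at the origin). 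Second, along the diagonal the restriction $\Delta^* q_!(\sQ_E)$ is computed by base change as a pushforward along $\quo{\wh{E}} \to S$ of $\Delta_X^* \sQ_E$, where $\Delta_X \co \quo{E}\times_S\quo{\wh{E}} \hookrightarrow X$ is the diagonal over $\wh{E}$; the restricted kernel is recognized as the pullback of $\FT_{\wh{E}}(\un_{\wh{E}})$ up to the appropriate Thom twist, and hence the diagonal fibre coincides with $\pi_E^* \sL^E$ by the definition of $\sL^E$ in Lemma~\ref{lem:L}.

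The principal obstacle is the cosupport vanishing. In the classical Laumon setting, one works with ordinary vector bundles where $\quo{E}$ is a quasi-projective variety and the additive structure on $E$ can be used to simplify the subtraction map outside the diagonal. In the present derived generality, $\quo{E}$ may be a higher and/or derived stack of either positive or negative amplitude, and the argument must proceed purely through the homogeneous scaling action. The technical preliminaries in \S\ref{sec:j} concerning $\quo{j_*}(\un)$ on $\quo{\A^1}$ are laid out precisely to provide the ingredients needed to run the Contraction Principle in this generality, and this is where the bulk of the work lies. Once the cosupport identification is in hand, the deduction of \thmref{thm:invol} is essentially formal.
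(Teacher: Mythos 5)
Your high-level skeleton matches the paper's: realize the double Fourier transform as convolution against a kernel on the triple fibre product, show the pushed-forward kernel $q_!(\sQ_E)$ (the paper's $\sP''$) coincides with $\Delta_!(\pi_E^*\sL^E)[2]$, and conclude by the projection formula. The paper's Lemma~\ref{lem:P''} and the proof in \S\ref{sssec: proof of involutivity} implement exactly this. However, your justification of the key ``cosupport'' step contains two problems, one small and one large.

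The small problem: the identity you invoke, ``$0^*\quo{j}_*(\un)\simeq 0$ on $\quo{\A^1}$,'' is false. The $*$-restriction of $\quo{j}_*(\un)$ to the zero section is $\quo{q}_!(\un)[1]$, which is nonzero (see the proof of Proposition~\ref{prop:j_*(1)}). The correct vanishing, which the paper uses in the support computation of \S\ref{sec:supp}, is $\quo{\pr}_!\,\quo{j}_*\cong 0$ (equivalently $\quo{i}^!\,\quo{j}_*\cong 0$ by contraction), i.e.\ the shriek pushdown to $B\bG_m$, not the star restriction to the origin, vanishes.

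The large problem: your cosupport argument only makes sense for $E$ of amplitude $\ge 0$. You speak of showing that $q_!(\sQ_E)$ vanishes ``away from the diagonal'' via the Contraction Principle and the homogeneity of $\ev_E$. But for general $E\in\DVect(S)$ the zero section $0_E\colon S\to E$ is \emph{not} a closed immersion — when $E$ has negative-amplitude pieces, $0_E$ is smooth surjective — so there is no open complement on which to argue, and the statement ``$\FT_{\wh{E}}(\un)$ is supported on the zero section'' has no direct meaning. What makes sense in general is the statement that the counit $0_{E!}0_E^!\FT_{\wh{E}}(\un)\to\FT_{\wh{E}}(\un)$ is invertible (Proposition~\ref{prop:main/cosupp}), and this is exactly the hard part. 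The paper does \emph{not} establish it by a direct contraction argument; instead it first proves the honest support property for $E$ of amplitude $\ge 0$ (Proposition~\ref{prop:suppinvol}), deduces involutivity in that case (Corollary~\ref{cor:conninvol}), and then handles general $E$ by a bootstrap: working smooth-locally, choose a global presentation exhibiting $E$ as the fibre of $d\colon E^-\to E^+$ with $E^-\le 0$ and $E^+\ge 0$, pull back along the smooth surjection $\wh{i}\colon\wh{E^-}\twoheadrightarrow\wh{E}$, and apply the already-established support property for $E^+$ together with involutivity for $\wh{E^-}$ (which has amplitude $\ge 0$). Your proposal omits this reduction entirely, and without it the argument for cosupport does not go through for $E$ of mixed amplitude — which is the case of actual interest in the paper.
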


  The proof is assembled at the beginning of \S\ref{sec:cosupp}.

  \begin{notat}
    Let $E \in \DVect(S)$.
    Tensoring with $\sL^E$ defines an auto-equivalence
    \begin{equation*}
      (-)\twbrace{E} \coloneqq (-) \otimes \sL^E
      \quad \text{of}~\bD^{\Gm}(S).
    \end{equation*}
    We denote its inverse by $(-)\twbrace{-E}$.
    We also write $(-)\twbrace{E} \coloneqq (-) \otimes f^*(\sL^E)$ as an endofunctor of $\bD^{\Gm}(X)$ where $f : X \to S$ is any derived Artin stack over $S$ with $\Gm$-action.
    In this notation \eqref{eq:hygrothermal} reads:
    \begin{equation}
      (-)\twbrace{E} \to \FT_{\wh{E}} \FT_{E}(-).
    \end{equation}
  \end{notat}

  \begin{cor}\label{cor:inverse}
    For every $E \in \DVect(S)$, the functor $\FT_{\wh{E}}(\bullet)\twbrace{-E}$ determines a canonical inverse to $\FT_E$.
  \end{cor}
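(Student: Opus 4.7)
The plan is to deduce this corollary as a formal consequence of \thmref{thm:invol}, applied both to $E$ and to $\wh{E}$ (using the canonical identification $\wh{\wh{E}} \cong E$). Setting $F \coloneqq \FT_{\wh{E}}(\bullet)\twbrace{-E}$, I aim to show that $F$ is simultaneously a left and a right inverse to $\FT_E \co \bD^{\Gm}(E) \to \bD^{\Gm}(\wh{E})$.

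The left-inverse identity $F \circ \FT_E \cong \id$ will follow immediately from the involutivity isomorphism $\FT_{\wh{E}}(\FT_E(\cK)) \cong \cK \twbrace{E}$: since $\sL^E$ is $\otimes$-invertible by \lemref{lem:L}, the twist $\twbrace{E}$ is inverted by $\twbrace{-E}$. For the right-inverse identity, I will apply \thmref{thm:invol} with $E$ replaced by $\wh{E}$, obtaining $\FT_E \circ \FT_{\wh{E}}(-) \cong (-)\twbrace{\wh{E}}$. By the same reasoning the functor $F' \coloneqq \FT_{\wh{E}}(-)\twbrace{-\wh{E}}$ is then a right inverse to $\FT_E$.

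The last step will be to invoke the standard observation that a functor admitting both a left and a right inverse is an equivalence, whose inverse is determined up to canonical isomorphism: from the chain $F \cong F \circ \FT_E \circ F' \cong F'$ one concludes that $F \cong F'$ is a genuine two-sided inverse of $\FT_E$. I do not anticipate any substantive obstacle, since everything reduces to formal manipulation of the involutivity isomorphism. As a byproduct, the identification $F \cong F'$ yields a canonical isomorphism $\sL^E \cong \sL^{\wh{E}}$, reflecting the symmetry of the involutivity twist under swapping $E$ and $\wh{E}$.
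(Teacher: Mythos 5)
Your argument is correct and essentially the one the paper has in mind (the paper gives no explicit proof of this corollary, treating it as immediate from \thmref{thm:invol} applied to both $E$ and $\wh{E}$). The only small point worth flagging: when you assert that $F' = \FT_{\wh{E}}(-)\twbrace{-\wh{E}}$ is a right inverse, you need the observation that $\FT_{E}$ (and $\FT_{\wh{E}}$) commutes with tensoring by $\otimes$-invertible objects pulled back from $S$, i.e.\ that $\FT_E(\cK \otimes \pi_E^*\cL) \cong \FT_E(\cK) \otimes \pi_{\wh{E}}^*\cL$ for $\cL \in \bD^{\Gm}(S)$. This is easy (since $\pr_2^*\pi_E^* = \pr_1^*\pi_{\wh{E}}^*$ together with the projection formula), but strictly speaking it is what lets you move the twist $\twbrace{-\wh{E}}$ past $\FT_E$. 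An alternative that bypasses it entirely: apply $\twbrace{-\wh{E}}$ on the right of $\FT_E \circ \FT_{\wh{E}} \cong \twbrace{\wh{E}}$ to get that $G := \FT_{\wh{E}}(- \twbrace{-\wh{E}})$ is a right inverse, and then conclude $F \cong F \circ (\FT_E \circ G) \cong (F \circ \FT_E) \circ G \cong G$ directly; or note that the two applications of \thmref{thm:invol} show $\FT_E$ is both fully faithful and essentially surjective, hence an equivalence, so any one-sided inverse such as $F$ is automatically the inverse. Your byproduct $\sL^E \cong \sL^{\wh{E}}$ is a correct and nice observation.
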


\subsubsection{Base change}
\label{ssec:main/funbase}

  \begin{prop}\label{prop:funbase}
    For every morphism $f : S' \to S$, denote by $f_E : E' \to E$ and $f_{\wh{E}} : \wh{E'} \to \wh{E}$ its base changes.
    Then there are canonical isomorphisms
    \begin{align}
      f_{\wh{E}}^* \circ \FT_{E} &\cong {\FT_{E'}} \circ f_E^*,\tag{BC$^*$}\label{eq:BC^*}\\
      f_{\wh{E}*} \circ {\FT_{E'}} &\cong {\FT_{E}} \circ f_{E*},\tag{BC$_*$}\label{eq:BC_*}\\
      f_{\wh{E}!} \circ \FT_{E'} &\cong {\FT_{E}} \circ f_{E!}\tag{BC$_!$}\label{eq:BC_!}\\
      f_{\wh{E}}^! \circ \FT_{E} &\cong {\FT_{E'}} \circ f_E^!\tag{BC$^!$}\label{eq:BC^!}.
    \end{align}
  \end{prop}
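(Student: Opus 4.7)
The plan is to prove the four base-change isomorphisms in two stages. The first stage, independent of involutivity and naturally belonging in \S\ref{sec:easy}, establishes \eqref{eq:BC^*} and \eqref{eq:BC_!} by direct computation from the definition \eqref{eq: FT definition}. The second stage, belonging in \S\ref{sec:funct2}, derives \eqref{eq:BC_*} and \eqref{eq:BC^!} from the first two by passing to right adjoints, using that $\FT_E$ is an equivalence (Corollary~\ref{cor:inverse}) whose right adjoint is canonically identified with its inverse.

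The technical core of the first stage is the base-change compatibility of the Fourier kernel itself. Writing $g \co \quo{\wh{E'}} \fibprod_{S'} \quo{E'} \to \quo{\wh{E}} \fibprod_{S} \quo{E}$ for the morphism induced by $f$, I would first establish the canonical isomorphism $g^*(\sP_E) \cong \sP_{E'}$. By naturality of the evaluation morphism $\ev_E$ in $S$, this reduces to showing $\bar f^*(\quo{j_*}\un_S) \cong \quo{j'_*}\un_{S'}$ on $\quo{\A^1_{S'}}$, where $\bar f \co \quo{\A^1_{S'}} \to \quo{\A^1_S}$ is the base change of $f$. Although $*$-base change for open immersions is not automatic, the desired identification is obtained by applying $\bar f^*$ to the localization triangle $\quo{i_* i^!}\un \to \un \to \quo{j_*}\un$ and using base change for the complementary closed immersion $i$, which holds by axiom~\itemref{item:weave/loc} of the topological weave.

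Granted the kernel compatibility, \eqref{eq:BC^*} is a short chase:
\begin{equation*}
f_{\wh{E}}^* \FT_E(\cK) \cong \pr'_{1!}\,g^*(\pr_2^*\cK \otimes \sP_E)[-1] \cong \pr'_{1!}(\pr_2'^* f_E^*\cK \otimes \sP_{E'})[-1] = \FT_{E'}(f_E^*\cK),
\end{equation*}
where the first isomorphism uses proper base change along the Cartesian square with horizontal arrows $g, f_{\wh{E}}$ and vertical arrows $\pr'_1, \pr_1$, and the second uses $g^*\sP_E \cong \sP_{E'}$ together with the symmetry of $g^*$ with tensor product. The proof of \eqref{eq:BC_!} is parallel: factor $f_{\wh{E}!}\pr'_{1!} \cong \pr_{1!}g_!$ by functoriality of $!$-pushforward, apply the projection formula to commute $g_!$ past $\otimes\, \sP_E$, and invoke proper base change $g_!\pr_2'^* \cong \pr_2^* f_{E!}$ along the Cartesian square with horizontal arrows $g, f_E$ and vertical arrows $\pr'_2, \pr_2$.

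For the second stage, taking right adjoints of both sides of \eqref{eq:BC^*} produces $\FT_E^R f_{\wh{E}*} \cong f_{E*} \FT_{E'}^R$; since the right adjoint of an equivalence coincides with its inverse, Corollary~\ref{cor:inverse} gives $\FT^R \cong \FT^{-1}$, and composing with $\FT_E$ on the left and $\FT_{E'}$ on the right yields \eqref{eq:BC_*}. The isomorphism \eqref{eq:BC^!} is derived from \eqref{eq:BC_!} by the same adjunction argument. The main obstacle throughout is the kernel compatibility lemma of the first stage; once it is established, the four base-change isomorphisms follow by formal manipulation of the six-functor formalism.
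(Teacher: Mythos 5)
Your overall two-stage organization matches the paper's: prove \eqref{eq:BC^*} and \eqref{eq:BC_!} directly from the definition by establishing base change for the kernel $\sP_E$, defer \eqref{eq:BC_*} and \eqref{eq:BC^!} until involutivity is available, and then derive them by passing to right adjoints of the first two. The six-functor chases you give for \eqref{eq:BC^*} and \eqref{eq:BC_!} are correct, and your second-stage argument (composing the adjoint identity $\FT_E^R f_{\wh E*}\cong f_{E*}\FT_{E'}^R$ with $\FT_E$ on the left and $\FT_{E'}$ on the right) is a valid and slightly more streamlined variant of what the paper does, avoiding the explicit twist cancellation via Lemma~\ref{lem:twistbase}.

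The one place where your argument has a real gap is the reduction of the kernel compatibility to base change for $\quo{j}_*(\un)$. You propose to apply $\bar f^*$ to the recollement triangle $\quo{i}_*\quo{i}^!\un \to \un \to \quo{j}_*\un$ and cite ``base change for the complementary closed immersion $i$, which holds by axiom~\itemref{item:weave/loc}.'' Two issues. First, axiom~\itemref{item:weave/loc} gives the triangle $j_!j^*\to\id\to i_*i^*$, not its adjoint $i_*i^!\to\id\to j_*j^*$; the latter holds for weaves but is a derived consequence, not the stated axiom. Second, and more seriously, identifying $\bar f^*(\quo{i}_*\quo{i}^!\un)$ with $\quo{i'}_*\quo{i'}^!\un$ requires not just proper base change for $\quo{i}_*$ but also the isomorphism $\tilde f^*\bigl(\quo{i}^!\un\bigr)\cong \quo{i'}^!\un$, which is not automatic: $*$-pullback does not commute with $(-)^!$ in general. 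This can be repaired — for instance, the Contraction Principle (\corref{cor:contractder}) identifies $\quo{i}^!$ with $\quo{\pr}_!$ for the bundle projection $\quo{\pr}\colon \quo{\A^1_S}\to B\bG_{m,S}$, and $\quo{\pr}_!\un$ commutes with $*$-pullback by proper base change — but you would need to say so. The paper sidesteps the issue entirely: in the proof of \propref{prop:j_*(1)} the object $\quo{j}_*\un$ is placed in the triangle $\quo{j}_!\un\to\quo{j}_*\un\to\quo{i}_*\quo{q}_!\un[1]$, where every term is a $!$-pushforward and hence commutes with $*$-pullback with no further argument.
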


  The proof is in \S \ref{ssec:basechange1} and \S \ref{sec:funct2}.

  \begin{lem}\label{lem:twistbase}
    Let $E \in \DVect(S)$.
    For every morphism $f : S' \to S$, we have a canonical isomorphism
    \begin{equation}
      f^* \sL^E \cong \sL^{E'}.
    \end{equation}
    Moreover, if $f_E : E' \to E$ denotes the base change, there are canonical isomorphisms
    \begin{align}
      f^*_E((-)\twbrace{E}) &\cong f_E^*(-)\twbrace{E'},\\
      f^!_E((-)\twbrace{E}) &\cong f_E^!(-)\twbrace{E'},\\
      f_{E*}((-)\twbrace{E'}) &\cong f_{E*}(-)\twbrace{E},\\
      f_{E!}((-)\twbrace{E'}) &\cong f_{E!}(-)\twbrace{E},
    \end{align}
    and similarly for $f_{\wh{E}} : \wh{E'} \to \wh{E}$.
  \end{lem}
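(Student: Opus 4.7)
The plan is to reduce everything to the single base-change isomorphism $f^{*}\sL^{E} \cong \sL^{E'}$, from which the four subsequent identities will follow as formal consequences of projection formulas together with the $\otimes$-invertibility of $\sL^{E}$ granted by \lemref{lem:L}.

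To establish $f^{*}\sL^{E} \cong \sL^{E'}$, I would write out $\sL^{E} = \pi_{E!}\FT_{\wh{E}}(\un_{\wh{E}})$ by definition \eqref{eq:L} and consider the Cartesian square obtained by base-changing $\pi_{E} \co E \to S$ along $f$, together with its dual analogue for $\wh{E}$. Proper base change along $\pi_{E}$ gives $f^{*}\pi_{E!} \cong \pi_{E'!}f_{E}^{*}$, while the base-change identity \eqref{eq:BC^*} of \propref{prop:funbase} applied to $\FT_{\wh{E}}$ (i.e.\ with $\wh{E}$ in the role of $E$) yields $f_{E}^{*}\FT_{\wh{E}}(\un_{\wh{E}}) \cong \FT_{\wh{E'}}(f_{\wh{E}}^{*}\un_{\wh{E}}) \cong \FT_{\wh{E'}}(\un_{\wh{E'}})$. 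Concatenating these gives $f^{*}\sL^{E} \cong \pi_{E'!}\FT_{\wh{E'}}(\un_{\wh{E'}}) = \sL^{E'}$, again by \eqref{eq:L}.

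The four remaining identities are then formal. For $f_{E}^{*}((-)\twbrace{E})$, combine symmetric monoidality of $f_{E}^{*}$ with $f_{E}^{*}\pi_{E}^{*} \cong \pi_{E'}^{*}f^{*}$ and the identity just established:
\[
f_{E}^{*}(- \otimes \pi_{E}^{*}\sL^{E}) \cong f_{E}^{*}(-) \otimes \pi_{E'}^{*}f^{*}\sL^{E} \cong f_{E}^{*}(-) \otimes \pi_{E'}^{*}\sL^{E'}.
\]
For $f_{E}^{!}$ I would invoke the standard compatibility $f^{!}(K \otimes L) \cong f^{!}(K) \otimes f^{*}L$, which holds whenever $L$ is $\otimes$-invertible, and then proceed identically. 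For $f_{E!}$, the projection formula $f_{!}(- \otimes f^{*}-) \cong f_{!}(-) \otimes -$ rewrites
\[
f_{E!}(- \otimes \pi_{E'}^{*}\sL^{E'}) \cong f_{E!}(- \otimes f_{E}^{*}\pi_{E}^{*}\sL^{E}) \cong f_{E!}(-) \otimes \pi_{E}^{*}\sL^{E}.
\]
For $f_{E*}$, the same calculation applies using the dual projection formula $f_{*}(- \otimes f^{*}L) \cong f_{*}(-) \otimes L$, which is valid because $\sL^{E}$ is $\otimes$-invertible (hence strongly dualizable). The analogous statements for $f_{\wh{E}}$ follow verbatim by swapping the roles of $E$ and $\wh{E}$.

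The only real obstacle is invoking \propref{prop:funbase}, which is established elsewhere; once that is in hand, this lemma reduces to routine bookkeeping with six-functor identities and the invertibility of $\sL^{E}$.
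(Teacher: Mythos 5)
Your proposal is correct. For the isomorphism $f^*\sL^E \cong \sL^{E'}$ and for the $f_E^*$ and $f_{E!}$ identities, your argument is essentially the paper's. For $f_E^!$ and $f_{E*}$ you take a genuinely different route: the paper derives these by passing to adjoints from the $f_E^*$ and $f_{E!}$ isomorphisms already in hand (using that $(-)\twbrace{E}$ has inverse $(-)\twbrace{-E}$), whereas you argue directly via the six-functor compatibilities $f^!(K \otimes L) \cong f^! K \otimes f^* L$ and $f_*(K \otimes f^* L) \cong f_* K \otimes L$ for dualizable $L$, applied to $L = \pi_E^*\sL^E$. Both are formal once \lemref{lem:L} is available and reach the same conclusion; the adjunction route re-uses established isomorphisms, while yours appeals to two further standard closed-form facts but is arguably more direct.

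Two small caveats. First, on logical ordering: \lemref{lem:L} (invertibility of $\sL^E$) is itself proven using the isomorphism $f^*\sL^E \cong \sL^{E'}$, so that isomorphism must be established without invoking invertibility. Your proof respects this — it uses only \eqref{eq:BC^*} — but your opening sentence, asserting that all four subsequent identities follow using invertibility, overstates the dependence: only $f_E^!$ and $f_{E*}$ need it, and the paper splits the proof accordingly, establishing the $f^*\sL^E$, $f_E^*$, and $f_{E!}$ identities in \S\ref{ssec:basechangetwist1} before \lemref{lem:L}, and the $f_E^!$ and $f_{E*}$ identities in \S\ref{ssec:basechangetwist2} after. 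Second, the isomorphism $f^*\pi_{E!} \cong \pi_{E'!} f_E^*$ should not be called proper base change: $\pi_E$ is the projection of a derived vector bundle, which is not proper. What you need (and what holds unconditionally in any six-functor formalism) is the base-change compatibility of $!$-pushforward with $*$-pullback.
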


  The proof is in \S\ref{ssec:basechangetwist1} and \S\ref{ssec:basechangetwist2}.

\subsubsection{Functoriality}
\label{ssec:main/fun}

  \begin{prop}\label{prop:fun}
    For every morphism of derived vector bundles $\phi : E' \to E$ over $S$, there are canonical isomorphisms
    \begin{align}
      \Ex^{*,\FT} &: \wh{\phi}^* \circ {\FT_{E'}} \to {\FT_{E}} \circ \phi_!\tag{Fun$^*$}\label{eq:funA}\\
      \Ex^{\FT,!} &: {\FT_{E'}} \circ \phi^! \to \wh{\phi}_* \circ {\FT_{E}}\tag{Fun$_*$}\label{eq:funA'}\\
      \Ex^{!,\FT} &: \wh{\phi}^{!} \circ {\FT_{E'}\twbrace{-E'}} \to \FT_{E}\twbrace{-E} \circ \phi_*\tag{Fun$^!$}\label{eq:funB}\\
      \Ex^{\FT,*} &: {\FT_{E'}\twbrace{-E'}} \circ \phi^* \to \wh{\phi}_! \circ {\FT_{E}\twbrace{-E}}. \tag{Fun$_!$}\label{eq:funB'}
    \end{align}
  \end{prop}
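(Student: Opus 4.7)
The plan is to prove (Fun$^*$) directly from the definition of the Fourier transform, and then to deduce the remaining three isomorphisms by a combination of adjunction arguments and involutivity (\thmref{thm:invol}).

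For (Fun$^*$), the key ingredient is the fundamental compatibility of the two Fourier kernels $\sP_E$ and $\sP_{E'}$ on the fiber product $\quo{\wh{E}} \fibprod_S \quo{E'}$: their two natural pullbacks coincide, because both restrict the sheaf $\quo{j}_*(\un)$ along the shared evaluation map
\begin{equation*}
\quo{\ev_E} \circ (\id \times \quo{\phi}) = \quo{\ev_{E'}} \circ (\quo{\wh{\phi}} \times \id) \co \quo{\wh{E}} \fibprod_S \quo{E'} \to \quo{\A^1_S}.
\end{equation*}
Denote this common object by $\sP_\phi$. Applying proper base change to the Cartesian square with top horizontal map $\quo{\wh{\phi}} \times \id$ and vertical projections to $\quo{\wh{E}}$, $\quo{\wh{E'}}$, together with the projection formula, gives $\wh{\phi}^* \FT_{E'}(\cK) \cong q_{1!}(q_2^* \cK \otimes \sP_\phi)[-1]$, where $q_1, q_2$ are the two projections out of $\quo{\wh{E}} \fibprod_S \quo{E'}$. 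A symmetric calculation, using proper base change for $\pr_2^* \phi_! \cong (\id \times \phi)_! q_2^*$ and the projection formula, identifies $\FT_E \phi_!(\cK)$ with the same object. Then (Fun$_*$) is obtained as the mate of (Fun$^*$) under the adjunctions $(\phi_!, \phi^!)$ and $(\wh{\phi}^*, \wh{\phi}_*)$, and is an isomorphism because mating the inverse of (Fun$^*$) provides its inverse.

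For (Fun$_!$), apply (Fun$^*$) to the dual morphism $\wh{\phi} \co \wh{E} \to \wh{E'}$ to obtain $\phi^* \FT_{\wh{E}} \cong \FT_{\wh{E'}} \wh{\phi}_!$, and then compose on the left with $\FT_{E'}$ and on the right (in the argument) with $\FT_E$. Using \thmref{thm:invol} in both of its manifestations $\FT_{\wh{E}} \FT_E \cong (-)\twbrace{E}$ and $\FT_{E'} \FT_{\wh{E'}} \cong (-)\twbrace{\wh{E'}}$, together with the $\bD(S)$-linearity of $\FT_{E'}$ (immediate from the projection formula applied to the definition of $\FT$), this rewrites as an isomorphism between $\FT_{E'} \phi^*$ and $\wh{\phi}_! \FT_E$ twisted by $\pi_{\wh{E'}}^*(\sL^{\wh{E'}} \otimes (\sL^E)^{-1})$. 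To match the twist $\pi_{\wh{E'}}^*(\sL^{E'} \otimes (\sL^E)^{-1})$ appearing in (Fun$_!$), one uses the canonical identification $\sL^{E'} \cong \sL^{\wh{E'}}$. The latter (for any derived vector bundle) is established by applying $\FT_{\wh{E}}$ to both forms of involutivity to obtain $\FT_{\wh{E}}(-)\twbrace{E} \cong \FT_{\wh{E}}(-)\twbrace{\wh{E}}$, and invoking essential surjectivity of $\FT_{\wh{E}}$. Finally, (Fun$^!$) is obtained as the mate of (Fun$_!$) under the adjunctions $(\phi^*, \phi_*)$ and $(\wh{\phi}_!, \wh{\phi}^!)$.

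The main obstacle is the careful tracking of twists by $\twbrace{E}$, $\twbrace{E'}$, $\twbrace{\wh{E}}$, $\twbrace{\wh{E'}}$ in the involutivity-based derivation of (Fun$_!$) and (Fun$^!$); the key reduction is to the canonical isomorphism $\sL^E \cong \sL^{\wh{E}}$ of invertible objects of $\bD^{\Gm}(S)$, after which the remaining steps are diagrammatic and use only standard compatibilities already present in the six-functor formalism.
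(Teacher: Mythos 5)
Your proof is correct and follows essentially the same strategy as the paper — a direct verification of \eqref{eq:funA} via the compatibility of the kernels under the two evaluation maps and base change/projection formulas, followed by adjunction-passing and involutivity to obtain the remaining three. The paper derives them in a slightly different order: \eqref{eq:funA} directly, then \eqref{eq:funB} by passing to right adjoints of \eqref{eq:funA} (using \corref{cor:inverse} to identify the right adjoints of $\FT_{E}$ and $\FT_{E'}$), and then \eqref{eq:funA'} and \eqref{eq:funB'} by sandwiching with Fourier transforms and applying \thmref{thm:invol}. You instead go \eqref{eq:funA} $\to$ \eqref{eq:funA'} (mate) $\to$ \eqref{eq:funB'} (sandwich with $\FT$'s, applied to $\wh\phi$) $\to$ \eqref{eq:funB} (mate). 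These are logically interchangeable.

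Two remarks. First, your justification that the mate of \eqref{eq:funA} is invertible — \emph{``mating the inverse of \eqref{eq:funA} provides its inverse''} — is not quite the right formulation: the mate of $\alpha^{-1}$ under the same adjunctions lands in a different hom-set (it is a transformation $R_1 G \to F R_2$, not $R_2 G \to F R_1$), so it is not literally the inverse of the mate of $\alpha$. The correct reason the mate is invertible is the standard fact that mating a $2$-cell across a square whose \emph{transversal} (non-adjoint) sides are equivalences preserves invertibility; here this uses that $\FT_{E}$, $\FT_{E'}$ are equivalences, which is precisely \corref{cor:inverse}. Your conclusion is right, but the argument should be phrased this way. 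Second, your explicit attention to the identification $\sL^{E} \cong \sL^{\wh{E}}$ is a worthwhile observation: this identification is in fact implicitly used in the paper already in asserting that $\FT_{\wh E}(\bullet)\twbrace{-E}$ is a \emph{two-sided} inverse of $\FT_E$ (the ``wrong'' composite gives a twist by $\sL^{\wh E}\otimes(\sL^E)^{-1}$, which vanishes only via this identification), and hence also in the displayed right-adjoint formula in the paper's proof of \eqref{eq:funB}. Your proof of $\sL^E \cong \sL^{\wh E}$ via applying $\FT_{\wh E}$ to both forms of involutivity, essential surjectivity of $\FT_{\wh E}$, and full faithfulness of $\pi_E^*$ (\corref{cor:contractder}) is valid. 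Equivalently and perhaps more simply: $\FT_{\wh E}(\bullet)\twbrace{-E}$ is a left inverse of $\FT_E$, and $\FT_{\wh E}(\bullet)\twbrace{-\wh E}$ is a right inverse (the latter by applying \thmref{thm:invol} to $\wh E$ and using $\bD(S)$-linearity); since any left and right inverse of a functor coincide, one gets $\twbrace{-E}\cong\twbrace{-\wh E}$ on the essential image of $\FT_{\wh E}$, hence everywhere.
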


  The proof is in \S \ref{ssec:easy/fun} and \S \ref{sec:funct2}.

  \begin{exam}
    Let $E \in \DVect(S)$.
    Since the projection $\pi_{E} : E \to S$ and zero section $0_{E} : S \to E$ are dual to $0_{\wh{E}} : S \to \wh{E}$ and $\pi_{\wh{E}} : \wh{E} \to S$, respectively, we get canonical isomorphisms
    \begin{align}
      \FT_{0_S} \circ ~\pi_{E!} &\cong 0_{\wh{E}}^* \circ \FT_{E}\\
      \FT_{E} \circ ~0_{E!} &\cong \pi_{\wh{E}}^* \circ \FT_{0_S}.
    \end{align}
  \end{exam}

\subsubsection{Outline of proof: support and cosupport properties}
\label{ssec:main/cosupp}

  We will see that the proof of involutivity (\thmref{thm:invol}) eventually boils down to what we call the ``cosupport property'' for the object $\FT_{\wh{E}}(\un_{\wh{E}}) \in \bD^{\Gm}(E)$.

  When $E$ is of amplitude $\ge 0$, the object $\FT_{\wh{E}}(\un_{\wh{E}})$ is supported on the zero section of $E$ (which is a closed immersion):

  \begin{prop}[Support property]\label{prop:suppinvol}
    Let $E \in \DVect(S)$.
    If $E$ is of amplitude $\ge 0$, then we have:
    \begin{thmlist}
      \item\label{item:Ghent}
      There is a canonical isomorphism
      \begin{equation}
        {0}_E^* \FT_{\wh{E}}(\un)
        \cong \un_S\vb{-\wh{E}}.
      \end{equation}

      \item\label{item:journalize}
      The canonical morphism
      \begin{equation}\label{eq:doctrinally}
        \FT_{\wh{E}}(\un)
        \xrightarrow{\mrm{unit}} {0_{E*}} {0_E^*} \FT_{\wh{E}}(\un)
        \cong {0_{E*}}(\un_S)\vb{-\wh{E}}
      \end{equation}
      is invertible.
    \end{thmlist}
    In particular, there is a canonical isomorphism
    \begin{equation}\label{eq:conntwist}
      \un_S\twbrace{E}
      \cong 0_E^* \FT_{\wh{E}}(\un)
      \cong \un_S \vb{-\wh{E}}.
    \end{equation}
  \end{prop}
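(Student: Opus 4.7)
The plan is to prove (i) and (ii) by combining base change for the Fourier kernel with the Contraction Principle from \S\ref{sec:equivariant}. The hypothesis that $E$ is of amplitude $\ge 0$ enters twice: it makes $0_E \co S \to E$ a closed embedding, so that $0_{E*}$ is well-behaved and the statement of (ii) is meaningful; and it makes $\pi_{\wh{E}} \co \wh{E} \to S$ smooth, so that Poincaré duality \eqref{eq:Poin} produces the Thom twist $\vb{-\wh{E}}$. Once (i) and (ii) are both established, the final display \eqref{eq:conntwist} follows from the definition $\sL^E \coloneqq 0_E^!\FT_{\wh{E}}(\un)$ together with the observation that $0_E^* \cong 0_E^!$ on sheaves supported at $0_E$.

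For (i), I would unfold $\FT_{\wh{E}}(\un) = \pr_{1!}(\sP_{\wh{E}})[-1]$ and apply $!$-base change along $\quo{0_E} \co [S/\Gm] \to \quo{E}$, obtaining
\[
  \quo{0_E}^*\FT_{\wh{E}}(\un) \cong \pr_{1!}'(0_E'^*\sP_{\wh{E}})[-1]
\]
where $\pr_1'$ is a base change of the smooth morphism induced by $\pi_{\wh{E}}$. The essential geometric point is that the composite $\ev_{\wh{E}} \circ 0_E'$ factors through the closed inclusion $\iota \co [S/\Gm] \hook \quo{\A^1_S}$ of the origin (since the pairing $\ev(0,y) = 0$ identically), so $0_E'^*\sP_{\wh{E}}$ is pulled back from $\iota^*\quo{j_*}\un$. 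The latter is controlled by the localization triangle on $\quo{\A^1_S}$ together with the purity isomorphism $\iota^!\un \cong \un\vb{-\sL}$, where $\sL$ is the universal line bundle on $[S/\Gm]$ (that is, the $\Gm$-equivariant normal bundle of the origin). Pushing forward along $\pr_1'$ via the Contraction Principle for the $\Gm$-scaling action on $\wh{E}$ (which contracts $\wh{E}$ onto $0_{\wh{E}}$), and tracking twists through Poincaré duality, then collapses the answer to $\un_S\vb{-\wh{E}}$.

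For (ii), I would invoke the Contraction Principle again, now applied to $E$ itself with the $\Gm$-scaling action contracting $E$ onto $0_E$. Since $\FT_{\wh{E}}(\un) \in \bD^{\Gm}(E)$ is $\Gm$-equivariant by construction, the Contraction Principle identifies the adjunction unit $\FT_{\wh{E}}(\un) \to 0_{E*}0_E^*\FT_{\wh{E}}(\un)$ as an isomorphism. The main obstacle I anticipate is handling the derived and stacky features uniformly: when $E$ is not of amplitude $[0,0]$, either $\wh{E}$ is a non-representable quotient stack (so the Contraction Principle must be applied along a smooth morphism with stacky fibers) or $E$ carries nontrivial derived structure at the zero section (so notions like the ``complement'' $E \setminus 0_E$ must be treated derived-categorically). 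Ensuring that purity, base change, and the Contraction Principle all interact correctly with derived vector bundles of arbitrary amplitude, in the generality of an arbitrary topological weave $\bD$, is the technical heart of the argument.
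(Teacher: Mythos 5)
Your approach to part (i) is sound and essentially unrolls what the paper does via the functoriality isomorphism \eqref{eq:funA} (which the paper invokes to write $0_E^*\FT_{\wh{E}}(\un) \cong \FT_{0_S}(\pi_{\wh{E}!}\un)$, then applies Poincaré duality and homotopy invariance for $\pi_{\wh{E}}$). The "in particular" step is also fine.

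However, your proposal for part (ii) contains a real gap. You claim that the Contraction Principle (Proposition \ref{prop:contract}), applied to the scaling action on $E$, directly identifies the adjunction unit $\FT_{\wh{E}}(\un) \to 0_{E*}0_E^*\FT_{\wh{E}}(\un)$ as an isomorphism for $\Gm$-equivariant input. But the Contraction Principle asserts that $\pi_{E*} \cong 0_E^*$ (and dually $0_E^! \cong \pi_{E!}$) as functors on $\bD^{\Gm}(E)$; it does \emph{not} assert that $\cK \to 0_{E*}0_E^*\cK$ is an isomorphism. The latter would mean every $\Gm$-equivariant sheaf on $E$ is supported at the zero section, which is false already for $\cK = \un_E$ on $E = \A^1_S$. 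Part (ii) is precisely the nontrivial assertion that $\FT_{\wh{E}}(\un)$ is supported on the zero section, and the Contraction Principle alone cannot produce a support statement of this kind.

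The paper instead reduces (ii), via localization, to the vanishing of $v^*\FT_{\wh{E}}(\un)$ at any residue field $v : \Spec(\kappa) \to \quo{E}$ off the zero section. After replacing $S$ by $\Spec(\kappa)$, one factors the evaluation-at-$s$ map $\ev_s : \wh{E} \to \A^1_S$ for a nowhere vanishing section $s$, observes that its fibre is of amplitude $\le 0$ (this is where the amplitude hypothesis on $E$ is used in a way your sketch does not capture: it makes $\wh{E}$ of amplitude $\le 0$, so $\ev_s$ is smooth), and reduces to the key vanishing $\quo{p}_!\,\quo{j}_*\un \cong 0$ on $\quo{\A^1_S}$ established in Proposition \ref{prop:j_*(1)}\itemref{item:retolerate}. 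That vanishing is the actual content behind the support property, and it has no obvious substitute coming from the Contraction Principle.
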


  The proof is in \S\ref{sec:supp}. In general, the zero section $0_E$ is not a closed immersion, so that $0_{E!}$ does not agree with $0_{E*}$.
  Nevertheless, the following dual version of \propref{prop:suppinvol} holds for $E$ of arbitrary amplitude:

  \begin{prop}[Cosupport property]\label{prop:main/cosupp}
    For every $E \in \DVect(S)$, the object $\FT_{\wh{E}}(\un_{\wh{E}}) \in \bD^{\Gm}(E)$ lies in the essential image of the fully faithful functor $0_{E!}$.
    More precisely, the canonical morphism
    \begin{equation}\label{eq:urodelan}
      0_{E!} (\un_S\twbrace{E})
      \cong 0_{E!} 0_{E}^!(\FT_{\wh{E}}(\un_{\wh{E}}))
      \xrightarrow{\counit} \FT_{\wh{E}}(\un_{\wh{E}})
    \end{equation}
    is invertible.
  \end{prop}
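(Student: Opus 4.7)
The plan is to deduce Proposition \ref{prop:main/cosupp} from the support property (Proposition \ref{prop:suppinvol}), which handles the case when $\cE$ has amplitude $\ge 0$ and, crucially, the zero section $0_E$ is a closed immersion (making $0_{E*}$ and $0_{E!}$ coincide, so that the support and cosupport statements match).

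First, since both $\FT_{\wh{E}}(\un_{\wh{E}})$ and $0_{E!}(\un_S\twbrace{E})$ are compatible with smooth base change in $S$ (via \eqref{eq:BC^*}, which is available from \S\ref{sec:easy}), and so is the natural morphism \eqref{eq:urodelan}, I may work Nisnevich-locally on $S$ to assume $S$ is affine and $\cE$ admits a global presentation as a bounded cochain complex of vector bundles. In that setting, consider the brutal truncation cofiber sequence $\cE^{\le 0} \to \cE \to \cE^{>0}$, which induces a fiber square of derived vector bundles
\[
\begin{tikzcd}
E^{\le 0} \ar[r, hook, "i"] \ar[d, "\pi^{\le 0}"'] & E \ar[d, "\pi^{>0}"] \\
S \ar[r, hook, "0_{E^{>0}}"'] & E^{>0},
\end{tikzcd}
\]
where $E^{>0}$ is of amplitude $\ge 1$ (so $0_{E^{>0}}$, and hence $i$, is a closed immersion) and $E^{\le 0}$ is of amplitude $\le 0$ (so $\pi^{\le 0}$ is smooth). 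Dually, $\wh{\pi^{>0}} \co \wh{E^{>0}} \hookrightarrow \wh{E}$ is a closed immersion, fitting in the dual fiber sequence $\wh{E^{>0}} \to \wh{E} \to \wh{E^{\le 0}}$.

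The key computational step is to apply Proposition \ref{prop:suppinvol} to $\wh{E^{>0}}$ (valid since $E^{>0}$ has amplitude $\ge 0$), obtaining $\FT_{\wh{E^{>0}}}(\un_{\wh{E^{>0}}}) \cong 0_{E^{>0}!}(\un_S\twbrace{E^{>0}})$, and then to transfer this computation across the fiber sequence using the functoriality isomorphism \eqref{eq:funA'} from \S\ref{sec:easy} applied to the closed immersion $\wh{\pi^{>0}}$. Combined with the computation $\wh{\pi^{>0}\,\,}^!\un_{\wh{E}} \cong \un_{\wh{E^{>0}}}\vb{-\wh{E^{\le 0}}}$ (where the virtual normal bundle of $\wh{\pi^{>0}}$ is identified, via the Cartesian structure, with the pullback of $\wh{E^{\le 0}}$), the additivity of Thom twists along the fiber sequence, and the factorization $0_{E^{>0}} = \pi^{>0} \circ 0_E$ (together with the fact that $\pi^{>0}$ is proper on the image of $0_E$, so that $\pi^{>0}_! = \pi^{>0}_*$ there), one obtains the pushforward identity
\[
\pi^{>0}_* \FT_{\wh{E}}(\un_{\wh{E}}) \;\cong\; \pi^{>0}_*\bigl(0_{E!}(\un_S)\bigr)\vb{-\wh{E}}.
\]

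The main obstacle will be the final descent of this identity from $\pi^{>0}_*$ to the identity functor, in order to conclude the isomorphism $\FT_{\wh{E}}(\un_{\wh{E}}) \cong 0_{E!}(\un_S\twbrace{E})$ on $E$ itself: the morphism $\pi^{>0}$ is smooth but not proper, so $\pi^{>0}_*$ is not conservative in general. To resolve this, I plan to exploit the $\bG_m$-equivariant structure. Both $\FT_{\wh{E}}(\un_{\wh{E}})$ and $0_{E!}(\un_S\twbrace{E})$ are $\bG_m$-equivariant on $E$, and the $\bG_m$-scaling action on $E$ contracts toward the zero section $0_E$; a contraction-principle argument, combined with the identification via \lemref{lem:L} that the costalks at $0_E$ of both sides equal $\sL^E = \un_S\twbrace{E}$ compatibly with the natural counit map \eqref{eq:urodelan}, should complete the proof.
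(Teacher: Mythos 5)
Your proposal has the right high-level idea — reduce to the support property (\propref{prop:suppinvol}) by decomposing $\cE$ via a global presentation into a $\le 0$ piece and a $\ge 0$ piece — but it diverges from a workable proof in three substantive ways.

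\textbf{The decomposition is wrong.} Brutal truncation yields the fiber sequence $\cE^{>0} \to \cE \to \cE^{\le 0}$ (degrees $\ge 1$ form a sub\emph{complex}, degrees $\le 0$ a quotient), not $\cE^{\le 0} \to \cE \to \cE^{>0}$ as you wrote. Consequently there is no natural morphism $\pi^{>0} \co E \to E^{>0}$, and the Cartesian square you base everything on does not exist. The correct square (the paper's \eqref{eq:nieceship}, obtained by rotating to $\cE \to \cE^{\le 0} \to \cE^{>0}[1]$) puts $E$ \emph{itself} at the corner, exhibited as the fiber of $d \co E^- \to E^+$ with $E^- = \Tot(\cE^{\le 0})$ and $E^+ = \Tot(\cE^{\ge 1}[1])$, and it is $i \co E \to E^-$ that is the closed immersion.

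\textbf{Circularity.} You cite \eqref{eq:funA'} (Fun$_*$), attributing it to \S\ref{sec:easy}. But \S\ref{sec:easy} proves only \eqref{eq:funA} (Fun$^*$), \eqref{eq:BC^*}, and \eqref{eq:BC_!}; Fun$_*$ is proved in \S\ref{sec:funct2} \emph{as a consequence of involutivity} (\thmref{thm:invol}), which in turn rests on the cosupport property via \lemref{lem:cosuppinvol}. Invoking Fun$_*$ here would be circular. The paper's proof of \propref{prop:main/cosupp} is careful to use only \eqref{eq:funA} (Fun$^*$), $\Ex^*_!$, $\Ex^{*!}$, \corref{cor:chlorine}, \propref{prop:suppinvol}, and the already-established $\ge 0$ case of involutivity (\corref{cor:conninvol}) — all of which are available at that stage. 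Your sketch also omits \corref{cor:conninvol} entirely, but it is a genuinely necessary ingredient: the support property for the $\ge 0$ piece lands on the wrong side of a Fourier transform, and one must apply $\FT$ once more and appeal to involutivity to convert.

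\textbf{The descent step is the wrong move.} You flag, correctly, that $\pi^{>0}_*$ is not conservative since $\pi^{>0}$ is smooth but not proper, and propose to recover via $\bG_m$-equivariance and matching costalks at $0_E$. That cannot work: on a quotient like $[\A^1/\Gm]$, recollement along the closed point shows a $\Gm$-equivariant sheaf is determined by its $0_E^*$, $0_E^!$, \emph{and} its restriction to the open complement $\Gm$, and the contraction principle together with a costalk computation controls only the closed part. The paper sidesteps the issue entirely by passing to the \emph{dual} Cartesian square and pulling back along the smooth surjection $\wh{i} \co \wh{E^-} \to \wh{E}$; since $\wh{i}^*$ is conservative, the reduction to the corner space is lossless and no descent argument is needed. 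That choice of direction is the decisive structural point your proposal misses.
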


  The proof is in \subsectionref{ssec:cosupp/proof}.
  Involutivity will then follow from:

  \begin{lem}\label{lem:cosuppinvol}
    Let $E \in \DVect(S)$.
    If $E$ satisfies the cosupport property, then there is a canonical isomorphism
    \begin{equation*}
      (-)\twbrace{E} \to \FT_{\wh{E}} \FT_{E}(-).
    \end{equation*}
  \end{lem}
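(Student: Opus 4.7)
The plan is to express $\FT_{\wh{E}} \circ \FT_E$ as a kernel-convolution functor and then use the hypothesized cosupport property to show that the kernel is supported on the diagonal, with stalk giving the twist $\sL^E$.

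First, I would combine the defining formulas \eqref{eq: FT definition} for $\FT_E$ and $\FT_{\wh{E}}$. Applying proper base change along the Cartesian square
\begin{equation*}
\begin{tikzcd}
T \coloneqq \quo{E} \fibprod_S \quo{\wh{E}} \fibprod_S \quo{E} \ar[r, "\pi_{23}"] \ar[d, "\pi_{12}"'] & \quo{\wh{E}} \fibprod_S \quo{E} \ar[d, "\pr_1"] \\
\quo{E} \fibprod_S \quo{\wh{E}} \ar[r, "\pr_2"'] & \quo{\wh{E}}
\end{tikzcd}
\end{equation*}
together with the projection formula would yield
\begin{equation*}
\FT_{\wh{E}} \FT_E(\cK) \cong \pi_{1!}\bigl(\pi_3^* \cK \otimes \pi_{12}^* \sP_{\wh{E}} \otimes \pi_{23}^* \sP_E\bigr)[-2],
\end{equation*}
where $\pi_1, \pi_2, \pi_3$ are the three projections from $T$ onto its factors. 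Since both sides are natural in $\cK$ and preserve colimits, this reduces involutivity to analyzing the partial integration $\rho_!(\pi_{12}^* \sP_{\wh{E}} \otimes \pi_{23}^* \sP_E)$ along the projection $\rho = (\pi_1, \pi_3) \co T \to Q \fibprod_S Q$ with $Q \coloneqq \quo{E}$: the goal is to show that this is supported on the diagonal $\Delta \co Q \hookrightarrow Q \fibprod_S Q$, with stalk there identified with $\sL^E$.

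Second, I would interpret $\rho$ as exhibiting $T$ as the relative homogeneous total space of a pullback of $\wh{E}$ over $Q \fibprod_S Q$. Bilinearity of the evaluation pairing implies that the combined kernel $\pi_{12}^* \sP_{\wh{E}} \otimes \pi_{23}^* \sP_E$ is pulled back from $\quo{\A^1}$ under the composite evaluation $(x, \xi, y) \mapsto \langle x, \xi\rangle + \langle \xi, y\rangle$. After base change along $\Delta$, where $x$ and $y$ share a common line bundle and their sum is defined, this composite becomes the ordinary evaluation pairing of $\wh{E}$ against $x + y$, so the relative integration $\rho_!(\pi_{12}^* \sP_{\wh{E}} \otimes \pi_{23}^* \sP_E)|_\Delta$ is identified with a pullback of $\FT_{\wh{E}}(\un_{\wh{E}})$ (up to a change of variables in $\wh{E}$). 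The cosupport hypothesis then gives $\FT_{\wh{E}}(\un_{\wh{E}}) \cong 0_{E!}(\un_S \twbrace{E})$, and this is precisely the statement that the partial integration is supported on $\Delta$ with the requisite twist. Pushing forward along $\pi_1$ and applying proper base change along $\Delta$ would yield $\FT_{\wh{E}} \FT_E(\cK) \cong \cK \otimes \pi_E^* \sL^E = \cK\twbrace{E}$.

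The main obstacle lies in the second step: identifying $\rho_!$ with a single Fourier transform. The kernels $\sP_{\wh{E}}$ and $\sP_E$ carry distinct $\bG_m$-equivariant twists (by $L_x \otimes L_\xi$ versus $L_\xi \otimes L_y$), and their tensor product is a priori not of the form $\ev^* (\quo{j_*}\un)$ for a single evaluation map out of $T$. The identification as a relative Fourier kernel is clean only over the diagonal $\Delta$, where the two line bundles on $\quo{E}$ coincide; off the diagonal, one must argue that the $\rho_!$-pushforward vanishes, which is precisely what the full strength of cosupport (that $\FT_{\wh{E}}(\un_{\wh{E}})$ is not merely concentrated near, but supported on, the zero section) provides.
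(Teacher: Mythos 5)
Your plan matches the paper's at the level of architecture: write $\FT_{\wh{E}}\FT_E$ as kernel-convolution, reduce involutivity to showing that the composite kernel $\sP'' \coloneqq \pr_{13!}(\pr_{12}^*\sP_{\wh E} \otimes \pr_{23}^*\sP_E)$ on $\quo{E}\fibprod_S\quo{E}$ is supported on the diagonal with stalk $\sL^E$, and deduce this from the cosupport property. Your first step is essentially the paper's \eqref{eq:FT2}. However, your second step has a genuine gap, and it is precisely the technical heart of the proof.

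The claim that ``bilinearity of the evaluation pairing implies that $\pi_{12}^*\sP_{\wh E}\otimes\pi_{23}^*\sP_E$ is pulled back from $\quo{\A^1}$ under $(x,\xi,y)\mapsto\langle x,\xi\rangle+\langle\xi,y\rangle$'' is false in the homogeneous setting. In the Deligne--Laumon theory this would follow from the multiplicativity $\psi(a)\psi(b)=\psi(a+b)$ of the Artin--Schreier sheaf, but the homogeneous kernel $\quo{j}_*(\un)$ has no such property. The tensor product of two pullbacks $\ev_{\wh E}^*\quo{j}_*(\un) \otimes \ev_E^*\quo{j}_*(\un)$ carries two independent $\Gm$-equivariance twists and is \emph{not} of the form $\ev^*\quo{j}_*(\un)$ for any single evaluation map out of $T$. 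You correctly flag this as ``the main obstacle,'' but you then write that the vanishing off the diagonal ``is precisely what the full strength of cosupport provides.'' That is not so: the cosupport property is a statement about $\FT_{\wh E}(\un)$ on $\quo{E}$, and there is no way to confront it with the triple-product kernel until you have first performed a nontrivial homogeneous change of variables.

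What the paper does, and what your argument is missing, is the pair of lemmas: first a Künneth identity
$\quo{j}_*(\un) \boxtimes_S \quo{j}_*(\un) \cong (\quo{j}\fibprod_S\quo{j})_*(\un)$ on $\quo{\A^1_S}\fibprod_S\quo{\A^1_S}$ (Lemma~\ref{lem:kunn}), and then the key identity
\begin{equation*}
(\quo{j}\fibprod_S\quo{j})_*(\un) \;\cong\; c_!\,\quo{j}_{\Delta*}(\un)[1]
\end{equation*}
(Lemma~\ref{lem:pnpqnq}), where $c\co\quo{\A^2_S}\to\quo{\A^1_S}\fibprod_S\quo{\A^1_S}$ is the morphism merging the two $\Gm$-quotients into one and $j_\Delta$ is the complement of the diagonal in $\A^2$. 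This identity is the surrogate for multiplicativity: it converts the $(\Gm)^2$-equivariant external square into a $c_!$ of a single-$\Gm$-equivariant object, at the cost of the degree shift $[1]$ (which your proposal does not track). Only after Lemma~\ref{lem:pnpqnq} can one assemble the identification $\sP'' \cong c_!\,\quo{e}^*\,\FT_{\wh E}(\un)[2]$ where $e\co E\fibprod_S E\to E$ is the difference map (Lemma~\ref{lem:P''}); at that point the cosupport property does exactly what you want, since $e^{-1}(0_E)$ is the diagonal. Without Lemma~\ref{lem:pnpqnq} the argument cannot close: the cosupport property has no leverage on the kernel before the change of variables.
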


  The proof is in \S\ref{sssec: proof of involutivity}.

\subsubsection{Identification of the twist}

  We do not know whether the twist $\sL^E \cong \un_S\twbrace{E}$ can be identified with the Thom twist $\un_S\vb{-\wh{E}}$ in general.
  If $E$ admits a global presentation as a cochain complex of vector bundles, one can with some care build such an isomorphism from the vector bundle case.
  To glue together these local isomorphisms (choosing presentations locally on $S$) we would need to show they are compatible up to coherent homotopy.
  Assuming the existence of a suitable t-structure this question is reduced to the heart, where we just need to check a cocycle condition.
  This line of argument leads to a proof of the following statement, by the same argument as in \cite[\S A.3.5]{FYZ3}.\footnote{The gluing is quite subtle. This is due to the fact that if $E$ is a vector bundle, the isomorphisms \eqref{eq:conntwist} for $E$ and $\wh{E}$ only agree up to a sign.}
  
  \begin{prop}\label{prop:gluetwist}
    Suppose that the weave $\bD$ admits an orientation and a t-structure in which the unit is discrete, i.e., $\un_S \in \bD(S)^\heartsuit$ for all derived Artin stacks $S$.
    Then for every $E \in \DVect(S)$ there exists a canonical isomorphism
    \begin{equation}\label{eq:gluetwist}
      \sL^E \cong \un_S\twbrace{E} \cong \un_S\vb{-r}
    \end{equation}
    where $r=\rk(E)$.
  \end{prop}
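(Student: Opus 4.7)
The plan is to first establish the statement in the case where $E$ admits a global presentation as a bounded cochain complex of vector bundles $\cE^a \to \cdots \to \cE^b$, and then glue these local isomorphisms to obtain a canonical isomorphism on general derived vector bundles. In the classical case $E = \V(\cE)$ with $\cE$ of tor-amplitude $[0,0]$ (an honest vector bundle of rank $r$), Proposition~\ref{prop:suppinvol}, specifically the isomorphism \eqref{eq:conntwist}, gives $\sL^E \cong \un_S\vb{-\wh{E}}$. Invoking the assumed orientation on $\bD$ to identify the Thom twist $\vb{-\wh{E}}$ with the sphere twist $\vb{-r}$, we obtain \eqref{eq:gluetwist} when $E$ is a vector bundle.

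The next step is to extend this to derived vector bundles with a chosen global presentation. Given a cochain complex of vector bundles $\cE^\bullet$, one filters by brutal truncations $\cE^{\le n}$, producing a sequence of exact triangles $\V(\cE^{n+1})[-n-1] \to \V(\cE^{\le n+1}) \to \V(\cE^{\le n})$ in $\DVect(S)$. Using the functoriality \eqref{eq:funA}--\eqref{eq:funB'} of Proposition~\ref{prop:fun} (which implies a multiplicativity property for $\sL^{(-)}$ along such exact triangles), together with the vector bundle case applied to each $\V(\cE^{n})[-n] = \V(\cE^n[-n])$ (whose rank is $(-1)^n \rk(\cE^n)$, contributing to the total virtual rank $r = \sum (-1)^n \rk(\cE^n) = \chi(\cE)$), one constructs an isomorphism $\sL^E \cong \un_S\vb{-r}$ attached to the chosen presentation.

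The main obstacle, and the bulk of the work, is to show that these presentation-dependent isomorphisms glue to a \emph{canonical} isomorphism independent of any presentation, since in general $E$ only admits such presentations smooth-locally on $S$ (see \S\ref{ssec: notate perf}). This requires producing coherent homotopies between the isomorphisms built from different presentations on overlaps in an atlas, and showing they satisfy a cocycle condition for higher multi-fold overlaps. Here the hypothesis that $\bD$ admits a t-structure with discrete unit is essential: it forces the mapping space $\Map(\sL^E, \un_S\vb{-r})$ to be discrete (being the space of endomorphisms of an invertible object whose pullback to an atlas is a shift of the unit), so that all higher coherence data is automatic and the gluing reduces to the verification of a single cocycle condition in the abelian category $\bD(S)^\heartsuit$. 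This is the argument carried out in detail in \cite[\S A.3.5]{FYZ3}, and the proof is completed by transcribing that argument verbatim into the present setting.

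The subtle point within this gluing, which is already flagged in the preceding footnote of the excerpt, is that the isomorphism \eqref{eq:conntwist} for a vector bundle $E$ and the corresponding isomorphism for $\wh{E}$ differ by a sign, reflecting the sign in the duality pairing $\ev_E$ versus $\ev_{\wh{E}}$. Thus when passing between two presentations related by shifting one factor (e.g. replacing $\cE^n$ with its dual shifted by one), one must verify that the induced signs are absorbed consistently in the cocycle. This is exactly the bookkeeping handled in \cite[\S A.3.5]{FYZ3} and so the proof reduces entirely to that reference.
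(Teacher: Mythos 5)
Your sketch takes essentially the same approach as the paper, which itself only gives a brief outline (vector bundle case via \eqref{eq:conntwist} and the orientation, extension to global presentations, gluing by reduction to a cocycle condition in the heart using the t-structure, and the sign subtlety) and defers the details to \cite[\S A.3.5]{FYZ3}. You spell out the presentation-dependent construction a little more explicitly via brutal truncations, but the structure of the argument is identical and your proof likewise ultimately reduces to that reference.
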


\begin{remark} 
Proposition \ref{prop:gluetwist} applies to the weave $\bD = \Dmot{-}$, by Lemma \ref{lem:Qplus heart}.
 \end{remark}

\subsection{The contraction principle}
\label{sec:equivariant}

  The following \emph{contraction principle} is well-known in the case of a separated morphism of schemes. In the context of $D$-modules it appears in \cite[Theorem~C.5.3]{DrinfeldGaitsgoryCompact}, and the proof works for an arbitrary topological weave. 

  \begin{prop}[Contraction principle]\label{prop:contract}
    Let $\pr : Y \to S$ be a morphism of derived Artin stacks and $s : S \to Y$ a section.
    Suppose there is an $\A^1$-homotopy $Y \times \A^1 \to Y$ between $\id_Y$ and $s \circ \pr$, so that the diagram
    \[ \begin{tikzcd}[matrix yscale=0.1,matrix xscale=1.3]
      Y \ar{rd}{i_0}\ar[bend left,swap]{rrd}{s\circ \pr}
      &
      &
      \\
      & Y \times \A^1 \ar{r}
      & Y
      \\
      Y \ar[swap]{ru}{i_1}\ar[bend right]{rru}{\id_Y}
      &
      &
    \end{tikzcd} \]
    commutes.
    Then the canonical morphisms
    \begin{equation*}
      \pr_* \xrightarrow{\unit} \pr_*s_*s^* \cong s^*,
      \qquad
      s^! \cong \pr_!s_!s^! \xrightarrow{\counit} \pr_!
    \end{equation*}
    are invertible on $\Gm$-equivariant sheaves.
  \end{prop}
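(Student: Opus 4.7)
The plan is to adapt the argument of Drinfeld--Gaitsgory \cite[Theorem~C.5.3]{DrinfeldGaitsgoryCompact}, which for $D$-modules uses only the axioms of localization and homotopy invariance and therefore transports verbatim to any topological weave. The two assertions are formally dual under the exchange $(\pr^*,\pr_*) \leftrightarrow (\pr_!,\pr^!)$ combined with $(s^*,s_*) \leftrightarrow (s_!,s^!)$, so it suffices to prove that $\pr_*\xrightarrow{\unit}\pr_*s_*s^*\cong s^*$ is invertible on $\bD^{\Gm}(Y)$.

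First I would lift everything to quotient stacks. Both the $\A^1$-homotopy $h\co Y \times \A^1 \to Y$ and the projection $p\co Y \times \A^1 \to Y$ are $\Gm$-equivariant if we endow $Y \times \A^1$ with the diagonal $\Gm$-action (the given action on $Y$ combined with scaling on $\A^1$). Descending to quotient stacks produces two morphisms
\[
[h],\, [p] \co [(Y \times \A^1)/\Gm] \to [Y/\Gm]
\]
over $[Y/\Gm]$. The key geometric observation is that they agree on the open stratum $[(Y \times \Gm)/\Gm] \cong Y$, where both restrict to the atlas map $q \co Y \to [Y/\Gm]$ (using the trivialization of the free action by the point $1 \in \Gm$), but they differ on the closed stratum $[(Y \times \{0\})/\Gm] \cong [Y/\Gm]$: the projection $[p]$ restricts to $\id$, while the homotopy $[h]$ restricts to the contraction $[s/\Gm]\circ[\pr/\Gm]$.

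Next I would feed this into homotopy invariance and localization. Homotopy invariance for the trivial line bundle $p$ gives $p_* p^* \cong \id$, which descends to $[p]_*[p]^* \cong \id$ on $\bD([Y/\Gm])$. Apply $[p]_*$ to the localization cofiber sequence $j_! j^* \to \id \to i_* i^*$ attached to the stratification of $[(Y\times\A^1)/\Gm]$, evaluated on $[p]^*\cK$ for $\cK\in\bD([Y/\Gm])$. Both end terms are identified with $\cK$ (the middle by homotopy invariance, the closed one via $[p]\circ i=\id$), and a check of the canonical maps shows this identification is the identity, so the open-stratum contribution $[p]_*\, j_!\, q^*\cK$ vanishes. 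Repeating the analysis for the localization triangle of $[h]^*\cK$ and again pushing forward by $[p]_*$: the open term vanishes by the same computation, the closed term becomes $[\pr/\Gm]^*[s/\Gm]^*\cK$ (via $i^*[h]^* = [\pr/\Gm]^*[s/\Gm]^*$), and the middle $[p]_*[h]^*\cK$ is independently identified with $\cK$ by a parallel push--pull with $[h]$ in place of $[p]$. Chasing through the adjunctions extracts the desired isomorphism $\pr_*\cK \cong s^*\cK$ together with its compatibility with the unit map.

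The hard part will be the vanishing $[p]_*\, j_!\, q^*\cK \cong 0$: non-equivariantly this statement fails outright (pushing forward a constant sheaf on $\Gm$ along the projection $\A^1 \to \pt$ is non-zero), so the argument must essentially use that we have quotiented by $\Gm$. The mechanism is that the coherent identification $[p]_*[p]^*\cK \cong \cK$ supplied by homotopy invariance restricts to the identity on the boundary $i^*[p]^*\cK \cong \cK$, forcing the fiber of the corresponding unit map—precisely $[p]_*\, j_!\, q^*\cK$—to vanish. Once this vanishing is in hand, the rest is bookkeeping within the six-functor formalism, all of which is available for an arbitrary topological weave.
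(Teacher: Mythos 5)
The paper gives no proof here --- it cites \cite[Theorem~C.5.3]{DrinfeldGaitsgoryCompact} and asserts the argument transports to an arbitrary topological weave --- so there is no reference proof to compare against, but your reconstruction has a genuine gap. You correctly set up $[(Y\times\A^1)/\Gm]$ with its open/closed decomposition, and correctly derive the vanishing $[p]_*\,j_!\,q^*\cK \cong 0$ from localization and homotopy invariance for the line bundle $[p]$. But the claim that $[p]_*[h]^*\cK$ is ``independently identified with $\cK$'' is false: take $S=\pt$, $Y=\A^1$ with $\Gm$ acting by scaling, $h(y,t)=ty$, and $\cK = j_!\un$ where $j:\pt\cong[\Gm/\Gm]\hookrightarrow[\A^1/\Gm]$. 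Restricting to the open point $[\Gm/\Gm]$, smooth base change identifies $[p]$ with $\A^1\to\pt$ and $[h]^*\cK$ with $j_!\un_{\Gm}$, whose derived global sections on $\A^1$ vanish by localization against homotopy invariance; so $([p]_*[h]^*\cK)|_{[\Gm/\Gm]}\cong 0$ whereas $\cK|_{[\Gm/\Gm]}\cong\un$. In fact your own localization step already computes $[p]_*[h]^*\cK\cong[\pr/\Gm]^*[s/\Gm]^*\cK$, which is in general neither $\cK$ nor $\pr_*\cK$ (the latter lives over $S$, not over $Y$), so the closing ``chase through adjunctions'' has nothing to chase. The step that converts this identification on $[(Y\times\A^1)/\Gm]$ into invertibility of the unit $\pr_*\to s^*$ is exactly the missing heart of the Drinfeld--Gaitsgory argument.

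Two smaller points. First, $h$ is \emph{not} $\Gm$-equivariant for the diagonal action as you assert: in the model case $h(e,t)=te$ on a vector bundle one has $h(\lambda e,\lambda t)=\lambda^2\,h(e,t)$, not $\lambda\,h(e,t)$, and the proposition's hypotheses impose no equivariance on $h$ at all, so even the existence of the descended map $[h]$ requires an additional hypothesis or construction. Second, the $*$- and $!$-statements are not formally dual in a general weave: the left adjoint of $s^*$ is not $s_!$ (rather $s_!\dashv s^!$), so one cannot pass between the two assertions by taking mates, and they require separate --- if parallel --- arguments.
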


  \begin{cor}\label{cor:contractder}
    For every derived Artin stack $Y$ and every derived vector bundle $E$ over $Y$, the natural transformations
    \begin{align*}
      &\pi_{E*} \xrightarrow{\unit} \pi_{E*}0_{E*}0_E^* \cong 0_E^*\\
      &0_E^! \cong \pi_{E!}0_{E!}0_E^! \xrightarrow{\counit} \pi_{E!}
    \end{align*}
    are invertible on $\Gm$-equivariant sheaves.
    In particular, the functors $\pi_E^*$, $\pi_E^!$, $0_{E*}$, and $0_{E!}$ are all fully faithful on $\Gm$-equivariant sheaves.
  \end{cor}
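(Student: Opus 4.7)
The plan is to apply the Contraction Principle, \propref{prop:contract}, with the projection $\pi_E \co E \to Y$ playing the role of $\pr$ and the zero section $0_E \co Y \to E$ playing the role of $s$. The only hypothesis requiring verification is the existence of an $\bA^1$-homotopy between $\id_E$ and $0_E \circ \pi_E$, in the $\bG_m$-equivariant world.

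Such a homotopy is furnished by the scalar multiplication map $m \co E \times \bA^1 \to E$. In functor-of-points terms, $m$ carries a triple $(u \co T \to Y,\ \phi \co \cO_T \to u^*\cE,\ t \in \Gamma(T, \cO_T))$ to $(u, t\cdot\phi)$; at $t = 1$ this recovers $\id_E$ and at $t = 0$ it factors as $0_E \circ \pi_E$. Functoriality of $\Tot(-)$ in the perfect complex $\cE$ makes this construction well-defined for arbitrary derived vector bundles. The map $m$ is $\bG_m$-equivariant when $\bG_m$ acts on $E$ by scaling and trivially on $\bA^1$, so the homotopy lifts to the $\bG_m$-equivariant setting. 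Applying \propref{prop:contract} immediately yields invertibility of the two displayed natural transformations.

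The ``In particular'' assertions are then a routine consequence of adjunction. Full faithfulness of $\pi_E^*$ is equivalent to invertibility of the unit $\id \to \pi_{E*}\pi_E^*$; using the isomorphism $\pi_{E*} \cong 0_E^*$ just established, together with $\pi_E \circ 0_E = \id_Y$, we compute $\pi_{E*}\pi_E^* \cong 0_E^*\pi_E^* = (\pi_E \circ 0_E)^* = \id$. The other three cases are symmetric: $\pi_{E!}\pi_E^! \cong 0_E^!\pi_E^! = \id$ (giving full faithfulness of $\pi_E^!$); $0_E^*0_{E*} \cong \pi_{E*}0_{E*} = (\pi_E \circ 0_E)_* = \id$ (for $0_{E*}$); and $0_E^!0_{E!} \cong \pi_{E!}0_{E!} = (\pi_E \circ 0_E)_! = \id$ (for $0_{E!}$).

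There is no genuine obstacle in the proof: the substantive content has been absorbed in \propref{prop:contract}, and the only thing to observe is that the classical scaling homotopy for vector bundles extends verbatim to the derived setting by functoriality of $\Tot(-)$. The mild point of care is that the homotopy must be $\bG_m$-equivariant in order to descend to the equivariant derived category, which is immediate from the formula above once one takes the trivial $\bG_m$-action on $\bA^1$.
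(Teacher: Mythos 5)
Your proof follows the same route as the paper: the displayed isomorphisms are a direct consequence of the Contraction Principle (\propref{prop:contract}), with your explicit scaling homotopy $m \co E \times \A^1 \to E$ spelling out the hypothesis that the paper invokes implicitly, and the full-faithfulness statements follow by composing with the tautological identities coming from $\pi_E \circ 0_E = \id_Y$. One small point where the paper is a touch more careful: to conclude full faithfulness of, say, $\pi_E^*$ you need invertibility of the specific adjunction \emph{unit} $\id \to \pi_{E*}\pi_E^*$, not merely an abstract equivalence $\pi_{E*}\pi_E^*\cong \id$; the paper supplies this by observing that the unit for $\pi_E$ followed by the isomorphism $\pi_{E*}\pi_E^*\cong 0_E^*\pi_E^*=\id$ agrees with the unit for the composite $\pi_E\circ 0_E = \id$, which is the identity natural transformation. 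That compatibility of units under composition is standard, but it is the step that actually closes the argument, and your write-up leaves it unstated.
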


  \begin{proof}
    The first claim is a special case of \propref{prop:contract}.
    For every $\cK \in \rD^{\Gm}(Y)$ there is a commutative diagram
    \[ \begin{tikzcd}
      \cK \ar{r}{\mrm{unit}_{\pi_E}}\ar[equals]{rd}
      & \pi_{E*}\pi_E^*(\cK) \ar{d}{\mrm{unit}_{0_E}}
      \\
      & 0_E^*\pi_E^*(\cK)
    \end{tikzcd} \]
    where the vertical arrow is invertible by the first claim.
    This shows that $\unit : \id \to \pi_{E*} \pi_E^*$ is invertible on $\Gm$-equivariant sheaves.
    Similarly, on $\Gm$-equivariant sheaves, the counit $\pi_{E!}\pi_E^! \to \id$ is identified with the tautological isomorphism $0^!\pi^! \cong \id$; the counit $0_E^*0_{E*} \to \id$ is identified with $\pi_{E*} 0_{E*} \cong \id$; and the unit $\id \to 0_{E}^! 0_{E!}$ is identified with $\id \cong \pi_{E!} 0_{E!}$.
  \end{proof}

  \begin{cor}\label{cor:chlorine}
    Let $E \in \DVect(S)$.
    For any Cartesian square
    \begin{equation*}
      \begin{tikzcd}
        Y_0 \ar{r}{i}\ar{d}{f_0}
        & Y \ar{d}{f}
        \\
        S \ar{r}{0_E}
        & E
      \end{tikzcd}
    \end{equation*}
    where $f$ is smooth, the unit $\un_Y \to i^!i_!(\un_Y)$ is invertible.
  \end{cor}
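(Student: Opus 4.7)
The plan is to reduce invertibility of the unit $\un_{Y_0} \to i^! i_! \un_{Y_0}$ to the contraction principle (\corref{cor:contractder}) applied to the zero section $0_E \co S \to E$. (We interpret the statement as referring to $\un_{Y_0}$; the expression $i_!(\un_Y)$ only typechecks when the argument lies on $Y_0$.)

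Two base-change isomorphisms are the key technical ingredients. First, the \emph{proper base change} $\di \co f^*0_{E!} \xrightarrow{\sim} i_! f_0^*$ of \eqref{eq: pbc di} is an isomorphism because the square is derived Cartesian. Second, the \emph{pullback base change} $\tu \co f_0^* 0_E^! \xrightarrow{\sim} i^! f^*$ of \eqref{eq: pull-pull gen} is also an isomorphism: the comparison map $a \co Y_0 \to S \times_E Y$ is an isomorphism since the square is derived Cartesian, and $f_0$ is smooth as a base change of smooth $f$, so by \examref{ex: pushable/pullable examples} the square is pullable with defect zero. Invertibility of $\tu$ then follows from Poincaré duality $f^! \cong f^*\vb{T_f}$ (and likewise for $f_0$), the identity $(fi)^! = (0_E f_0)^!$, and the base-change relation $T_{f_0} \cong i^* T_f$; the Thom twists cancel because $i^!$ commutes with Thom twisting by $\otimes$-invertible objects.

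Now I would apply \corref{cor:contractder} to the derived vector bundle $\pi_E \co E \to S$: since $\un_S$ tautologically underlies a $\bG_m$-equivariant sheaf (with trivial action), the adjunction unit $\un_S \to 0_E^! 0_{E!} \un_S$ is invertible. Pulling back along $f_0^*$ and transporting through the two base-change isomorphisms yields a chain of isomorphisms
\[ \un_{Y_0} = f_0^* \un_S \xrightarrow{\sim} f_0^*\, 0_E^! 0_{E!} \un_S \xrightarrow{\tu} i^!\, f^* 0_{E!} \un_S \xrightarrow{\di} i^!\, i_! f_0^* \un_S = i^! i_! \un_{Y_0}. \]

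The only remaining point, which I expect to be the main conceptual step, is to identify this composite with the adjunction unit $\un_{Y_0} \to i^! i_! \un_{Y_0}$ itself. This follows from the fact that $\di$ and $\tu$ are mates of one another under the relevant adjunctions $(f^*, f_*)$, $(0_{E!}, 0_E^!)$, and $(i_!, i^!)$, so that they intertwine the adjunction units by naturality — a standard feature of any six-functor formalism. With this identification in hand, the unit $\un_{Y_0} \to i^! i_! \un_{Y_0}$ is recognized as the pullback along $f_0^*$ of the invertible map $\un_S \to 0_E^! 0_{E!} \un_S$, and is therefore itself an isomorphism.
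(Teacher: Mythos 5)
Your proof is correct and takes essentially the same route as the paper's: apply $f_0^*$ to the invertible unit $\id \to 0_E^! 0_{E!}$ from the Contraction Principle (\corref{cor:contractder}), and then use the two base-change exchange isomorphisms ($!$-base change and smooth $*$-$!$ exchange, the latter valid since $f$ is smooth) to identify the result with the unit $f_0^* \to i^! i_! f_0^*$. The paper expresses this more tersely via $\Ex^*_!$ and $\Ex^{*!}$, but the content — including the final identification-of-units step, which you carefully flag — is the same.
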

  \begin{proof}
    Apply $f_0^*$ on the left to the isomorphism $\unit : \id \to 0_E^! 0_{E!}$ (\corref{cor:contractder}).
    Under the isomorphisms $\Ex^*_!$ and $\Ex^{*!}$ (the latter since $f$ is smooth), the result is identified with $\unit : f_0^* \to i^! i_! f_0^*$.
  \end{proof}

\subsection{Computations on \texorpdfstring{$\quo{\A^1}$}{A1/Gm} and \texorpdfstring{$\quo{\A^1 \times \quo{\A^1}}$}{A1/Gm x A1/Gm}}
\label{sec:j}

We lift some simple computations from \cite{Laumon} (namely, Lemmas~1.4, 3.2, 3.3, and 3.4 of \emph{op. cit.}) to the generality of topological weaves.

\subsubsection{The sheaf \texorpdfstring{$\quo{j}_*(\un)$}{j\_*(1)}}
  Let the notation be as follows:
  \begin{equation}\label{diagram: homogeneous A^1}
    \begin{tikzcd}
      S \ar{r}{i}\ar[equals]{rd}
      & \A^1_S \ar[leftarrow]{r}{j}\ar{d}{\pr}
      & \bG_{m,S} \ar{ld}{q}
      \\
      & S &
    \end{tikzcd}
  \end{equation}
  where $i$ is the zero section.

  We record some basic observations about the sheaf $j_*(\un) \in \bD^{\Gm}(\A^1_S)$, or more precisely
  \begin{equation*}
    \quo{j}_*(\un) \in \bD(\quo{\A^1_S})
  \end{equation*}
  where $\quo{j} : S = \quo{\bG_{m,S}} \hook \quo{\A^1_S}$.

  \begin{prop}\label{prop:j_*(1)}\leavevmode
    \begin{thmlist}
      \item
      \emph{Geometricity.}
      The sheaf $\quo{j}_*(\un)$ is geometric.

      \item
      \emph{Base change.}
      For any morphism $f : S' \to S$, let $j' : S' = \quo{\bG_{m,S'}} \hook \quo{\A^1_{S'}}$ denote the base change of $j$ along $f' : \quo{\A^1_{S'}} \to \quo{\A^1_S}$.
      Then the canonical morphism
      \begin{equation*}
        \Ex^*_* : f'^* \quo{j}_*(\un) \to j'_*(\un)
      \end{equation*}
      is invertible.

      \item
      \emph{Projection formula.}
      For every $\cK \in \bD(\quo{\A^1_S})$, the canonical morphism
      \begin{equation*}
        \mrm{Pr}^*_* : \quo{j}_*(\un) \otimes \cK \to \quo{j}_*\quo{j}^*(\cK)
      \end{equation*}
      is invertible.

      \item\label{item:retolerate}
      We have $\quo{\pr}_!\,\quo{j}_* \cong 0$ in $\bD(\quo{S})$.

      \item\label{item:binh}
      There is a canonical isomorphism
      \[ \quo{j}_*(\un) \cong u_! j_{1*}(\un)[1] \]
      in $\bD(\quo{\A^1})$, where $j_1 : \A^1 \setminus \{1\} \to \A^1$ is the complement of the unit section and $u : \A^1_S \twoheadrightarrow \quo{\A^1_S}$ is the quotient morphism.
    \end{thmlist}
  \end{prop}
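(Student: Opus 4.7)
The plan is to establish item (v) first, since it provides an explicit presentation of $\quo{j}_*(\un)$ on the stack $\quo{\A^1_S}$ in terms of objects on the scheme $A^1_S$, from which items (i)--(iv) follow by standard applications of the six-functor formalism.

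For item (v), I would adapt the proof of \cite[Lemma~1.4]{Laumon} to the present topological weave setting. The key is to compare two localization triangles. On $\quo{\A^1_S}$, the closed--open decomposition with $\quo{i} \co B\bG_{m,S} \hook \quo{\A^1_S}$ complementary to $\quo{j}$ yields
\[
\quo{i}_*\quo{i}^!(\un) \to \un_{\quo{\A^1_S}} \to \quo{j}_*(\un);
\]
on the atlas $A^1_S$, the localization triangle for the smooth closed immersion $i_1 \co \{1\}_S \hook A^1_S$ (complementary to $j_1$) yields
\[
i_{1*}i_1^!(\un) \to \un_{A^1_S} \to j_{1*}(\un).
\]
The identification comes from observing that the $\bG_m$-orbit of $\{1\}_S \subset A^1_S$ is $\bG_{m,S}$, which maps isomorphically onto the open point $S = \quo{j}$ of $\quo{\A^1_S}$ via $u$. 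Applying $u_!$ to the second triangle and matching against the first (using Poincaré duality for the smooth morphism $u$ of relative dimension $1$) gives the identification $\quo{j}_*(\un) \cong u_! j_{1*}(\un)[1]$; the shift $[1]$ absorbs the relative dimension of the smooth $\bG_m$-torsor $u$.

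Items (i)--(iii) follow from (v). For geometricity (i), the localization triangle for $i_1$ together with absolute purity $i_1^!(\un) \cong \un\vb{-1}$ shows that $j_{1*}(\un)$ is geometric on $A^1_S$, and geometricity is preserved by $u_!$ (since $u$ is smooth and representable, cf. \S\ref{sssec: weave gm}). For base change (ii) and the projection formula (iii), one reduces via (v) to the corresponding properties for $u_!$ (which hold for $!$-pushforwards in general) and for $j_{1*}(\un)$; the latter reduce through the localization triangle to the analogous properties for $i_{1*}(\un)\vb{-1}$, which hold because $i_1$ is a (smooth) closed immersion.

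For item (iv), we rewrite $\quo{\pr}_!\quo{j}_*(\un) \cong \quo{\pr}_! u_! j_{1*}(\un)[1] \cong (\quo{\pr}\circ u)_! j_{1*}(\un)[1]$ using (v), and use the factorization $\quo{\pr}\circ u = p \circ \pr$ (with $\pr \co A^1_S \to S$ and $p \co S \to \quo{S}$ the quotient map) to reduce to showing $\pr_! j_{1*}(\un) \cong 0$. Applying $\pr_!$ to the localization triangle on $A^1_S$ yields a map $\pr_! i_{1*}(\un)\vb{-1} \to \pr_!(\un_{A^1_S})$, where both sides are canonically isomorphic to $\un_S\vb{-1}$ (via absolute purity and Poincaré duality, respectively); the unit section trivializes both identifications compatibly, so the map is the identity and its cofiber vanishes. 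The main obstacle is (v): the careful bookkeeping of the $\bG_m$-equivariance and the interplay between Thom twists on $\quo{\A^1_S}$ and on $A^1_S$ through the smooth atlas $u$ requires care, since the shift and twist arising from Poincaré duality for the non-representable morphism $u$ must be reconciled exactly with the equivariant localization triangle in order to match orientations and signs.
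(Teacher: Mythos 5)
The central step of your proposal---item (v)---does not work as described, and since you derive (i)--(iv) from (v), the gap propagates. Your plan is to apply $u_!$ to the localization triangle
\[
i_{1*}i_1^!(\un)\to\un_{\A^1_S}\to j_{1*}(\un)
\]
and ``match'' the result against
\[
\quo{i}_*\quo{i}^!(\un)\to\un_{\quo{\A^1_S}}\to\quo{j}_*(\un).
\]
But the middle terms do not match: since $u\co\A^1_S\to\quo{\A^1_S}$ is a $\bG_m$-torsor, $u_!(\un_{\A^1_S})$ computes the compactly-supported cohomology of $\bG_m$ fiberwise, which is a genuine two-term complex (concretely, after restricting along $\quo{j}^*$ one gets $q_!(\un)$ for $q\co\bG_{m,S}\to S$, which already fails to be a shift of $\un$). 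So no shift or Thom twist will identify $u_!(\un_{\A^1_S})$ with $\un_{\quo{\A^1_S}}$, and there is no way to transport the triangle. Relatedly, the claim that ``the shift $[1]$ absorbs the relative dimension of $u$'' via Poincaré duality is incorrect: for a smooth morphism of relative dimension $1$ the Thom twist is $\vb{1}=[2](1)$ in the oriented case, not $[1]$. The $[1]$ in the formula $\quo{j}_*(\un)\cong u_!j_{1*}(\un)[1]$ in fact arises from a localization computation of $q_!j^*j_{1*}(\un)$, not from Poincaré duality for $u$.

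The paper takes essentially the opposite route. It first proves (iv) directly: the contraction principle gives $\quo{\pr}_!\cong\quo{i}^!$, and $\quo{i}^!\quo{j}_*\cong 0$ by base change for the disjoint closed/open pair; this is a statement about functors, which your argument (valid only on $\un$, conditional on (v)) does not recover. Feeding (iv) back into the localization triangle yields the clean presentation $\quo{j}_!\to\quo{j}_*\to\quo{i}_*\,\quo{q}_![1]$, from which (i)--(iii) follow at once since $\quo{j}_!$, $\quo{i}_!$, $\quo{q}_!$ are all $!$-pushforwards and hence geometric-preserving, base-change compatible, and satisfy the projection formula. Finally, (v) is established separately: one computes $\quo{j}^*u_!j_{1*}(\un)\cong\un[-1]$ by base change, and shows the unit $u_!j_{1*}(\un)\to\quo{j}_*\quo{j}^*u_!j_{1*}(\un)$ is invertible by checking $\quo{i}^!u_!j_{1*}(\un)\cong 0$---again via the contraction principle ($\quo{i}^!\cong\quo{\pr}_!$) together with the purely schematic vanishing $\pr_!j_{1*}(\un)\cong 0$. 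If you want to keep your architecture (derive everything from (v)), you would need a correct proof of (v); the cleanest is precisely the paper's, in which case there is no advantage to inverting the order---the contraction principle is needed either way.
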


  \begin{proof}
  We consider the $\Gm$-scaling quotient of diagram \eqref{diagram: homogeneous A^1}, writing the resulting morphisms as $\quo{i} : \quo{S} \to \quo{\A^1_S}$, etc.
  We have the localization triangles
  \begin{equation}\label{eq:harmonica}
    \quo{j}_!
    \cong \quo{j}_! (\quo{j}^*) ( \quo{j}_*) 
    \xrightarrow{\counit} \quo{j}_*
    \xrightarrow{\unit} (\quo{i}_*) ( \quo{i}^* ) \quo{j}_*.
  \end{equation}
  Applying $\quo{\pr}_!$ yields
  \begin{equation*}
    \quo{q}_!
    \to \quo{\pr}_! \ \quo{j}_*
    \xrightarrow{\unit} \quo{\pr}_! (\quo{i}_*) ( \quo{i}^* ) \quo{j}_*
    \cong \quo{i}^*  \ \quo{j}_*.
  \end{equation*}
  We have $\quo{\pr}_! \ \quo{j}_* \cong \quo{i}^! \ \quo{j}_* \cong 0$ \itemref{item:retolerate} by the Contraction Principle (\propref{prop:contract}) and base change formula.
  We deduce a canonical isomorphism
  \begin{equation*}
    \quo{i}^* \ \quo{j}_* 
    \cong \quo{q}_![1].
  \end{equation*}
  In particular, we can rewrite \eqref{eq:harmonica} as an exact triangle
  \begin{equation*}
    \quo{j}_!
    \to \quo{j}_*
    \to \quo{i}_*  \ \quo{q}_![1].
  \end{equation*}

  Since $\quo{j}_!$, $\quo{i}_* = \quo{i}_!$, and $\quo{q}_!$ preserve geometric objects, it follows that $\quo{j}_*$ preserves geometric objects.
  Similarly, the base change and projection formulas for $\quo{j}_!$, $\quo{i}_!$, and $\quo{q}_!$ yield the claimed base change and projection formulas for $\quo{j}_*(\un)$.\footnote{We omit verification of commutativity of some diagrams, expressing e.g. the compatibility of $\Ex^*_* : f'^* \ \quo{j_*}(\un) \to j'_*(\un)$ and $\Ex^*_! : j'_!(\un) \to f'^* \  \quo{j_!}(\un)$.}

  For the final claim~\itemref{item:binh}, we begin by observing the canonical isomorphism
  \[ (\quo{j}^* )u_! j_{1*} (\un) \cong \un[-1] \]
  using the base change formula $(\quo{j}^* ) u_! \cong q_! j^*$ for the Cartesian square
  \[ \begin{tikzcd}
    \bG_{m,S} \ar{r}{j}\ar{d}{q}
    & \A^1_S \ar{d}{u}
    \\
    S = \quo{\bG_{m,S}} \ar{r}{\quo{j}}
    & \quo{\A^1_S}
  \end{tikzcd} \]
  and the observation that
  \[ q_! j^* j_{1*}(\un) \cong \un[-1] \]
  which is a straightforward computation using the base change formula and localization.

  It will then suffice to show that the unit morphism
  \[ u_! j_{1*} (\un) \to \quo{j}_*\quo{j}^*(u_! j_{1*} (\un)) \cong \quo{j}_*(\un) \]
  is invertible.
  By localization, this is equivalent to showing that $\quo{i}^! (u_! j_{1*} (\un)) \cong 0$.
  By the Contraction Principle (\propref{prop:contract}), we have
  \[ \quo{i}^! (u_! j_{1*} (\un)) \cong \quo{\pr}_! (u_! j_{1*} (\un))  \cong \quo{q}_! (\pr_! j_{1*}(\un)) = 0\]
  since $\pr_! j_{1*}(\un) = 0 \in \bD(S)$ by a straightforward localization argument.
  \end{proof}

  \begin{cor}[Preservation of geometricity]\label{cor: FT preserve local constructibility}
    Assume the weave $\bD$ is as in \thmref{thm:dgm} (e.g. $\bD=\cD_{\mrm{mot}}(-; \bQ)$).
    Then for every derived vector bundle $E \rightarrow S$ over a derived Artin stack $S$, the functor $\FT_E$ preserves geometricity.
  \end{cor}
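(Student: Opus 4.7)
The plan is to reduce the verification of geometricity of $\FT_E(\cK)$ to operations already covered by \thmref{thm:dgm}, exploiting both the specific structure of $\FT_E$ and the description of $\quo{j}_*(\un)$ from \propref{prop:j_*(1)}\itemref{item:binh}. Since geometricity is smooth-local on the base and $\FT$ commutes with smooth pullback via \eqref{eq:BC^*}, I may reduce to the case where $S$ is an affine derived scheme, so that the perfect complex $\cE$ admits a global presentation as a bounded complex of locally free sheaves.

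First I would rewrite $\sP_E = \ev_E^*(\quo{j}_*(\un))$ using the identification $\quo{j}_*(\un) \cong u_! j_{1*}(\un)[1]$ from \propref{prop:j_*(1)}\itemref{item:binh}, where $u \co \A^1_S \to \quo{\A^1_S}$ is a $\Gm$-torsor and $j_1 \co \A^1_S \smallsetminus \{1\} \inj \A^1_S$ is an open immersion (both representable). The sheaf $j_{1*}(\un)$ on $\A^1_S$ is itself geometric via the localization triangle $i_{1!} i_1^!(\un) \to \un \to j_{1*}(\un)$ associated to the representable closed immersion $i_1 \co \{1\}_S \inj \A^1_S$. Base change along the Cartesian square formed by $u$ and $\ev_E$, followed by the projection formula, then yields
\[\FT_E(\cK) \cong (\pr_1 \circ \tilde u)_!\bigl(\tilde u^* \pr_2^* \cK \otimes \tilde \ev^* j_{1*}(\un)\bigr),\]
where $\tilde u$ is the base change of the representable $u$ and the sheaf in parentheses is manifestly geometric on $V \coloneqq (\quo{\wh E} \times_S \quo E) \times_{\quo{\A^1_S}} \A^1_S$.

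Next, to test geometricity of the resulting $!$-pushforward I would pull back along the atlas $w \co \wh E \to \quo{\wh E}$. A direct computation identifies $V \times_{\quo{\wh E}} \wh E \cong \wh E \times_S E$ in a manner under which the base change of $\pr_1 \circ \tilde u$ is the projection $p_1 \co \wh E \times_S E \to \wh E$, so the problem reduces to showing that $p_{1!}$ preserves geometricity. Using the global presentation of $\cE$, the brutal truncation triangle $\cE^{\le 0} \to \cE \to \cE^{> 0}$ gives a fiber sequence of derived vector bundles $\Tot(\cE^{\le 0}) \to E \to \Tot(\cE^{>0})$, so that $\pi_E$, and hence $p_1$, factors as a smooth map (a torsor under the smooth group stack $\Tot(\cE^{\le 0})$) followed by an affine map. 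For the affine (in particular representable) piece preservation under $_!$ is covered by \thmref{thm:dgm}; for the smooth piece $f$, Poincaré duality $f_! \cong \vb{-T_f} \circ f_{\sh}$ reduces the question to the fact that $f_{\sh}$ sends a generator $g_{\sh} \un_T \vb{n}$ of $\bD_{\mrm{gm}}$ to another generator $(f \circ g)_{\sh} \un_T \vb{n}$.

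The main obstacle is the non-representability of $\pi_E \co E \to S$ when $\cE$ has components of negative amplitude, which prevents direct appeal to \thmref{thm:dgm}. The argument circumvents this in two stages: part \itemref{item:binh} of \propref{prop:j_*(1)} absorbs the non-representability of $\ev_E$ into a $!$-pushforward along the representable morphism $\tilde u$, while the amplitude truncation of $\cE$ handles the remaining non-representable smooth factor of $\pi_E$ via Poincaré duality.
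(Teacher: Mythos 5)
Your proposal takes a genuinely different route from the paper's proof. The paper argues directly that each of the three functors composing $\FT_E$ — namely $\pr_2^*$, $(-) \otimes \sP_E[-1]$, and $\pr_{1!}$ — preserves geometricity; the key step is that $\pr_{1!}$ on $\Gm$-equivariant sheaves is identified with $0^!$ by the Contraction Principle (\corref{cor:contractder}), so it becomes a $!$-pullback and the non-representability of $\pr_1$ simply never comes up. Your proof instead unwinds $\sP_E$ via \propref{prop:j_*(1)}\itemref{item:binh}, base changes to the atlas $\wh E \to \quo{\wh E}$, and then attempts to control the resulting non-equivariant pushforward $p_{1!} : \wh E \times_S E \to \wh E$ by an explicit factorization. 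But once you pass to $\wh E$ the object being pushed forward is no longer $\Gm$-equivariant (for the $\Gm$ scaling $E$): $j_{1*}(\un)$ is not equivariant because $\A^1 \smallsetminus \{1\}$ is not $\Gm$-stable. You have therefore traded the very device that makes the paper's proof work for a problem the paper was designed to avoid.

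Two concrete issues with the factorization. First, a sign error: the brutal truncation gives the exact triangle $\cE^{>0} \to \cE \to \cE^{\le 0}$, not $\cE^{\le 0} \to \cE \to \cE^{>0}$. Consequently there is no morphism $E \to \Tot(\cE^{>0})$ and no torsor structure of the claimed form; the correct factorization (as in \eqref{eq:nieceship}) is $E \xrightarrow{i} \Tot(\cE^{\le 0}) \to S$ with $i$ a representable closed immersion and $\Tot(\cE^{\le 0}) \to S$ smooth. Second, and more seriously, the smooth factor $\Tot(\cE^{\le 0}) \to S$ is in general non-representable with source a higher Artin stack. Your Poincaré-duality reduction to $f_\sh$ then appeals to "$f_\sh$ sends a generator $g_\sh\un_T\vb{n}$ of $\bD_{\mrm{gm}}$ to another generator" — but that generating set is the \emph{definition} of $\bD_{\mrm{gm}}$ only when the source is a derived scheme. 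For a stack, $\bD_{\mrm{gm}}$ is defined by pullback to atlases, and there is no analogous explicit set of generators on which $f_\sh$ can be tested; in fact §\ref{sssec: weave gm} explicitly flags $f_!$, $f_*$ for non-representable $f$ as \emph{not} covered. One could try to fill this by inducting on the amplitude of $\cE^{\le 0}$ — factoring through $B\Tot(\cE^{-1})$ and so on, using $\bA^1$-invariance at each step — but this is precisely the argument the Contraction Principle packages, and as written your proof does not supply it.
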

  
  \begin{proof}
    With notation as in \eqref{eq: FT definition}, the functor $\FT_E$ is the composite of the functors $\pr_2^*(-)$, $(-) \otimes \sP_E[-1]$, and $\pr_{1!}(-)$. As discussed in \S \ref{sssec: weave gm}, geometricity is preserved by $*$-pullbacks, $!$-pullbacks, and tensor product with geometric objects. Hence $\pr_2^*(-)$ preserves geometricity on general grounds, $(-) \otimes \sP_E[-1]$ preserves geometricity because Proposition \ref{prop:j_*(1)}(v) shows that $\sP_E[-1]$ is geometric, and $\pr_{1!}(-)$ preserves geometricity because it can be identified with $!$-pullback to the zero section by Proposition \ref{prop:contract}.
  \end{proof}

  \subsubsection{The square of \texorpdfstring{$\quo{j}_*(\un)$}{j\_*(1)}} We establish more technical properties of $\quo{j}_*(\un)$. 
  
    \begin{lem}\label{lem:kunn}
      There is a canonical isomorphism
      \[ \quo{j}_*(\un) \boxtimes_S \quo{j}_*(\un) \cong (\quo{j} \fibprod_S \quo{j})_*(\un)  \in \bD(\quo{\A^1_S} \fibprod_S \quo{\A^1_S}).\]
    \end{lem}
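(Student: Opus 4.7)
The plan is a direct application of base change and projection formula for $\quo{j}_*(\un)$ established in \propref{prop:j_*(1)}, organized in a way analogous to the standard proof that $*$-pushforward along an open immersion commutes with external product. The main subtlety is that we will need the base change and projection formula not just for $\quo{j}$ itself but also for its base changes; this is justified by the observation that all the statements in \propref{prop:j_*(1)} are stable under base change of the parameter scheme $S$ (indeed, the proposition applies over any base, and we will apply it to $S' = \quo{\A^1_S}$).

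Write $T = \quo{\A^1_S} \fibprod_S \quo{\A^1_S}$, with projections $\pr_1, \pr_2 \co T \to \quo{\A^1_S}$. Let $\beta_1, \beta_2 \co \quo{\A^1_S} \to T$ denote the base changes of $\quo{j}$ along $\pr_2$ and $\pr_1$, respectively; in terms of $T$-points, $\beta_1$ is the immersion $x \mapsto (x, \quo{j}(s))$ and $\beta_2$ is $y \mapsto (\quo{j}(s), y)$, where $s$ is the image in $S$. Applying \propref{prop:j_*(1)}(ii) over the base $\quo{\A^1_S}$ (which identifies $\quo{\A^1_{\quo{\A^1_S}}} \cong T$, and identifies $\beta_i$ with the corresponding open immersion $\quo{j}$) gives canonical base change isomorphisms
\begin{equation*}
\pr_2^* \quo{j}_*(\un) \cong \beta_{1*}(\un), \qquad \pr_1^* \quo{j}_*(\un) \cong \beta_{2*}(\un),
\end{equation*}
and likewise \propref{prop:j_*(1)}(iii) for the base $\quo{\A^1_S}$ supplies a projection formula for $\beta_{1*}(\un)$.

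Now compute:
\begin{equation*}
\quo{j}_*(\un) \boxtimes_S \quo{j}_*(\un)
= \pr_1^*\quo{j}_*(\un) \otimes \pr_2^*\quo{j}_*(\un)
\cong \beta_{2*}(\un) \otimes \beta_{1*}(\un)
\cong \beta_{1*}\bigl(\beta_1^* \beta_{2*}(\un)\bigr),
\end{equation*}
where the last isomorphism uses the projection formula for $\beta_1$. Finally, the Cartesian square
\begin{equation*}
\begin{tikzcd}
S \ar[r, "\quo{j}"] \ar[d, "\quo{j}"'] & \quo{\A^1_S} \ar[d, "\beta_2"] \\
\quo{\A^1_S} \ar[r, "\beta_1"'] & T
\end{tikzcd}
\end{equation*}
(whose fibered product is indeed $S$, since the conditions $x = \quo{j}(s_y)$ and $y = \quo{j}(s_x)$ force both factors to land in the image of $\quo{j}$), together with base change, yields $\beta_1^* \beta_{2*}(\un) \cong \quo{j}_*(\un)$. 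Composing $\beta_1 \circ \quo{j} = \quo{j} \fibprod_S \quo{j}$ then gives the desired identification.

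The main point to check carefully is the legitimacy of invoking the projection formula and base change isomorphisms for the maps $\beta_i$ rather than $\quo{j}$; apart from that, the argument is purely formal.
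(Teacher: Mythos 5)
Your proof is correct and follows essentially the same route as the paper's: reduce $\pr_i^* \quo{j}_*(\un)$ to pushforwards along base changes of $\quo{j}$, apply the projection formula to one of those pushforwards, and conclude via base change for the Cartesian square whose fiber product is $S$ and whose composite is $\quo{j} \fibprod_S \quo{j}$. The only differences are cosmetic: the paper phrases the first step as smooth base change rather than invoking Proposition~\ref{prop:j_*(1)}(ii) over the new base (both are valid since $\pr_i$ is smooth and $\quo{j}$ is an open immersion), and the paper applies the projection formula to the other factor, but the structure and justifications are identical.
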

    
    \begin{proof}
      By definition,
      \[ \quo{j}_*(\un) \boxtimes_S \quo{j}_*(\un) \coloneqq \pr_1^*(\quo{j}_*(\un)) \otimes \pr_2^*(\quo{j}_*(\un)) \]
      where $\pr_1$ and $\pr_2$ are the projections $\quo{\A^1_S} \fibprod_S \quo{\A^1_S} \to \quo{\A^1_S}$.
      By smooth base change, we have $\pr_1^* \quo{j}_*(\un) \cong j_{1*}(\un)$, where $j_1 = \quo{j} \fibprod_S \id : S \fibprod_S \quo{\A^1_S} \to \quo{\A^1_S} \fibprod_S \quo{\A^1_S}$ and similarly for the second term.
      By the projection formula (\propref{prop:j_*(1)}),
      \begin{equation*}
        j_{1*}(\un) \otimes j_{2*}(\un)
        \cong j_{1*}j_1^*j_{2*}(\un).
      \end{equation*}
      By smooth base change for the Cartesian square
      \[ \begin{tikzcd}
        S \fibprod_S S \ar{r}\ar{d}
        & \quo{\A^1_S} \fibprod_S S \ar{d}{j_2}
        \\
        S \fibprod_S \quo{\A^1_S} \ar{r}{j_1}
        & \quo{\A^1_S} \fibprod_S \quo{\A^1_S},
      \end{tikzcd} \]
      where the diagonal composite is $\quo{j} \times \quo{j}$, we have
      \begin{equation*}
        j_{1*}j_1^*j_{2*}(\un) \cong (\quo{j} \times \quo{j})_*(\un),
      \end{equation*}
      whence the claim.
    \end{proof}

  Consider the morphism
  \[ c = (\quo{\pr_1}, \quo{\pr_2}) : \quo{\A^2_S} \to \quo{\A^1_S} \fibprod_S \quo{\A^1_S} \]
  induced by the projections $\A^2 \to \A^1$ (which are $\Gm$-equivariant), which exhibits $\quo{\A^1_S} \fibprod_S \quo{\A^1_S}$ as the quotient of $\quo{\A^2_S}$ by the action $\lambda\cdot(x,y) = (x,\lambda\cdot y)$.
  We have a commutative diagram
  \[ \begin{tikzcd}
    \bG_{m,S} \fibprod_S \bG_{m,S} \ar{r}\ar[twoheadrightarrow]{d}
    & \A^2_S \setminus \{0\}_S \ar{r}{j^2}\ar[twoheadrightarrow]{d}
    & \A^2_S \ar[twoheadrightarrow]{d}\ar[leftarrow]{r}{i^2}
    & S\ar[twoheadrightarrow]{d}
    \\
    \{1\}_S \fibprod_S \bG_{m,S} \ar{r}\ar[twoheadrightarrow]{d}
    & \quo{(\A^2_S \setminus \{0\}_S)} \ar{r}{\quo{j^2}}\ar[twoheadrightarrow]{d}
    & \quo{\A^2_S} \ar[twoheadrightarrow]{d}{c}\ar[leftarrow]{r}{\quo{i^2}}
    & \quo{S}\ar[twoheadrightarrow]{d}{d}
    \\
    S \ar{r}{j_a}
    & U \ar{r}{j_b}
    & \quo{\A^1_S} \fibprod_S \quo{\A^1_S}\ar[leftarrow]{r}{i_b}
    & \quo{S} \fibprod_S \quo{S}
  \end{tikzcd} \]
  where the bottom row is the quotient of the middle one by the $\Gm$-action which scales the second coordinate (with weight $1$).
  In the two left-hand columns, the horizontal rows are factorizations of $j \times j$, $\quo{j \times j}$, and $\quo{j} \times \quo{j}$, respectively.
  In the two right-hand columns, the horizontal rows are complementary open/closed immersions.

  Let $\Delta \subset \A^2_S$ denote the diagonal, $i_\Delta : \Delta \hook \A^2_S$ the inclusion, and $j_\Delta$ the open complement of $i_\Delta$.
  
  \begin{lem}\label{lem:pnpqnq}
    There is a canonical isomorphism
    \[ (\quo{j} \times \quo{j})_*(\un) \cong c_! (\quo{j}_{\Delta*})(\un)[1] \in \bD(\quo{\A^1_S} \times_S \quo{\A^1_S}). \]
  \end{lem}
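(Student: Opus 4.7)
The plan is to reduce $(\quo{j}\times\quo{j})_*(\un)$ to an expression involving $c_!$ via the K\"unneth identity \lemref{lem:kunn} and the description $\quo{j}_*(\un) \cong u_!j_{1*}(\un)[1]$ from \propref{prop:j_*(1)}\itemref{item:binh}, and then to identify the resulting sheaf on $\quo{\A^2_S}$ with $(\quo{j_\Delta})_*(\un)[1]$ by an analogous argument ``one level up''.

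First, combining \lemref{lem:kunn} with the identity of \propref{prop:j_*(1)}\itemref{item:binh} and using the general K\"unneth formula for $(-)_!$ (which holds formally in any six-functor formalism, being a consequence of base change and the projection formula, as in the proof of \lemref{lem:kunn}), I obtain the chain of isomorphisms
\[
(\quo{j}\times\quo{j})_*(\un) \;\cong\; \quo{j}_*(\un) \boxtimes_S \quo{j}_*(\un) \;\cong\; (u\times_S u)_!\bigl(j_{1*}(\un) \boxtimes_S j_{1*}(\un)\bigr)[2].
\]
Next, the map $u\times u$ factors as $c \circ u^{(2)}$, where $u^{(2)}\co \A^2_S \to \quo{\A^2_S}$ is the quotient morphism by the diagonal $\bG_m$-action, so
\[
(\quo{j}\times\quo{j})_*(\un) \;\cong\; c_!\,(u^{(2)})_!\bigl(j_{1*}(\un) \boxtimes_S j_{1*}(\un)\bigr)[2].
\]
It therefore suffices to establish a canonical isomorphism
\[
(u^{(2)})_!\bigl(j_{1*}(\un) \boxtimes_S j_{1*}(\un)\bigr)[1] \;\cong\; (\quo{j_\Delta})_*(\un) \qquad \text{on } \quo{\A^2_S}.
\]

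This last identification is the main content of the lemma and is where I expect the principal technical difficulty. The approach will mirror the proof of \propref{prop:j_*(1)}\itemref{item:binh} in this ``two-dimensional'' setting: apply the Contraction Principle (\propref{prop:contract}) to the $\bG_m^{\mathrm{diag}}$-equivariant linear retraction of $\A^2_S$ onto the diagonal $\Delta$ (for instance, the homotopy $H_t(x,y) = \bigl((1-t)x + t(x+y)/2,\, (1-t)y + t(x+y)/2\bigr)$ contracting onto $(x,y) \mapsto ((x+y)/2,(x+y)/2)$) in order to show that $\iota_{\quo{\Delta}}^!$ applied to the left-hand side vanishes; then verify separately, using base change along the Cartesian square relating $\quo{j_\Delta}$ to $j_\Delta$, that the restriction along $(\quo{j_\Delta})^*$ to $\quo{(\A^2_S\setminus\Delta)}$ yields $\un[-1]$. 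Combined with the localization triangle $\iota_{\quo{\Delta}*}\iota_{\quo{\Delta}}^!(\un) \to \un \to (\quo{j_\Delta})_*(\un)$, this produces the required isomorphism. The chief bookkeeping difficulty will be arranging the $\bG_m$-equivariance needed to invoke \propref{prop:contract} (the relevant action being the residual $\bG_m \cong (\bG_m\times\bG_m)/\bG_m^{\mathrm{diag}}$ on $\quo{\A^2_S}$) and controlling the pullback computation through the external tensor product $j_{1*}(\un)\boxtimes_S j_{1*}(\un)$.
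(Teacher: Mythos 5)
Your first reduction is valid: combining \lemref{lem:kunn} with \propref{prop:j_*(1)}\itemref{item:binh} and the K\"unneth formula for $(-)_!$, and then factoring $u\times_S u = c \circ u^{(2)}$, does yield
\[
(\quo{j}\times\quo{j})_*(\un) \cong c_!\,(u^{(2)})_!\bigl(j_{1*}(\un)\boxtimes_S j_{1*}(\un)\bigr)[2].
\]
But note this reduces the lemma to proving $c_!(\mathrm{LHS}') \cong c_!(\mathrm{RHS}')$ where $\mathrm{LHS}' = (u^{(2)})_!(j_{1*}(\un)\boxtimes j_{1*}(\un))[1]$ and $\mathrm{RHS}' = (\quo{j_\Delta})_*(\un)$, and you propose to prove the \emph{strictly stronger} statement $\mathrm{LHS}' \cong \mathrm{RHS}'$ on $\quo{\A^2_S}$ itself. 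Since $c$ is a $\bG_m$-torsor, $c_!$ is far from faithful, so you are committing to a stronger claim which is not obviously true; observe that $\mathrm{RHS}' \cong (u^{(2)})_! e^* j_{1*}(\un)[1]$ (by the base change identities implicit in the remark following the lemma), and $e^* j_{1*}(\un)$ is the $*$-extension from the complement of the single line $\{x-y=1\}$, whereas $j_{1*}(\un)\boxtimes j_{1*}(\un)$ is the $*$-extension from the complement of $\{x=1\}\cup\{y=1\}$ — these are genuinely different sheaves on $\A^2_S$, and it is not clear that $u^{(2)}_!$ identifies them.

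The more serious gap is in your invocation of the Contraction Principle. \propref{prop:contract} (following Drinfeld--Gaitsgory) requires, beyond the bare $\A^1$-homotopy, that the homotopy be an \emph{extension of the given $\bG_m$-action to a monoid action of $\A^1$}; without this compatibility the conclusion is simply false (e.g.\ take $Y=\A^1$ with its scaling action, $s$ the section at $1$, $\pr$ the projection to a point, the affine homotopy $(1-t)+tx$, and test against the $\bG_m$-equivariant sheaf $j_{0*}(\un)$: the two sides have different Euler characteristics). In your setting the relevant equivariance is for $\bG_m^{\mathrm{diag}}$ acting on $\A^2_S$, and the unique extension of that action to a monoid action of $\A^1$ contracts $\A^2_S$ onto the \emph{origin}, not onto the diagonal. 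The diagonal $\Delta$ is not the $\bG_m^{\mathrm{diag}}$-attractor, so there is no Contraction Principle available that would identify $\iota_{\quo{\Delta}}^!$ with a projection pushforward. Your proposed homotopy $H_t$ is indeed $\bG_m^{\mathrm{diag}}$-equivariant (with trivial action on the $t$-parameter), but that is not the required hypothesis. (It also divides by $2$, which fails for a general topological weave in characteristic two, though this is the lesser issue.) The paper's proof instead does the localization on $\quo{\A^1_S}\times_S\quo{\A^1_S}$ along the \emph{closed point} $\quo{S}\times_S\quo{S}$ — which is the quotient of the actual $\bG_m^{\mathrm{diag}}$-attractor, the origin of $\A^2$ — and applies the Contraction Principle in its legitimate form to the contraction of $\quo{\A^2_S}$ onto $\quo{S}$, plus a separate direct computation over the complementary open. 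Your plan would need to be reworked to use that decomposition (or some other argument) rather than the one along $\quo{\Delta}$.
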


  \begin{rem}
    Let $e : \A^2_S \to \A^1_S$ denote the ``difference'' morphism, given informally by $(x,y) \mapsto x-y$.
    By smooth base change for the square
    \[ \begin{tikzcd}
      \quo{(\A^2_S\setminus\Delta)} \ar{r}{\quo{j_\Delta}}\ar{d}{\quo{e}}
      & \quo{\A^2_S} \ar{d}{\quo{e}}	
      \\
      \quo{\bG_{m,S}} \ar{r}{\quo{j}}
      & \quo{\A^1_S}
    \end{tikzcd} \]
    we can write $\quo{e}^* \quo{j}_* (\un) \cong \quo{j}_{\Delta,*}(\un)$ and hence also
    \begin{equation}
      (\quo{j} \times \quo{j})_*(\un) \cong c_! \quo{e}^* \quo{j}_* (\un)[1].
    \end{equation}
  \end{rem}

\begin{proof}[Proof of Lemma \ref{lem:pnpqnq}]   
Set
    $$\cK \coloneqq c_!\quo{j_{\Delta *}}(\un) \in \bD(\quo{\A^1_S}\fibprod_S\quo{\A^1_S}).$$
    We claim there are isomorphisms:
    \begin{enumerate}
      \item\label{item:aouuhp/i} $i_b^!(\cK) \cong 0$,
      \item\label{item:aouuhp/j} $j_b^*(\cK) \cong {j}_{a*}(\un)[-1]$.
    \end{enumerate}
    By localization it will follow that the unit
    \[ \mrm{unit} : \cK \to j_{b*}j_b^*(\cK) \cong j_{b*}j_{a*}(\un)[-1] \cong (\quo{j} \times \quo{j})_*(\un)[-1] \]
    is invertible, as claimed.

    \emph{Proof of \itemref{item:aouuhp/i}}.
    Since $\quo{i^2}$ is the zero section of the vector bundle $\quo{p^2} : \quo{\A^2_S} \to \quo{S}$ (where $p^2 : \A^2_S \to S$ is the projection), the Contraction Principle (\propref{prop:contract}) yields natural isomorphisms isomorphism $ (\quo{i^{2}})^! \cong (\quo{p_{0}^2})_!$ and $i_b^! \cong (\quo{p} \times \quo{p})_!$. Hence we have 
    \begin{align*}
      i_b^! c_!\, \quo{j_{\Delta *}}
      &\cong (\quo{p} \times \quo{p})_! c_! \quo{j_{\Delta*}} \cong d_!\, \quo{p_{0!}^2}\, \quo{j_{\Delta*}} \cong d_!\, \quo{i^{2!}}\, \quo{j_{\Delta*}}
    \end{align*}
    where $d$ is the diagonal of $\quo{S}$ as in the diagram above.
    But $(\quo{i^{2}})_! (\quo{j_{\Delta}})_* \cong 0$ by base change (as $0 \in \A^2$ is contained in $\Delta$).

    \emph{Proof of \itemref{item:aouuhp/j}}.
    Consider the diagram of Cartesian squares
    
    \[ \begin{tikzcd}[matrix xscale=0.3]
      \Gm\setminus\{1\}\ar[equals]{r}\ar[hookrightarrow]{d}{j'_1}
      & \quo{(\Gm \times \Gm\setminus\Delta)} \ar{r}\ar[hookrightarrow]{d}
      & \quo{(\A^1 \times \Gm\setminus\Delta)} \ar[equals]{r}\ar[hookrightarrow]{d}
      & \A^1 \setminus \{1\} \ar{r}\ar[hookrightarrow]{d}{j_1}
      & \quo{(\A^2\setminus\Delta)} \ar[hookrightarrow]{d}{\quo{j_\Delta}}
      \\
      \Gm \ar[equals]{r}\ar{d}{q}
      & \quo{(\Gm \times \Gm)} \ar{r}\ar[twoheadrightarrow]{d}
      & \quo{(\A^1 \times \Gm)} \ar[equals]{r}\ar[twoheadrightarrow]{d}
      & \A^1 \ar{r}\ar[twoheadrightarrow]{d}{f}
      & \quo{\A^2} \ar[twoheadrightarrow]{d}{c}
      \\
      S \ar[equals]{r}
      & \quo{\Gm} \ar[hookrightarrow]{r}
      & \quo{\A^1} \ar[equals]{r}
      & \quo{\A^1} \ar[hookrightarrow]{r}
      & \quo{\A^1} \times \quo{\A^1}
    \end{tikzcd} \]
    where we omit the subscripts $_S$ for sanity of notation.
    By base change we get
    \[
      (\quo{j} \times \quo{j})^* c_! \,\quo{j_{\Delta*}} (\un)
      \cong q_{!} j'_{1*} (\un)
      \cong \un[-1]
    \]
    where the second isomorphism follows easily from localization.

    Since $\quo{j} \times \quo{j} = j_b \circ j_a$ this isomorphism gives by adjunction a morphism
    \[ j_b^*(\cK) \to j_{a*}(\un)[-1] \]
    in $\bD(U)$ which we claim is invertible.
    Write $U$ as the union of the two opens $[\quo{(\A^1\times\Gm)}/\Gm]$ and $[\quo{(\Gm\times\A^1)}/\Gm]$, where $[-/\Gm]$ is the quotient by the scaling action on the second coordinate.
    Over either open this morphism restricts to the isomorphism
    \[ u_! j_{1*}(\un) \cong \quo{j}_*(\un)[-1] \]
    constructed in \propref{prop:j_*(1)}(v). 
\end{proof}

  \subsection{Easy proofs}
\label{sec:easy} Here we spell out, for completeness, proofs which carry over essentially verbatim from the case of classical vector bundles. 

\subsubsection{Zero bundle}
\label{ssec:easy/zero}

We prove \propref{prop:zero}. Recall that $i \co 0_S \inj \A^1_S$ is the zero section and $j \co \G_{m,S} \inj \A^1_S$ is the complementary open embedding. 

  The evaluation map $\ev_{0_S} : [\wh{0}_S/\Gm] \times [0_S/\Gm] \to [\A^1_S/\Gm]$ factors as the composite
  \[ B\bG_{m,S} \times B\bG_{m,S} \xrightarrow{m} B\bG_{m,S} \xrightarrow{\quo{i}} [\A^1_S/\bG_{m,S}] \]
  where the first map $m$ is $(L, L') \mapsto L \otimes L'$.
  By localization, one computes $\quo{i^*} \quo{j_{*}}(\un) \cong {q_{!}}(\un)[1]$, where $q : S \to B\bG_{m,S}$.
  Note that we have a Cartesian square
  \[ \begin{tikzcd}
    B\bG_{m,S} \ar{r}\ar{d}{(\id,o)}
    & S \ar{d}{q}
    \\
    B\bG_{m,S} \times B\bG_{m,S} \ar{r}{m}
    & B\bG_{m,S}
  \end{tikzcd} \]
  where the left-hand vertical map is $L \mapsto (L, \wh{L})$.
  Thus the kernel of $\FT_0$ is
  \[
    \sP_0 = \ev_{0_S}^* \quo{j}_*(\un)
    \cong m^* q_!(\un)[1]
    \cong (\id,o)_!(\un)[1]
  \]
  and $\FT_0$ itself is given by
  \[
    \FT_0(\cK) \cong \pr_{1!}(\pr_2^*(\cK) \otimes (\id,o)_!(\un))[1][-1]\\
    \cong \pr_{1!}(\id,o)_!(\id,o)^*\pr_2^*(\cK)\\
    \cong o^*(\cK)
  \]
  by the projection formula.

  For the remaining isomorphisms, note that $o$ is finite étale so that $o_* \cong o_!$ and $o^* \cong o^!$.
  Since $o$ is an involution, we also have $o^*o^* \cong \id$, hence $o^* \cong o_*$ and $o_! \cong o^!$. \qed 

\subsubsection{Base change, part 1}
\label{ssec:basechange1}

  Let the notation be as in \propref{prop:funbase}.

  The base change property for $\quo{j}_*(\un)$ (\propref{prop:j_*(1)}) implies that there is a canonical isomorphism
  \begin{equation}
    f_{\wh{E} \fibprod_S E}^* (\sP_E) \cong \sP_{E'}
  \end{equation}
  where $f_{\wh{E} \fibprod_S E} : \wh{E'} \fibprod_{S'} E' \to \wh{E} \fibprod_S E$ is the base change of $f$.
  The isomorphisms \eqref{eq:BC^*} and \eqref{eq:BC_!} follow immediately using the base change and projection formulas.\qed 

  The proofs of the remaining two isomorphisms will be done after proving involutivity.

\subsubsection{Functoriality, part 1}
\label{ssec:easy/fun}

  We prove the isomorphism \eqref{eq:funA} of \propref{prop:fun}. We omit the base $S$ from notation; all products are over $S$. 

  Let $\phi : E' \to E$ be a morphism of derived vector bundles.
  We have the commutative square
  \[ \begin{tikzcd}
    \quo{\wh{E}} \times \quo{E'} \ar{r}{\id \times {\phi}}\ar{d}{{\wh{\phi}} \times \id}
    & \quo{\wh{E}} \times \quo{E} \ar{d}{\ev_E}
    \\
    \quo{\wh{E'}} \times \quo{E'} \ar{r}{\ev_{E'}}
    & \quo{\A^1}
  \end{tikzcd} \]
  whence a canonical isomorphism
  \[ (\id \times {\phi})^*\sP_{E}^* \cong ({\wh{\phi}} \times \id)^*(\sP_{E'}). \]

  We use the following commutative diagram, where all squares are Cartesian.
  \[ \begin{tikzcd}[matrix xscale=0.2]
    & & \quo{E'} \times \quo{E'} \ar[leftarrow,swap]{ld}{\wh{\phi}\times\id}\ar{rd}{\id\times f} \ar[bend right=50,swap]{lldd}{\pr_2}\ar[bend left=50]{rrdd}{\pr_1} & &
    \\
    & \quo{\wh{E}} \times \quo{E'} \ar[swap]{ld}{\pr_2}\ar{rd}{\id\times f} & & \quo{\wh{E'}} \times \quo{E} \ar[leftarrow,swap]{ld}{\wh{\phi} \times \id}\ar{rd}{\pr_1} &
    \\
    \quo{E'} \ar{rd}{\phi} & & \quo{\wh{E}} \times \quo{E} \ar[swap]{ld}{\pr_2}\ar{rd}{\pr_1} & & \quo{\wh{E'}} \ar[leftarrow,swap]{ld}{\wh{\phi}}
    \\
    & \quo{E} & & \quo{\wh{E}} & &
  \end{tikzcd} \]

  By the base change and projection formulas we have:
  \begin{align*}
    \FT_{E} {\phi_!}(\cK)[1]
    &\cong \pr_{1!} (\pr_2^*({\phi_!}\cK) \otimes \sP_{E}) \cong \pr_{1!} ((\id \times {\phi})_!(\pr_2^*\cK) \otimes \sP_{E})\\
    &\cong \pr_{1!}(\id \times {\phi})_! (\pr_2^*\cK \otimes (\id \times {\phi})^*(\sP_{E})) \cong \pr_{1!} (\pr_2^*\cK \otimes ({\wh{\phi}} \times \id)^*(\sP_{E'})).
  \end{align*}
  Similarly we have:
  \begin{align*}
    {\wh{\phi}^*} \FT_{E'}(\cK)[1]
    &\cong {\wh{\phi}^*} \pr_{1!} (\pr_2^*(\cK) \otimes \sP_{E'}) \cong \pr_{1!} (\id \times {\phi})_! ({\wh{\phi}} \times \id)^* (\pr_2^*(\cK) \otimes \sP_{E'})\\
    &\cong \pr_{1!} ({\wh{\phi}} \times \id)^* (\pr_2^*(\cK) \otimes \sP_{E'}) \cong \pr_{1!} (\pr_2^*(\cK) \otimes ({\wh{\phi}} \times \id)^* \sP_{E'}).
  \end{align*}
  Comparing these gives the desired isomorphism. \qed 
  
\subsubsection{Base change for the twist, part 1}
\label{ssec:basechangetwist1}

  Given a morphism $f : S' \to S$ and $E' \coloneqq E \fibprod_S S' \in \DVect(S')$, we show the isomorphisms
  \begin{equation*}
    f^* \sL^E \cong \sL^{E'}
  \end{equation*}
  and
  \begin{equation*}
    f^*_E((-)\twbrace{E}) \cong f_E^*(-)\twbrace{E'},
    \qquad
    f_{E!}(-)\twbrace{E} \cong f_{E!}((-)\twbrace{E'})
  \end{equation*}
  of \lemref{lem:twistbase}.
  The remaining parts of these statements will be proven in \subsectionref{ssec:basechangetwist2}.

  By \eqref{eq:BC^*} we have:
  \begin{align*}
    f^*\sL^E
    &= f^* \pi_{E!} \FT_{\wh{E}}(\un) \cong \pi_{E'!} f_{\wh{E}}^* \FT_{\wh{E}}(\un) \cong \pi_{E'!} \FT_{\wh{E'}}(\un) = \sL^{E'}.
  \end{align*}

  From $f^* \sL^E \cong \sL^{E'}$ we now deduce
  \begin{align*}
    f_E^*((-)\twbrace{E})
    &= f_E^*(-\otimes\pi_E^*(\sL^E)) \cong f_E^*(-)\otimes f_E^*\pi_E^*(\sL^E) \cong f_E^*(-)\otimes \pi_{E'}^*f^*(\sL^E)\\
    &\cong f_E^*(-)\otimes \pi_{E'}^*(\sL^{E'}) = f_E^*(-)\twbrace{E'}.
  \end{align*}

  Similarly, using the projection formula we have:
  \begin{align*}
    f_{E!}(-)\twbrace{E}
    &= f_{E!}(-) \otimes \pi_E^*(\sL^E) \cong f_{E!}(- \otimes f_E^*\pi_E^*(\sL^E)) \cong f_{E!}(- \otimes \pi_{E'}^*f^*(\sL^E))\\
    &\cong f_{E!}(- \otimes \pi_{E'}^*(\sL^{E'})) = f_{E!}((-)\twbrace{E'}).
  \end{align*}
\qed   

\subsection{Proof of involutivity assuming cosupport}
\label{sec:cosuppinvol}

In this section we prove \lemref{lem:cosuppinvol}.

\subsubsection{Kernel of the square}

  Consider the following commutative diagram:
  \[\begin{tikzcd}[matrix xscale=0.1, matrix yscale=1.3]
    && \quo{E} \fibprod_S \quo{E} \\
    && \quo{E} \fibprod_S \quo{\wh{E}} \fibprod_S \quo{E}  \\
    & \quo{E} \fibprod_S \quo{\wh{E}} && \quo{\wh{E}} \fibprod_S \quo{E} \\
    \quo{E} && \quo{\wh{E}} && \quo{E}
    \arrow["\pr_1", swap, from=3-2, to=4-1]
    \arrow["\pr_2", from=3-2, to=4-3]
    \arrow["\pr_1", swap, from=3-4, to=4-3]
    \arrow["\pr_2", from=3-4, to=4-5]
    \arrow["\pr_{12}", swap, from=2-3, to=3-2]
    \arrow["\pr_{23}", from=2-3, to=3-4]
    \arrow["\pr_{13}", from=2-3, to=1-3]
    \arrow["\pr_1", swap, bend right, from=1-3, to=4-1]
    \arrow["\pr_2", bend left, from=1-3, to=4-5]
  \end{tikzcd}\]
  A straightforward application of base change and projection formulas yields an identification
  \begin{equation}\label{eq:FT2}
    {\FT_{\wh{E}}} \circ {\FT_{E}}(-)
    \cong \pr_{1!}(\pr_2^*(-) \otimes \sP'')[-2]
  \end{equation}
  where
  \[ \sP'' \coloneqq \pr_{13!}(\pr_{12}^*(\sP_{\wh{E}}) \otimes \pr_{23}^*(\sP_E)) \in \bD(\quo{E} \fibprod_S \quo{E}). \]

  Consider the $\Gm$-action scaling both coordinates of $E \fibprod_S E$.
  The two projections $\pr_1, \pr_2 : E \fibprod_S E \to E$ are $\Gm$-equivariant.
  We let $c : \quo{(E \fibprod_S E)} \to \quo{E} \fibprod_S \quo{E}$ denote the induced morphism $(\quo{\pr_1}, \quo{\pr_2})$.
  We denote by $e : E \fibprod_S E \to E$ the ``difference'' morphism, given informally by $(x,y) \mapsto x-y$; this is also $\Gm$-equivariant.

  \begin{lem}\label{lem:P''}
    For any $E \in \DVect(S)$, there is a canonical isomorphism
    \begin{equation*}
      \sP''
      \cong c_! (\quo{e})^* \FT_{\wh{E}}(\un)[2].
    \end{equation*}
  \end{lem}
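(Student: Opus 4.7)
The plan is to realize both sides of the desired isomorphism as $!$-pushforwards of a single sheaf from a common auxiliary stack $U$ over $\quo{E} \fibprod_S \quo{E}$. The conceptual point is that each side computes a ``convolution'' expressing the bilinearity of the evaluation pairing, $y(x_1) - y(x_2) = y(x_1 - x_2)$, and the proof is to materialize this identity in sheaf form. To this end, define
\[
U \coloneqq \quo{(E \fibprod_S E)} \fibprod_{\quo{E}} (\quo{E} \fibprod_S \quo{\wh{E}}),
\]
where the fiber product is formed along $\quo{e} \co \quo{(E\fibprod_S E)} \to \quo{E}$ and the first projection $\pr_1 \co \quo{E}\fibprod_S\quo{\wh{E}} \to \quo{E}$. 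Functorially, $U$ classifies tuples $(L,\ \phi_1, \phi_3 \co L \to E,\ L_2,\ \phi_2 \co L_2 \to \wh{E})$, equipped with canonical projections $\tilde{\pi} \co U \to \quo{(E\fibprod_S E)}$ forgetting $(L_2, \phi_2)$ and $\tilde{e} \co U \to \quo{E}\fibprod_S\quo{\wh{E}}$ sending $(L, \phi_1, \phi_3, L_2, \phi_2)$ to $(L, \phi_1 - \phi_3, L_2, \phi_2)$.

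For the right-hand side, unfolding $\FT_{\wh{E}}(\un) = \pr_{1!}\sP_{\wh{E}}[-1]$ and applying $!$-base change for the Cartesian square defining $U$ yields
\[
c_!(\quo{e})^*\FT_{\wh{E}}(\un)[2] \cong (c \circ \tilde{\pi})_! (\ev_{\wh{E}} \circ \tilde{e})^* \quo{j}_*(\un)[1],
\]
where the composite $\ev_{\wh{E}} \circ \tilde{e}$ sends $(L, \phi_1, \phi_3, L_2, \phi_2)$ to $(L \otimes L_2, \ev((\phi_1 - \phi_3) \otimes \phi_2))$.

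For $\sP''$, I would first invoke \lemref{lem:kunn} to rewrite the product $\pr_{12}^*\sP_{\wh{E}} \otimes \pr_{23}^*\sP_E$ as $(\ev_{12}, \ev_{23})^* (\quo{j} \fibprod_S \quo{j})_*(\un)$, then invoke \lemref{lem:pnpqnq} (in the form of the remark that follows it) to rewrite this as $(\ev_{12},\ev_{23})^* c_{\A!}(\quo{e_\A})^* \quo{j}_*(\un)[1]$, where $c_\A \co \quo{\A^2_S} \to \quo{\A^1_S}\fibprod_S\quo{\A^1_S}$ is the map of \lemref{lem:pnpqnq} and $e_\A$ is the difference map on $\A^2$. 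Base change along $(\ev_{12},\ev_{23})$ replaces $c_{\A!}$ by $\widetilde{c}_!$, where $\widetilde{c} \co Z \to T$ is the pullback of the $\Gm$-torsor $c_\A$. A direct computation on the functor of points identifies $Z$ with $U$: a point of $Z$ over $(L_1, \phi_1, L_2, \phi_2, L_3, \phi_3) \in T$ is the datum of an isomorphism $L_1 \otimes L_2 \cong L_2 \otimes L_3$---equivalently, an isomorphism $L_1 \cong L_3$---and using it to set $L_1 = L_3 = L$ recovers the parameterization of $U$. Under this identification $\pr_{13} \circ \widetilde{c} = c \circ \tilde{\pi}$, while bilinearity of evaluation gives $\quo{e_\A} \circ \widetilde{\ev} = \ev_{\wh{E}} \circ \tilde{e}$. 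Combining these with the previous display yields the claimed isomorphism.

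The main obstacle I anticipate is the identification $Z \cong U$ in the last step: the stack $Z$ is presented via a trivialization of a tensor product of line bundles coming from the $\A^2$-side, while $U$ is defined directly from the difference map on $E$-factors. Matching the two requires carefully tracing the $\Gm$-equivariance through several nested quotient stacks, and ultimately reduces to the identity that an isomorphism $L_1 \otimes L_2 \cong L_2 \otimes L_3$ carries the same information as an isomorphism $L_1 \cong L_3$.
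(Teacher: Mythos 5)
Your proof is correct and takes essentially the same route as the paper's: both rely on Lemmas \ref{lem:kunn} and \ref{lem:pnpqnq}, and your auxiliary stack $U$ is precisely the stack $\quo{\wh{E}} \fibprod_S \quo{(E\fibprod_S E)}$ that appears in the paper's diagram \eqref{eq:lanthana}, with your identification $Z \cong U$ corresponding to the Cartesian square labeled there. The only difference is organizational — you frame the argument symmetrically as ``realize both sides as pushforwards from the common stack $U$,'' whereas the paper carries out the equivalent base-change manipulations directly in the big diagram and invokes $\pr_{2!}\ev_E^*\quo{j}_*(\un)[-1]\cong\FT_{\wh E}(\un)$ at the end rather than the beginning.
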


\begin{proof}
  Consider the morphism
  \[ \ev'' : \quo{E} \fibprod_S \quo{\wh{E}} \fibprod_S \quo{E} \to \quo{\A^1_S} \fibprod_S \quo{\A^1_S} \]
  given on points by
  \begin{multline*}
    \big((L, x : L \to E), (L', \phi : L' \to \wh{E}), (L'', y : L'' \to E)\big)\\
    \,\mapsto\,
    \big(
    ( L' \otimes L, L' \otimes L \xrightarrow{\phi \otimes x} \wh{E} \fibprod_S E \xrightarrow{\ev} \A^1_S ),
    ( L' \otimes L'', L' \otimes L'' \xrightarrow{\phi \otimes y} \wh{E} \otimes E \xrightarrow{\ev} \A^1_S )
    \big).
  \end{multline*}

  We have commutative squares
  \[ \begin{tikzcd}
    \quo{E} \fibprod_S \quo{\wh{E}} \fibprod_S \quo{E} \ar{r}{\pr_{23}}\ar{d}{\ev''}
    & \quo{E} \fibprod_S \quo{\wh{E}} \ar{d}{\ev}
    \\
    \quo{\A^1_S} \fibprod_S \quo{\A^1_S} \ar{r}{\pr_1}
    & \quo{\A^1_S},
  \end{tikzcd}
  \quad
  \begin{tikzcd}
    \quo{E} \fibprod_S \quo{\wh{E}} \fibprod_S \quo{E} \ar{r}{\pr_{12}}\ar{d}{\ev''}
    & \quo{E} \fibprod_S \quo{\wh{E}} \ar{d}{\ev'}
    \\
    \quo{\A^1_S} \fibprod_S \quo{\A^1_S} \ar{r}{\pr_2}
    & \quo{\A^1_S}.
  \end{tikzcd} \]
  This yields
  \begin{equation}\label{eq: kernel eq 1}
    \sP''
    \cong \pr_{13!} \ev''^*(\quo{j}_*(\un) \boxtimes_S \quo{j}_*(\un))
    \cong \pr_{13!} \ev''^* c_! (\quo{j}_{\Delta})_* (\un)[1]
  \end{equation}
  where the second isomorphism comes from Lemmas~\ref{lem:kunn} and \ref{lem:pnpqnq}.

  Next observe that we have a commutative diagram
  \begin{equation}\label{eq:lanthana}
    \begin{tikzcd}[matrix xscale=0.5]
      \quo{S} \ar{r}{\quo{0}}\ar[phantom]{rd}{\di}
      & \quo{E}\ar[phantom]{rd}{\di}
      & \quo{\wh{E}} \fibprod_S \quo{E} \ar[swap]{l}{\pr_2}\ar{r}{\ev}
      & \quo{\A^1_S} \ar[leftarrow]{r}{\quo{j}}\ar[phantom]{rd}{\di}
      & \quo{\Gm}\ar[leftarrow]{d}{\quo{e}}
      \\
      \quo{E} \ar{r}{\quo{\Delta}}\ar[swap]{u}{\quo{\pi_E}}\ar[equals]{d}
      & \quo{(E\fibprod_S E)} \ar[swap]{u}{\quo{e}}\ar{d}{c}\ar[phantom]{rd}{\di}
      & \quo{\wh{E}} \fibprod_S \quo{(E\fibprod_S E)} \ar[swap]{l}{\pr_2}\ar{r}\ar[swap]{u}{\id\times\quo{e}}\ar{d}\ar[phantom]{rd}{\di}
      & \quo{\A^2_S}\ar[swap]{u}{\quo{e}}\ar{d}{c}\ar[leftarrow]{r}{\quo{j_\Delta}}
      & \quo{(\A^2\setminus\Delta)}
      \\
      \quo{E} \ar{r}{\Delta}
      & \quo{E}\fibprod_S \quo{E}
      & \quo{E}\fibprod_S\quo{\wh{E}}\fibprod_S\quo{E} \ar[swap]{l}{\pr_{13}}\ar{r}{\ev''}
      & \quo{\A^1_S}\fibprod_S\quo{\A^1_S}
      &
    \end{tikzcd}
  \end{equation}
  where the squares labeled by $\di$ are derived Cartesian.
  The notation is as follows:
  \begin{itemize}
    \item
    The two projections $\pr_1, \pr_2 : E \fibprod_S E \to E$ are $\Gm$-equivariant.
    We let $c : \quo{(E \fibprod_S E)} \to \quo{E} \fibprod_S \quo{E}$ denote the induced morphism $(\quo{\pr_1}, \quo{\pr_2})$.
    Similarly for $c : \quo{\A^2_S} \to \quo{\A^1_S} \fibprod_S \quo{\A^1_S}$.

    \item 
    $\Delta$ is the diagonal of $\quo{E}$ and $\quo{\Delta}$ is the quotient of the diagonal of $E$.

    \item
    $e : E \fibprod_S E \to E$ is the ``difference'' morphism, given informally by $(x,y) \mapsto x-y$.
    Similarly for $e : \A^2 \to \A^1$.
  \end{itemize}

From \eqref{eq: kernel eq 1}, applying proper and smooth base change isomorphisms to the Cartesian squares in \eqref{eq:lanthana} gives 
  \begin{equation}\label{eq: kernel eq 2}
    \sP''
    \cong \pr_{13!} \ev''^* c_! \quo{j}_{\Delta*} \quo{e}^*(\un)[1] \cong c_! \quo{e}^* \pr_{2!} \ev_E^* \quo{j}_*(\un)[1].
  \end{equation}

  Under the automorphism of $\wh{\quo{E}} \fibprod_S \quo{E}$ which swaps the factors, the morphism $\ev_E : \wh{\quo{E}} \fibprod_S \quo{E} \to \quo{\A^1_S}$ is identified with $\ev_{\wh{E}}$ and the projection $\pr_2 : \wh{\quo{E}} \fibprod_S \quo{E} \to \quo{E}$ is identified with $\pr_1 : \quo{E} \fibprod_S \wh{\quo{E}} \to \quo{E}$.
  Thus by definition we have an isomorphism
  \begin{equation}\label{eq: FT 1} 
  \FT_{\wh{E}}(\un)
    \cong \pr_{2!} \ev_E^* \quo{j}_*(\un)[-1]. \end{equation}
  Combining \eqref{eq: kernel eq 2} and \eqref{eq: FT 1}, we obtain an isomorphism
  \begin{equation*}
    \sP'' \cong c_! \quo{e}^* \FT_{\wh{E}}(\un)[2]
  \end{equation*}
  as desired.

\end{proof}

\subsubsection{Proof of Lemma~\ref{lem:cosuppinvol}}\label{sssec: proof of involutivity}

  Since $E$ satisfies the cosupport property, we have the canonical isomorphism
  \begin{equation*}
    0_{E!}(\cL^{E})
    \coloneqq 0_{E!}0_E^! \FT_{\wh{E}}(\un)
    \to \FT_{\wh{E}}(\un).
  \end{equation*}
  Applying $c_! \quo{e}^*$ yields a canonical isomorphism
  \begin{equation*}
    c_! \quo{e}^* 0_{E!}(\cL^{E})[2]
    \to c_! \quo{e}^* \FT_{\wh{E}}(\un)[2]
    \cong \sP''.
  \end{equation*}
  By base change, using the diagram \eqref{eq:lanthana} again, we obtain a canonical isomorphism
  \begin{equation*}
    \Delta_! {\pi_E^*}(\cL^{E})[2]
    \to \sP''.
  \end{equation*}

  Finally, plugging this into \eqref{eq:FT2} yields
  \begin{align*}
    \FT_{\wh{E}}\FT_E(\cK)
    &\cong \pr_{1!}(\pr_2^*(\cK) \otimes \sP'')[-2] \cong \pr_{1!}(\pr_2^*(\cK) \otimes \Delta_! {\pi_E^*}(\cL^{E}))\\
\text{(Projection formula) } \implies    &\cong \pr_{1!}(\Delta_!(\Delta^*\pr_2^*(\cK) \otimes {\pi_E^*}(\cL^{E}))) \cong \cK \otimes {\pi_E^*}(\cL^{E}). \qed 
  \end{align*}

\subsection{Co/support and involutivity for \texorpdfstring{$E \ge 0$}{E<=0}}
\label{sec:supp}

In this section we fix $E \in \DVect(S)$ of amplitude $\le 0$.
We prove the support property \propref{prop:suppinvol} for $E$ (which implies the cosupport property too) and deduce the optimal form of involutivity in this case.

  \subsubsection{Restriction to zero}

    We first compute the inverse image along $\quo{0}_E : \quo{S} \to \quo{E}$:
    \[
      \quo{0}_E^* \FT_{\wh{E}}(\un)
      \cong \FT_{0_S} (\quo{\pi}_{\wh{E}!} (\un))
      \cong \FT_{0_S} (\un\vb{-\wh{E}})
      \cong \un\vb{-\wh{E}},
    \]
    using functoriality as well as the isomorphisms $\quo{\pi}_{\wh{E}}^! \cong \quo{\pi}_{\wh{E}}^*\vb{\wh{E}}$ (Poincaré duality) and $(\quo{\pi}_{\wh{E}})_! (\quo{\pi}_{\wh{E}})^! \cong \id$ (homotopy invariance) for the morphism $\quo{\pi}_{\wh{E}} : \quo{\wh{E}} \to \quo{S}$ (smooth because $\wh{E}$ is of amplitude $\le 0$).

  \subsubsection{Support and cosupport properties}

    Since $E$ is of amplitude $\ge 0$, the zero-section $0_E$ is a closed immersion and $0_{E*} = 0_{E!}$.
    By localization, invertibility of either \eqref{eq:doctrinally} or \eqref{eq:urodelan} is equivalent to the assertion that $\FT_{\wh{E}}(\un)$ is supported on the image of $\quo{0}_E : \quo{S} \to \quo{E}$.

    It will suffice to show that for every residue field $v : \Spec(\kappa) \to \quo{E}$ that factors through the complement of $0_E$, $v^* \FT_{\wh{E}}(\un)$ vanishes.
    Without loss of generality, we may replace $S$ by $\Spec(\kappa)$ and show that for every nowhere zero section $s$ of $E \to S$, the inverse image along $s' : S \xrightarrow{s} E \twoheadrightarrow \quo{E}$ vanishes.

    We have the following commutative diagram:
    \[ \begin{tikzcd}
      S\ar[leftarrow]{r}{a_{\wh{E}}}\ar{d}{s'}
      & \quo{\wh{E}} \ar{r}{\quo{\ev_s}}\ar{d}{\id \times s'}
      & \quo{\A^1_S} \ar[equals]{d}
      \\
      \quo{E}\ar[leftarrow]{r}{\pr_2}
      & \quo{\wh{E}}\times \quo{E} \ar{r}{\ev}
      & \quo{\A^1_S}
    \end{tikzcd} \]
    where the left-hand square is Cartesian.
    Here $\quo{\ev_s}$ is the $\Gm$-quotient of the ``evaluation at $s$'' morphism $\ev_s : \wh{E} \to \A^1_S$, and $a_{\wh{E}}$ is the projection.
    Hence we have
    \begin{equation}\label{eq:quadrans}
      s'^* \pr_{2!} \ev^*
      \cong a_{\wh{E}!} (\id\times s')^* \ev^*
      \cong a_{\wh{E}!} \quo{\ev_s^*}.
    \end{equation}
    Since $s$ is nowhere zero, $\ev_s$ is surjective and its fibre $F = \Fib(\ev_s : \wh{E} \to \A^1_S)$ is of amplitude $\le 0$.
    Since $S$ is affine, it follows that $\ev_s$ admits a section, hence can be identified with the projection $F \fibprod_S \A^1_S \to \A^1_S$.
    Using Poincaré duality and homotopy invariance for the latter we have
    \begin{equation}\label{eq: connective (co)support 1}
      a_{\wh{E}!} \,\quo{\ev_s^*} \cong a_{!} \,\quo{\ev_{s!}} \quo{\ev_s^*} \cong a_{!} \,\quo{\ev_{s!}} \quo{\ev_s^!} \vb{-F} \cong a_{!} \vb{-F}
    \end{equation}
    where $a : \quo{\A^1_S} \to S$ is the projection.
    Combining with \eqref{eq:quadrans} we deduce
    \begin{equation}\label{eq: connective (co)support 2}
      s'^* \FT_{\wh{E}}(\un)
      \coloneqq s'^* \pr_{2!} \ev^*(\quo{j}_*(\un))
      \cong a_{!}\,\quo{j}_*(\un) \vb{-F}.
    \end{equation}
    Since $a$ factors through $\quo{p} : \quo{\A^1_S} \to \quo{S}$, and $\quo{p}_!\,\quo{j}_* \cong 0$ (\propref{prop:j_*(1)}), \eqref{eq: connective (co)support 2} vanishes, as desired. \qed

\subsubsection{Proof of involutivity}

  At this point, combining \propref{prop:suppinvol} with \lemref{lem:cosuppinvol} yields the following form of involutivity:

  \begin{cor}[Involutivity]\label{cor:conninvol}
    Let $E \in \DVect(S)$ be of amplitude $\ge 0$.
    Then there is a canonical isomorphism
    \begin{equation*}
      (-)\vb{-\wh{E}} \to \FT_{\wh{E}} \FT_{E}(-).
    \end{equation*}
  \end{cor}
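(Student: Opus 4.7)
The plan is to deduce Corollary \ref{cor:conninvol} directly from two results already available in the excerpt: the support property (Proposition \ref{prop:suppinvol}) and the cosupport-to-involutivity implication (Lemma \ref{lem:cosuppinvol}). The strategy is first to upgrade the support property to the cosupport property in the $\ge 0$ amplitude regime, then feed that into Lemma \ref{lem:cosuppinvol}, and finally use the explicit identification of the twist $\twbrace{E}$ with a Thom twist that is also contained in Proposition \ref{prop:suppinvol}.

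The first step: since $E$ is of amplitude $\ge 0$, the zero section $0_E \co S \to E$ is a closed immersion, so the exchange transformation $0_{E!} \to 0_{E*}$ is an isomorphism. Dualizing the statement of Proposition \ref{prop:suppinvol}(ii) under this identification shows that the counit
\[
0_{E!}\, 0_E^!\, \FT_{\wh{E}}(\un_{\wh{E}}) \;\longrightarrow\; \FT_{\wh{E}}(\un_{\wh{E}})
\]
is invertible, i.e., that $E$ satisfies the cosupport property in the sense of Proposition \ref{prop:main/cosupp}. (Concretely, the assertion that $\FT_{\wh{E}}(\un)$ is $*$-pulled back from the image of $0_E$ is equivalent, via the localization triangle and $0_{E!} \simeq 0_{E*}$, to the assertion that it is $!$-pulled back from the image of $0_E$.) Thus the hypothesis of Lemma \ref{lem:cosuppinvol} is satisfied.

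The second step is immediate: Lemma \ref{lem:cosuppinvol} yields a canonical isomorphism of functors
\[
(-)\twbrace{E} \;\longrightarrow\; \FT_{\wh{E}} \FT_{E}(-).
\]
It remains to rewrite the twist on the left. By \eqref{eq:conntwist} in Proposition \ref{prop:suppinvol}, we have a canonical isomorphism $\sL^E = \un_S\twbrace{E} \cong \un_S\vb{-\wh{E}}$ in $\bD^{\Gm}(S)$. Pulling back along $\pi_E \co E \to S$ and tensoring identifies the endofunctor $(-)\twbrace{E} = (-)\otimes\pi_E^*(\sL^E)$ of $\bD^{\Gm}(E)$ with the Thom twist $(-)\vb{-\wh{E}}$ (using that Thom twists are stable under pullback). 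Composing yields the desired canonical isomorphism
\[
(-)\vb{-\wh{E}} \;\longrightarrow\; \FT_{\wh{E}} \FT_{E}(-).
\]

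The main obstacle is not in this corollary but in its inputs: once Proposition \ref{prop:suppinvol} and Lemma \ref{lem:cosuppinvol} are available, the combination is essentially formal, the only subtlety being the passage between the support and cosupport formulations, which becomes routine precisely because the $\ge 0$ amplitude hypothesis makes $0_E$ a closed immersion. The genuine content is already absorbed into establishing the cosupport property of $\FT_{\wh{E}}(\un)$ and the kernel computation underlying Lemma \ref{lem:cosuppinvol}; by contrast, the present corollary is the clean packaging of those facts in the amplitude $\ge 0$ case, where the twist simplifies to a Thom twist with no further ambiguity.
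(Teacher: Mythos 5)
Your proposal is correct and takes essentially the same route as the paper: deduce the cosupport property for $E$ from Proposition \ref{prop:suppinvol}(ii) (using that $0_E$ is a closed immersion, so $0_{E!} \simeq 0_{E*}$ and a localization argument upgrades $*$-support to $!$-cosupport), feed it into Lemma \ref{lem:cosuppinvol}, and identify $\sL^E$ with $\un_S\vb{-\wh{E}}$ via \eqref{eq:conntwist}. The only quibble is linguistic: saying $\FT_{\wh{E}}(\un)$ is ``$*$-pulled back from the image of $0_E$'' should be ``$*$-pushed forward along $0_E$'' (i.e., supported on the zero section), but your parenthetical makes the intended localization argument clear and the substance is right.
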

  \begin{proof}
    By \lemref{lem:cosuppinvol}, the cosupport property for $\wh{E}$ yields the canonical isomorphism
    \begin{equation*}
      (-) \otimes \pi_{E}^*(\sL^{E}) \to \FT_{\wh{E}} \FT_{E}(-).
    \end{equation*}
    By \propref{prop:suppinvol}\itemref{item:journalize}, we have the canonical isomorphism
    \begin{equation}
      \sL^{E}
      \cong \pi_{E!} 0_{E*}(\un)\vb{-\wh{E}}
      \cong \pi_{E!} 0_{E!}(\un)\vb{-\wh{E}}
      \cong \un_S\vb{-\wh{E}}.
    \end{equation}
  \end{proof}

\subsection{Cosupport and involutivity for general \texorpdfstring{$E$}{E}}
\label{sec:cosupp}

Let $E \in \DVect(S)$ be an arbitrary derived vector bundle.
We will now prove \propref{prop:main/cosupp}, which implies involutivity (\thmref{thm:invol}) by \lemref{lem:cosuppinvol}.
We will also show that the twist $\sL^E = \un\twbrace{E}$ is $\otimes$-invertible in general (\lemref{lem:L}).

\subsubsection{Proof of Proposition~\ref{prop:main/cosupp}}
\label{ssec:cosupp/proof}

  Let $E \in \DVect(S)$ and let us show that the canonical morphism
  \begin{equation}\label{eq:ideoplastia}
    \counit : 0_{\wh{E}!} 0_{\wh{E}}^! \FT_E(\un_E)
    \to \FT_E(\un_E)
  \end{equation}
  is invertible.
  Equivalently, $\FT_E(\un_E)$ lies in the essential image of the fully faithful functor $0_{\wh{E}!}$ (\corref{cor:chlorine}).

  Let $f : S' \to S$ be a smooth surjection and adopt the notation of \propref{prop:funbase}.
  Applying $f_{\wh{E}}^*$ on the left to the morphism \eqref{eq:ideoplastia} yields, by the base change formula $\Ex^*_! : f_{\wh{E}}^* 0_{\wh{E}!} \cong 0_{\wh{E'}!} f^*$, the exchange isomorphism $\Ex^{*!} : f^* 0_{\wh{E}}^! \cong 0_{\wh{E'}}^! f_{\wh{E}}^*$, and \eqref{eq:BC^*}, a canonical morphism
  \begin{equation}
    0_{\wh{E'}!} 0_{\wh{E'}}^! \FT_{E'/S'}(\un_{E'})
    \to \FT_{E'/S'}(\un_{E'})
  \end{equation}
  which one easily checks is identified with the counit.
  Thus the claim is local on $S$ and we may assume that $S$ is affine.
  In particular, by choosing a global presentation $\cE$ as in \S \ref{ssec: notate dvb}, we may write $E$ as the fibre of a morphism $d : E^- \to E^+$ where $E^-$ is of amplitude $\le 0$ and $E^+$ is of amplitude $\ge 0$.\footnote{namely, if $(\cdots \to \cE^{-1} \to \cE^0 \to \cE^{1} \to \cdots)$ is a global presentation for $\cE$, take $E^- = \Tot_S(\cE^{\leq 0})$ and $E^+ = \Tot_S(\cE^{\geq 1}[1])$ to be the brutal truncations}
  We have the Cartesian squares
  \begin{equation}\label{eq:nieceship}
    \begin{tikzcd}
      E \ar{r}{i}\ar{d}{\pi_E}
      & E^- \ar{d}{d}
      \\
      S \ar{r}{0_{E^+}}
      & E^+,
    \end{tikzcd}
    \qquad
    \begin{tikzcd}
      \wh{E^+} \ar{r}{\wh{d}}\ar{d}{\pi_{\wh{E^+}}}
      & \wh{E^-} \ar{d}{\wh{i}}
      \\
      S \ar{r}{0_{\wh{E}}}
      & \wh{E}
    \end{tikzcd}
  \end{equation}
  where $i$ is a closed immersion (since $E^+$ is of amplitude $\ge 0$) and its dual $\wh{i}$ is a smooth surjection (it is a torsor under $\wh{E^+}$).
  We may thus check \eqref{eq:ideoplastia} is invertible after applying $\wh{i}^*$ on the left; by base change and invertibility of $\Ex^{*!}$ this reduces to show that the morphism
  \begin{equation}\label{eq:Pellaea}
    \counit : \wh{d}_! \wh{d}^! \FT_{E^-} (i_! \un_E)
    \to \FT_{E^-} (i_! \un_E)
  \end{equation}
  is invertible.
  We claim that $\FT_{E^-} (i_! \un_E)$ is in the essential image of $\wh{d}_!$.
  Since $\unit : \un \to \wh{d}^! \wh{d}_!(\un)$ is invertible (\corref{cor:chlorine}), it will follow from adjunction identities that \eqref{eq:Pellaea} is invertible.

  Using $\Ex^{*,\FT}$, $\Ex^*_!$, and the computation of $\FT_{E^+}(\un)$ from \propref{prop:suppinvol} (which applies because $E^+$ is of amplitude $\ge 0$), we first compute:
  \begin{align}
    \FT_{\wh{E^-}} (\wh{d}_! \un_{\wh{E^+}})
    &\cong d^* \FT_{E^+} (\un_{E^+}) \cong d^* 0_{\wh{E^+}!}(\un_{\wh{E^+}}) \vb{-\wh{E^+}} \cong i_! (\un_E)\vb{-\wh{E^+}}.
  \end{align}
  Using involutivity for $\wh{E^-}$ (\corref{cor:conninvol}), we deduce the (canonical) isomorphism
  \begin{equation}\label{eq:hackneyer}
    \FT_{E^-}(i_! \un_E)
    \cong \FT_{E^-}\FT_{\wh{E^-}} (\wh{d}_! \un)\vb{\wh{E^+}}\\
    \cong \wh{d}_!(\un)\vb{\wh{E^+}}\vb{-E^-}.
  \end{equation}
  The claim follows.\qed 

\subsubsection{Proof of Lemma~\ref{lem:L}}
\label{ssec:cosupp/L}

  We prove that $\sL^E \coloneqq \pi_{E!} \FT_{\wh{E}}(\un)$ is $\otimes$-invertible.
  This is equivalent to the assertion that the evaluation morphism
  \begin{equation}\label{eq:shiny}
    \ev : \sL^E \otimes \uHom(\sL^E, \un_S) \to \un_S
  \end{equation}
  is invertible.
  If $f : S' \to S$ is a smooth morphism, then we have the canonical isomorphism
  \begin{equation*}
    f^*\uHom(\sL^E, \un_S)
    \cong \uHom(f^*\sL^E, f^*\un_{S'})
  \end{equation*}
  under which the $*$-pullback of \eqref{eq:shiny} is identified with
  \begin{equation*}
    f^*\sL^E \otimes \uHom(f^*\sL^E, f^*\un_{S'}) \to \un_{S'}.
  \end{equation*}
  Let $E' \coloneqq E \fibprod_S S'$ and let $f_{\wh{E}} : \wh{E'} \to \wh{E}$ be the base change of $f$.
  By \subsectionref{ssec:basechangetwist1} we have $f^*\sL^E \cong \sL^{E'}$.
  This shows that the question of invertibility of \eqref{eq:shiny} is local on $S$.
  In particular, we may assume that $E$ admits a presentation so that there are Cartesian squares
  \begin{equation}
    \begin{tikzcd}
      E \ar{r}{i}\ar{d}{\pi_E}
      & E^- \ar{d}{d}
      \\
      S \ar{r}{0_{E^+}}
      & E^+,
    \end{tikzcd}
    \qquad
    \begin{tikzcd}
      \wh{E^+} \ar{r}{\wh{d}}\ar{d}{\pi_{\wh{E^+}}}
      & \wh{E^-} \ar{d}{\wh{i}}
      \\
      S \ar{r}{0_{\wh{E}}}
      & \wh{E}
    \end{tikzcd}
  \end{equation}
  where $E^-$ is of amplitude $\leq 0$ and $E^+$ is of amplitude $\geq 0$, as above in \eqref{eq:nieceship}.
  By symmetry, we may as well show the claim for $\wh{E}$ instead, i.e., that $\sL^{\wh{E}} = \pi_{\wh{E}!} \FT_{E}(\un)$ is $\otimes$-invertible.
  After $*$-pullback along the smooth surjection $\pi_{\wh{E^+}} : \wh{E^+} \to S$, we have:
  \begin{align*}
    \pi_{\wh{E^+}}^* \sL^{\wh{E}}
    &\cong \pi_{\wh{E^+}}^* 0_{\wh{E}}^! \FT_{E}(\un)\tag{\corref{cor:contractder}}\\
    &\cong \wh{d}^! \wh{i}^* \FT_{E}(\un)\tag{$\Ex^{*!}$}\\
    &\cong \wh{d}^! \FT_{E^-}(i_!\un)\tag{\ref{eq:funA}}\\
    &\cong \wh{d}^! \wh{d}_! (\un) \vb{\wh{E^+}}\vb{-E^-}\tag{$\star$}\\
    &\cong \un\vb{\wh{E^+}}\vb{-E^-}\tag{\corref{cor:chlorine}},
  \end{align*}
   which is evidently $\otimes$-invertible. Here $(\star)$ is from the isomorphism $\FT_{E^-}(i_!\un) \cong \wh{d}_!(\un)\vb{\wh{E^+}}\vb{-E^-}$ \eqref{eq:hackneyer}.

\subsection{Base change and functoriality, part 2}
\label{sec:funct2}

Using involutivity, we conclude the proofs of Propositions~\ref{prop:funbase} and \ref{prop:fun}.
Specifically, we will use the fact that the functor
\begin{equation*}
  \FT_{\wh{E}}(\bullet)\twbrace{-E}
\end{equation*}
is inverse to $\FT_E$ (\corref{cor:inverse}).

\subsubsection{Proof of~\texorpdfstring{\eqref{eq:BC_*}}{BC\_*}}

  Let the notation be as in \propref{prop:funbase}.
  Recall the isomorphism
  \begin{equation*}
    f^*_E((-)\twbrace{E}) \cong f_E^*(-)\twbrace{E'}
  \end{equation*}
  of \lemref{lem:twistbase}, proven in \subsectionref{ssec:basechangetwist1}.
  Passing to right adjoints, we have:
  \begin{equation}\label{eq:horsepond}
    f_{E*}(-)\twbrace{-E} \cong f_{E*}((-)\twbrace{-E'})
  \end{equation}

  Recall also the $*$-base change natural isomorphism \eqref{eq:BC^*}
  \begin{equation*}
    f_{\wh{E}}^* \FT_E \cong \FT_{E'} f_E^*
  \end{equation*}
  proven in \S\ref{ssec:basechange1}.
  Passing to right adjoints yields
  \begin{equation*}
    f_{E*} (\FT_{\wh{E'}}(-)\twbrace{-E'})
    \cong \FT_{\wh{E}}(f_{\wh{E}*}(-))\twbrace{-E}.
  \end{equation*}
  Pulling out the twist using \eqref{eq:horsepond}, we deduce
  \begin{equation*}
    f_{E*} (\FT_{\wh{E'}}(-))
    \cong \FT_{\wh{E}}(f_{\wh{E}*}(-)).
  \end{equation*}
  Equivalently, replacing $E$ by $\wh{E}$ gives the natural isomorphism
  \begin{equation}
    f_{\wh{E}*} (\FT_{E'}(-))
    \cong \FT_{E}(f_{E*}(-)).
  \end{equation}

\subsubsection{Proof of \texorpdfstring{\eqref{eq:BC^!}}{(BC\^!)}}

  This follows from \eqref{eq:BC_!} exactly as above.

\subsubsection{Proof of~\texorpdfstring{\eqref{eq:funB}}{(Fun!)}}

  Passing to right adjoints from \eqref{eq:funA}
  \begin{equation*}
    \Ex^{*,\FT} : \wh{\phi}^* \circ {\FT_{E'}} \xrightarrow{\sim} {\FT_{E}} \circ \phi_!
  \end{equation*}
  yields the canonical isomorphism
  \begin{equation}\label{eq:lanced}
    \phi^! \circ {\FT_{\wh{E}}\twbrace{-\wh{E}}} \xrightarrow{\sim} {\FT_{\wh{E'}}\twbrace{-\wh{E'}}} \circ \wh{\phi}_*.
  \end{equation}
  Equivalently, applying this to $\wh{\phi}$ in place of $\phi$ we get the canonical isomorphism
  \begin{equation}
    \Ex^{!,\FT} : \wh{\phi}^{!} \circ {\FT_{E'}\twbrace{-E'}} \xrightarrow{\sim} {\FT_{E}\twbrace{-E}} \circ \phi_*.
  \end{equation}
  \qed 

\subsubsection{Proof of~\texorpdfstring{\eqref{eq:funA'}}{(Fun*)}}

  Begin with the isomorphism \eqref{eq:lanced} above.
  Applying $\FT_{E'}$ on the left and $\FT_{E}$ on the right, then applying the natural isomorphism of Theorem \ref{thm:invol} to $\FT_{E'} \circ \FT_{\wh{E'}}$ and $(-1)^{\rank E}$ times the natural isomorphism of Theorem \ref{thm:invol} to $\FT_{\wh{E}} \circ \FT_E$ (cf. \cite[\S A.2.6]{FYZ3}), and then untwisting, we obtain the natural isomorphism
  \begin{equation*}
    \Ex^{\FT,!} : {\FT_{E'}} \circ \phi^!
    \xrightarrow{\sim} \wh{\phi}_* \circ {\FT_{E}}.
  \end{equation*}
  
  \qed 

\subsubsection{Proof of~\texorpdfstring{\eqref{eq:funB'}}{(Fun!)}}

  Begin with the natural isomorphism \eqref{eq:funA}:
  \begin{equation*}
    \Ex^{*,\FT} : \wh{\phi}^* \circ {\FT_{E'}} \xrightarrow{\sim} {\FT_{E}} \circ \phi_!.
  \end{equation*}
  Applying $\FT_{E'}\twbrace{-E}$ on the left and $\FT_{E}\twbrace{-E}$ on the right then applying the natural isomorphism of Theorem \ref{thm:invol} to $\FT_{E'} \circ \FT_{\wh{E'}}$ and $(-1)^{\rank E}$ times the natural isomorphism of Theorem \ref{thm:invol} to $\FT_{\wh{E}} \circ \FT_E$, and then untwisting, we obtain the natural isomorphism:
  \begin{equation*}
    \Ex^{\FT,*} : {\FT_{E'}\twbrace{-E'}} \circ \phi^*
    \xrightarrow{\sim} \wh{\phi}_! \circ {\FT_{E}\twbrace{-E}}.
  \end{equation*}
  \qed 

\subsubsection{Proof of Lemma~\ref{lem:twistbase}}
\label{ssec:basechangetwist2}

  Let $f : S' \to S$ be a morphism and adopt the notation of \propref{prop:funbase}.
  We prove the remaining isomorphisms of \lemref{lem:twistbase}.
  We already have the isomorphisms
  \begin{equation}\label{eq:cankerwort}
    f^*_E((-)\twbrace{E}) \cong f_E^*(-)\twbrace{E'},
    \qquad
    f_{E!}(-)\twbrace{E} \cong f_{E!}((-)\twbrace{E'})
  \end{equation}
  by \subsectionref{ssec:basechangetwist1}.
  
  \subsubsection{$*$-Push}
  We wish to produce a natural isomorphism
  \begin{equation*}
    f_{E*}(-)\twbrace{E} \cong f_{E*}((-)\twbrace{E'})
  \end{equation*}
  which is equivalent by passage to left adjoints to a natural isomoprhism
  \begin{equation*}
    f_E^*((-)\twbrace{-E}) \cong f_E^*(-)\twbrace{-E'}.
  \end{equation*}
  Twisting by $\twbrace{E'}$ on both sides, this is equivalent to a natural isomorphism 
  \begin{equation*}
f_E^*((-)\twbrace{-E})\twbrace{E'}    \cong f_E^*(-)  ,
  \end{equation*}
  which we have from \eqref{eq:cankerwort}.\qed

  \subsubsection{$!$-Pull}
  This follows from the $!$-push version by the same argument as above. \qed

\section{Fourier analysis for motives}\label{sec: Fourier for motives}

We now specialize the discussion again to the weave $\Dmot{-}$.
We will refer to the homogeneous Fourier transform in this context as the ``motivic homogeneous Fourier transform''. In this section we analyze the interaction between the motivic Fourier transform and cohomological correspondences, and the motivic sheaf-cycle correspondence. In \S \ref{ssec: coco} we recall the notion of cohomological co-correspondences. \S \ref{ssec: renormalized FT} and \S \ref{ssec: global presentation} play a small technical role. In \S \ref{sssec: FT of cc} and \S\ref{sssec: FT functoriality of cc} we study the motivic Fourier transform of cohomological correspondences and their compatibility with pushforwards and pullbacks. Finally in \S \ref{sec: arithmetic FT} we define a homogeneous version of the \emph{arithmetic Fourier transform} from \cite[\S 9]{FYZ3}, and establish its compatibility with the motivic Fourier transform under the sheaf-cycle correspondence. 

\subsection{The renormalized homogeneous Fourier transform}\label{ssec: renormalized FT}

\sssec{}

Let $S$ be a derived Artin stack locally of finite type over a field.
For a derived vector bundle $E \rightarrow S$, the \emph{homogeneous motivic Fourier transform} is the functor
\begin{equation*}
  \FT_E : \Dmot{\quo{E}} \to \Dmot{\quo{\wh{E}}}
\end{equation*}
given by the construction of \S \ref{sssec:FT} in the case of the weave $\Dmot{-}$.

\sssec{}

The \emph{renormalized} homogeneous motivic Fourier transform 
\[
\rFT_E \co \Dmot{\quo{E}} \rightarrow \Dmot{\quo{\wh{E}}}
\]
is defined as
\begin{equation*}
  \rFT_E(-) \coloneqq \FT_E(-)[\rk(E)].
\end{equation*}
As all sheaf-theoretic operations are compatible with shifts, the results of \S \ref{sec: derived homogeneous FT} all have straightforward reformulations in terms of $\rFT$. 

\begin{rem}
The renormalization is designed to match the conventions for the $\ell$-adic Fourier transform in \cite{Lau87, FYZ3}.
Indeed, in the $\ell$-adic context the ``average'' of the Artin--Schreier sheaf $\cL_\psi$ is isomorphic to $\quo{j}_* (\Q)[-1]$ where $\quo{j} \co \quo{\G_m} \rightarrow \quo{\A^1}$ by \cite[Lemme 2.3]{Laumon} (notation as in \ref{notat: quo}).
Hence $\rFT_E$ is compatible with the derived $\ell$-adic Fourier transform of \cite{FYZ3} under $\ell$-adic realization.
On the other hand, one can show that $\FT_E$ (non-renormalized) is compatible with the Fourier--Sato transform of \cite{KashiwaraSchapira,dimredcoha} under Betti realization (see \cite{fouriercomp}).
\end{rem}

\begin{rem}
The renormalization is arguably less natural from the perspective of weights, but more natural from the perspective of perverse sheaves.
\end{rem}

\subsection{Further functoriality properties}\label{ssec: global presentation} 

We formulate some functoriality results that we can prove under globally presented assumptions (which are probably unnecessary for the statements to be true).
We recall the notion of global presentations of derived vector bundles from \S \ref{ssec: notate dvb}.

\sssec{Base change}

Consider a Cartesian square of globally presented derived vector bundles over $S$, along with the dual Cartesian square:
\[
\begin{tikzcd}
& B \ar[dl, "g'"'] \ar[dr, "f'"] \\
A  \ar[dr, "f"'] & & D  \ar[dl, "g"] \\
& C
\end{tikzcd}\hspace{1cm} \text{and} \hspace{1cm}
\begin{tikzcd}
& \wh{C} \ar[dl, "\wh{f}"'] \ar[dr, "\wh{g}"] \\
\wh{A}  \ar[dr, "\wh{g}'"'] & & \wh{D}  \ar[dl, "\wh{f}'"] \\
& \wh{B}
\end{tikzcd}
\]
Then the base change formula gives natural isomorphisms 
\begin{equation}\label{eq: app BC}
g^* f_! \cong (f')_! (g')^*  \quad \text{and} \quad \wh{g}_! \wh{f}^* \cong (\wh{f}')^* \wh{g}'_!
\end{equation}
Let $d \coloneqq d(f)$, $\delta \coloneqq d(g)$. According to \S \ref{ssec:main/fun}, there are natural isomorphisms 
\begin{equation}\label{eq: app FT}
\wh{g}_! \wh{f}^*\rFT_A \cong  \rFT_D g^* f_! [d+\delta](\delta)  \quad \text{and} \quad  (\wh{f}')^* \wh{g}'_! \rFT_A \cong \rFT_D f'_! (g')^* [d+\delta](\delta).
\end{equation}

\begin{prop}\label{prop: ft pbc} 
Assume that $f$ and $g$ are globally presented (in particular, $A,C,D$ are globally presented). Then there is a commutative diagram of functors $\Dmot{A} \rightarrow \Dmot{\wh{D}}$
\begin{equation}\label{eq: FT BC compatibility}
\begin{tikzcd}
\wh{g}_! \wh{f}^*\rFT_A  \ar[r, "\sim"]   \ar[d, "\sim"]  &   \rFT_D g^* f_! [d+\delta](\delta)  \ar[d, "\sim"]  \\
(\wh{f}')^* \wh{g}'_! \rFT_A \ar[r, "\sim"]   &  \rFT_D f'_! (g')^* [d+\delta](\delta) 
\end{tikzcd}
\end{equation}
where the identifications are as in \eqref{eq: app BC} and \eqref{eq: app FT}.
\end{prop}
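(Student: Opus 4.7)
My plan is to expand both paths around \eqref{eq: FT BC compatibility} into composites of base-change and projection-formula isomorphisms, and then verify commutativity by coherence in the six-functor formalism. Recall that for a morphism $\phi \co E' \to E$ of globally presented derived vector bundles, the functoriality isomorphism $\Ex^{*,\FT} \co \wh{\phi}^* \rFT_{E'} \xrightarrow{\sim} \rFT_E \phi_!$ from \eqref{eq:funA} was built in \S \ref{ssec:easy/fun} from the kernel identification $(\id \times \phi)^* \sP_E \cong (\wh{\phi} \times \id)^* \sP_{E'}$ together with base change and projection formula on the Cartesian square relating $\quo{\wh{E}} \times_S \quo{E'}$ to $\quo{\wh{E}} \times_S \quo{E}$ and $\quo{\wh{E'}} \times_S \quo{E'}$. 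A dual argument, using involutivity of $\rFT$ (\thmref{thm:invol}), gives the inverse functoriality $\wh{\phi}_! \rFT_E \xrightarrow{\sim} \rFT_{E'} \phi^* \cdot \text{shift}$, as derived in \S \ref{sec:funct2}.

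Both horizontal isomorphisms in \eqref{eq: FT BC compatibility} then decompose into two Fourier functoriality isomorphisms apiece: the top arrow is $\wh{f}^* \rFT_A \xrightarrow{\sim} \rFT_C f_! \cdot \text{sh}$ (for $f \co A \to C$) followed by $\wh{g}_! \rFT_C \xrightarrow{\sim} \rFT_D g^* \cdot \text{sh}$ (for $g \co D \to C$), while the bottom arrow is $\wh{g}'_! \rFT_A \xrightarrow{\sim} \rFT_B (g')^* \cdot \text{sh}$ (for $g' \co B \to A$) followed by $(\wh{f}')^* \rFT_B \xrightarrow{\sim} \rFT_D f'_! \cdot \text{sh}$ (for $f' \co B \to D$). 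The two vertical isomorphisms are standard base change across the Cartesian squares $B = A \times_C D$ (on the right) and $\wh{C} = \wh{A} \times_{\wh{B}} \wh{D}$ (on the left); the latter is Cartesian because the dual of a Cartesian square of globally presented perfect complexes is again Cartesian (since in the stable $\infty$-category of perfect complexes, Cartesian squares coincide with co-Cartesian ones, and duality is an equivalence). The renormalization shifts $[d+\delta](\delta)$ cancel on both sides because $\rk A + \rk D = \rk B + \rk C$ for a Cartesian square of globally presented derived vector bundles.

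Unwinding each Fourier functoriality isomorphism and each base change into its definition produces a large diagram of iterated base-change and projection-formula morphisms across products $\quo{\wh{X}} \times_S \quo{Y}$ for $X, Y \in \{A, B, C, D\}$. Commutativity of \eqref{eq: FT BC compatibility} then follows from: (i) associativity of base-change isomorphisms across composed Cartesian squares (the motivic analogue of \eqref{eq: comp push-push}); (ii) naturality of the projection formula; and (iii) the compatibility of the kernel-pullback identification $(\id \times \phi)^* \sP_E \cong (\wh{\phi} \times \id)^* \sP_{E'}$ with further base change, which in turn reduces to the evident commutativity of the underlying evaluation diagram $\ev_E$ applied over the various products.

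The main obstacle I anticipate is purely combinatorial: one must carefully set up a three-dimensional cube of Cartesian squares encoding both paths through \eqref{eq: FT BC compatibility} and verify that each face commutes by invoking the formal axioms of the six-functor formalism. The global-presentation hypothesis is used only to ensure that duals and Cartesian-square dualizations of derived vector bundles behave as expected, so that the constructions of \S \ref{sec: derived homogeneous FT} are directly available in all the relevant places; the commutativity itself is then purely formal, requiring no new sheaf-theoretic input beyond what was already developed to prove \propref{prop:fun} and \propref{prop:funbase}.
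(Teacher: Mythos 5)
Your sketch captures the correct high-level strategy, which is indeed the one behind the cited \cite[Proposition~6.6.3]{FYZ3}: both horizontal arrows decompose as composites of two Fourier-functoriality isomorphisms, and the verticals are base change for the original Cartesian square and its dual. Your observation that $\wh{C} = \wh{A} \times_{\wh{B}} \wh{D}$ is Cartesian — because dualization is a contravariant exact auto-equivalence of $\Perf(S)$, and Cartesian squares coincide with coCartesian ones in a stable $\infty$-category — is the right reason the left vertical base change is even defined. The rank-additivity remark is equivalent to the more directly relevant facts $d(f)=d(f')$ and $d(g)=d(g')$ (automatic since $f', g'$ are base changes of $f,g$), which is what makes the shifts agree.

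There is one place where I would push back on the claim that the remaining work is \emph{purely} combinatorial in the sense of base-change and projection-formula bookkeeping. In your two-step decomposition, the step involving $\wh{g}_!$ (resp.\ $\wh{g}'_!$) uses the isomorphism $\Ex^{\FT,*}$ of \eqref{eq:funB'}, which was \emph{not} constructed directly from the kernel identification $(\id\times\phi)^*\sP_E \cong (\wh{\phi}\times\id)^*\sP_{E'}$ but was derived in \S\ref{sec:funct2} from \eqref{eq:funA} by untwisting via involutivity (\thmref{thm:invol}), including a sign $(-1)^{\rk E}$ in the normalization. So unwinding ``each Fourier functoriality isomorphism into its definition'' also requires unwinding involutivity, which involves the kernel $\sP''$ of \S\ref{sec:cosuppinvol} and the cosupport property, not just base change of $\sP_E$. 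Your point (iii) — reducing to commutativity of the evaluation diagrams — is accurate for the $\Ex^{*,\FT}$ steps but does not by itself account for the $\Ex^{\FT,*}$ steps; a complete verification must also check that the involutivity untwists on the two paths are compatible. This is the genuinely delicate part that the cited proposition in \cite{FYZ3} carries out, and it is the reason the global-presentation hypothesis is invoked (it guarantees the splitting $E \cong \mathrm{fib}(E^- \to E^+)$ used to establish cosupport and to make the intermediate calculations with $\sP''$ tractable), rather than merely ensuring duals are well behaved.
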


\begin{proof}
The proof of \cite[Proposition 6.6.3]{FYZ3} works verbatim. 
\end{proof}

\sssec{Gysin vs. forgetting supports}

Recall from \cite[\S 6.4]{FYZ3} that a map $f \co E' \rightarrow E$ of derived vector bundles over $S$ is quasi-smooth if and only if the dual map $\wh{f} \co \wh{E} \rightarrow \wh{E'}$ is separated.
In this case, we have a Gysin natural transformation (see \S \ref{ssec:Gys})
\[ \mrm{gys}_f \co f^* \rightarrow f^! \tw{-d(f)} \]
and a ``forget supports'' natural transformation
\[ \mrm{fsupp}_{\wh{f}} \co \wh{f}_! \rightarrow \wh{f}_*. \]

\begin{prop}\label{prop: fourier of Gysin}
Let $f \co E' \rightarrow E$ be a globally presented quasi-smooth map of derived vector bundles and let $\wh{f} \co \wh{E} \rightarrow \wh{E}'$ be the dual map to $f \co E' \rightarrow E$. Then there is a commutative diagram of functors $\Dmot{\quo{E}} \rightarrow \Dmot{\quo{\wh{E}'}}$
\begin{equation}\label{eq: fourier of gysin}
\begin{tikzcd}
\wh{f}_! \rFT_{E} \ar[d, "\sim"] \ar[r, "\on{fsupp}_{\wh{f}}"] & \wh{f}_* \rFT_{E} \ar[d, "\sim"] \\
\rFT_{E'} f^* [d(f)](d(f)) \ar[r, "{\mrm{gys}_f}"] & \rFT_{E'} f^! [-d(f)]
\end{tikzcd}
\end{equation}
\end{prop}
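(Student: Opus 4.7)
The plan is to prove this by unraveling the vertical isomorphisms, reducing to building blocks via the global presentation, and checking the commutativity in each local case.

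First I would identify the vertical isomorphisms. The left vertical $\wh{f}_! \rFT_E \cong \rFT_{E'} f^*[d(f)](d(f))$ is the functoriality isomorphism $\Ex^{\FT,*}$ from \eqref{eq:funB'}, renormalized via $\rFT = \FT[\rk]$ and with the twist $(-)\twbrace{-E}$ identified with the Thom twist $\vb{\rk(E)}$ using Proposition \ref{prop:gluetwist} (applicable to $\bD = \Dmot{-}$ by Lemma \ref{lem:Qplus heart}, and compatible with duals since $\rk(E)-\rk(E') = d(f)$). Similarly the right vertical is $\Ex^{\FT,!}$ from \eqref{eq:funA'}. The claim is that these Fourier-functoriality isomorphisms intertwine the Gysin map $\mrm{gys}_f$ with the forget-supports map $\mrm{fsupp}_{\wh{f}}$.

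Second, using the global presentation hypothesis, I would factor $f$ as a composite of two building blocks: a smooth morphism $s$ and a quasi-smooth closed embedding $i$ arising from a zero section of a classical vector bundle over $E'$. By the compatibility of Gysin with composition (\S \ref{ssec:Gys}) and of $\mrm{fsupp}$ with composition (via the exchange transformations), as well as Proposition \ref{prop:fun} applied to compositions, it suffices to check commutativity for each factor. In the smooth case, $\mrm{gys}_s$ is the Poincaré duality isomorphism $s^*\vb{d(s)} \cong s^!$ and $\wh{s}$ is a closed embedding so $\mrm{fsupp}_{\wh{s}}$ is an isomorphism; the commutativity reduces to the fact that $\Ex^{\FT,*}$ and $\Ex^{\FT,!}$ are constructed in \S \ref{sec:funct2} by passage to adjoints from the same basic transformation \eqref{eq:funA}, and so they differ precisely by the Poincaré duality identification. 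In the zero-section case, both sides can be computed explicitly: the Fourier transform of a zero-section pushforward is described by Proposition \ref{prop:suppinvol}, and the forget-supports map on the dual side is computed via the Contraction Principle (Corollary \ref{cor:contractder}).

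The main obstacle will be the coherence check in the zero-section case, where the construction of $\Ex^{\FT,*}$ in \S \ref{sec:funct2} passes through the involutivity isomorphism \eqref{eq:hygrothermal}, and one must carefully match this manipulation with the explicit contraction-principle identifications in Proposition \ref{prop:suppinvol} and \S \ref{sec:supp}. This is the motivic analogue of a calculation carried out for $\ell$-adic sheaves in \cite[\S 6]{FYZ3}, and the same formal argument should adapt verbatim, with attention to the renormalization $\rFT = \FT[\rk]$ and to the signs introduced when applying \eqref{eq:hygrothermal} on both sides of the involutivity square. An alternative approach that avoids unpacking the coherences is to characterize $\mrm{gys}_f$ axiomatically by its compatibility with composition and its value on the universal zero section, and then verify those two properties for the transported transformation $\mrm{fsupp}_{\wh{f}}$ under the Fourier equivalence — this bypasses explicit coherence diagrams and reduces the whole proposition to the zero-section case, which can be handled by direct computation.
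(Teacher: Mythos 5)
Your proposal takes essentially the same route as the paper: the paper's proof of this proposition is simply ``The proof of \cite[Proposition~6.4.2]{FYZ3} works verbatim,'' and your outline — unwinding the vertical isomorphisms as $\Ex^{\FT,*}$ and $\Ex^{\FT,!}$, using the global presentation to factor $f$ into a smooth morphism and a zero-section closed embedding, and checking commutativity in each building block — is precisely the content of that FYZ3 argument transported to the motivic setting, as you yourself note. Your alternative axiomatic characterization of $\mrm{gys}_f$ is a reasonable packaging of the same reduction, not a different proof.
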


\begin{proof}
The proof of \cite[Proposition 6.4.2]{FYZ3} works verbatim. 
\end{proof}

\subsection{Cohomological co-correspondences}\label{ssec: coco}
A \emph{co-correspondence} between derived Artin stacks $A_{0}$ and $A_{1}$ is a diagram
\begin{equation}
\begin{tikzcd}
A_0 \ar[r, "c'_{1}"] & C' & A_{1} \ar[l, "c'_{0}"']
\end{tikzcd}
\end{equation}
We define a \emph{cohomological co-correspondence} from $\cK_0 \in \Dmot{A_0}$ to $\cK_1 \in \Dmot{A_1}$ to be an element of $\Hom_{C'} (c'_{1!} \cK_0, c'_{0*} \cK_1)$. Let
\begin{equation}
\CoCorr_{C'}(\cK_{0}, \cK_{1})\coloneqq\Hom_{C'} (c'_{1!} \cK_0, c'_{0*} \cK_1).
\end{equation}

\subsubsection{Correspondences versus co-correspondences}\label{sssec: corr vs cocorr} To see the relation between cohomological correspondences and co-correspondences, suppose we have a Cartesian square
\begin{equation}\label{eq: corr and co-corr diagram}
\begin{tikzcd}
& C^{\flat} \ar[dl, "c_0"'] \ar[dr, "c_1"] \\
A_0 \ar[dr, "c_1'"']  & & A_1 \ar[dl, "c_0'"] \\
& C^{\sharp}
\end{tikzcd}
\end{equation}
Then for $\cK_0 \in \Dmot{A_0}$ and $\cK_1 \in \Dmot{A_1}$, there is a canonical isomorphism of vector spaces
\begin{equation}\label{eq: coh corr to coh co-corr}
\g_{C}: \Corr_{C^{\flat}}(\cK_{0}, \cK_{1})\isom \CoCorr_{C^{\sharp}}(\cK_{0}, \cK_{1})
\end{equation}
given by the composition below, where the isomorphisms come from adjunctions and proper base change:
\begin{equation*}
\Hom_{C^{\flat}}(c_0^* \cK_0 , c_1^! \cK_1) \cong \Hom_{A_1}(c_{1!} c_0^* \cK_0 , \cK_1) \cong  \Hom_{A_1}((c_0')^* c'_{1!}  \cK_0 , \cK_1)  \cong  \Hom_{C^{\sharp}} (c'_{1!} \cK_0, c'_{0*} \cK_1).
\end{equation*}
Note that a correspondence of \emph{derived vector bundles} (over some base $S$) can always be completed to a Cartesian square of the form \eqref{eq: corr and co-corr diagram} by taking $C^\sharp$ to be the pushout in the \inftyCat of derived vector bundles over $B$.

\subsection{Fourier transform of cohomological correspondences}\label{ssec: FT of cc}

In \S \ref{ssec: coco} we defined the notion of cohomological co-correspondence. These arise naturally as the Fourier transforms of cohomological correspondences, as we now explain. 
 
\sssec{}\label{sssec: FT of cc}

Suppose we are given a map of correspondences of derived Artin stacks
\begin{equation}
\xymatrix{
E_{0}\ar[d] & C^{\flat}\ar[d] \ar[l]_{p_{0}}\ar[r]^{p_{1}} & E_{1}\ar[d]\\
S_{0} & C_{S}\ar[l]_{h_{0}}\ar[r]^{h_{1}} & S_{1}} 
\end{equation}
where $E_0, C^{\flat}$ and $E_1$ are derived vector bundles on $S_0, C_{S}$ and $S_1$ respectively. Assume the maps $p_{0}$ and $p_{1}$ are linear. 

Let $\wt{E}_0$ and  $\wt{E}_1$ be the pullbacks of $E_{0}$ and $E_{1}$ to $C_S$ via $h_{i}$. We can canonically extend the correspondence $E_{0}\xleftarrow{p_{0}}C^{\flat}\xr{p_{1}} E_{1}$ to a commutative diagram
 \begin{equation}\label{eq: FT cc 1}
 \begin{tikzcd}
& &  C^{\flat}  \ar[dl, "\wt{p}_0"]   \ar[dr, "\wt{p}_1"']  \ar[ddll, bend right, "p_0"'] \ar[ddrr, bend left, "p_1"] \\ 
& \wt{E}_0 \ar[dr, "\wt p'_1"'] \ar[dl, "h_0^E"] && \wt{E}_1 \ar[dl, "\wt p'_0"]  \ar[dr, "h_1^E"'] \\ 
 E_0 & & C^{\sharp}  & & E_1
 \end{tikzcd}
 \end{equation}
by defining $C^{\sh}$ to be the pushout of the diagram $\wt E_{0}\xleftarrow{\wt p_{0}}C^{\flat}\xr{\wt p_{1}} \wt E_{1}$, taken in the \inftyCat of derived vector bundles over $C_{S}$, so that the inner diamond is (derived) Cartesian.
 
Dualizing \eqref{eq: FT cc 1}, we get a commutative diagram
\begin{equation}
 \begin{tikzcd}
& &  \wh {C^{\sh}} \ar[dl, "\wh{\wt p'_1}"]   \ar[dr, "\wh{\wt p'_0}"']  \ar[ddll, bend right, "\wh{p'_1}"'] \ar[ddrr, bend left, "\wh{p'_0}"]\\ 
& \wh{\wt{E}}_0 \ar[dr, "\wh{\wt{p}}_0"'] \ar[dl, "h_0^{\wh E}"] && \wh{\wt{E}}_1 \ar[dl, "\wh{\wt{p}}_1"]  \ar[dr, "h_1^{\wh E}"'] \\ 
 \wh{E}_0 & & \wh{C^{\flat}}  & & \wh{E}_1
 \end{tikzcd}
\end{equation}
where the inner diamond is Cartesian.

Given $\cK_{i}\in \Dmot{\quo{E_{i}}}$ for each $i \in \{0,1\}$, we define an isomorphism of vector spaces
\begin{equation}\label{eq: FT of corr}
\rFT_{C^{\flat}}: \Corr_{\quo{C^{\flat}}}(\cK_{0}, \cK_{1})\isom \Corr_{\quo{\wh{C}^{\sharp}}}(\rFT_{E_{0}}(\cK_{0}), \rFT_{E_{1}}(\cK_{1})\sm{[d(\wt p_0) + d(\wt p_1)](d(\wt p_0))})
\end{equation}
as the composite of the isomorphisms
\begin{eqnarray*}
\Corr_{\quo{C^{\flat}}}(\cK_{0}, \cK_{1})&=&\Corr_{\quo{ C^{\flat}}}(( h_0^E)^* \cK_0,(h_1^E)^! \cK_1 )\\
\text{(\S\ref{ssec:main/fun}) } \implies &\xr{\rFT_{C^{\flat}}}& \CoCorr_{\quo{\wh{C^{\flat}}}}(\rFT_{\wt E_{0}}(( h_0^E)^* \cK_0),\rFT_{\wt E_{1}}((h_1^E)^! \cK_1 )\sm{[d(\wt p_0) + d(\wt p_1)](d(\wt p_0))})\\
\text{(\S\ref{sssec: corr vs cocorr}) } \implies &\cong& \Corr_{\quo{\wh{C^{\sh}}}}(\rFT_{\wt E_{0}}(( h_0^E)^* \cK_0),\rFT_{\wt E_{1}}((h_1^E)^! \cK_1 )\sm{[d(\wt p_0) + d(\wt p_1)](d(\wt p_0))})\\
\text{(\S\ref{ssec:main/funbase})} \implies &\cong & \Corr_{\quo{\wh{C^{\sh}}}}((h_0^{\wh E})^* \rFT_{E_0}(\cK_0), (h_1^{\wh E})^! \rFT_{E_1}(\cK_1)\sm{[d(\wt p_0) + d(\wt p_1)](d(\wt p_0))})\\
&=& \Corr_{\quo{\wh{C^{\sharp}}}}(\rFT_{E_{0}}(\cK_{0}), \rFT_{E_{1}}(\cK_{1})\sm{[d(\wt p_0) + d(\wt p_1)](d(\wt p_0))}).
\end{eqnarray*}

\subsubsection{Functoriality}\label{sssec: FT functoriality of cc}
We state and prove functorial properties of the Fourier transform of cohomological correspondences \eqref{eq: FT of corr}. Consider a diagram of maps of correspondences of derived Artin stacks
\begin{equation}\label{linear corr varying base}
\xymatrix{ 
E_{0}\ar[d]_{f_{0}}  & C^{\flat}\ar[l]_{p_{0}}\ar[r]^{p_{1}}\ar[d]^{f^{\flat}}  & E_{1}\ar[d]^{f_{1}}\\
F_{0}\ar[d]  & D^{\flat}\ar[l]_{q_{0}}\ar[r]^{q_{1}}\ar[d]^{} & F_{1}\ar[d]\\
S_{0} & C_{S}\ar[l]_{h_{0}}\ar[r]^{h_{1}} & S_{1} 
}
\end{equation}
where $E_{i}$ and $F_{i}$ are derived vector bundles over $S_{i}$ (for $i=0,1$),  and $C^{\flat}$ and $D^{\flat}$ are derived vector bundles over $C_{S}$. All maps between derived vector bundles are assumed to be linear.

Let $\wt{E}_i \rightarrow C_S$, $\wt{F}_i \rightarrow C_S$ and $\wt f_{i}: \wt{E}_i \rightarrow \wt{F}_{i}$ be the base changes of $E_{i}, F_{i}$ and $f_{i}$ along $h_i \co C_S \rightarrow S_i$. Using the discussion in \S\ref{sssec: FT of cc}, we can canonically extend the upper part of the diagram \eqref{linear corr varying base} to a commutative diagram 
\begin{equation}\label{eq: FT cc functoriality diagram varying base}
 \begin{tikzcd}
& &  C^{\flat}  \ar[dl, "\wt{p}_0"] \ar[dd, phantom, "\di"]  \ar[dr, "\wt{p}_1"']  \ar[ddll, bend right, "p_0"']   \ar[ddrr, bend left, "p_1"] \ar[ddd, "f^{\flat}"', bend left] \\ 
& \ar[dl, "h_0^E"]  \wt{E}_0 \ar[dr, "p'_{1}"'] \ar[ddd, "\wt{f}_0"] && \wt{E}_1 \ar[dl, "p'_0"]   \ar[ddd, "\wt{f}_1"]  \ar[dr, "h_1^E"'] \\ 
E_0 \ar[ddr, phantom, "\di"]\ar[ddd, "f_0"]  &  & \wt C^{\sharp}  \ar[ddd, "\wt{f}^{\sh}"', bend right] & & E_1 \ar[ddd, "f_1"]\ar[ddl, phantom, "\di"] \\
& &  D^{\flat}  \ar[dd, phantom, "\di"]\ar[dl, "\wt q_0"]   \ar[dr, "\wt q_{1}"']  \ar[ddll, bend right, "q_0"']  \ar[ddrr, bend left, "q_1"] \\ 
&\ar[dl, "h_0^{F}"]   \wt{F}_0 \ar[dr, "\wt q'_1"'] && \wt{F}_1 \ar[dl, "\wt q'_0"] \ar[dr, "h_1^{F}"']    \\ 
F_0  & &  \wt{D}^{\sharp} & & F_1
 \end{tikzcd}
\end{equation}
where the squares labeled by $\di$ are derived Cartesian.

Since the leftmost parallelogram is derived Cartesian, the square $(C^{\flat}, E_{0}, D^{\flat}, F_{0})$ is pushable if and only if the square $( C^{\flat}, \wt E_{0}, D^{\flat}, \wt F_{0})$ is pushable. When any of these equivalent conditions holds, we have a pushforward map (for $\cK_i \in \Dmot{\quo{E_i}}$),
\begin{equation}
f^{\flat}_{!}: \Corr_{\quo{C^{\flat}}}(\cK_{0}, \cK_{1})\to \Corr_{\quo{D^{\flat}}}(f_{0!}\cK_{0}, f_{1!}\cK_{1}).
\end{equation}

 The dual diagram to \eqref{eq: FT cc functoriality diagram varying base} is: 
\begin{equation}\label{eq: FT cc functoriality diagram varying base dual}
\begin{tikzcd}
 & & \wh{D^{\sharp}}  \ar[dl, "\wh{\wt q'_1}"]   \ar[dr, "\wh{\wt q'_0}"']    \ar[ddd, "\wh{f}^{\sh}"', bend left] \ar[ddll, bend right, "\wh{q'_1}"']   \ar[ddrr, bend left, "\wh{q'_0}"]\\ 
&\ar[dl, "h_0^{\wh F}"]  \wh{\wt{F}}_0 \ar[dr, "\wh{\wt{q}_0}"'] \ar[ddd, "\wh{\wt{f}}_0"]  && \wh{\wt{F}}_1 \ar[dl, "\wh{\wt{q}}_1"]   \ar[ddd, "\wh{\wt{f}}_1"]  \ar[dr, "h_1^{\wh F}"']  \\ 
\wh{F}_0  \ar[ddd, "\wh{f}_0"] &  & \wh{\wt{D}^{\flat}}  \ar[ddd, "\wh{\wt f}^{\flat}"', bend right]    & & \wh{F}_1  \ar[ddd, "\wh{f}_1"] \\
 & & \wh{C^{\sh}}  \ar[dl, "\wh{\wt p'_1}"]   \ar[dr, "\wh{\wt p'_0}"'] \ar[ddll, bend right, "\wh{p'_1}"']   \ar[ddrr, bend left, "\wh{p'_0}"]\\ 
& \wh{\wt{E}}_0 \ar[dr, "\wh{\wt{p}}_0"']  \ar[dl, "h_0^{\wh E}"]  && \wh{\wt{E}}_1 \ar[dr, "h_1^{\wh E}"']   \ar[dl, "\wh{\wt{p}}_1"]  \\ 
\wh{E}_0 &  & \wh{\wt{C}^{\flat} }  && \wh{E}_1
 \end{tikzcd}
\end{equation}
Since the rightmost parallelogram is derived Cartesian, the square $(\wh{D^{\sh}}, \wh F_{1}, \wh{C^{\sh}}, \wh E_{1})$ is pullable if and only if the square $(\wh{ D^{\sh}}, \wh{\wt F_{1}}, \wh{ C^{\sh}}, \wh{\wt E_{1}})$ is pullable. When any of these equivalent conditions holds, we have a pullback map (for $\cK_i \in \Dmot{\quo{\wh{E_i}}}$),
\begin{equation}
(\wh{f^{\sh}})^{*}: \Corr_{\quo{\wh{C^{\sh}}}}(\cK_{0}, \cK_{1})\to \Corr_{\quo{\wh{D^{\sh}}}}(\wh{f}_{0}^{*}\cK_{0}, \wh{f}_{1}^{*}\cK_{1}\tw{-\d_{\wh{ f^{\sh}}}}).
\end{equation}
Moreover, by \cite[Lemma 7.2.1]{FYZ3}, $f^{\flat}$ is left pushable if and only if $\wh{f^{\sh}}$ is right pullable.

\begin{prop}\label{prop: cc push/pull functoriality}  Assume the diagram \eqref{eq: FT cc functoriality diagram varying base dual} is globally presented. 

(1) Suppose the map of correspondences $f^{\flat}: C^{\flat}\to D^{\flat}$ is left pushable. Let $\cK_i \in \Dmot{\quo{E_i}}$ for $i=0,1$. Then the following diagram commutes:
\begin{equation}\label{FT cc push pull diagram}
\xymatrix{\Corr_{\quo{C^{\flat}}}(\cK_{0}, \cK_{1}) \ar[rr]^-{\rFT_{C^{\flat}}}\ar[d]^{f^{\flat}_{!}}& &  \Corr_{\quo{\wh{C^{\sh}}}}(\rFT_{E_{0}}(\cK_0), \rFT_{E_{1}}(\cK_1)\sm{[d(\wt p_0)+d(\wt p_1)](d(\wt p_0))})\ar[d]^{(\wh{f}^{\sh})^{*}}\\
\Corr_{\quo{D^{\flat}}}(f_{0!}\cK_{0}, f_{1!}\cK_{1})\ar[rr]^-{\TT_{[d(f_0)]}\rFT_{D^{\flat}}} & & \Corr_{\quo{\wh{D^{\sh}}}}(\wh{f}_{0}^{*}\rFT_{E_{0}}(\cK_0), \wh{f}_{1}^{*}\rFT_{E_{1}}(\cK_1)\sm{[d(\wt q_0)+d(\wt q_1)+d(f_{0})-d(f_{1})](d(\wt q_0))})
}
\end{equation}
Here we use \cite[Lemma 7.2.2]{FYZ3} to match the differences of the twists that appear in the right vertical map.

(2) Suppose the map of correspondences $f^{\flat}: C^{\flat}\to D^{\flat}$ is right pullable. Let $\cK_i \in \Dmot{\quo{F_i}}$ for $i=0,1$. Then the following diagram commutes
\[
\begin{tikzcd}[column sep = huge]
\Corr_{\quo{D^{\flat}}}(\cK_{0}, \cK_{1}) \ar[rr, "\rFT_{D^{\flat}}"] \ar[d, "(f^{\flat})^*"] & & {\Corr_{\quo{\wh{D^{\sh}}}}(\rFT_{F_{0}}(\cK_0), \rFT_{F_{1}}(\cK_1)\sm{[d(\wt{q}_0)+d(\wt{q}_1)](d(\wt{q}_0))})} \ar[d, "(\wh{f}^{\sh})_{!}"] \\
\Corr_{\quo{C^{\flat}}}(f_{0}^*\cK_{0}, f_{1}^*\cK_{1})   \ar[rr, "\TT_{[d(f_0)](d(f_0))} \rFT_{C^{\flat}}"] & &  {\Corr_{\quo{\wh{C^{\sh}}}}(\wh{f}_{0!}\rFT_{F_{0}}(\cK_0), \wh{f}_{1!}\rFT_{F_{1}}(\cK_1)\sm{[d(\wt{q}_0)+d(\wt{q}_1)](d(\wt{q}_0))})} 
\end{tikzcd}
\]
\end{prop}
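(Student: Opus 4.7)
The plan is to prove both parts by unwinding the definition of the Fourier transform of cohomological correspondences given in \S\ref{sssec: FT of cc}, and then decomposing each square into smaller rectangles, each of which commutes by one of the compatibility results already established for the motivic homogeneous Fourier transform. Since the structure of the motivic homogeneous Fourier transform on cohomological correspondences mirrors that of its $\ell$-adic counterpart, the argument should closely parallel that of \cite[Proposition 7.2.3]{FYZ3}, with our motivic base-change statement (\propref{prop: ft pbc}) and Gysin/forget-supports statement (\propref{prop: fourier of Gysin}) substituted for their $\ell$-adic analogues. The global presentation hypothesis is exactly what we need to apply these motivic results without further restriction.

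For part (1), the left vertical arrow $f^{\flat}_!$ is built out of the push-pull base change transformation $\td$ of \S\ref{sssec: push-pull} together with the counit of the $(f_{1!}, f_1^!)$-adjunction. The right vertical arrow $(\wh{f}^{\sh})^*$ is dually built out of the pull-pull transformation $\tu$ of \S\ref{ssec: pull-pull} together with the unit of a $(*, *)$-adjunction. Meanwhile, each $\rFT$ of cohomological correspondences factors as the composite displayed in \S\ref{sssec: FT of cc}: (a) convert to a co-correspondence via the adjunction isomorphism $\g_{C}$ of \S\ref{sssec: corr vs cocorr}, (b) apply the Fourier transform pointwise using \S\ref{ssec:main/fun}, (c) apply base change isomorphisms from \S\ref{ssec:main/funbase}. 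I would then write the big square as a stack of smaller rectangles, one for each of these steps, and verify commutativity of each using: naturality of base change, the compatibility of $\rFT$ with base change (\propref{prop: ft pbc}), and the functoriality isomorphisms \eqref{eq:funA}--\eqref{eq:funB'} applied to $\wt{f}^{\sh}$, $\wt{f}_0$, $\wt{f}_1$.

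For part (2), the argument is formally dual: the pullback $(f^{\flat})^*$ is built out of $\tu$ and involves a Gysin shift (whence the extra Tate twist in the bottom row), while the Fourier-dual arrow $(\wh{f}^{\sh})_!$ arises from $\td$ and the forget-supports map. The diagram again decomposes into rectangles controlled by \propref{prop: ft pbc} and by the functoriality isomorphisms of \propref{prop:fun}. The step where the Gysin shift on the correspondence side is matched against the shift on the Fourier-dual side is governed by \propref{prop: fourier of Gysin}: applying $\rFT$ to a Gysin transformation produces a forget-supports transformation, precisely as needed to match the two sides.

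The principal obstacle will be the bookkeeping of shifts and Tate twists produced by the various Gysin and renormalization conventions, together with the relative dimension identities $d(\wt p_i)$, $d(\wt q_i)$, $d(f_i)$, $d(f^{\flat})$, $d(\wh{f}^{\sh})$. The computation of these in terms of one another is exactly the content of \cite[Lemma 7.2.1 and Lemma 7.2.2]{FYZ3}, which was stated purely in terms of ranks and relative dimensions of derived vector bundles and thus carries over verbatim to our setting. Once these identifications are in hand, each individual rectangle in the decomposition is formal, and one obtains the commutativity of the large squares claimed in parts (1) and (2).
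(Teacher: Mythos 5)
Your outline takes essentially the same approach as the paper: the paper's proof simply states that the argument of the $\ell$-adic analogue carries over verbatim, and your decomposition into rectangles controlled by \propref{prop: ft pbc}, \propref{prop: fourier of Gysin}, and the dimension bookkeeping of the FYZ3 lemmas is precisely what that earlier proof does. One small correction: the paper cites \cite[Proposition 7.2.4]{FYZ3} as the reference, not 7.2.3, and the chain defining $\rFT$ on correspondences in \S\ref{sssec: FT of cc} applies the Fourier functoriality \emph{before} the corr/co-corr conversion, rather than after, but neither slip affects the validity of the strategy.
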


\begin{proof} The proof of \cite[Proposition 7.2.4]{FYZ3} works verbatim. 
\end{proof}

\subsection{Homogeneous arithmetic Fourier transform}\label{sec: arithmetic FT} In this section we lift (a homogeneous variant of) the \emph{arithmetic Fourier transform} of \cite[\S 8]{FYZ3} from $\ell$-adic Borel--Moore homology to Chow groups.

\subsubsection{\'{E}tale $\F_q$-vector space bundles}\label{sssec: AFT setup}

Let $T$ be a derived Artin stack locally of finite type over a field.
Let $V \rightarrow T$ be an \'{e}tale locally free $\F_q$-vector space bundle of rank $d$ (thus, the datum of $V$ is equivalent to that of an \'{e}tale $\GL_d(\F_q^\times)$-torsor). 

Define $\quo{V}$ to be the stack quotient $[V/\ul{\F}_q^\times]$, where $\ul{\F}_q^\times$ is the discrete group scheme over $T$ with value $\F_q^\times$. 
Let $\wh{V} \rightarrow T$ be the dual $\F_q$-vector space, i.e., at the level of \'{e}tale sheaves over $T$ we have 
\[
\wh{V} \coloneqq \cHom_{\F_q}(V, \ul{\F}_q). 
\]
Note that $\wh{\wh{V}} \cong V$. We have an ``evaluation'' map 
\[
\begin{tikzcd}
\ev \co \quo{V} \times_T \quo{\wh{V}} \rightarrow [\F_q/ \F_q^\times].
\end{tikzcd}
\]

\sssec{Homogeneous arithmetic Fourier transform}

Let $\xi$ be the function on $\F_q$ defined as 
\[
\xi (x) = \begin{cases} q-1 & x = 0, \\ -1 & x \neq 0. \end{cases}
\]
As $\xi$ is invariant under the scaling action of $\F_q^\times$, $\xi$ descends to a function on the quotient stack $[\F_q/\F_q^\times]$. Its significance is the following: if $\psi$ is any nontrivial additive character of $\ul{\F}_q$, then the function on $[\F_q/\F_q^\times]$ obtained by averaging $\psi$ over $\F_q^\times$ is $\xi$. 

Now consider the diagram 
\[
\begin{tikzcd}
& \quo{V} \times_T \quo{\wh{V}}   \ar[dl, "\pr_0"'] \ar[dr, "\pr_1"]  \ar[r, "\ev"] & {[\F_q/\F_q^\times]}\\
\quo{V} \ar[dr, "\pi"']  & & \quo{\wh{V}} \ar[dl, "\wh{\pi}"] \\
& T
\end{tikzcd}
\]

We define the \emph{homogeneous arithmetic Fourier transform} to be the map 
\[
\aFT_V \co \CH_*(\quo{V}) \rightarrow \CH_{*} (\quo{\wh{V}})
\]
given by
\[
\alpha \mapsto   (-1)^d \pr_{1*}(\pr_0^!(\alpha) \cap \ev^*\xi),
\]
where $d$ is the rank of $V$ as an $\F_q$-vector space over $T$.
Here, we regard the locally constant function $\xi$ as an element of $\CH^0([\F_q/\F_q^\times])$.
We also used the fact that the projections $\pr_i$ are finite étale so that the pushforward and pullback maps are defined.

Similarly, we consider a variant in Chow cohomology (\S \ref{ssec: Chow}):
\[
\aFT \co \CH^*(\quo{V}; \Q) \rightarrow \CH^* (\quo{\wh{V}}; \Q)
\]
given by 
\[
\alpha \mapsto   (-1)^d\pr_{1!}(\pr_0^*(\alpha) \cup \ev^*\xi).
\]

\subsubsection{Basic properties} The following properties of the homogeneous arithmetic Fourier transform follow by the same arguments as in the non-homogeneous case (which are found in \cite[\S 8.2]{FYZ3}).

\begin{notat}\label{notat: pairing} For $\alpha \in \CH_{i}(\quo{V})$ and $\beta \in \CH^{j}(\quo{V})$, we write 
\[
\langle \alpha, \beta \rangle \coloneqq \pi_* (\alpha \cap \beta) \in \CH_{i-j}(T)
\]
where $\pi \co \quo{V} \rightarrow T$ is the projection. 
\end{notat}

\begin{lemma}[Plancherel property]\label{lem: Plancherel for cycles}
Let $\alpha_1 \in \CH_{i}(\quo{V})$ and $\beta_2 \in \CH^{j}(\quo{\wh{V}})$. Then
\[
\langle \alpha_1 , \aFT(\beta_2)\rangle = \langle \aFT(\alpha_1) , \beta_2 \rangle \in \CH_{i-j}(T).
\]
\end{lemma}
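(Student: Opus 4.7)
The plan is to unpack both sides of the equation using the definitions of $\aFT$ and the pairing $\langle-,-\rangle$, and then apply the projection formula for finite étale maps. The key observation is that the projections $\pr_0, \pr_1 \co \quo{V}\times_T \quo{\wh{V}} \to \quo{V}, \quo{\wh{V}}$ are finite étale, so $\pr_i^! \cong \pr_i^*$ and $\pr_{i!} \cong \pr_{i*}$, and the relevant projection formulas between Chow homology and Chow cohomology are available. Moreover, we have the equality of structural morphisms $\pi \circ \pr_0 = \wh{\pi}\circ \pr_1$, both being the projection $\quo{V}\times_T \quo{\wh{V}} \to T$.

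First I would rewrite the left-hand side by substituting the definition of $\aFT(\beta_2)$:
\[
\langle \alpha_1, \aFT(\beta_2)\rangle
= \pi_*\bigl(\alpha_1 \cap (-1)^d \pr_{0*}(\pr_1^*(\beta_2) \cup \ev^*\xi)\bigr).
\]
Applying the projection formula for $\pr_0$ (using that $\pr_0$ is finite étale, so that we may identify $\pr_0^! = \pr_0^*$ and use the identity $\alpha \cap \pr_{0*}(\gamma) = \pr_{0*}(\pr_0^*(\alpha) \cap \gamma)$ in this context) yields
\[
\langle \alpha_1, \aFT(\beta_2)\rangle
= (-1)^d (\pi\circ \pr_0)_*\bigl(\pr_0^*(\alpha_1) \cap \pr_1^*(\beta_2) \cap \ev^*\xi\bigr).
\]
Symmetrically, unpacking the right-hand side and applying the projection formula for $\pr_1$ gives
\[
\langle \aFT(\alpha_1), \beta_2\rangle
= (-1)^d (\wh{\pi}\circ \pr_1)_*\bigl(\pr_0^*(\alpha_1) \cap \pr_1^*(\beta_2) \cap \ev^*\xi\bigr).
\]
Since $\pi \circ \pr_0 = \wh{\pi}\circ \pr_1$, the two expressions coincide, proving the claim.

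The only step that requires any care is the invocation of the projection formula for the cap product of motivic Borel--Moore homology against Chow cohomology on derived Artin stacks. However, since $\pr_0$ and $\pr_1$ are finite étale, both the proper pushforward and the smooth pullback (which in this case coincides with the Gysin pullback) are available and the standard projection formulas apply; these follow formally from the identities $\pr_{i*} \cong \pr_{i!}$ and the projection formula in the six-functor formalism for $\Dmot{-}$. Hence no essential obstacle arises, and the proof amounts to an application of the projection formula together with commutativity of the structural diagram.
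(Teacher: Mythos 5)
Your proof is correct and takes the standard route; the paper itself gives no proof for this lemma but instead refers to the analogous non-homogeneous arguments in \cite[\S 8.2]{FYZ3}, which proceed in exactly this way — unwind the definition of $\aFT$ on each side, apply the projection formula along the finite étale projections $\pr_0, \pr_1$ to fold everything into a single pushforward along $\quo{V}\times_T\quo{\wh V} \to T$, and observe the resulting expressions coincide. Your argument supplies the expected details, and the points you flag (that $\pr_i^! \cong \pr_i^*$ and $\pr_{i!}\cong\pr_{i*}$ since the projections are finite étale, and that the relevant projection formula is available in the formalism) are precisely the ones needed; one small implicit step worth making explicit is that applying $\aFT_{\wh V}$ to $\beta_2$ uses the evaluation pairing $\quo{\wh V}\times_T\quo{\wh{\wh V}}\to[\F_q/\F_q^\times]$, which is identified with $\ev_V$ under the double-duality and swap isomorphisms, so that both sides are indeed built from the same kernel $\ev^*\xi$ on the same product stack.
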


\begin{lemma}[Involutivity]\label{lem: AFT involutivity}
We have $\aFT_{\wh{V}} \circ  \aFT_{V} = q^d$ where $d$ is the rank of $V$. 
\end{lemma}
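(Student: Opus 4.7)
The plan is to prove involutivity by a direct character-sum computation on cycle classes, adapting the argument from \cite[\S 8.2]{FYZ3} (where the analogous statement is established for the non-homogeneous arithmetic Fourier transform).

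First I would unfold $\aFT_{\wh V} \circ \aFT_V$ as a single iterated pushforward on the triple product $W \coloneqq \quo V \times_T \quo{\wh V} \times_T \quo V$. Denoting by $\pi_0, \pi_1, \pi_2$ the projections of $W$ onto the three factors and by $\pi_{ij}$ the pairwise projections, proper base change along the Cartesian square
\[
\begin{tikzcd}
W \ar[r, "\pi_{12}"] \ar[d, "\pi_{01}"] & \quo{\wh V} \times_T \quo V \ar[d, "\pr_0'"] \\
\quo V \times_T \quo{\wh V} \ar[r, "\pr_1"] & \quo{\wh V}
\end{tikzcd}
\]
(valid because all these projections are finite étale) together with the projection formula rewrites the composition as
\[
\aFT_{\wh V}\aFT_V(\alpha) \;=\; \pi_{2*}\bigl(\pi_0^* \alpha \cap (\pi_{01})^*\ev^*\xi \cap (\pi_{12})^*\ev^*\xi\bigr).
\]

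The crux is then the following kernel identity:
\begin{equation}\label{eq: kernel identity plan}
(\pi_{02})_*\bigl((\pi_{01})^*\ev^*\xi \cdot (\pi_{12})^*\ev^*\xi\bigr) \;=\; q^d \cdot \Delta_*(\mathbf 1) \;\in\; \CH^0(\quo V \times_T \quo V),
\end{equation}
where $\Delta \co \quo V \to \quo V \times_T \quo V$ is the diagonal. Granted \eqref{eq: kernel identity plan}, another application of the projection formula turns the above expression into $\pi_{2*} \Delta_*(q^d \alpha) = q^d \alpha$, which is the desired identity.

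To verify \eqref{eq: kernel identity plan}, I would work étale-locally on $T$ so that $V$ becomes the trivial rank-$d$ bundle of $\F_q$-vector spaces, reducing the identity to a pointwise statement at each point of $\quo V \times_T \quo V$. Using $\xi(x) = q\cdot \mathbf{1}_{x=0} - 1$, the character sum $\sum_{\phi \in \wh V} \xi(\phi(v_1)) \xi(\phi(v_2))$ is readily evaluated by inclusion-exclusion; a brief case analysis on the $\F_q$-span of $\{v_1, v_2\}$ shows the sum vanishes unless $v_1$ and $v_2$ are proportional (so that $\bar v_1 = \bar v_2$ in $\quo V$). The main technical obstacle is the bookkeeping of the stacky normalizations arising from the $(\F_q^\times)^2$-quotient: one must verify that the $(q-1)$-factors produced by the $\F_q^\times$-scaling precisely cancel those appearing in the fibre sizes of the finite étale maps $\pi_{ij}$ between quotient stacks, leaving exactly $q^d$ on the diagonal. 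With $\bQ$-coefficients and the convention that pushforward along a finite étale map of stacks weights each geometric fiber point by the reciprocal of its automorphism order, this is routine but delicate, and proceeds exactly as in \emph{loc. cit.}
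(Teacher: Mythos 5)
Your proposal is correct and takes essentially the same approach as the paper, which simply asserts that the homogeneous involutivity ``follows by the same arguments as in the non-homogeneous case'' in \cite[\S 8.2]{FYZ3}---namely the kernel-convolution argument you describe, with the Gauss-sum evaluation $\sum_{\phi}\xi(\phi(v_1))\xi(\phi(v_2)) = q^d\cdot\#\{(\lambda_1,\lambda_2)\in(\F_q^\times)^2 : \lambda_1 v_1 + \lambda_2 v_2 = 0\}$ as the crux. You correctly identify that the residual $(q-1)$-factors from this sum are absorbed by the automorphism weights in the stacky pushforwards $\pi_{ij}$, and that the signs $(-1)^d$ cancel upon composition.
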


\ssec{Compatibility of motivic and arithmetic Fourier transforms}\label{sssec: AFT and FT} We establish the compatibility of the motivic Fourier transform with arithmetic Fourier transform under the motivic sheaf-cycle correspondence.

\sssec{Setup}

Let $Y$ be a derived Artin stack locally of finite type over a field.
Let $p \co E\to Y$ be a vector bundle\footnote{i.e., a derived vector bundle of amplitude $[0,0]$.}. Suppose $c=(c_{0},c_{1}) \co C\to Y\times Y$ is a correspondence of derived Artin stacks and we are given an isomorphism of vector bundles over $C$
\begin{equation}
\io \co c_{0}^{*}E\cong c_{1}^{*}E.
\end{equation}
Let $C_{E}$ be the total space of $c_{0}^{*}E \stackrel{\iota}\cong c_{1}^{*}E$. For $i \in \{0,1\}$ we let $e_{i} \co C_{E}\cong c_{i}^{*}E\to E$ be the corresponding projection map. Then we have a map of correspondences
\begin{equation}\label{corr vb}
\xymatrix{E\ar[d]^{\pr} &  C_{E}\ar[d]^{\pr_{C}}\ar[l]_-{e_{0}}\ar[r]^{e_{1}} & E\ar[d]^{\pr}\\
Y & C\ar[l]_-{c_{0}}\ar[r]^{c_{1}} & Y }
\end{equation}
such that both squares are Cartesian.

The above data induces a correspondence $\wh e \co C_{\wh E}\to \wh E\times \wh E$ by passing to the dual vector bundles.

Consider the Frobenius twisted correspondence map $c^{(1)}=(\Frob \c c_{0}, c_{1}) \co C^{(1)} \rightarrow Y\times Y$. Similarly, we define $C^{(1)}_{E}$ (a self-correspondence of $E$) and $C^{(1)}_{\wh E}$ (a self-correspondence of $\wh E$). Recall notation (\S \ref{sssec: fix vs sht})
\begin{equation}
\Sht(C)\coloneqq\Fix(C^{(1)}), \quad 
\Sht(C_{E})\coloneqq\Fix(C^{(1)}_{E}), \quad \Sht(C_{\wh E})\coloneqq\Fix(C^{(1)}_{\wh E}).
\end{equation}
The projections 
\begin{equation}
\pi: \Sht(C_{E})\to \Sht(C), \quad  \wh \pi: \Sht(C_{\wh E})\to \Sht(C)
\end{equation}
are \'{e}tale $\F_{q}$-vector space bundles over $\Sht(C)$ that are dual to each other. 

\sssec{}

Let $\cK\in \Dmotg{\quo{E}}$ and $\frc \in \Corr_{\quo{C_E}}(\cK, \cK{\tw{-i}})$.

On one hand, we can form the Frobenius-twisted trace (\S \ref{sssec: sht val tr})
\begin{equation}\label{eq: tr ft 1}
\Tr^{\Sht}(\frc) \coloneqq \Tr(\frc^{(1)})\in \CH_i(\quo{\Sht(C_E)})
\end{equation}
where $\frc^{(1)}$ is the cohomological correspondence
\begin{equation}
\frc^{(1)} \co e_{0}^{*}\Frob_{E}^{*}\cK\cong e_{0}^{*}\cK\xr{\frc}e_{1}^{!}\cK\tw{-i}
\end{equation}
supported on $\quo{C_{E}^{(1)}}$ (see \S \ref{sssec: fix vs sht}).

On the other hand, we can first apply the homogeneous motivic Fourier transform to get a cohomological correspondence
\[\FT_{C_E}(\frc) \in \Corr_{\quo{C_{\wh{E}}}}(\FT_E(\cK), \FT_E(\cK))\]
given by the composite
\begin{equation}
\wh e_{0}^{*}\FT_{E}(\cK)\cong \FT_{C_{E}}(e_{0}^{*}\cK)\xr{\FT_{C_{E}}(\frc)}
\FT_{C_{E}}(e_{1}^{!}\cK\tw{-i})\cong \wh e_{1}^{!}\FT_{E}(\cK){\tw{-i}},
\end{equation}
where we used the commutativity of the Fourier transform with base change (\S \ref{ssec:main/funbase}).
We can then (using that $\FT(\cK)$ is geometric, thanks to Corollary~\ref{cor: FT preserve local constructibility}) form the Frobenius-twisted trace
\begin{equation}\label{eq: tr ft 2}
\Tr^{\Sht}(\FT_{C_{E}}(\frc))\coloneqq\Tr(\FT_{C_{E}}(\frc)^{(1)})\in \CH_i(\quo{\Sht(C_{\wh E})}).
\end{equation}

The two constructions \eqref{eq: tr ft 1} and \eqref{eq: tr ft 2} agree up to the arithmetic Fourier transform:

\begin{thm}\label{thm: trace compatible with FT} In the above situation, we have
\begin{equation}
\Tr^{\Sht}(\FT_{C_{E}}(\frc))=\FT^{\arith}_{\Sht(C_{E})}(\Tr^{\Sht}(\frc))\in \CH_i(\quo{\Sht(C_{\wh E})}).
\end{equation}
\end{thm}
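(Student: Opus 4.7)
The proof follows the template of the $\ell$-adic analogue in \cite[\S 9]{FYZ3}, with the motivic sheaf-cycle correspondence developed in \S \ref{sec: motivic sheaf-cycle} playing the role of the sheaf-function correspondence. The plan is to unpack both sides of the asserted equality as a proper pushforward of a smooth/quasi-smooth pullback capped with a universal class, and then to identify the two resulting classes via a single computation of the trace of the Fourier kernel.

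First, I would unwind the definition of $\FT_{C_E}(\frc)$ given by \eqref{eq: FT of corr}. In the present setting (where $e_0, e_1$ are linear maps of the same vector bundle $E$), the construction of \S \ref{sssec: FT of cc} identifies $\wh{C^\sharp}$ with $\wh{C^\flat} = C_{\wh E}$ up to canonical base change, so that $\FT_{C_E}(\frc)$ is presented concretely through the diagram pairing $\quo{C_E}$ with $\quo{C_{\wh E}}$ over $\quo{C}$. Passing to Frobenius twists and forming the Sht-valued trace then yields a class on $\Sht(C_{\wh E})$ given explicitly as the proper pushforward along $\pr_{1}: \Sht(C_E) \times_{\Sht(C)} \Sht(C_{\wh E}) \to \Sht(C_{\wh E})$ of the $!$-pullback of $\Tr^{\Sht}(\frc)$ capped with the trace of the Fourier kernel restricted to this fiber product. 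This matches the shape of $\aFT^{\arith}_{\Sht(C_E)}(\Tr^{\Sht}(\frc))$ given by the definition in \S \ref{sec: arithmetic FT}.

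The central computation is the identification of the Sht-valued trace of the Fourier kernel $\sP_E = \ev_E^*(\quo{j}_*(\un))$ with the (pullback of) the function $\xi$ introduced in \S \ref{sec: arithmetic FT}. Using Proposition \ref{prop:j_*(1)}(v) to write $\quo{j}_*(\un) \cong u_! j_{1*}(\un)[1]$, together with the localization triangle \eqref{eq:loc} and additivity of the trace (as in the proof of \thmref{thm: contracting fixed terms}), the trace of $\quo{j_*}(\un)$ viewed as a self-correspondence via Frobenius decomposes into contributions from the closed stratum $\quo{S} \hook \quo{\A^1_S}$ and the open stratum $S \hook \quo{\A^1_S}$. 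The first gives weight $q-1$ (the trace of Frobenius acting on $\rH^*(B\Gm; \Q)$ in degree zero after the shift) and the second gives weight $-1$, exactly reproducing $\xi$. Pulling back along $\ev_E$ and using the renormalization $\rFT = \FT[\rk E]$ along with the sign $(-1)^{\rk E}$ built into $\aFT$ accounts for the degrees and signs.

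With this key identification in hand, one assembles the global statement by appealing to the functoriality of the trace: \propref{prop: trace commutes with proper push} handles the $\pr_{1!}$ in the definition of $\FT_E$ (giving proper pushforward along $\Sht(\pr_1)$), \propref{prop: trace commutes with smooth pullback} handles the $\pr_2^*$, and the multiplicativity of the trace under external tensor product (via \lemref{lem: kunneth} and \lemref{lem: relative Kunneth}) handles the $\otimes \sP_E$. The main obstacle is the bookkeeping in the key computation: matching the signs of $\xi$ with those coming from the localization triangle for $\quo{j}_*(\un)$, and tracking the Tate twists introduced by the shift and twist identity \eqref{eq: Tr Sht sh tw} together with the renormalizations, is where the proof of \cite[\S 9.5-9.8]{FYZ3} does most of its work. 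Fortunately, these calculations are purely formal and transpose verbatim once the sheaf-cycle correspondence has been set up as in \S \ref{ssec: VFC as trace}.
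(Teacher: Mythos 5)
There is a genuine gap in the assembly step, and it is precisely the point the paper flags as the technical crux. You write that ``\propref{prop: trace commutes with proper push} handles the $\pr_{1!}$ in the definition of $\FT_E$ (giving proper pushforward along $\Sht(\pr_1)$).'' But the projection $\pr_1 \co \quo{\wh{E}} \times_Y \quo{E} \to \quo{\wh{E}}$ contracts the $E$-factor, and since the derived vector bundle $E \to Y$ is an affine bundle rather than a proper one, $\pr_1$ is very far from proper. Proposition~\ref{prop: trace commutes with proper push} simply does not apply; the paper states this explicitly in the proof of Lemma~\ref{lem: vb push commutes with trace}: ``Note that this does not follow from Proposition~\ref{prop: trace commutes with proper push} since $\pr$ is far from proper.''

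The actual content of the paper's proof of the pushforward step is: (i) first reduce from the homogeneous quotient $\quo{E} \times_Y \quo{\wh{E}}$ to $E \times_Y \wh{E}$ by showing that $f_!f^*$ along the $\G_m$-torsor $f$ contributes a factor of $q-1$ on both sides; (ii) compactify the self-correspondence of $E$ to a self-correspondence of $\ov{E} = \PP(E \oplus \cO)$, where the projection is now proper and Proposition~\ref{prop: trace commutes with proper push} does apply; and (iii) show that the contribution from the boundary divisor $E_\infty = \PP(E)$ vanishes. The last step uses the local-term machinery: the Frobenius-twisted boundary correspondence $C^{(1)}_{\ov{E}}$ is contracting near $E_\infty$ (Example~\ref{ex: Frobenius twist contracting}), so Theorem~\ref{thm: contracting fixed terms} lets one compute the local term by restriction to $E_\infty$, where it vanishes because the cohomological correspondence being traced is extension-by-zero from the open part. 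None of this appears in your proposal, and it is essentially the entire reason Theorem~\ref{thm: contracting fixed terms} and the specialization/deformation-to-the-normal-cone machinery of \S\ref{sec: local terms} are developed in the paper. Your treatment of the pullback (Proposition~\ref{prop: trace commutes with smooth pullback}) and of the kernel ($\Tr$ of $\sP_E$ giving $-\ev^*\xi$) is in line with the paper, but without the compactification-and-vanishing argument for $\pr_{1!}$ the proof does not close.
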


\begin{proof} The version for $\ell$-adic coefficients is \cite[Theorem 8.3.2]{FYZ3}, and the proof here is similar with a few modifications. The renormalized homogeneous Fourier transform  (resp. arithmetic homogeneous Fourier transform) is the composition of three steps: 
\begin{enumerate}
\item pullback along $\quo{E} \leftarrow \quo{E} \times_Y \quo{\wh{E}}$ (resp. pullback along $\quo{\Sht(E)} \leftarrow \quo{\Sht(E)} \times_{\Sht(C)}  \quo{\Sht(\wh{E})}$),
\item tensor with $\sP_E[r-1] $  (resp. multiply by $(-1)^r \ev^* \xi$) where $r=\rk(E)$,
\item pushforward along $\quo{E} \times_Y \quo{\wh{E}} \rightarrow \quo{\wh{E}}$ (resp. pushforward along $ \quo{\Sht(E)} \times_{\Sht(C)}  \quo{\Sht(\wh{E})} \rightarrow \quo{\Sht(\wh{E})}$).
\end{enumerate}
It suffices to show that each of these steps is compatible with the formation of the trace. The first two are easy:
\begin{itemize}
\item Since the pullback in (a) is smooth, the compatibility there follows from Proposition \ref{prop: trace commutes with smooth pullback}. 
\item The formation of trace takes the operation of tensoring with a self-correspondence of a local system to the operation of multiplication by the trace. Hence (b) follows from the computation that the Frobenius trace function of $\sP_E$ is $-\ev^* \xi$. 
\end{itemize}

For (c), we need to show that pushforward through the projection $\pr_1 \co \quo{E} \times_Y \quo{\wh{E}} \rightarrow \quo{\wh{E}}$ commutes with formation of trace. From the map of correspondences 
\[
\begin{tikzcd}
E \ar[d, "f"] & C_E \ar[l, "e_0"'] \ar[r, "e_1"] \ar[d, "f"] & E \ar[d, "f"] \\
\quo{E} & \quo{C_E}  \ar[l, "\quo{e}_0"'] \ar[r, "\quo{e}_1"] & \quo{E}
\end{tikzcd}
\]
we have maps of correspondences
\[\Corr_{\quo{C_E}}(\cK, \cK\tw{-i}) \xrightarrow{f^*} \Corr_{C_E}(f^* \cK, f^* \cK\tw{-i}) \]
and
\[\Corr_{C_E}(\cK, \cK\tw{-i}) \xrightarrow{f_!} \Corr_{\quo{C_E}}(f_! \cK,  f_!\cK\tw{-i}).\]
The endofunctor $f_! f^* \co \Dmot{\quo{E}} \rightarrow  \Dmot{\quo{E}}$ is given by tensoring with $f_! \Qsh{E}$. Since $f$ is a $\G_m$-torsor, this implies that $\Tr(f_! f^* \cc) = (q-1) \Tr(\cc)$, which agrees with $\Fix(f)_* \Fix(f)^! (\Tr(\cc))$ since $\Fix(f)$ is a $\G_m(\F_q)$-torsor. Replacing $\cc$ by $f^* \cc$, it therefore suffices to show that the projection $\pr_1 \co E \times_Y \wh{E} \rightarrow \wh{E}$ commutes with formation of trace. This follows from Lemma \ref{lem: vb push commutes with trace}.
\end{proof}

\begin{lemma}\label{lem: vb push commutes with trace}
In the situation of \eqref{corr vb}, let $\cK\in \Dmotg{E}$ and $\frc \in \Corr_{C_E}(\cK, \cK\tw{-i})$. Let $\Sht(\pr) \co \Sht(C_{E})\to \Sht(C)$ be the induced map on fixed points of $C_{E}^{(1)}$ and $C^{(1)}$, which is an \'etale $\F_q$-vector space bundle (in particular a finite morphism). Then
\begin{equation}
\Tr(\pr_{C!}(\frc))=\Sht(\pr)_{*}(\Tr(\frc^{(1)}))\in \CH_i(\Sht(C)).
\end{equation}
\end{lemma}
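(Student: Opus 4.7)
The approach is to apply the contracting principle (Theorem~\ref{thm: contracting fixed terms}) in combination with an analysis of the étale vector-bundle structure of the map $\Sht(\pr) \co \Sht(C_E) \to \Sht(C)$. Since the formation of pushforward of cohomological correspondences is compatible with Frobenius twisting (the pushability of the Frobenius-twisted left square $(C_E^{(1)}, E, C^{(1)}, Y)$ follows from the Cartesianness of the original square, and the pushforward construction only uses the left square), the claim can be rewritten as
\[
\Tr((\pr_C)_!(\frc^{(1)})) \;=\; \Sht(\pr)_*(\Tr(\frc^{(1)})) \;\in\; \CH_i(\Sht(C)).
\]
Note that neither Proposition~\ref{prop: trace commutes with proper push} nor Proposition~\ref{prop: trace commutes with smooth pullback} applies directly to the map of correspondences $\pr_C \co C_E \to C$, since $\pr \co E \to Y$ is a vector bundle: smooth but not proper.

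Next, I would invoke the contracting principle as follows. The maps $e_0, e_1 \co C_E \to E$ are both linear (being base changes of the linear map $\pr$), so $C_E$ stabilizes the zero section $Y \hookrightarrow E$. By Example~\ref{ex: Frobenius twist contracting}, $C_E^{(1)}$ is then contracting near $Y$. Under the canonical identification $(C_E^{(1)})_Y \cong C^{(1)}$ (since $e_0, e_1$ restrict to $c_0, c_1$ on the zero section), Theorem~\ref{thm: contracting fixed terms} applied to the zero section $\iota_0 \co Y \hookrightarrow E$ yields
\[
\Tr(\frc^{(1)})|_{\Sht(C)} \;=\; \Tr(\iota_0^* \frc^{(1)}) \;\in\; \CH_i(\Sht(C)),
\]
where $\Sht(C)$ is identified with the open-closed substack of $\Sht(C_E)$ corresponding to the zero $\F_q$-rational vector.

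The main obstacle is that $\Sht(C_E)$ has many other components, indexed by nonzero $\F_q$-rational vectors in the fibers of $\pr$, and the identity requires summing contributions across all components of the degree-$q^{\rk(E)}$ étale cover $\Sht(\pr)$. The essential remaining step is a motivic ``sum-over-fibers'' identity: at each $y \in \Sht(C)$, the local term of $(\pr_C)_!(\frc^{(1)})$ should decompose as the sum over all $\F_q$-rational $v \in E_{c_0(y)}$ of the local terms of $\frc^{(1)}$ at $(y,v) \in \Sht(C_E)$. In the $\ell$-adic setting this sum formula is supplied directly by the sheaf-function correspondence. In the motivic setting it must be established from scratch: the strategy I would adopt is to use Poincaré duality $\pr^! \cong \pr^*\tw{d}$ for the smooth map $\pr$ to write $(\pr_C)_!(\frc^{(1)})$ as a sum of Gysin contributions along $\F_q$-rational fibers, and then to apply Theorem~\ref{thm: contracting fixed terms} componentwise (near each translated zero section) in order to identify each Gysin contribution with the local trace at the corresponding component $(y,v) \in \Sht(C_E)$.
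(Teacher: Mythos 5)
Your plan diverges substantially from the paper's proof, and the divergence exposes a genuine gap at the step you yourself flag as the ``essential remaining step.''

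Your first move---applying Theorem~\ref{thm: contracting fixed terms} to the zero section $Y \hookrightarrow E$ to get $\Tr(\frc^{(1)})|_{\Sht(C)} = \Tr(\iota_0^* \frc^{(1)})$---is correct but does not advance the lemma. It computes the contribution to $\Tr(\frc^{(1)})$ from the zero-section component of the \'etale $\F_q$-vector space bundle $\Sht(\pr)$, whereas the identity you need sums contributions over \emph{all} $q^{\rk E}$ sheets. What is actually needed is not a restriction, but the compatibility of the trace with $\pr_{C!}$, and that is a statement in the other direction.

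The gap is in the ``sum-over-fibers'' step. You propose to decompose $(\pr_C)_!(\frc^{(1)})$ into Gysin contributions along ``$\F_q$-rational fibers'' and apply Theorem~\ref{thm: contracting fixed terms} componentwise near each translated zero section. This does not work: the nonzero $\F_q$-rational vectors of the fibers of $\pr$ do not assemble into a closed substack of $E$ over $Y$, and so do not give a $Z \hookrightarrow E$ to which Theorem~\ref{thm: contracting fixed terms} could be applied. The $\F_q$-rational structure only appears after passing to fixed points, i.e., on $\Sht(C_E) \to \Sht(C)$ itself, which is exactly what one is trying to compute. Sheaf-theoretically, $\pr_C$ is a vector-bundle projection and admits no natural decomposition indexed by arithmetic points; Poincar\'e duality for $\pr$ gives you $\pr^! \cong \pr^*\tw{d}$, but does not disaggregate $\pr_{C!}$ into summands. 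In the $\ell$-adic setting the ``sum over $\F_q$-points of the fiber'' is supplied by the Grothendieck--Lefschetz trace formula, which has no direct analogue here.

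The route the paper takes is different and avoids this issue entirely: compactify $E$ to $\overline{E} = \PP(E \oplus \cO)$, so that $\overline{\pr}_C$ is proper and Proposition~\ref{prop: trace commutes with proper push} applies. Then extend $\frc$ by $j_{C!}$ to a correspondence on $C_{\overline E}$; since $\overline{\pr}_{C!}(j_{C!}\frc) = \pr_{C!}(\frc)$, one obtains $\Tr(\pr_{C!}\frc) = \Sht(\overline{\pr})_* \Tr(\overline\frc)$. Finally, the boundary contribution from $E_\infty = \PP(E)$ is killed by the contracting principle (Theorem~\ref{thm: contracting fixed terms}, now applied at the \emph{divisor at infinity}, not at the zero section), since by Example~\ref{ex: Frobenius twist contracting} the twisted correspondence is contracting near $E_\infty$. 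The ``sum over $\F_q$-rational fibers'' you want emerges automatically from the proper-pushforward step rather than having to be assembled by hand. So the contracting principle is indeed the right tool here, but it should be applied at infinity as a vanishing statement, not at the zero section as a computation, and it must be preceded by compactification.
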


\begin{proof}Note that this does not follow from Proposition \ref{prop: trace commutes with proper push} since $\pr$ is far from proper. The analogous result for $\ell$-adic sheaves is \cite[Lemma 8.3.3]{FYZ3}, and the proof here is essentially the same: compactify the map of correspondences, use the compatibility of trace and pushforward on the compactification, and then show that the boundary contribution vanishes. As some references need to be replaced, we will spell out the argument. 

We compactify the map of correspondences \eqref{corr vb} to 
\begin{equation}\label{eq: compactified correspondence}
\xymatrix{\ov E\ar[d]^{\ov \pr} &  C_{\ov E}\ar[d]^{\ov \pr_{C}}\ar[l]_-{\ov e_{0}}\ar[r]^{\ov e_{1}} & \ov E\ar[d]^{\ov \pr}\\
Y & C\ar[l]_-{c_{0}}\ar[r]^{c_{1}} & Y
}
\end{equation}
where $\ov E \coloneqq\PP(E\oplus \cO)\to Y$ and $C_{\ov E} \coloneqq \PP(c_{0}^{*}E\oplus \cO) \stackrel{\iota}\cong \PP(c_{1}^{*}E\oplus \cO)\to C$ is the pullback projective bundle over $C$. Then $C_{\ov E}$ is a self-correspondence of $\ov E$ with a proper map to $C$. Let $E_{\infty}\coloneqq\ov E-E$ be the divisor at infinity, which is isomorphic to $\PP(E)$. Similarly define $C_{E_{\infty}} \coloneqq C_{\ov E}-C_{E}$, which is a self-correspondence of $E_{\infty}$. Let $C^{(1)}_{\ov E}$ and $C^{(1)}_{E_{\infty}}$ be the twists by Frobenius as in \S \ref{sssec: fix vs sht}. Since the vertical maps in \eqref{eq: compactified correspondence} are proper, Proposition \ref{prop: trace commutes with proper push} implies that $\ol{\pr}_{C!}$ is compatible with formation of traces. 

Let $j:E\incl \ov E$ and $j_{C}: C_{E}\incl C_{\ov E}$ be the open inclusions. The map of correspondences 
\[
\begin{tikzcd}
E \ar[d, hook, "j"] & C_E \ar[l, "e_0"'] \ar[r, "e_1"] \ar[d, hook, "j_C"] & E \ar[d, hook, "j"] \\
\ol{E} & C_{\ol{E}}  \ar[l, "\ol{e}_0"'] \ar[r, "\ol{e}_1"] & \ol{E}
\end{tikzcd}
\]
has both squares Cartesian, so it is left pushable. Therefore the pushforward cohomological correspondence
\begin{equation}
\ov\frc\coloneqq j_{C!}(\frc) \in \Corr_{C_{\ov E}}(j_! \cK, j_! \cK \tw{-i})
\end{equation}
is defined. It remains to show that $\Sht(\pr)_* \Tr(\ol{c}) = \Sht(\pr)_* \Tr(\frc)$, which amounts to the vanishing of the contribution from the boundary correspondence. The rest of the argument is exactly the same as the corresponding step in the proof of \cite[Lemma 8.3.3]{FYZ3}, except using Theorem~\ref{thm: contracting fixed terms} instead of \cite{Var07} to see that the contribution from the boundary correspondence vanishes, because $C^{(1)}_{\ov E}$ is contracting near $E_{\infty}$ by Example~\ref{ex: Frobenius twist contracting}.
\end{proof}

\section{Generic modularity of higher theta series}\label{sec: generic modularity}
In this section we will assemble the preceding theory to prove the main result, Theorem \ref{thm: main}.

\subsection{Notation} We fix the following notation throughout the section.

We let $\nu \co X' \rightarrow X$ be an \'{e}tale double cover of smooth projective curves over a finite field $\F_q$ of characteristic $p>2$, and $\sigma \co X' \rightarrow X'$ be the non-trivial automorphism over $X$. We let $\Frob$ denote the $q$-power Frobenius.

For a torsion coherent sheaf $Q$ on a curve $X'$ we let $D_Q$ be its scheme-theoretic support, viewed as a divisor on $X'$, and $|Q| \subset X'$ its set-theoretic support.

Let $n \in \Z_{\geq 1}$. The (smooth, classical, $1$-Artin) stack $\Bun_{GU(n)}$ parametrizes triples $(\cF,\frL, h)$, where $\cF$ is a vector bundle on $X'$ of rank $n$, $\frL$ is a line bundle on $X$, and $h: \cF\isom \s^{*}\cF^{\vee}\ot\nu^{*}\frL$ is an $\frL$-twisted Hermitian structure (i.e., $\s^{*}h^{\vee}=h$). The corresponding moduli space of shtukas is denoted $\Sht_{GU(n)}$.

\subsection{Higher theta series}\label{ssec: higher theta series}

We briefly summarize the construction of higher theta series on $\Sht_{GU(n)}$, which can be found in \cite[\S 4]{FYZ2}. Let $m \in \Z_{\geq 1}$. The stack $\Bun_{GU^{-}(2m)}$ parametrizes triples $(\cG,\frM, h)$, where $\cG$ is a vector bundle on $X'$ of rank $2m$, $\frM$ is a line bundle on $X$, and $h: \cG\isom \s^{*}\cG^{\vee}\ot\nu^{*}\frM$ is an $\frM$-twisted {\em skew-Hermitian} structure (i.e., $\s^{*}h^{\vee}=-h$). Alternatively, we can think of $h$ as an $\cO_{X'}$-bilinear perfect pairing
\begin{equation}\label{h to pairing}
(\cdot,\cdot)_{h}: \cG\times \s^{*}\cG\to \nu^{*}(\frM\ot \om_{X})
\end{equation}
satisfying $(\s^{*}\b,\s^{*}\a)_{h}=-\s^{*}(\a,\b)_{h}$  for local sections $\a$ and $\b$ of $\cG$ respectively.

Let $\Bun_{\wt P_{m}}$ be the moduli stack of quadruples $(\cG,\frM,h,\cE)$ where $(\cG,\frM, h)\in \Bun_{GU^{-}(2m)}$, and $\cE \subset \cG$ is a Lagrangian sub-bundle (i.e., $\cE$ has rank $m$ and the composition $\cE\subset \cG\xr{h}\s^{*}\cG^{\vee}\ot\nu^{*}\frM\to \s^{*}\cE^{\vee}\ot\nu^{*}\frM$ is zero). Thus $\wt{P}_m$ corresponds to the Siegel parabolic of $GU^-(2m)$. 

Assume now that $1 \leq m \leq n$. In \cite[\S 4.6]{FYZ2}, we defined for each $r \geq 0$ and $m \leq n$ a \emph{higher theta series}, which is a function
\[
\wt Z^{r}_{m}  \co \Bun_{\wt P_{m}}(k) \to \CH_{r(n-m)}(\Sht^{r}_{GU(n)}).
\]
We repeat the brief recap of the definition of $\wt Z^{r}_{m}$ from \cite[\S 2]{FYZ3}.
Let $(\cG,\frM,h,\cE) \in \Bun_{\wt P_m}(k)$ and set $\frL \coloneqq\om_{X}\ot \frM$. Let $\Sht^{r}_{U(n),\frL} \subset \Sht^{r}_{GU(n)}$ be the moduli stack of rank $n$ Hermitian shtukas 
\[
\cF_{\bullet}=((x_{i}), (\cF_{i}), (f_{i}), \ph: \cF_{r}\isom {}^{\tau}\cF_{0})
\]
on $X'$ with $r$ legs and similitude line bundle $\frL$. For a vector bundle $\cE$ on $X'$ of rank $m$, the special cycle $\cZ^{r}_{\cE, \frL}$ parametrizes a point $\cF_{\bu}$ of $\Sht^{r}_{U(n),\frL}$, and maps $t_{i}: \cE\to \cF_{i}$ for each $0\le i\le r$, compatible with the shtuka structure on $\cF_{\bullet}$. For a Hermitian map $a: \cE\to \s^{*}\cE^{\vee}\ot\nu^{*}\frL$, let $\cZ^{r}_{\cE, \frL}(a)$ be the open-closed substack of $\cZ^{r}_{\cE, \frL}$ consisting of $(\cF_{\bullet}, t_{\bullet})$ such that the Hermitian form on $\cF_{\bullet}$ induces the Hermitian map $a$ on $\cE$ via $t_{\bullet}$. Let $\z: \cZ^{r}_{\cE,\frL}(a)\to \Sht^{r}_{U(n),\frL}\subset \Sht^{r}_{GU(n)}$ be the  map forgetting $t_{\bu}$, which is known to be finite \cite[Proposition 7.5]{FYZ} and unramified.

In \cite[Definition 4.8]{FYZ2} there is constructed a {\em virtual fundamental class} $[\cZ^{r}_{\cE,\frL}(a)]\in \CH_{r(n-m)}(\cZ^{r}_{\cE,\frL}(a))$, although the interpretation as a derived fundamental class in \cite[\S 5,6]{FYZ2} will be more useful in the proofs. Pushing forward along $\z$, we get Chow classes
\begin{equation*}
\z_{*}[\cZ^{r}_{\cE,\frL}(a)]\in \CH_{r(n-m)}(\Sht^{r}_{U(n),\frL}).
\end{equation*}
The value of $\wt Z^{r}_{m}$ on $(\cG,\frM,h,\cE)$ (recall $\frM=\om_{X}^{-1}\ot\frL$), which we henceforth abbreviate as $(\cG,\cE)$,  is defined as
\begin{equation}
\wt Z^{r}_{m}(\cG,\cE)=\chi(\det\cE)q^{n(\deg \cE-\deg\frL-\deg\om_{X})/2}\sum_{a\in \cA_{\cE,\frL}(k)}\psi(\j{e_{\cG,\cE},a})\z_{*}[\cZ^{r}_{\cE,\frL}(a)],
\end{equation}
where:
\begin{itemize}
\item $\chi: \Pic_{X'}(k)\to \ol{\Q}^{\times}$ is a character whose restriction to $\Pic_{X}(k)$ is the $n$th power of the quadratic character $\Pic_{X}(k)\to \{\pm1\}$ corresponding to the double cover $X'/X$ by class field theory. 
\item $\psi:\F_{q}\to \ol{\Q}^{\times}$ is a nontrivial additive character.
\item The sum is indexed over $\cA_{\cE,\frL}(k)$, the set of Hermitian maps $a: \cE\to \s^{*}\cE^{\vee}\ot\nu^{*}\frL$.
\item Let $\cE'=\cG/\cE$. The pairing $(\cdot,\cdot)_{h}$ in \eqref{h to pairing} induces a perfect pairing $\cE\times \s^{*}\cE'\to \nu^{*}\frL$, which identifies $\cE'$ with $\s^{*}\cE^{*}\ot \nu^{*}\frL$. We thus have a short exact sequence
\begin{equation*}
\xymatrix{0\ar[r] & \cE\ar[r] & \cG\ar[r] & \s^{*}\cE^{*}\ot\nu^{*}\frL\ar[r] & 0}
\end{equation*}
and $e_{\cG,\cE}\in \Ext^{1}(\s^{*}\cE^{*}\ot\nu^{*}\frL, \cE)$ is its extension class. 
\item The pairing $\j{-,-}$ is the Serre duality pairing between $\Ext^{1}(\s^{*}\cE^{*}\ot\nu^{*}\frL, \cE)$ and $\Hom(\cE, \s^{*}\cE^{\vee}\ot\nu^{*}\frL)$.
\end{itemize}

\subsection{The generic modularity theorem}

The Modularity Conjecture of \cite{FYZ2} predicts that $\wt{Z}_m^r$ descends to a function on $\Bun_{GU^-(2m)}(k)$. As explained just after \cite[Conjecture 4.15]{FYZ2}, this can be reformulated as the assertion that $\wt Z^{r}_{m}(\cG, \cE)$ is actually independent of the choice of Lagrangian sub-bundle $\cE \subset \cG$. We will prove this statement after restriction to the generic locus (cf. \S\ref{sssec: generic locus}): 

\begin{thm}\label{thm: generic modularity}
For any $\cG \in \Bun_{GU^-(2m)}(k)$ and any Lagrangian sub-bundles $\wt{\cE}_1, \wt{\cE}_2 \subset \cG$, we have 
\begin{equation}\label{eq: generic modularity}
\wt{Z}_m^r(\cG, \wt{\cE}_1) = \wt{Z}_m^r(\cG, \wt{\cE}_2) \in \CH_{r(n-m)}(\Sht^r_{GU(n)} \times_{(X')^r} \eta^r).
\end{equation}
where $\eta^r$ is as in \S\ref{sssec: generic locus}. 
\end{thm}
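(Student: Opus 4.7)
The plan is to follow the strategy of \cite{FYZ3}, upgrading each step from $\ell$-adic cohomology to Chow groups by means of the motivic sheaf-cycle correspondence developed in \S\ref{sec: motivic sheaf-cycle}. First, I would realize $\wt Z^r_m(\cG, \wt\cE)$ as a Frobenius-twisted trace $\Tr^{\Sht}$ of a cohomological correspondence $\frc(\wt\cE)$ supported on an auxiliary moduli stack mapping to $\Sht^r_{GU(n)}$. The explicit character-sum form of the definition---a sum over $a \in \cA_{\wt\cE,\frL}(k)$ weighted by $\psi(\langle e_{\cG,\wt\cE}, a\rangle)$---strongly suggests writing it as the arithmetic Fourier transform $\aFT$ of a more primitive class, with $\aFT$ taken over the \'etale $\F_q$-vector space bundle $\cA_{\wt\cE,\frL}$ of Hermitian forms. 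By \thmref{thm: trace compatible with FT}, this arithmetic Fourier transform of a trace matches the trace of the motivic homogeneous Fourier transform $\rFT(\frc(\wt\cE))$, recasting $\wt Z^r_m(\cG, \wt\cE)$ as the Frobenius-twisted trace of a cohomological correspondence living on the Fourier-dual side.

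The crucial point is that on the Fourier-dual side, $\rFT(\frc(\wt\cE))$ should depend only on $\cG$ and not on the Lagrangian $\wt\cE \subset \cG$. This reflects a general sheaf-theoretic principle: Fourier transform exchanges Lagrangian subspaces of a symplectic space, so the Fourier-dual correspondence is intrinsic to $\cG$. Formalizing this involves the base-change and functoriality compatibilities of the motivic Fourier transform (Propositions~\ref{prop: ft pbc} and~\ref{prop: cc push/pull functoriality}) to convert a change of Lagrangian into an isomorphism on the dual side. The derived framework for the Fourier transform developed in \S\ref{sec: derived homogeneous FT} is essential, since the family of Hermitian-form spaces over varying $\wt\cE$ naturally forms a derived vector bundle of mixed amplitude over $\Bun_{GU^-(2m)}$, outside the regime where the classical Laumon transform suffices.

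The main obstacle is boundary control. The identification $\rFT(\frc(\wt\cE_1)) \simeq \rFT(\frc(\wt\cE_2))$ will hold only up to correction terms supported on a locus where the two Lagrangians degenerate relative to each other (measured, e.g., by the length of the torsion in $\wt\cE_1 + \wt\cE_2 \subset \cG$), and this locus projects to a union of ``incidence'' divisors in $(X')^r$ disjoint from $\eta^r$. The mechanism for discarding such contributions is the contracting-correspondence formalism of \S\ref{sec: local terms}: by \thmref{thm: contracting fixed terms}, applied to Frobenius-twisted self-correspondences (which are automatically contracting near any stable closed substack, \examref{ex: Frobenius twist contracting}), the local trace contribution from a boundary stratum is computed by restriction of the underlying cohomological correspondence to that stratum. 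By construction, the relevant boundary correspondence vanishes after restriction to $\eta^r$, so the discrepancy between $\wt Z^r_m(\cG, \wt\cE_1)$ and $\wt Z^r_m(\cG, \wt\cE_2)$ dies after restriction to $\Sht^r_{GU(n)} \times_{(X')^r}\eta^r$, yielding \eqref{eq: generic modularity}.
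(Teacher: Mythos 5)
Your proposal correctly identifies the toolbox (trace of cohomological correspondences, motivic homogeneous Fourier transform, arithmetic Fourier transform, contracting correspondences), but the shape of the argument you sketch is not what the paper does, and at least two of your structural choices would lead you into serious difficulties.

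First, you skip the essential preliminary reduction to \emph{transverse} Lagrangians (\S\ref{sssec: transverse reduction}). The paper's whole apparatus---the diagram \eqref{eq: Q diagram for saturation}, the torsion sheaves $Q,\cT_1,\cT_2$, the auxiliary derived vector bundles $U_i,V_i,W_i$ over $S=\Bun_{GU(n)}^{\le\mu}$ (not over $\Bun_{GU^-(2m)}$, as you suggest)---is built for a single \emph{pair} of transverse Lagrangians. The paper does not establish that $\rFT(\frc(\wt\cE))$ ``depends only on $\cG$'' in any intrinsic sense; it instead proves a concrete Fourier duality for the specific pair: \thmref{thm: FT of pushforward correspondence} shows that $\rFT_{\Hk_V^\flat}$ sends $\quo{f}_!\quo{\cc_U}$ (built from $\wt\cE_1$) \emph{exactly} to $(\quo{f^\perp})_!\quo{\cc_{U^\perp}}$ (built from $\wt\cE_2$), with no correction term. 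The two theta series are then written as pairings of these pushforward classes against explicit test functions on the $\F_q$-bundle $\Sht_V^{r,\circ}$ and its dual (\lemref{lem: theta series as product}), and their equality follows from the Plancherel identity for $\aFT$ (\lemref{lem: Plancherel for cycles}) together with the Gauss-sum computation of \lemref{lem: FT of test functions}.

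Second, you have the role of the generic locus and of the contracting-correspondence formalism backwards. The restriction to $\eta^r$ (equivalently, to $X^\circ$, where the legs avoid $|Q|\cup|\cT_1|\cup|\cT_2|$) is needed \emph{before} any Fourier analysis on $\Sht$ is performed: it is what makes $\Sht_V^{r,\circ}\to\Sht_S^{r,\circ}$ an \'etale $\F_q$-vector space bundle so that $\aFT$ even makes sense. It is \emph{not} there to kill a discrepancy supported on a ``Lagrangian degeneration'' locus---no such discrepancy appears, because transversality was imposed at the outset. And \thmref{thm: contracting fixed terms} is not used to discard boundary contributions in the main argument; it appears only inside the technical \lemref{lem: vb push commutes with trace} (within the proof of \thmref{thm: trace compatible with FT}) to compactify the non-proper bundle projection and show the compactification's boundary term vanishes. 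If you tried to run your version---Fourier transform an $\cE$-dependent correspondence, claim the result is $\cG$-intrinsic, and kill discrepancies by contraction---you would have no handle on the putative correction terms, and you would be missing the Plancherel/test-function mechanism that is actually doing the work.
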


\subsubsection{Reduction to transverse Lagrangians}\label{sssec: transverse reduction} As explained in \cite[\S 2.2]{FYZ3}, Theorem \ref{thm: generic modularity} is reduced to the case where the sub-bundles $\wt{\cE}_1, \wt{\cE}_2$ are \emph{transverse} in the sense that their intersection within $\cG$ is the zero section. Henceforth we assume this to be the case.

\subsubsection{Reduction to Harder--Narasimhan truncations}\label{sssec: H-N truncation} Given a Harder--Narasimhan polygon $\mu$ for $GU(n)$, we have a Harder--Narasimhan truncation $\Sht^{r, \leq \mu}_{GU(n)} \inj \Sht^{r}_{GU(n)}$. Because
\[
\CH_*(\Sht^r_{GU(n)} \times_{(X')^r} \eta^r) \cong \limit_{\mu} \CH_*(\Sht^{r, \leq \mu}_{GU(n)} \times_{(X')^r} \eta^r),
\]
it suffices to show \eqref{eq: generic modularity} after restriction to this truncation. Henceforth we fix a Harder--Narasimhan polygon $\mu$ and write $S=\Bun_{GU(n)}^{\le \mu}$ for the corresponding open substack of $\Bun_{GU(n)}$. As in \cite[\S 9.1.2]{FYZ3}, we write $\Hk_S^r \coloneqq h_0^{-1}(S) \cap \ldots \cap h_r^{-1}(S)$ where $h_i \co \Hk_{GU(n)}^r \rightarrow \Bun_{GU(n)}$ are the ``leg maps''.  

For a space over $\Bun_{GU(n)}$ such as $\Sht_{GU(n)}^r$ or the special cycles on it, we write $(-)^{\leq \mu}$ for the pullback to $S$.

\subsection{Transverse Lagrangians ansatz}\label{ssec: Lagrangians setup}
Let $\cG \in \Bun_{GU^{-}(2m)}(k)$. For notational simplicity, we assume the similitude line bundle of $\cG$ is trivial, therefore the skew-Hermitian form reads $h_{\cG}: \cG\isom \s^{*}\cG^{*}$. The general case has the same content. 

Thanks to the transversality assumption from \S \ref{sssec: transverse reduction}, we have a commutative diagram (cf. \cite[\S 2.3.3]{FYZ3})
\begin{equation}\label{eq: Q diagram for saturation}
\begin{tikzcd}
\wt \cE_1 \ar[rr, "b_{12}"] \ar[dr, hook] & & \sigma^* \wt \cE_2^* \ar[rr]\ar[dr, hook, "i_{1}"] & &  Q_{2}  \ar[dr, "\io_{2}"] \\
& \cG \ar[ur] \ar[rr] \ar[dr]  & &  \cG^{\sh}\ar[rr] && Q\\
\wt \cE_2 \ar[rr, "b_{21}"] \ar[ur, hook] & & \sigma^* \wt \cE_1^* \ar[rr] \ar[ur, hook, "i_{2}"] & & Q_{1}\ar[ur, "\io_{1}"] 
\end{tikzcd}
\end{equation}
where the horizontal rows and diagonal sequences are short exact sequences of coherent sheaves on $X'$. Here $b_{12}$ is the composition
\[
\wt{\cE_1} \rightarrow \cG\isom \s^{*}\cG^{*} \rightarrow \sigma^*  \wt{\cE}_2^*. 
\]
The map $b_{21}$ is defined similarly. The maps $\io_{1}$ and $\io_{2}$ are isomorphisms of torsion sheaves on $X'$. As in \cite[\S 2.3.3]{FYZ3}, the duality between $Q_{1}$ and $Q_{2}$ equips $Q$ with two Hermitian structures $h_{12} \co Q \xrightarrow{\sim} \sigma^* Q^*$ and $h_{21} \co  Q \xrightarrow{\sim} \sigma^* Q^*$, related by $h_{12} = -h_{21}$.

For each $i \in \{1,2\}$ let $\cE_i \inj \wt{\cE_i}$ be a sub-sheaf with cokernel a torsion coherent sheaf $\cT_i$ on $X'$. Let $\cT^{*}_{i}=\RHom(\cT_{i}, \cO_{X'})[1]$ be its dual torsion sheaf on $X'$. Therefore, $\wt{\cE_i}$ is the saturation of $\cE_i$ in $\cG$, and we have the diagram below 
\begin{equation}\label{eq: TL exact sequence}
\begin{tikzcd}
\cE_1  \ar[r, hook, "\cT_1"] &   \wt{\cE_1} \ar[r, hook] \ar[rr, hook, bend right, "Q"'] & \cG \ar[r, twoheadrightarrow] & \s^{*}\wt\cE_{2}^* \ar[r, hook, "\sigma^* \cT_2^*"] &  \s^{*}\cE_2^*
\end{tikzcd}
\end{equation}
where the arrows are labeled by their cokernels. 

\subsubsection{Assumptions on $\cT_{1}$ and $\cT_{2}$}\label{sss: assumptions Qi} We make the following assumptions:
 \begin{enumerate}
\item The supports $|Q|, |\cT_1|, |\cT_2|$ are disjoint after mapping to $X$.
\item For all $\cF \in S(\ov k)=\Bun_{U(n)}^{\leq \mu}(\ol{k})$ we have for $i=1,2$
\begin{equation}\label{eq: vanishing assumption 1}
\Ext^1_{X_{\ol{k}}}(\cF^*, \cE_i^*) = 0.
\end{equation}
\end{enumerate}
These conditions can always be arranged, as discussed in \cite[Remark 10.1.1]{FYZ3}. Note that by the dualities in \cite[(10.1.3)]{FYZ3}, \eqref{eq: vanishing assumption 1} is equivalent to
\begin{equation}\label{eq: vanishing assumption 2}
\Hom_{X_{\ol{k}}}(\cF^*, \sigma^* \cE_i) = 0
\end{equation}
for all $\cF \in \Bun_{U(n)}^{\leq \mu}(\ol{k})$ and $i=1,2$. 

Let
\begin{eqnarray}
\wt Q_{1}\coloneqq Q^{*} \oplus \cT_1^* \oplus \s^{*}\cT_2,\\
\wt Q_{2}\coloneqq \sigma^* Q \oplus \s^{*}\cT_1 \oplus \cT_2^{*}.
\end{eqnarray}
From \eqref{eq: TL exact sequence} and the disjointness assumption in \S\ref{sss: assumptions Qi}, we have short exact sequences
\begin{align}
0 \rightarrow  \s^{*}\cE_2 \rightarrow \cE_1^* \rightarrow \wt Q_{1} \rightarrow 0,\label{eq: TL SES 1}\\
0 \rightarrow \s^{*}\cE_1 \rightarrow \cE_2^* \rightarrow \wt Q_{2}\rightarrow 0.\label{eq: TL SES 2}
\end{align}
  
\subsection{Moduli spaces}\label{ssec: moduli spaces}

We will use the same moduli spaces as in \cite[\S 9.1, \S 9.2]{FYZ3}:
\begin{enumerate}
\item For $i \in \{0,r\}$: $U_i,V_i, W_i$, which are derived vector bundles over $S$; and their respective dual derived vector bundles $W_i^\perp, \wh{V}_i, U_i^\perp$. 
\item For $i \in \{0,r\}$: $\wt{U}_i, \wt{V}_i, \wt{W}_i$, which are derived vector bundles over $\Hk_S^r$ obtained by pulling back $U_i,V_i,W_i$ respectively along $h_i$; and their respective dual derived vector bundles $\wt{W}_i^\perp, \wh{\wt{V}}_i, \wt{U}_i^\perp$. 
\item The Hecke stacks $\Hk_U^\perp, \Hk_U^\sharp, \Hk_V^\flat, \Hk_V^\sharp, \Hk_W^\flat, \Hk_W^\sharp$, which are derived vector bundles over $\Hk_S^r$; and their respective dual derived vector bundles $\Hk_{W^\perp}^{\sharp}, \Hk_{W^\perp}^{\flat}, \Hk_{\wh{V}}^\sharp, \Hk_{\wh{V}}^\flat, \Hk_{U^\perp}^\sharp, \Hk_{U^\perp}^\flat$. 
\end{enumerate}
We give an informal summary of the definitions. Recall that for an animated $\F_q$-algebra $R$, an $R$-point of $S$ is a Hermitian bundle $\cF$ on $X'_R$. In these terms, 
\begin{itemize}
\item The fiber of $U_i$ over $\cF \in S(R)$ is $\RHom_{X_R'}(\cF^*, \cE_1^* \otimes R)$. 
\item The fiber of $V_i$ over $\cF \in S(R)$ is $\RHom_{X_R'}(\cF^*, \wt{Q}_1 \otimes R)$. 
\item The fiber of $W_i$ over $\cF \in S(R)$ is $\RHom_{X_R'}(\cF^*, \sigma^* \cE_2[1] \otimes R)$. 
\item The fiber of $U_i^\perp$ over $\cF \in S(R)$ is $\RHom_{X_R'}(\cF^*, \cE_2^* \otimes R)$.
\item The fiber of $\wh{V}_i$ over $\cF \in S(R)$ is $\RHom_{X_R'}(\cF^*, \wt{Q}_2 \otimes R)$. 
\item The fiber of $W_i^\perp$  over $\cF \in S(R)$ is $\RHom_{X_R'}(\cF^*, \sigma^* \cE_1[1] \otimes R)$. 
\end{itemize}
Note that the definitions of the six spaces above are, in fact, independent of $i$. However, it the notation is useful for indexing purposes. 

For an animated $\F_q$-algebra $R$, an $R$-point of $\Hk_S^r$ is a sequence of modifications of Hermitian bundles $(\cF_0 \dashrightarrow \ldots \dashrightarrow \cF_r)$ on $X_R'$ that we abbreviate $\cF_\star$. For $i \in \{0,r\}$ and $? \in \{U_i,V_i, W_i, U_i^\perp, \wh{V}_i, W_i^\perp \}$, the fiber of $\wt{?}$ over $(\cF_\star) \in \Hk_S^r(R)$ is obtained by replacing $\cF$ with $\cF_i$ in the descriptions of $?(R)$ above. 

From $\cF_\star \in \Hk_S^r(R)$ we can define perfect complexes $\cF_\bu^\flat$ and $F_\bu^\sharp$ on $X_R'$ as in \cite[\S 9.1.3, \S 9.2.2]{FYZ3}. For $? \in \{U,V,W\}$ the fiber of $\Hk_?^{\flat}$ is obtained by replacing $\cF$ with $\cF_{\bu}^\flat$ in the description of $?(R)$ above, while the fiber of $\Hk_?^{\sharp}$ over $\cF_\star \in \Hk_S^r(R)$ is obtained by replacing $\cF$ with $\cF_{\bu}^\sharp$ in the description of $?(R)$ above.

\begin{remark}The vanishing assumption \eqref{eq: vanishing assumption 1} implies that $U,V$ and $W$ are all classical vector bundles over $S$, and we have a short exact sequence of classical vector bundles over $S$
\begin{equation}\label{eq: UVW SES}
0\to U_i \to V_i \to W_i\to 0.
\end{equation}
Similarly, we have a short exact sequence of classical vector bundles over $S$
\begin{equation}
0\to U^{\bot}_i \to \wh V_i \to W^{\bot}_i\to 0.
\end{equation}
\end{remark}

Thus we have a pair of commutative diagrams:
\begin{equation}\label{eq: big diagram for E_1}
\adjustbox{scale=0.8, center}{\begin{tikzcd}
& & \Hk_U^{\flat} \ar[dl, "\wt{a}_0"', color=blue] \ar[dr, "\wt{a}_r"] \ar[ddd, bend left, "f"', color=orange] \\
& \ar[dl, "h_0^U"'] \wt{U}_0 \ar[ddd, "\wt{f}_0"', color=red] \ar[dr, "\wt{a}_r'"'] && \wt{U}_r \ar[dl, "\wt{a}_0'"] \ar[ddd, "\wt{f}_r"] \ar[dr, "h_r^U"] \\
U_0 \ar[ddd, "f_0"]  & & \Hk_U^{\sharp}  \ar[ddd, bend right, "f^{\sharp}"]   &  &  U_r  \ar[ddd, "f_r"] \\
& & \Hk_V^{\flat} \ar[dl, "\wt{b}_0"', color=green] \ar[dr, "\wt{b}_r"] \ar[ddd, bend left, "g"']   \\
& \wt{V}_0 \ar[ddd, "\wt{g}_0"']  \ar[dr, "\wt{b}_r'"']  \ar[dl, "h_0^V"'] && \wt{V}_r \ar[dl, "\wt{b}_0'"]  \ar[ddd, "\wt{g}_r"]  \ar[dr, "h_r^V"] \\
V_0 \ar[ddd, "g_0"] & & \Hk_V^{\sharp} \ar[ddd, bend right, "g^{\sharp}"]  & & V_r  \ar[ddd, "g_r"] \\
& & \Hk_W^{\flat}  \ar[dl, "\wt{c}_0"'] \ar[dr, "\wt{c}_r"] \\
& \wt{W}_0 \ar[dr, "\wt{c}_r'"'] \ar[dl, "h_0^W"']  & &  \wt{W}_r \ar[dl, "\wt{c}_0'"]  \ar[dr, "h_r^W"]  \\
W_0 & & \Hk_W^{\sharp}  & & W_r 
\end{tikzcd}\hspace{1cm}
\begin{tikzcd}
& & \Hk_{U^{\perp}}^{\flat} \ar[dl, "\wt{a}_0^{\perp}"'] \ar[dr, "\wt{a}_r^{\perp}"] \ar[ddd, bend left, "f^{\perp}"'] \\
& \ar[dl, "h_0^{U^{\bot}}"'] \wt{U}_0^{\perp} \ar[ddd, "\wt{f}_0^{\perp}"'] \ar[dr, "(\wt{a}_r')^{\perp}"'] && \wt{U}_r^{\perp} \ar[dl, "(\wt{a}_0')^{\perp}"] \ar[ddd, "\wt f_r^{\perp}"] \ar[dr, "h_r^{U^{\bot}}"]  \\
U_0^{\perp} \ar[ddd, "f_0^{\perp}"] & & \Hk_{U^{\perp}}^{\sharp}  \ar[ddd, bend right, "(f^{\sharp})^{\perp}"]   & & U_r^{\perp} \ar[ddd, "f_r^{\perp}"] \\
& & \Hk_{\wh{V}}^{\flat} \ar[dl, "\wt{\b}_0"'] \ar[dr, "\wt{\b}_r"] \ar[ddd, bend left, "g^{\bot}"']   \\
& \ar[dl, "h_0^{\wh V}"']  \wh{\wt{V}}_0 \ar[ddd, "\wt{g}_0^{\perp}"', color=red]  \ar[dr, "\wt\b'_{r}"', color=green]  && \wh{\wt{V}}_r \ar[dl, "\wt\b'_{0}"]  \ar[ddd, "\wt{g}_r^{\perp}"]  \ar[dr, "h_r^{\wh V}"]  \\
\wh{V}_0 \ar[ddd, "g_0^{\perp}"]  & & \Hk_{\wh{V}}^{\sharp} \ar[ddd, bend right, "(g^{\sharp})^{\perp}", color=orange] & & \wh{V}_r \ar[ddd, "g_r^{\perp}"]  \\	
& & \Hk_{W^{\perp}}^{\flat}  \ar[dl, "\wt{c}_0^{\perp}"'] \ar[dr, "\wt{c}_r^{\perp}"] \\
& \ar[dl, "h_0^{W^{\bot}}"']  \wt{W}_0^{\perp} \ar[dr, "(\wt{c}_r')^{\perp}"', color=blue]  & &  \wt{W}_r^{\perp} \ar[dl, "(\wt{c}_0')^{\perp}"]  \ar[dr, "h_r^{W^{\bot}}"]  \\
W_0^{\perp} & & \Hk_{W^{\perp}}^{\sharp} & & W_r^{\perp} 
\end{tikzcd}
}
\end{equation}
In each diagram: 
\begin{itemize}
\item The maps in the columns come from exact triangles of perfect complexes. 
\item The three diamonds in the middle are derived Cartesian. 
\item The four parallelograms on the left and right sides are derived  Cartesian. 
\end{itemize}
The diagram on the right is dual to the diagram on the left. The duality exchanges $U$ with $W^{\perp}$, $V$ with $\wh{V}$, and $W$ with $U^{\perp}$, and exchanges $\flat$ and $\sharp$ superscripts. Sample examples of dual morphisms are colored with the same color. By \cite[\S 9.3]{FYZ3}, each of these diagrams is globally presented, so that we may apply the results of \S \ref{ssec: global presentation} to them.

\subsection{Calculation of motivic Fourier transforms}\label{ssec: calculate FT of cc} We refer to the diagrams in \eqref{eq: big diagram for E_1}. By \cite[Corollary 9.1.4]{FYZ3}, the map $a_r$ is quasi-smooth, hence it has a relative fundamental class, which defines as in \S\ref{ssec: VFC as trace} a cohomological correspondence
\begin{equation}
\cc_{U}=[a_{r}]\in \Corr_{\quo{\Hk^{\flat}_{U}}}(\Qsh{U_0}, \Qsh{U_{r}}\tw{-d(a_r)}).
\end{equation}
Similarly, the relative fundamental class of $a^{\bot}_{r}$ defines a cohomological correspondence
\begin{equation}
\cc_{U^{\bot}}=[a^{\bot}_{r}]\in \Corr_{\quo{\Hk^{\flat}_{U^{\bot}}}}(\Qsh{U^{\bot}_0}, \Qsh{U^{\bot}_{r}}\tw{-d(a^{\bot}_r)}).
\end{equation}
By \cite[Corollary 9.1.5]{FYZ3} the pushforward of cohomological correspondences (\S \ref{ssec: pushforward functoriality for CC}) along the morphism of correspondences $f: \Hk^{\flat}_{U}\to \Hk^{\flat}_{V}$ is defined, giving
\begin{equation}
f_{!}(\cc_{U})\in \Corr_{\quo{\Hk^{\flat}_{V}}}(f_{0!}\Qsh{U_{0}}, f_{r!}\Qsh{U_{r}}\tw{-d(a_{r})}).
\end{equation}
Similarly, 
\begin{equation}
f^{\bot}_{!}(\cc_{U^{\bot}})\in \Corr_{\quo{\Hk^{\flat}_{\wh V}}}(f^{\bot}_{0!}\Qsh{U^{\bot}_{0}}, f^{\bot}_{r!}\Qsh{U^{\bot}_{r}}\tw{-d(a^{\bot}_{r})})
\end{equation}
is defined.

For a $\G_m$-equivariant cohomological correspondence $\cc$ on a derived vector bundle $E$, we denote by $\quo{\cc}$ the descended cohomological correspondence on $\quo{E}$. Recall the notion of homogeneous Fourier transform of cohomological correspondences from \S \ref{sssec: FT of cc}. We have (simplifying some dimensions as in \cite[\S 9.4]{FYZ3})
\begin{equation}
\rFT_{\Hk^{\flat}_{V}}(\quo{f}_{!}(\quo{\cc_{U}}))\in \Corr_{\quo{\Hk^{\flat}_{\wh V}}}(\rFT_{V_{0}}(f_{0!}\Qsh{U_{0}}), \rFT_{V_{r}}(f_{r!}\Qsh{U_{r}})\tw{d(\wt b_{r})-d(a_{r})}).
\end{equation}

Since $U^{\bot}$ is the orthogonal complement of $U$ relative to $V$ (in the derived sense), by \S\ref{ssec:main/fun} we have canonical isomorphisms for $i=0,r$:
\begin{eqnarray*}
\rFT_{V_{i}}(f_{i!}\Qsh{U_{i}})\cong (f^{\bot}_i)_!\Qsh{U^{\bot}_{i}}\sm{[\rank(\cV)]}\tw{-\rank(\cU)}.
\end{eqnarray*}
Note the shift and twist on the right side is the same for $i=0$ and $i=r$. Therefore we may view (simplifying some dimensions as in \cite[\S 9.4]{FYZ3}) 
\[
\rFT_{\Hk^{\flat}_{V}}(\quo{f}_{!}(\quo{\cc_{U}})) \in \Corr_{\quo{\Hk^{\flat}_{\wh V}}}((f^{\bot}_{0})_! \Qsh{U^{\bot}_{0}}, (f^{\bot}_r)_! \Qsh{U^{\bot}_{r}}\tw{-d(a_{r}^{\bot})}).
\]

\begin{thm}\label{thm: FT of pushforward correspondence} Recall the shift and twist notation from \S \ref{ssec: trace variants}. Let $\pi_i \co U_i \rightarrow S$ be the bundle projection. Then we have 
\[
\TT_{[d(f_0) + d(\pi_0)](d(\pi_0))}\rFT_{\quo{\Hk_V^{\flat}}}(\quo{f}_{!} (\quo{\cc_U}))   = (\quo{f^{\perp}})_{!} (\quo{\cc_{U^\perp}}) \in \Corr_{\quo{\Hk_{\wh V}^\flat}}((f_{0}^\perp)_! \Qsh{U^{\bot}_0}, (f_{r}^\perp)_! \Qsh{U^{\bot}_r} \tw{-d(a_r^\perp)}). 
\]
\end{thm}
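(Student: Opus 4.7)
The proof is a motivic lift of the corresponding $\ell$-adic statement in \cite[\S 9]{FYZ3}, combining the Fourier functoriality from \propref{prop: cc push/pull functoriality} with explicit evaluations of the Fourier transform on trivial sheaves.

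The first step is to apply \propref{prop: cc push/pull functoriality}(1) to the map of correspondences $f \co \Hk^\flat_U \to \Hk^\flat_V$. This map is left pushable because the relevant squares in \eqref{eq: big diagram for E_1} are derived Cartesian by the construction of \S \ref{ssec: moduli spaces}, and the diagram is globally presented (by \cite[\S 9.3]{FYZ3}, which carries over). The proposition then gives
\[
\TT_{[d(f_0)]} \rFT_{\quo{\Hk^\flat_V}}(\quo f_!(\quo{\cc_U})) = (\wh{f^\sharp})^* \rFT_{\quo{\Hk^\flat_U}}(\quo{\cc_U})
\]
up to the shift and twist indicated in the proposition, where $\wh{f^\sharp} \co \Hk^\flat_{\wh V} \to \wh{\Hk^\sharp_U}$. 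Under the external duality swapping $U \leftrightarrow W^\perp$, $V \leftrightarrow \wh V$, and $\flat \leftrightarrow \sharp$, the morphism $\wh{f^\sharp}$ is identified with the natural projection $g^\perp \co \Hk^\flat_{\wh V} \to \Hk^\flat_{W^\perp}$ in the right-hand diagram of \eqref{eq: big diagram for E_1}.

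Next, evaluate the sheaves $\rFT_{U_i}(\Qsh{U_i})$ via the support property \propref{prop:suppinvol}, applied with $E = \wh U_i$ (which has amplitude $\geq 0$): we obtain $\rFT_{U_i}(\Qsh{U_i}) \cong 0_{\wh U_i, !}(\Qsh{S})\sm{[-d(\pi_i)](-d(\pi_i))}$, which accounts exactly for the additional $[d(\pi_0)](d(\pi_0))$ shift and twist appearing in the statement. The dual of the short exact sequence $0 \to U \to V \to W \to 0$ is a fiber sequence $U^\perp \to \wh V \to W^\perp$ of derived vector bundles, realizing $\Hk^\flat_{U^\perp}$ as the derived base change of the zero section of $\Hk^\flat_{W^\perp}$ along $\wh{f^\sharp}$. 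By proper base change, the pulled-back sheaves $(\wh{f^\sharp})^*(0_{\wh U_i,!} \Qsh{S})$ become precisely $f^\perp_{i!}\Qsh{U^\perp_i}$, matching the sheaves in the target correspondence $\quo{f^\perp}_!(\quo{\cc_{U^\perp}})$.

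The final step is to identify the morphism-level data. Both $\cc_U = [a_r]$ and $\cc_{U^\perp} = [a_r^\perp]$ are relative fundamental classes of quasi-smooth morphisms, and \propref{prop: fourier of Gysin} converts the Gysin map defining $\cc_U$ into a forget-supports map on the dual side; under the base change identification of the previous step, this matches the Gysin defining $\cc_{U^\perp}$. The main obstacle is the careful bookkeeping of the various shifts and twists---the Poincar\'e duality twists from quasi-smoothness of $a_r$ and $a_r^\perp$, the Fourier renormalization, and the twists from \propref{prop:suppinvol}---which must be reconciled via involutivity (\thmref{thm:invol}). At this point, the argument proceeds exactly as in the $\ell$-adic case \cite[\S 9]{FYZ3}, since all required tools (Fourier functoriality, involutivity, Gysin-to-forget-supports, proper base change) have been established in the motivic context in the preceding sections.
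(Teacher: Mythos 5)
Your proof takes a genuinely different route from the paper's. The paper factors $\cc_U = \pi^* \frs$ through the trivial correspondence $\frs$ on the base $\Hk_S^r$ (regarded as the zero vector bundle over itself), applies Proposition~\ref{prop: cc push/pull functoriality} \emph{twice}---once for the pushforward $f_!$ and once for the pullback $\pi^*$---uses the triviality $\rFT_{\Hk_S^r}(\frs) = \frs$, and then invokes the base change theorem for cohomological correspondences (Theorem~\ref{thm: descent of pushforward correspondence}). You instead apply the pushforward functoriality once for $f_!$ and then try to compute $\rFT(\quo{\cc_U})$ explicitly as a cohomological correspondence via the support property (Proposition~\ref{prop:suppinvol}) and the Gysin-to-forget-supports conversion (Proposition~\ref{prop: fourier of Gysin}).

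Your approach has a genuine gap at the morphism level. The support property tells you that the \emph{objects} $\rFT_{U_i}(\Qsh{U_i})$ are supported on zero sections, and base change identifies the pulled-back objects with $f^\perp_{i!}\Qsh{U^\perp_i}$. But the cohomological correspondence is a \emph{morphism} between such objects, and identifying $(\wh{f^\sharp})^*\rFT(\quo{\cc_U})$ with $(\quo{f^\perp})_!\quo{\cc_{U^\perp}}$ as morphisms is not supplied by Proposition~\ref{prop: fourier of Gysin}. That proposition converts the Gysin transformation for a linear map $\phi$ into a forget-supports transformation for the \emph{dual} map $\wh{\phi}$; applied to $a_r$ (or more precisely to its $\Hk_S^r$-linear piece $\wt a_r$), the output is a forget-supports map for $(\wt a_r')^\perp \co \wt W_r^\perp \to \Hk_{W^\perp}^\sharp$, which lives on the $W^\perp$-side of the diagram, not on the $U^\perp$-side. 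Turning this into the Gysin $[a_r^\perp]$ after the pullback $(\wh{f^\sharp})^*$ requires exactly the kind of base change compatibility that the paper packages cleanly as Theorem~\ref{thm: descent of pushforward correspondence}. Your final sentence asserting that ``the argument proceeds exactly as in the $\ell$-adic case'' is also off the mark: \cite[\S 9]{FYZ3} and the present paper both use the factorization $\cc_U = \pi^* \frs$ precisely to sidestep this morphism-level tracking, so the $\ell$-adic argument does not supply the missing step in your route. To complete your approach you would need to prove, separately, that the Fourier transform of the relative fundamental class correspondence (not just its source and target) agrees with the expected fundamental class after linear base change---which is in essence a reformulation of the combination of (1), (5) and (4) in the paper's proof, and is arguably no simpler than using the factorization directly.
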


\begin{proof}
Let $\frs\in \Corr_{\quo{\Hk^{r}_{S}}}(\Qsh{S}, \Qsh{S}\tw{-d(h_{r})})$ be the cohomological correspondence obtained from the relative fundamental class of $h_{r}$ as in \S \ref{ssec: VFC as trace}. Let 
\[
\pi: \Hk^{\flat}_{U}\to \Hk_{S}^{r}, \quad \pi^{\bot}: \Hk^{\flat}_{U^{\bot}}\to \Hk_{S}^{r},
\]
be the bundle projections viewed as maps of correspondences, and also recall the maps of correspondences
\begin{eqnarray*}
z^{\perp}: \Hk^{r}_{S}\to \Hk^{\flat}_{W^\perp}, \quad g^{\perp}: \Hk_{\wh{V}}^{\flat}\to \Hk_{W^\perp}^{\flat}.
\end{eqnarray*}

The proof is completed by the sequence of equalities of cohomological correspondences
\begin{align*}
\TT_{[d(f_0) + d(\pi_0)](d(\pi_0))}\rFT(\quo{f}_{!}\quo{\cc_{U}}) &\stackrel{(1)}{=} 
\TT_{[d(f_0) + d(\pi_0)](d(\pi_0))} \rFT(\quo{f}_{!}\quo{\pi}^{*}\quo{\frs}) \stackrel{(2)}{=} (\quo{g^{ \perp}})^{*}(\quo{z^\perp})_{!}\rFT(\frs)  \\
& \stackrel{(3)}{=}(\quo{g^\perp})^{*} (\quo{z^{\perp}})_! \frs\stackrel{(4)}{=}(\quo{f^\perp})_!(\quo{\pi^\perp})^{*}\frs\stackrel{(5)}{=} (\quo{f^\perp})_! \quo{\cc_{U^\perp}}.
\end{align*}
Here: 
\begin{enumerate}
\item[(1), (5)] follow from the $\G_m$-equivariant identifications 
\begin{equation*}
\pi^{*}\frs=\cc_{U}, \quad (\pi^{\bot})^{*}\frs=\cc_{U^{\bot}}
\end{equation*}
which are proved exactly as in \cite[Lemma 9.4.2]{FYZ3}. 

\item[(2)] involves two applications of Proposition \ref{prop: cc push/pull functoriality}, namely
\begin{equation*}
\TT_{[d(f_0)]}\rFT\c \quo{f}_{!}=(\quo{g}^\perp)^*\c\rFT, \quad \TT_{[d(\pi_0)](d(\pi_0)) } \rFT\c \quo{\pi}^{*}= \quo{z}^\perp_{!}\c\rFT.
\end{equation*}
\item[(3)] is the trivial equality $\frs=\rFT_{\Hk_{S}^{r}}(\frs)$, where $\Hk_{S}^{r}$ is regarded as a trivial vector bundle over itself.

\item[(4)] follows from Theorem \ref{thm: descent of pushforward correspondence}. (Note that the hypotheses of Theorem \ref{thm: descent of pushforward correspondence} hold in this situation by \cite[Corollary 9.1.5]{FYZ3}.) 
\end{enumerate}

\end{proof}

\subsection{Calculations with the homogeneous arithmetic Fourier transform}
We denote
\begin{eqnarray}
X^{\c}&\coloneqq&X-\nu(|Q|\cup |\cT_{1}|\cup |\cT_{2}|)=X-\nu(|\wt Q_{1}|)=X-\nu(|\wt Q_{2}|);\\
X'^{\c}&\coloneqq &\nu^{-1}(X^{\c}).
\end{eqnarray}
For a stack $A$ over $X^{\c}$, we denote
\begin{equation}
A^{\c}\coloneqq A\times_{X^{r}}(X^{\c})^{r}.
\end{equation}
In particular,   $\Hk_S^{r, \circ} \subset \Hk_S^r$ denotes the open substack where the legs are all disjoint from $|\wt{Q}_1|\cup|\wt Q_{2}|$.

By \cite[Lemma 10.13]{FYZ3}, for each $i$ the restriction $\wt b_i^{\c} \co \Hk_{V}^{\flat,\c} \rightarrow \wt{V}^{\c}_i$ of $\wt b_{i}$ and the restriction $\wt b'^{\c}_{i} \co  \wt{V}^{\c}_i\to \Hk_{V}^{\sh,\c} $ of $\wt b'_{i}$ are isomorphisms. This implies \cite[Corollary 10.1.4]{FYZ3} that the projection map $\Sht^{r,\c}_{V}\to \Sht^{r,\c}_{S}=\Sht^{r,\le \mu,\c}_{U(n)}$ is an \'{e}tale $\F_{q}$-vector space bundle. Hence the theory of the homogeneous arithmetic Fourier transform (\S \ref{sec: arithmetic FT}) applies to it.

 \subsubsection{Virtual fundamental classes}\label{ssec: VFC}
By \cite[Remark 9.1.1]{FYZ3} the spaces $U_i$ from \S \ref{ssec: moduli spaces} can be viewed as the derived fiber of the derived Hitchin stack $\sM_{H_1, H_2}$ from \cite[\S 5]{FYZ2} over $\{ \cE_1 \} \times S   \rightarrow \Bun_{\GL(m)'} \times \Bun_{U(n)}$, where $H_1 = \GL(m)'$ and $H_2 = U(n)$. Similarly, by \cite[Remark 9.1.2]{FYZ3} $\Hk_U^\flat$ is an open substack of the derived fiber of the derived Hecke stack $\Hk_{\sM_{H_1, H_2}}^r$ from \cite[\S 5]{FYZ2} over $\{ \cE_1 \} \times S   \rightarrow \Bun_{\GL(m)'} \times \Bun_{U(n)}$. Therefore, the derived fibered product 
\[
\begin{tikzcd}
\Sht_U^r \ar[r] \ar[d] & \Hk_U^\flat \ar[d, "{(a_0, a_r)}"] \\
U_0 \ar[r, "{(\Id, \Frob)}"] & U_0 \times U_r
\end{tikzcd}
\]
is equipped with an open embedding in $\Sht_{\sM_{H_1, H_2}}^r$, and in particular is of virtual dimension $d(a_r)$. We then have two natural cycles in $\CH_{d(a_r)}(\Sht_U^r )$:
\begin{enumerate}
\item The intrinsic derived fundamental class $[\Sht_U^r] \in \CH_{d(a_r)}(\Sht_U^r )$.
\item The trace of the cohomological correspondence $\cc_U$ (calculated using the canonical Weil structure), denoted $\Tr^{\Sht}(\cc_U) \in \CH_{d(a_r)}(\Sht_U^r )$ (cf. \S\ref{sssec: fix vs sht}). 
\end{enumerate}

We assemble the earlier results to calculate the trace of our cohomological correspondences. The assumptions \eqref{eq: vanishing assumption 1} imply that the maps $U_i \rightarrow S$ and $U_i^\perp \rightarrow S$ are smooth, hence $U_i$ and $U_i^\perp$ are smooth. Then by Corollary~\ref{cor: smooth derived local terms Sht}  we have 
\begin{equation}\label{eq: trace of cc_U}
\Tr^{\Sht}(\cc_U) = [\Sht_U^r] \in \CH_{d(a_r)}(\Sht_U^r).
\end{equation}
In particular, $\Sht_U^r$ is an open substack of $\Sht_{\cM_{\cE_1}}^r$, so it is quasi-smooth and $[\Sht_U^r]$ is the restriction of what was called $[\cZ_{\cE_1}^r]$ in \cite{FYZ2}. 

Similarly, we have
\begin{equation}\label{eq: trace of cc_U perp}
\Tr^{\Sht}(\cc_{U^\perp}) = [\Sht_{U^\perp}^r] \in \CH_{d(a_r^\perp)}(\Sht_{U^{\perp}}^r),
\end{equation}
where $\Sht_{U^\perp}^r$ is defined by the derived Cartesian square 
\[
\begin{tikzcd}
\Sht_{U^\perp}^r \ar[r] \ar[d] & \Hk_{U^\perp}^\flat \ar[d, "{(a_0^\perp, a_r^\perp)}"]\\
U_0^\perp \ar[r, "{(\Id, \Frob)}"] & U_0^\perp \times U_r^\perp
\end{tikzcd}
\]

Next, the assumptions \eqref{eq: vanishing assumption 2} imply that the maps $f_i \co U_i \rightarrow V_i$, $f \co \Hk_U^\flat \rightarrow \Hk_V^\flat$, $f_i^{\perp} \co U_i^{\perp} \rightarrow \wh{V}_i$, and $f^\perp \co \Hk_{U^{\perp}}^\flat \rightarrow \Hk_{\wh{V}}^\flat$ are all closed embeddings. Then Proposition \ref{prop: trace commutes with proper push} applies to give 
\begin{equation}\label{eq: trace f_!}
\Tr^{\Sht}(f_! \cc_U) = \Sht(f)_! \Tr^{\Sht}(\cc_U) \stackrel{\eqref{eq: trace of cc_U}}= \Sht(f)_! [\Sht_{U}^r] \in \CH_{d(a_r)} (\Sht_V^r),
\end{equation}
where we write $\Sht(f) \coloneqq \Fix(f^{(1)}) \co \Sht_U^r \rightarrow \Sht_V^r$ for the map induced by taking fixed points of the twisted cohomological correspondence $\mf{\cc}_U^{(1)}$, and similarly for other cohomological correspondences. We similarly have 
\begin{equation}\label{eq: trace f_! perp}
\Tr^{\Sht}(f_!^\perp \cc_{U^\perp}) = \Sht(f^\perp )_! \Tr^{\Sht}(\cc_{U^\perp}) \stackrel{\eqref{eq: trace of cc_U perp}}= \Sht(f^\perp )_! [\Sht_{U^\perp}^r] \in \CH_{d(a_r^\perp)} (\Sht_{\wh V}^r). 
\end{equation}

\begin{notat}For an $\F_q$-vector space stack $Y \rightarrow T$ as in \S \ref{sssec: AFT setup} and a class $\alpha \in \CH_*(Y)$ (resp. $\CH^{*}(Y)$), we denote
\[
\quo{\alpha} = \Av(\alpha) \coloneqq \frac{1}{q-1} \pr_! (\alpha) \in \CH_*(\quo{Y}) \quad \text{ (resp. $\CH^{*}(\quo{Y})$)}
\]
where $\pr  \co Y \rightarrow \quo{Y}$ is the quotient map. 
\end{notat}

\begin{example}[Homogeneous cycles]\label{ex: homogeneous cycle}
We say that $\alpha$ on $Y$ is \emph{homogeneous} if $\alpha = \pr^* (\quo{\alpha})$. Note that $[\Sht^r_U ] \in \CH_{d(a_r)} (\Sht_U^r)$ is homogeneous, hence $\Sht(f)_! [\Sht^r_U ]  \in \CH_{d(a_r)} (\Sht_V^r)$ is homogeneous.
\end{example}

\subsubsection{Arithmetic Fourier transform of cycles} Recall that $\Sht^{r,\c}_{V}\to \Sht^{r,\c}_{S}$ is an \'etale $\F_{q}$-vector space bundle. We now relate the cycle classes \eqref{eq: trace f_!} and \eqref{eq: trace f_! perp} under the homogeneous arithmetic Fourier transform on $\Sht^{r,\c}_{V}$ as defined in \S \ref{sec: arithmetic FT}. 

\begin{thm}\label{thm: AFT of distribution}  
We have 
\[
\aFT(\Sht(f)^{\c}_! (\quo{[\Sht_U^{r,\c}]})) =  (-1)^{d(U/S)+d(f_0)} q^{d(U/S)} \cdot  \Sht(f^{\perp})^{\c}_! (\quo{[\Sht_{U^{\perp}}^{r,\c}]}) \in \CH_{d(a_r)}(\quo{\Sht_{\wh{V}}^{r,\c}}).
\]
Here $\Sht(f)^{\c}: \Sht_U^{r,\c}\to \Sht^{r,\c}_{V}$ is the restriction of $\Sht(f)$, and similarly for $\Sht(f^{\perp})^{\c}$. We use the same notation for induced maps on the homogeneous quotients $\quo{(-)}$. (We are using \cite[Remark 10.2.2]{FYZ3} to match the degrees of homology.) 
\end{thm}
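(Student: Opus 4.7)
The plan is to combine three tools developed in the paper: Theorem \ref{thm: FT of pushforward correspondence} (the motivic homogeneous Fourier transform commutes with pushforward of cohomological correspondences, up to an explicit shift and twist); Theorem \ref{thm: trace compatible with FT} (the $\Sht$-valued trace intertwines the motivic Fourier transform with the homogeneous arithmetic Fourier transform); and the trace computations of \S \ref{ssec: VFC} (which identify the trace of $\cc_U$ with the fundamental class $[\Sht_U^r]$).

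Step 1 (geometric setup). Restrict everything to the open locus $\c$. By \cite[Lemma 10.13]{FYZ3}, the maps $\wt b_i^{\c} \co \Hk_V^{\flat,\c} \to \wt V_i^\c$ and their duals are isomorphisms, so $\iota \coloneqq \wt b_r^\c \circ (\wt b_0^\c)^{-1}$ supplies an isomorphism $h_0^* V^\c \isom h_r^* V^\c$ of vector bundles over $\Hk_S^{r,\c}$. With $Y = S^\c$, $E = V^\c$, $C = \Hk_S^{r,\c}$, $C_E = \Hk_V^{\flat,\c}$, and $C_{\wh E} = \Hk_{\wh V}^{\flat,\c}$, this realizes the hypotheses of Theorem \ref{thm: trace compatible with FT}, applied to the cohomological correspondence $\quo{f}_!^\c \quo{\cc_U}^\c$ on $\quo{\Hk_V^{\flat,\c}}$.

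Step 2 (Fourier transform of the correspondence). By Theorem \ref{thm: FT of pushforward correspondence}, noting $d(\pi_0) = d(U/S)$,
\[
\rFT_{\quo{\Hk_V^{\flat,\c}}}(\quo{f}_!^\c \quo{\cc_U}^\c)
\;=\; \TT_{[-d(f_0)-d(U/S)](-d(U/S))}\bigl((\quo{f^\perp})_!^\c \quo{\cc_{U^\perp}}^\c\bigr).
\]
Applying $\Tr^{\Sht}$ to both sides and invoking the identity \eqref{eq: Tr Sht sh tw} to absorb the shift and twist, we obtain
\[
\Tr^{\Sht}\!\bigl(\rFT(\quo{f}_!^\c \quo{\cc_U}^\c)\bigr)
\;=\; (-1)^{d(f_0)+d(U/S)}\, q^{d(U/S)}\,\Tr^{\Sht}\!\bigl((\quo{f^\perp})_!^\c \quo{\cc_{U^\perp}}^\c\bigr).
\]
Theorem \ref{thm: trace compatible with FT}, applied in the setup of Step 1, identifies the left-hand side with $\aFT\!\bigl(\Tr^{\Sht}(\quo{f}_!^\c \quo{\cc_U}^\c)\bigr)$.

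Step 3 (trace of $\cc_U$ is the fundamental class, and descent). By \eqref{eq: trace of cc_U}, $\Tr^{\Sht}(\cc_U) = [\Sht_U^r]$; since $f$ is a closed embedding (by the vanishing assumption \eqref{eq: vanishing assumption 2}), Proposition \ref{prop: trace commutes with proper push} yields $\Tr^{\Sht}(f_! \cc_U) = \Sht(f)_![\Sht_U^r]$, and analogously $\Tr^{\Sht}(f^\perp_! \cc_{U^\perp}) = \Sht(f^\perp)_![\Sht_{U^\perp}^r]$. Because $[\Sht_U^r]$ (hence $\Sht(f)_![\Sht_U^r]$) is $\bG_m$-homogeneous in the sense of Example \ref{ex: homogeneous cycle}, averaging commutes with the trace, so descent along $\quo{(-)}$ and restriction to $\c$ gives $\Tr^{\Sht}(\quo{f}_!^\c \quo{\cc_U}^\c) = \Sht(f)^\c_!(\quo{[\Sht_U^{r,\c}]})$ and the analogous identity for $U^\perp$. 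Substituting into the equation of Step 2 produces the stated formula.

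The main technical hurdle is Step 1, which requires fitting the self-correspondence of $V^\c$ (with its subtle Hecke-modification structure) into the rigid setup of Theorem \ref{thm: trace compatible with FT} via the trivialization provided by \cite[Lemma 10.13]{FYZ3}; the bookkeeping of signs and powers of $q$ through the shift/twist identity \eqref{eq: Tr Sht sh tw} is delicate but mechanical.
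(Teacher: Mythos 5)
Your proposal is correct and follows essentially the same argument as the paper: apply Theorem~\ref{thm: trace compatible with FT} with $E=V$, $C_E=\Hk_V^{\flat,\c}$, $\cc = (f_!\cc_U)|_{\Hk_V^{\flat,\c}}$, substitute Theorem~\ref{thm: FT of pushforward correspondence} to compute $\rFT(\quo{f}_!\quo{\cc_U})$, absorb the shift/twist via \eqref{eq: Tr Sht sh tw}, and conclude with the trace identifications \eqref{eq: trace f_!}, \eqref{eq: trace f_! perp}. Your Step~1 makes explicit the geometric setup (via \cite[Lemma 10.13]{FYZ3}) that the paper keeps implicit, and your Step~3 spells out the descent to the $\bG_m$-quotient, but these are expository refinements rather than a different route.
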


\begin{proof} We apply Theorem \ref{thm: trace compatible with FT} with $E = V$, $C_E = \Hk_V^{\flat,\c}$ and $\cc =( f_! \cc_U)|_{\Hk_V^{\flat,\c}}$.  Then Theorem \ref{thm: trace compatible with FT} tells us that 
\begin{equation}\label{eq: aft trace 1}
\aFT_{\Sht_V^{r,\c}}\left(\Tr^{\Sht}( \quo{f}_!  \quo{\cc_U})|_{\quo{\Sht^{r,\c}_{V}}} \right)= \left(\Tr^{\Sht} \rFT_{\Hk_V^\flat}(\quo{f}_!  \quo{\cc_U}) \right)|
_{\quo{\Sht^{r,\c}_{\wh V}}} \in \CH_{d(a_r)}(\quo{\Sht_{\wh{V}}^{r,\c}}).
\end{equation}
By Theorem \ref{thm: FT of pushforward correspondence} we have
\[
\rFT_{\Hk_V^{\flat}}(\quo{f}_{!} \quo{\cc_U})   = \TT_{[-d(U/S)-d(f_0)](-d(U/S))} (\quo{f}^{\perp})_{!} \quo{\cc_{U^\perp}}.
\]
Putting this into \eqref{eq: aft trace 1} and then taking the trace, using \eqref{eq: Tr Sht sh tw}, \eqref{eq: trace f_!} and \eqref{eq: trace f_! perp}, yields the result. 
\end{proof}

\subsubsection{Test functions} We introduce some notation for functions on $\Sht_{V}$ and $\Sht_{\wh{V}}$. The decompositions $\wt{Q}_1 \coloneqq Q^* \oplus \cT_1^* \oplus \sigma^* \cT_2$ and $\wt{Q}_2 \coloneqq \sigma^* Q \oplus \sigma^* \cT_1 \oplus \cT_2^*$ induce the following decompositions defined in \cite[\S 10.2.1]{FYZ3} (with the same notation): 
\[
\Sht_V^r = \Sht_{V^{(0)}}^r \times_{\Sht_S^r} \Sht_{V^{(1)}}^r \times_{\Sht_S^r} \Sht_{V^{(2)}}^r,
\]
\[
\Sht_{\wh{V}}^r = \Sht_{\wh{V}^{(0)}}^r \times_{\Sht_S^r} \Sht_{\wh{V}^{(1)}}^r \times_{\Sht_S^r}\Sht_{\wh{V}^{(2)}}^r.
\]
We note that $\Sht_{\wh{V}^{(i)}}^{r,\c}$ is dual to $\Sht_{V^{(i)}}^{r,\c}$ as $\F_{q}$-vector spaces over $\Sht^{r,\c}_{S}$ in the sense of \S \ref{sssec: AFT setup}.

We denote $\mf{q}_{12} \co \Sht^{r}_{V^{(0)}} \rightarrow \ul{\F_q}$ and $\mf{q}_{21} \co \Sht^{r}_{V^{(0)}} \rightarrow \ul{\F_q}$ the two quadratic forms induced by the Hermitian structures $h_{12}$ and $h_{21}$ on $Q$ from \cite[\S 2.3.3]{FYZ3}, respectively. Namely, $\frq_{12}$ is the composition
\begin{equation}
\frq_{12}: \Sht^{r}_{V^{(0)}}\xr{(\Id, h_{12})}\Sht^{r}_{V^{(0)}}\times_{\Sht^{r}_{S}}\Sht^{r}_{\wh V^{(0)}}\to \ul{\F_{q}},
\end{equation}
and similarly for $\frq_{21}$. They are related by $\mf{q}_{12} = - \mf{q}_{21}$. Recall the additive character $\psi \co \F_q \rightarrow \ol{\Q}^\times$ from \S \ref{ssec: higher theta series}. 
\begin{itemize}
\item We let $\mf{q}_{12}^* \psi$ be the pullback of $\psi$ to $\Sht_{V^{(0)}}^r$ via $\mf{q}_{12}$, and similarly for $\mf{q}_{21}$. Abusing notation, we will also use the same notation $\mf{q}^*_{12} \psi$ to denote its pullback to $\Sht_V^r$ and to $\Sht^{r,\c}_{\wh V}$. The meaning will be clear from context. 
\item We let $\delta_{\Sht_{V^{(i)}}^{r,\c}}$ be the indicator function of the zero-section of the \'etale $\F_{q}$-vector space bundle $\Sht_{V^{(i)}}^{r,\c} \rightarrow \Sht_{S}^{r,\c}$. Abusing notation, we will also use this same notation to denote its pullback to $\Sht_V^{r,\c}$. 
\item We let $\bbm{1}_{\Sht_{V^{(i)}}^{r,\c}}$ be the constant function of $\Sht_{V^{(i)}}^{r,\c}$ with value $1$. Abusing notation, we will also use this same notation to denote its pullback to $\Sht_V^{r,\c}$. 
\item We use similar notation on $\Sht_{\wh{V}}^{r,\c}$ and $\quo{\Sht_{\wh{V}}^{r,\c}}$. 
\end{itemize}

\begin{lemma}\label{lem: FT of test functions}
Let $d^{(i)}$ be the rank of $\Sht_{V^{(i)}}^{r,\c}$ as an \'etale $\F_q$-vector space bundle over $\Sht_{S}^{r,\c}$. Let $d = d^{(0)} +  d^{(1)} + d^{(2)}$ be the rank of $ \Sht_V^{r,\c}$ as an \'etale $\F_q$-vector space bundle over $\Sht_S^{r,\c}$. Then we have 
\begin{align*}
 \aFT_{\Sht_V^{r,\c}} \left( \Av({(\mf{q}_{12}^*  \psi)} \cdot {\delta_{\Sht^{r,\c}_{V^{(1)}}}} \cdot {\bbm{1}_{\Sht^{r,\c}_{V^{(2)}}}}) \right) 
 = (-1)^{d} q^{d^{(2)} + \frac{1}{2}d^{(0)}} \eta_{F'/F}(D_Q)^n \cdot \Av ( (\mf{q}_{12}^* [-1]^* \psi) \cdot \bbm{1}_{\Sht^{r,\c}_{V^{(1)}}} \cdot \delta_{\Sht^{r,\c}_{V^{(2)}}})
\end{align*}
as functions on $\quo{\Sht_{\wh{V}}^{r,\c}}$.  
\end{lemma}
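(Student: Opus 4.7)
The plan is to exploit the product decomposition
\[
\Sht_V^{r,\c} = \Sht_{V^{(0)}}^{r,\c} \times_{\Sht_S^{r,\c}} \Sht_{V^{(1)}}^{r,\c} \times_{\Sht_S^{r,\c}} \Sht_{V^{(2)}}^{r,\c},
\]
together with the dual decomposition of $\Sht_{\wh V}^{r,\c}$, to factor the homogeneous arithmetic Fourier transform $\aFT_{\Sht_V^{r,\c}}$ as the composite of three fiberwise Fourier transforms $\aFT_{\Sht_{V^{(i)}}^{r,\c}}$ for $i \in \{0,1,2\}$ performed over the base $\Sht_S^{r,\c}$. Since each of the three factors of the input function $(\mf{q}_{12}^*\psi) \cdot \delta_{\Sht^{r,\c}_{V^{(1)}}} \cdot \bbm{1}_{\Sht^{r,\c}_{V^{(2)}}}$ depends only on the corresponding factor of the product, and the $\G_m$-averaging operator $\Av$ is compatible with this product structure (with the normalizing factors of $\frac{1}{q-1}$ being accounted for once), the calculation reduces to three independent fiberwise computations. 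This mirrors the overall strategy of \cite[Lemma~10.2.3]{FYZ3}, the analog in Borel--Moore homology.

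Two of the three fiberwise computations are straightforward applications of the definition of the homogeneous arithmetic Fourier transform from \S\ref{sec: arithmetic FT}. Namely, on $\Sht_{V^{(1)}}^{r,\c}$, the Fourier transform of the delta function at the zero section is $(-1)^{d^{(1)}}\bbm{1}$; on $\Sht_{V^{(2)}}^{r,\c}$, the Fourier transform of $\bbm{1}$ is $(-1)^{d^{(2)}}q^{d^{(2)}}\delta$ supported at the zero section. These are essentially the standard orthogonality relations for characters of finite $\F_q$-vector spaces, lifted to the relative setting over $\Sht_S^{r,\c}$, and together contribute the sign $(-1)^{d^{(1)}+d^{(2)}}$ and the power $q^{d^{(2)}}$.

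The main obstacle will be the Gauss sum computation for $\mf{q}_{12}^*\psi$ on $\Sht_{V^{(0)}}^{r,\c}$, which is the étale $\F_q$-vector space bundle over $\Sht_S^{r,\c}$ associated (fiberwise) to the space of sections valued in $Q^*$, with $\mf{q}_{12}$ the non-degenerate quadratic form induced by the Hermitian structure $h_{12}$ on $Q$. The classical Gauss-sum identity for non-degenerate quadratic forms over finite fields yields, after identifying $\mf{q}_{21}^*\psi = \mf{q}_{12}^*[-1]^*\psi$ via $\mf{q}_{12} = -\mf{q}_{21}$, an identity of the form
\[
\aFT\bigl(\Av(\mf{q}_{12}^*\psi)\bigr) = (-1)^{d^{(0)}} q^{d^{(0)}/2} \cdot \varepsilon \cdot \Av(\mf{q}_{12}^*[-1]^*\psi),
\]
where $\varepsilon$ is the quadratic character of $\F_q^\times$ evaluated on the discriminant of $\mf{q}_{12}$. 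Identifying $\varepsilon$ with $\eta_{F'/F}(D_Q)^n$ is the crux of the argument: the quadratic form $\mf{q}_{12}$, fiber by fiber, is built from the Hermitian torsion sheaf $Q$ on $X'$ tensored with the rank-$n$ Hermitian bundle $\cF$, so its discriminant sign is controlled by $\det(Q)$ raised to the $n$-th power. Translating this via local class field theory (the quadratic character of $\F_q^\times$ attached to the étale double cover $X'/X$ corresponds to $\eta_{F'/F}$) yields the factor $\eta_{F'/F}(D_Q)^n$. Multiplying together the prefactors from the three fiberwise computations then produces the claimed expression $(-1)^d q^{d^{(2)} + d^{(0)}/2} \cdot \eta_{F'/F}(D_Q)^n$.
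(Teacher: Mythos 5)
The paper's own proof is a single line: it cites the unaveraged ($\psi$-kernel) version of this identity, \cite[Corollary 10.2.4]{FYZ3}, and then applies the averaging operator $\Av(-)$ to both sides. Your proposal instead attempts to reprove the claim from scratch \emph{directly at the level of the homogeneous arithmetic Fourier transform}, and this is where a genuine gap appears.

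The issue is your claimed factorization of $\aFT_{\Sht_V^{r,\c}}$ over the product decomposition $\Sht_V^{r,\c} = \Sht_{V^{(0)}}^{r,\c} \times_{\Sht_S^{r,\c}} \Sht_{V^{(1)}}^{r,\c} \times_{\Sht_S^{r,\c}} \Sht_{V^{(2)}}^{r,\c}$. The homogeneous $\aFT$ is defined on the quotient stack $\quo{\Sht_V^{r,\c}} = [\Sht_V^{r,\c}/\ul{\F}_q^\times]$, where a single, diagonal copy of $\ul{\F}_q^\times$ scales all three factors simultaneously. This quotient is \emph{not} a fiber product of the three quotients $\quo{\Sht_{V^{(i)}}^{r,\c}}$, so there is no natural way to ``do the $V^{(0)}$-direction, then the $V^{(1)}$-direction, then the $V^{(2)}$-direction.'' Correspondingly, the kernel $\ev^*\xi$ built from $\xi(x) = \sum_{t\in\F_q^\times}\psi(tx)$ does \emph{not} factor as a product of three $\xi$-kernels on the three pairings, which is the multiplicativity property you would need. (Your statement that ``the $\G_m$-averaging operator $\Av$ is compatible with this product structure, with the normalizing factors of $\frac{1}{q-1}$ being accounted for once'' is precisely the assertion that needs proof, and it is not true in the naive sense.) Your fiberwise formulas $\aFT(\delta) = (-1)^{d^{(1)}}\bbm{1}$ and $\aFT(\bbm{1}) = (-1)^{d^{(2)}}q^{d^{(2)}}\delta$ are the standard $\psi$-transform identities, not the $\xi$-transform identities, which is a further symptom of the same conflation.

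The correct way to organize the argument — and what the paper does by citing \cite[Corollary 10.2.4]{FYZ3} — is to carry out your product-decomposition/Gauss-sum computation for the unaveraged $\psi$-Fourier transform on $\Sht_V^{r,\c}$ itself, where the $\F_q$-linear product decomposition \emph{is} directly available and the kernel $\ev^*\psi$ genuinely does factor. Having established the identity there (this is the content of \cite[Lemma 10.2.3, Corollary 10.2.4]{FYZ3}), one then applies $\Av(-)$ to both sides; the only additional input is the compatibility of $\Av$ with the two Fourier transforms (which intertwines the $\psi$-transform with the $\xi$-transform), a standard lemma. Your Gauss-sum argument for the $V^{(0)}$-factor, including the identification of the discriminant character with $\eta_{F'/F}(D_Q)^n$, is correct in spirit and is exactly what goes into the cited result; the gap is purely in trying to run it directly in the $\G_m$-quotient.
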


\begin{proof} This follows from \cite[Corollary 10.2.4]{FYZ3} by applying $\Av(-)$.
\end{proof}

\begin{lemma}\label{lem: theta series as product} For $i=1,2$, let $\cA_{\wt{\cE}_i}$ be the Hitchin base as in \cite[\S 3.3]{FYZ2}. For $a\in \cA_{\wt\cE_{i}}(k)$, recall that $\cZ^{r, \leq \mu,\c}_{\wt{\cE_i}}(a) \coloneqq \cZ^{r}_{\wt\cE_{i}}(a)\times_{\Sht^{r}_{U(n)}}\Sht^{r,\c}_{S}$.
	
(1) We have an equality in $\CH_{d(a_r)} (\quo{\Sht_V^{r,\c}})$:
\[
(\Sht(f)^{\c}_! \quo{[\Sht_{U}^{r,\c}]}) \cdot  \Av ({(\mf{q}_{12}^* \psi)} \cdot  {\delta_{\Sht_{V^{(1)}}^{r,\c}}} \cdot {\bbm{1}_{\Sht_{V^{(2)}}^{r,\c}}})=  \Av\left(\sum_{a \in \cA_{\wt{\cE}_1}(k)} \psi(\langle e_{\cG, \cE_1}, a \rangle ) \Sht(f)^{\c}_! [\cZ^{r, \leq \mu,\c}_{\wt{\cE}_{1}}(a)] \right).
\]

(2) We have an equality in $\CH_{d(a_r)} (\quo{\Sht_{\wh{V}}^{r,\c}})$:
\[
(\Sht(f^{\perp})^{\c}_! 
\quo{[\Sht_{U^{\perp}}^{r, \c}
]})\cdot \Av({(\mf{q}_{21}^* \psi )} \cdot {\bbm{1}_{\Sht_{\wh{V}^{(1)}}^{r,\c}}} \cdot {\delta_{\Sht_{\wh{V}^{(2)}}^{r,\c}}})   =   \Av\left(\sum_{a \in \cA_{\wt{\cE}_2}(k)} \psi(\langle e_{\cG, \cE_2}, a \rangle )\Sht(f^{\perp})^{\c}_! [\cZ^{r, \leq \mu,\c}_{\wt{\cE}_{2}}(a)]\right).
\]

\end{lemma}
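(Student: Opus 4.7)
The plan is to unwind both sides of (1) at the level of cycle classes times scalar functions on $\Sht_V^{r,\c}$, match the contributions stratum-by-stratum, and then apply $\mrm{Av}(-)$ to both sides. Part (2) will follow by an entirely symmetric argument after swapping $\cE_1 \leftrightarrow \cE_2$ and using the duality between $V$ and $\wh V$ (together with $h_{21} = -h_{12}$, which is absorbed into the character $\psi$), so I focus on part (1).

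First I will unpack $\Sht_V^{r,\c}$. By \cite[Corollary 10.1.4]{FYZ3} it is an étale $\F_q$-vector space bundle over $\Sht_S^{r,\c}$; and using the decomposition $\wt Q_1 = Q^* \oplus \cT_1^* \oplus \s^*\cT_2$ together with the disjointness and vanishing assumptions of \S\ref{sss: assumptions Qi}, it factors into a product over $\Sht_S^{r,\c}$ of three such bundles $\Sht_{V^{(i)}}^{r,\c}$ for $i=0,1,2$. The fiber of $\Sht_{V^{(0)}}^{r,\c}$ is identified (via the Hermitian structure $h_{12}$) with the space of Hermitian maps $\cE_1 \to \s^*\cE_1^{\vee} \otimes \nu^*\frL$, i.e.\ with $\cA_{\cE_1,\frL}(k)$ pointwise over $\Sht_S^{r,\c}$. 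On the other hand, the stratification of $\Sht_U^{r,\c}$ by the discriminant Hermitian form yields
\[
\Sht_U^{r,\c} = \coprod_{a \in \cA_{\cE_1,\frL}(k)} \cZ^{r,\leq\mu,\c}_{\cE_1,\frL}(a)
\]
as open substacks of the derived Hitchin shtuka fiber (cf.\ \S\ref{ssec: VFC}), with the map $\Sht(f)^{\c}$ sending the stratum labelled by $a$ into the locus of $\Sht_V^{r,\c}$ where the $V^{(0)}$-coordinate equals $a$.

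Next, I match the scalar functions stratum-by-stratum. The function $\mf q_{12}^*\psi$ is by construction (\cite[\S 10.2.1]{FYZ3}) constant with value $\psi(\langle e_{\cG,\cE_1},a\rangle)$ on the stratum of $\Sht_V^{r,\c}$ labelled by $a$, since $\mf q_{12}$ is precisely the quadratic form attached to the extension class $e_{\cG,\cE_1}$ via $h_{12}$. The indicator $\delta_{\Sht_{V^{(1)}}^{r,\c}}$ forces vanishing in the $V^{(1)}$-component, which corresponds exactly to the obstruction to extending $a$ across $\cT_1$; thus it cuts out those $a \in \cA_{\cE_1,\frL}(k)$ that lift to some $\tila \in \cA_{\wt\cE_1,\frL}(k)$, the lift being unique by the vanishing assumption. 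The indicator $\bbm 1_{\Sht_{V^{(2)}}^{r,\c}}$ is identically $1$. Finally, over $X^{\c}$ (on which the saturation $\cE_1 \hookrightarrow \wt\cE_1$ is an isomorphism since $|\cT_1| \subset X \smallsetminus X^{\c}$), the derived stacks $\cZ^{r,\c}_{\cE_1,\frL}(a)$ and $\cZ^{r,\c}_{\wt\cE_1,\frL}(\tila)$ coincide as open substacks of a common derived Hitchin shtuka fiber (\cite[\S 5]{FYZ2}), with matching derived fundamental classes. Assembling these ingredients and then averaging by $\mrm{Av}(-)$ yields the claimed identity in part (1).

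The main obstacle I anticipate is the derived-geometric identification $\cZ^{r,\c}_{\cE_1,\frL}(a) \cong \cZ^{r,\c}_{\wt\cE_1,\frL}(\tila)$ at the level of derived fundamental classes, rather than merely on classical truncations; this boils down to matching presentations of both sides as open substacks of a common derived Hitchin shtuka fiber restricted to $X^{\c}$, which is the reason the set-up of \S\ref{ssec: moduli spaces} was arranged so that the vanishing \eqref{eq: vanishing assumption 1} holds. A secondary technical point is the definitional verification that $\mf q_{12}^*\psi$ evaluates to $\psi(\langle e_{\cG,\cE_1},a\rangle)$, which is a direct unwinding of the Serre duality pairing but must be done carefully to track signs and the dependence on $h_{12}$ versus $h_{21}$.
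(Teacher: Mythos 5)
The paper's entire proof is a single line: the non-homogeneous version of this identity is exactly \cite[Lemma 10.2.7]{FYZ3} (for the same cycles $\Sht(f)^{\c}_!{[\Sht_U^{r,\c}]}$, $\Sht(f^{\perp})^{\c}_!{[\Sht_{U^{\perp}}^{r,\c}]}$ and the same test functions on the non-quotiented $\Sht_V^{r,\c}$), and the displayed statement is obtained by applying $\Av(-)$. Your proposal skips the citation and instead tries to rederive the pre-averaged identity from scratch, which is a genuinely longer route: you are essentially reconstructing the proof of the cited lemma and only at the end noting that everything passes through $\Av$.

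The reconstruction has the right shape (stratify $\Sht_U^{r,\c}$ by the Hermitian form $a$, evaluate the test function stratum-by-stratum, relate $\cZ^{r,\c}_{\cE_1}(a)$ to $\cZ^{r,\c}_{\wt\cE_1}(\tila)$), but two of the intermediate claims are not correct as stated. First, ``the fiber of $\Sht_{V^{(0)}}^{r,\c}$ is identified with $\cA_{\cE_1,\frL}(k)$ pointwise over $\Sht_S^{r,\c}$'' cannot hold: $\Sht_{V^{(0)}}^{r,\c} \rightarrow \Sht_S^{r,\c}$ is an \emph{\'etale $\F_q$-vector space bundle} whose fiber over a given Hermitian shtuka $\cF_\bu$ is the Frobenius-fixed subspace of $\Hom(\cF^*, Q^*)$, which varies with $\cF_\bu$, whereas $\cA_{\cE_1,\frL}(k)$ is a fixed finite set of Hermitian maps depending only on $\cE_1$ and $\frL$. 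What is true (and what feeds the computation) is that the composite $\Sht_U^{r,\c} \xrightarrow{\Sht(f)^\c} \Sht_V^{r,\c} \to \Sht_{V^{(0)}}^{r,\c}$, post-composed with $h_{12}$ and the evaluation pairing, is locally constant with values which can be read off from the Hermitian form $a$; this indexes the open-closed decomposition but is not a fiberwise identification. Second, the assertion that ``over $X^{\c}$ the derived stacks $\cZ^{r,\c}_{\cE_1,\frL}(a)$ and $\cZ^{r,\c}_{\wt\cE_1,\frL}(\tila)$ coincide since $\cE_1\hookrightarrow\wt\cE_1$ is an isomorphism over $X^{\c}$'' conflates restricting the legs to $X^{\c}$ with restricting the sheaves to $X'^{\c}$: the maps $t_i : \cE_1 \to \cF_i$ live over all of $X'$, and extending them along $\cE_1 \hook \wt\cE_1$ is obstructed in $\Ext^1(\cT_1,\cF_i)$, which does not vanish merely because the legs avoid $\nu(|\cT_1|)$. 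The precise comparison of the two special cycles and of their derived fundamental classes is one of the substantive points of \cite[\S 10.2]{FYZ3}, and it relies on the vanishing assumptions \eqref{eq: vanishing assumption 1}--\eqref{eq: vanishing assumption 2} rather than on a locality-over-$X^{\c}$ argument. In short: the strategy is sound, but the shortcut is to invoke the already-established pre-averaged identity and note that $\Av(-)$ (being a scalar multiple of pushforward along the $\Gm$-torsor) commutes with cap product, pushforward, and the sum over $a$.
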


\begin{proof} This follows from \cite[Lemma 10.2.7]{FYZ3} by applying $\Av(-)$. 
\end{proof}

\subsection{Proof of Theorem \ref{thm: generic modularity}}\label{ssec: conclusion}

We will now complete the proof of Theorem \ref{thm: generic modularity}. Let $d, d^{(i)}$, for $i \in \{0,1,2\}$, be be the rank of $\Sht_{V^{(i)}}^{r,\c}$ as an \'etale $\F_q$-vector space bundle over $\Sht_{S}^{r,\c}$.  Note that $d^{(i)}$ is also the rank of $V^{(i)}$ as a vector bundle over $S$. 

From \cite[(10.3.1)]{FYZ3} we may rewrite the higher theta series in the following way, using the non-homogeneous variant of Notation \ref{notat: pairing},
\begin{equation}\label{eq: conclusion -3}
\wt{Z}_m^r( \wt{\cE}_1 , \cG) |_{\Sht^{r,\c}_{S}}= \chi(\det \wt{\cE}_1) q^{n (\deg \wt{\cE}_1  - \deg \omega_X)/2} \langle  \Sht(f)^{\c}_! [\Sht_U^{r,\c}], \mf{q}_{12}^* \psi \cdot \delta_{\Sht_{V^{(1)}}^{r,\c}} \cdot \bbm{1}_{\Sht_{V^{(2)}}^{r,\c}} \rangle
\end{equation}
and
\begin{equation}\label{eq: conclusion -2}
\wt{Z}_m^r(\wt{\cE}_2 , \cG)|_{\Sht^{r,\c}_{S}} = \chi(\det \wt{\cE}_2) q^{n (\deg \wt{\cE}_2  - \deg \omega_X)/2} \langle \Sht(f^\perp)^{\c}_! [\Sht_{U^\perp}^{r,\c}], \mf{q}_{21}^*  \psi \cdot \bbm{1}_{\Sht_{\wh{V}^{(1)}}^{r,\c}} \cdot \delta_{\Sht_{\wh{V}^{(2)}}^{r,\c}} \rangle.
\end{equation}
Since $\Sht(f)^{\c}_! [\Sht_U^{r,\c}]$ and $\Sht(f^\perp)^{\c}_! [\Sht_{U^\perp}^{r,\c}]$ are homogeneous (cf. Example \ref{ex: homogeneous cycle}), it suffices to show that
\begin{align}\label{eq: averaged modularity}
 &\chi(\det \wt{\cE}_1) q^{n (\deg \wt{\cE}_1  - \deg \omega_X)/2} \langle \Sht(f)^{\c}_! \ \quo{[\Sht_U^{r,\c}]}, \Av(\mf{q}_{12}^* \psi \cdot {\delta_{\Sht_{V^{(1)}}^{r,\c}}} \cdot {\bbm{1}_{\Sht_{V^{(2)}}^{r,\c}}} ) \rangle \nonumber \\
 =& 
 \chi(\det \wt{\cE}_2) q^{n (\deg \wt{\cE}_2  - \deg \omega_X)/2} \langle \Sht(f^\perp)^{\c}_! \ \quo{[\Sht_{U^\perp}^{r,\c}]}, \Av(\mf{q}_{21}^*  \psi \cdot {\bbm{1}_{\Sht_{\wh{V}^{(1)}}^{r,\c}}} \cdot {\delta_{\Sht_{\wh{V}^{(2)}}^{r,\c}}}  ) \rangle.
\end{align}

Let $d = d^{(0)} +  d^{(1)} + d^{(2)}$ be the rank of $ \Sht_V^{r,\c}$ as an $\F_q$-vector space over $\Sht_S^{r,\c}$. By the Plancherel formula from Lemma \ref{lem: Plancherel for cycles} and the involutivity of $\aFT$ from Lemma \ref{lem: AFT involutivity}, we have 
\begin{eqnarray}\label{eq: conclusion 1}
&& \langle \Sht(f)^{\c}_! \quo{[\Sht_U^{r,\c}]}, \Av(\mf{q}_{12}^*  \psi \cdot {\delta_{\Sht_{V^{(1)}}^{r,\c}}} \cdot {\bbm{1}_{\Sht_{V^{(2)}}^{r,\c}}} ) \rangle \\
\notag &=& \frac{1}{q^d} \langle \aFT ( \Sht(f)^{\c}_! \quo{[\Sht_U^{r,\c}]}) , \aFT( \Av(\mf{q}_{12}^*  \psi \cdot {\delta_{\Sht_{V^{(1)}}^{r,\c}}} \cdot {\bbm{1}_{\Sht_{V^{(2)}}^{r,\c}}} ))\rangle.
\end{eqnarray}
Using Theorem \ref{thm: AFT of distribution} and Lemma \ref{lem: FT of test functions}, we rewrite the RHS of \eqref{eq: conclusion 1} as 

\begin{eqnarray}\label{eq: conclusion 2}
 &   \frac{1}{q^d} q^{d(U/S)} (-1)^{d(U/S) + d(f_0)} (-1)^d q^{d^{(2)}+  \frac{1}{2}d^{(0)}} \eta_{F'/F}(D_Q)^n \\
& \hspace{1cm} \cdot  \notag \langle \Sht(f^\perp)^{\c}_!  \quo{[\Sht_{U^\perp}^{r,\c}]} , \Av(\mf{q}_{12}^* [-1]^* \psi \cdot {\bbm{1}_{\Sht_{\wh{V}^{(1)}}^{r,\c}}} \cdot {\delta_{\Sht_{\wh{V}^{(2)}}^{r,\c}}} )\rangle.
\end{eqnarray}
Since $\mf{q}_{12} =- \mf{q}_{21}$, we have $\mf{q}_{12}^* [-1]^* = \mf{q}_{21}^*$. Then clearly \eqref{eq: conclusion 2} agrees with the RHS of \eqref{eq: averaged modularity} up to sign and exponent of $q$. These signs and the exponents of $q$ on either side are matched exactly as in \cite[\S 10.3]{FYZ3}.

\bibliographystyle{amsalpha}
\bibliography{Bibliography}

\end{document}